\numberwithin{equation}{subsection}
\newtheorem{thm}{Theorem}[subsection]
\newtheorem{cor}[thm]{Corollary}
\newtheorem{lem}[thm]{Lemma}
\newtheorem{pro}[thm]{Proposition}
\newtheorem*{thm*}{Theorem}
\theoremstyle{remark}
\newtheorem{rem}[thm]{Remark}
\newtheorem{proc}[thm]{Procedure}
\newtheorem{opq}[thm]{Problem}
\theoremstyle{definition}
\newtheorem{exa}[thm]{Example}
\DeclareMathOperator{\D}{d\hspace{-0.25ex}}
\DeclareMathOperator{\paa}{{\mathsf{par}}}
\newcommand*{\ascr}{\mathscr A}
\newcommand*{\borel}[1]{{\mathfrak B}(#1)}
\newcommand*{\card}[1]{\mathrm{card}(#1)}
\newcommand*{\cbb}{\mathbb C}
\newcommand*{\efi}{\mathsf{E}_{\phi}}
\newcommand*{\dz}[1]{{\EuScript D}(#1)}
\newcommand*{\dzn}[1]{{\EuScript D}^\infty(#1)}
\newcommand*{\Ge}{\geqslant}
\newcommand*{\gammab}{\boldsymbol \gamma}
\newcommand*{\gcal}{{\mathscr G}}
\newcommand*{\hh}{\mathcal H}
\newcommand*{\hfi}{{\mathsf h}_{\phi}}
\newcommand*{\hfin}[1]{{\mathsf h}_{\phi^{#1}}}
\newcommand*{\I}{\mathrm{i}}
\newcommand*{\is}[2]{\langle#1,#2\rangle}
\newcommand*{\ind}[2]{\mathrm{ind}_{#1}(#2)}
\newcommand*{\kk}{\mathcal K}
\newcommand*{\lambdab}{{\boldsymbol\lambda}}
\newcommand*{\Le}{\leqslant}
\newcommand*{\msc}{\mathscr M}
\newcommand*{\nbb}{\mathbb N}
\newcommand*{\ogr}[1]{\boldsymbol B(#1)}
\newcommand*{\pa}[1]{\paa(#1)}
\newcommand*{\rbb}{\mathbb R}
\newcommand*{\rbop}{{\overline{\rbb}_+}}
\newcommand*{\slam}{S_{\boldsymbol \lambda}}
\newcommand*{\smalloplus}{\raise0pt
\hbox{$\scriptscriptstyle \oplus$}}
\newcommand*{\supp}[1]{\mathrm{supp}(#1)}
\newcommand*{\tcal}{{\mathscr T}}
\newcommand*{\zbb}{\mathbb Z}
\begin{document}
   \title[Subnormality of composition operators: exotic examples]
{Subnormality of unbounded composition operators over
one-circuit directed graphs: exotic examples}
   \author[P.\ Budzy\'{n}ski]{Piotr Budzy\'{n}ski}
   \address{Katedra Zastosowa\'{n} Matematyki,
Uniwersytet Rolniczy w Krakowie, ul.\ Balicka 253c,
PL-30198 Krak\'ow}
\email{piotr.budzynski@ur.krakow.pl}
   \author[Z.\ J.\ Jab{\l}o\'nski]{Zenon Jan
Jab{\l}o\'nski}
   \address{Instytut Matematyki,
Uniwersytet Jagiello\'nski, ul.\ \L ojasiewicza 6,
PL-30348 Kra\-k\'ow, Poland}
\email{Zenon.Jablonski@im.uj.edu.pl}
   \author[I.\ B.\ Jung]{Il Bong Jung}
   \address{Department of Mathematics, Kyungpook
National University, Daegu 702-701, Korea}
\email{ibjung@knu.ac.kr}
   \author[J.\ Stochel]{Jan Stochel}
\address{Instytut Matematyki, Uniwersytet
Jagiello\'nski, ul.\ \L ojasiewicza 6, PL-30348
Kra\-k\'ow, Poland} \email{Jan.Stochel@im.uj.edu.pl}
   \thanks{The research of the first author was
supported by the Ministry of Science and Higher
Education of the Republic of Poland. The research of
the second and fourth authors was supported by the NCN
(National Science Center), decision No.
DEC-2013/11/B/ST1/03613. The research of the third
author was supported by Basic Science Research Program
through the National Research Foundation of Korea(NRF)
funded by the Ministry of Science, ICT and future
Planning (KRF-2015R1A2A2A01006072).}
   \subjclass[2010]{Primary 47B33, 47B20; Secondary
47B37, 44A60}
   \keywords{Subnormal operator, operator generating
Stieltjes moment sequences, composition operator,
consistency condition, graphs induced by self-maps,
Hamburger and Stieltjes moment sequences}
   \begin{abstract}
A recent example of a non-hyponormal injective
composition operator in an $L^2$-space generating
Stieltjes moment sequences, invented by three of the
present authors, was built over a non-locally finite
directed tree. The main goal of this paper is to solve
the problem of whether there exists such an operator
over a locally finite directed graph and, in the
affirmative case, to find the simplest possible graph
with these properties (simplicity refers to local
valency). The problem is solved affirmatively for the
locally finite directed graph $\gcal_{2,0}$, which
consists of two branches and one loop. The only
simpler directed graph for which the problem remains
unsolved consists of one branch and one loop. The
consistency condition, the only efficient tool for
verifying subnormality of unbounded composition
operators, is intensively studied in the context of
$\gcal_{2,0}$, which leads to a constructive method of
solving the problem. The method itself is partly based
on transforming the Krein and the Friedrichs measures
coming either from shifted Al-Salam-Carlitz
\mbox{$q$-polynomials} or from a quartic birth and
death process.
   \end{abstract}
   \maketitle
   \newpage
\setcounter{tocdepth}{2} \tableofcontents
   \section{{\bf Preliminaries}}
   \subsection{Introduction} \label{subsec1}
   The theory of bounded subnormal operators was
initiated by Halmos (cf.\ \cite{hal1}). The definition
and the first characterization of their unbounded
counterparts were given independently by Bishop (cf.\
\cite{bis}) and Foia\c{s} (cf.\ \cite{foi}). The
foundations of the theory of unbounded (i.e., not
necessarily bounded) subnormal operators were
developed by the fourth-named author and Szafraniec
(cf.\ \cite{StSz1,StSz2,StSz3,StSz}). The study of
this topic turned out to be highly successful. It led
to a number of challenging problems and nontrivial
results in various branches of mathematics including
functional analysis and mathematical physics (see,
e.g., \cite{Con,Con2,c-f} for the case of bounded
operators and \cite{m-s,jor,c-j-k,kou,k-t1,k-t2,j-s}
for unbounded ones). This area of interest still plays
a vital role in operator theory.

The first characterization of bounded subnormal
operators was given by Halmos himself. It was
successively simplified by Bram (cf.\ \cite{bra}),
Embry (cf.\ \cite{emb}) and Lambert (see \cite{lam};
see also \cite[Theorem 7]{StSz2} where the assumption
of injectivity was removed). The Lambert
characterization states that a bounded Hilbert space
operator is subnormal if and only if it generates
Stieltjes moment sequences (see Section \ref{n&t} for
definitions). It turns out that this characterization
also works for unbounded operators which have
sufficiently many analytic vectors (cf.\ \cite[Theorem
7]{StSz2}). However, it is no longer true for
arbitrary unbounded operators (cf.\ \cite[Section
3.2]{b-j-j-sA}). Recall that subnormal operators with
dense set of $C^{\infty}$-vectors always generate
Stieltjes moment sequences (see \cite[Proposition
3.2.1]{b-j-j-sC}). It is also worth pointing out that
subnormal composition operators in $L^2$-spaces, as
opposed to abstract subnormal operators, are always
injective (see \cite[Corollary 6.3]{b-j-j-sC}). Hence,
there arises the question whether or not composition
operators in $L^2$-spaces generating Stieltjes moment
sequences are injective (see Problem \ref{open2}).

In a recent paper \cite{b-j-j-sS}, we have developed a
completely new, even in the bounded case, approach to
studying subnormality of composition operators (in
$L^2$-spaces over $\sigma$-finite measure spaces)
which involves measurable families of probability
measures satisfying the so-called consistency
condition. This approach provides a criterion (read:
sufficient condition) for subnormality of composition
operators, which does not refer to the density of
domains of powers. The corresponding technique for
weighted shifts on directed trees worked out in
\cite{b-d-j-s} (see also \cite{b-j-j-sW}) enabled us
to construct an unexpected example of a subnormal
composition operator whose square has trivial domain
(cf.\ \cite{b-j-j-sSq}).

As shown in \cite[Example 4.2.1]{j-j-s0}, there are
unbounded injective operators generating Stieltjes
moment sequences which are not even hyponormal, and
thus not subnormal. In fact, it was proved there that
if $\tcal$ is a leafless directed tree which has
exactly one branching vertex and if the branching
vertex itself has {\em infinite}
valency\footnote{\;The valency of a vertex $v$ is
understood as the number of outgoing edges at $v$.},
then there exists a non-hyponormal (injective)
weighted shift on $\tcal$ with nonzero weights
generating Stieltjes moment sequences. Up to
isomorphism, there is only one rootless directed tree
of this kind, denoted in \cite[p.\ 67]{j-j-s} by
$\tcal_{\infty,\infty}$. A weighted shift on
$\tcal_{\infty,\infty}$ with nonzero weights is
unitarily equivalent to an injective composition
operator in an $L^2$-space over a discrete measure
space (see \cite[Lemma 4.3.1]{j-j-s0} and
\cite[Theorem 3.2.1]{j-j-s}). Since the directed graph
induced by the symbol of such a composition operator
coincides with $\tcal_{\infty,\infty}$ (see Section
\ref{sect-gr} for the definition), it is {\em not
locally finite}. This raises the question as to
whether there exists a non-hyponormal injective
composition operator over\footnote{\;A composition
operator $C$ is {\em over} a directed graph $\gcal$ if
$\gcal$ is induced by the symbol of $C$.} a {\em
locally finite} connected directed graph generating
Stieltjes moment sequences, and, if this is the case,
how {\em simple} such a directed graph can be, where
simplicity is understood with respect to local valency
(cf.\ Remark \ref{val1}). The present paper addresses
both of these questions. Taking into account the
simplicity leads to considering directed graphs
induced by self-maps whose vertices, all but one, say
$\omega$, have valency one, and the valency of
$\omega$ is greater than or equal to $1$. Such
directed graphs are described in Theorem
\ref{golfclub} and Remark \ref{val1}. In view of part
1) of Remark \ref{val1}, the situation in which the
valency of $\omega$ is equal to one is excluded by an
unbounded variant of Herrero's
characterization\footnote{\;The Herrero result (see
\cite{Herr}; see also \cite{cu-fi}) is a bilateral
analogue of the Berger-Gellar-Wallen characterization
of bounded subnormal injective unilateral weighted
shifts (cf.\ \cite{hal2,g-w}).} of subnormal injective
bilateral weighted shifts (see \cite[Theorem
5]{StSz2}; see also \cite[Theorem 3.2]{b-j-j-sB} for a
recent approach). If the valency of $\omega$ is
strictly greater than one, then, by Theorem
\ref{golfclub}, we have two cases. The first, which is
described in Theorem \ref{golfclub}\mbox{(ii-b)},
reduces to the directed tree $\tcal_{\infty,\infty}$,
the case studied in \cite{j-j-s0}. Unfortunately, the
method invented in \cite{j-j-s0} does not give any
hope of answering our questions. In the second case,
which is described in Theorem
\ref{golfclub}\mbox{(ii-a)}, the directed graph under
consideration has exactly one circuit of length
$\kappa+1$ starting at $\omega$ and $\eta$ branches of
infinite length attached to $\omega$, where $\eta \in
\{1, 2, 3, \ldots\} \cup \{\infty\}$ and $\kappa \in
\{0,1,2, \ldots\}$ (see Figure $2$ with
$\omega=x_{\kappa}$); denote it by
$\gcal_{\eta,\kappa}$. The culminating result of the
present paper, Theorem \ref{czwarte}, shows that there
exists a non-hyponormal injective composition operator
over the locally finite directed graph $\gcal_{2,0}$
generating Stieltjes moment sequences. This answers
our first question in the affirmative. Regarding
simplicity, the only simpler directed graph which
potentially may admit a composition operator with the
above-mentioned properties is $\gcal_{1,0}$ (the
subnormality over $\gcal_{1,0}$ was studied in
\cite[Section 3.4]{b-j-j-sS}). However, so far this
particular case remains unsolved because composition
operators over $\gcal_{1,0}$ obtained by our method
are automatically subnormal (cf.\ Theorem
\ref{drugie1}(iv)).

A large part of the present paper is devoted to the
study of subnormality of composition operators over
the directed graph $\gcal_{\eta,\kappa}$. They all
have the same symbol $\phi_{\eta,\kappa}$ whereas
masses attached to vertices that define the underlying
$L^2$-space are subject to changes. Since the only
known criterion\footnote{\;General criteria in
\cite{bis,foi,FHSz,Sz4} seem hardly to be applicable
to composition~ operators.} for subnormality of
unbounded composition operators relies on the
consistency condition \eqref{cc} (cf.\ \cite[Theorem
9]{b-j-j-sS}), we first characterize families of Borel
probability measures (on the positive half-line)
indexed by the vertices of $\gcal_{\eta,\kappa}$ which
satisfy \eqref{cc} (cf.\ Theorem \ref{cc1}). This
enables us to model all such families via collections
of measures indexed by the set $\{x_{i,1}\colon i\in
J_{\eta}\}$ which satisfy some natural conditions
(cf.\ Procedure \ref{proceed}), where $\{x_{i,1}\colon
i\in J_{\eta}\}$ are ends of edges outgoing from
$\omega=x_{\kappa}$ not lying on the circuit (see
Figure $2$) and $J_{\eta}$ is the set of all positive
integers less than or equal to $\eta$. The end $x_0$
of the edge that outgoes from $x_{\kappa}$ and lies on
the circuit also plays an important role in our
considerations. Namely, assuming both that the
Radon-Nikodym derivatives $\{\hfin{n}\}_{n=0}^\infty$
(see Section \ref{sec3.1}) calculated at $x_0$ and
$x_{i,1}$, $i\in J_\eta$, form Stieltjes moment
sequences and that appropriate sequences coming from
$\{\hfin{n}(x_0)\}_{n=0}^\infty$ are S-determinate, we
show that the corresponding composition operator over
$\gcal_{\eta,\kappa}$ is subnormal (cf.\ Theorem
\ref{suff}). The case of $\gcal_{1,\kappa}$ does not
require any determinacy assumption and may be written
purely in terms of the Hankel matrices
$[\hfin{i+j}(x_0)]_{i,j=0}^{\infty}$ and
$[\hfin{i+j+1}(x_0)]_{i,j=0}^{\infty}$ (cf.\
Proposition \ref{eta1}). The proofs of Theorem
\ref{suff} and Proposition \ref{eta1} rely on
constructing families of measures satisfying
\eqref{cc}. These two results are in the spirit of
Lambert's characterization of subnormality of bounded
composition operators (cf.\ \cite{lam1}) which is no
longer true for unbounded operators (cf.\
\cite[Theorem 4.3.3]{j-j-s0} and \cite[Section
11]{b-j-j-sC}). The case of bounded composition
operators over $\gcal_{\eta,\kappa}$, which is also
covered by Lambert's criterion, follows easily from
Theorem \ref{suff} (cf.\ Proposition \ref{wn0gr}). The
optimality of the assumptions of Propositions
\ref{eta1} and \ref{wn0gr} is illustrated by Examples
\ref{final} and \ref{prz2}.

It follows from \cite[Theorems 9 and 17]{b-j-j-sS}
(see also Theorem \ref{glowne}) that under the
assumption that $\hfin{n}$ takes finite values for all
positive integers $n$, any family of Borel probability
measures satisfying \eqref{cc} consists of
representing measures of Stieltjes moment sequences
$\{\hfin{n}(x)\}_{n=0}^\infty$, where $x$ varies over
the vertices of $\gcal_{\eta,\kappa}$. In Section
\ref{s4.4} we discuss the question of extending a
given family $\{P(x_{i,1},\cdot)\}_{i \in J_{\eta}}$
of Borel probability measures to a wider one (indexed
by $\gcal_{\eta,\kappa}$) satisfying the consistency
condition \eqref{cc}. According to Theorem \ref{main},
such extension exists if and only if for every $i \in
J_{\eta}$, $\{\hfin{n}(x_{i,1})\}_{n=0}^\infty$ is a
Stieltjes moment sequence represented by a measure
$P(x_{i,1},\cdot)$ satisfying the conditions
\mbox{(i-b)}, \mbox{(i-c)} and \mbox{(i-d)} of this
theorem. The condition \mbox{(i-b)} refers to moments
of the measures $P(x_{i,1},\cdot)$, $i \in J_{\eta}$.
The remaining two are of different nature, namely
\mbox{(i-c)} is a system of $\kappa$ equations (the
case of $\kappa=0$ is not excluded), while
\mbox{(i-d)} is a single inequality. In Theorem
\ref{6.2} we introduce the condition
\mbox{(i-d$^{\prime}$)} which is a weaker version of
\mbox{(i-d)}. This turns out to be the key idea that
leads to constructing exotic examples. Assuming the
S-determinacy of the sequence
$\{\hfin{n}(x_{\kappa})+c\}_{n=0}^\infty$ for any
$c\in (0,\infty)$, it is proved in Theorem \ref{6.2}
that the conditions \mbox{(i-d)} and
\mbox{(i-d$^{\prime}$)} are equivalent (provided the
remaining ones \mbox{(i-a)}, \mbox{(i-b)} and
\mbox{(i-c)} are satisfied). However, this is no
longer true if the S-determinacy assumption is
dropped. We show this by using Procedure
\ref{proceed2} that heavily depends on the existence
of a pair of N-extremal measures satisfying some
constraints (cf.\ Lemma \ref{pierwsze}). The task of
finding such a pair is challenging. It is realized by
transforming via special homotheties the Krein and the
Friedrichs measures (which are particular instances of
N-extremal measures). The crucial properties of these
transformations are described in Lemma \ref{epslem}.
The proof of the existence of the gap between
\mbox{(i-d)} and \mbox{(i-d$^{\prime}$)} is brought to
completion in Theorems \ref{drugie} (the case of $\eta
\Ge 2$) and \ref{drugie1} (the case of $\eta=1$).
Adapting the above technique, we show in Theorem
\ref{czwarte} that for any integer $\eta \Ge 2$, there
exists a non-hyponormal injective composition operator
over $\gcal_{\eta,0}$ which generates Stieltjes moment
sequences. The case of $\eta=\infty$ is treated in
Theorem \ref{trzecie}. The parallel question of
determinacy of moment sequences
$\{\hfin{n}(x)\}_{n=0}^\infty$, $x \in \{x_{\kappa}\}
\cup \{x_{i,j}\colon i \in J_{\eta}, \, j \in \nbb\}$,
is studied in Section \ref{sec5} by using the index of
H-determinacy introduced by Berg and Duran in
\cite{ber-dur}.

As noted above, the proofs of the main results of the
present paper (Theorems \ref{drugie}, \ref{drugie1},
\ref{trzecie} and \ref{czwarte}) essentially depend on
subtle properties of N-extremal measures. The question
of determinacy of moment sequences is of considerable
importance in our study as well. Therefore, for the
sake of completeness, we collect in Section \ref{Sec3}
basic concepts of the classical theory of moments and
include some new results in this field. Using
\cite[Theorem 3.6]{ber-dur}, we show that a measure
which comes from an N-extremal measure by removing an
infinite number of its atoms has infinite index of
H-determinacy (cf.\ Theorem \ref{b-d}). The Carleman
condition, which always guarantees the H-determinacy
of Stieltjes moment sequences, is investigated in
Section \ref{Sec2.5}. The process of transforming
moment sequences and their representing measures,
including N-extremal ones, via homotheties is
described in Section \ref{Sec2.3} (the particular case
of transformations induced by translations has already
been studied via different approaches in
\cite{Ped,sim}). Particular attention is paid to
transforming the Krein and the Friedrichs measures
(cf.\ Theorem \ref{taso}). As a consequence, a new way
of parametrizing N-extremal measures of
H-indeterminate Stieltjes moment sequences is invented
(cf.\ Theorem \ref{tinfsup}) and an unexpected
trichotomy property of N-extremal measures of
H-indeterminate Hamburger moment sequences is proven
(cf.\ Theorem \ref{trichotomy}). The N-extremal
measures used in the proofs of Theorems \ref{drugie},
\ref{drugie1} and \ref{trzecie} are derived from the
Krein and the Friedrichs measures of an
S-indeterminate Stieltjes moment sequence, first by
scaling them and then by transforming them via
carefully chosen homotheties (cf.\ Lemma
\ref{epslem}). As for the proof of Theorem
\ref{czwarte}, the above method requires the usage of
the Krein and the Friedrichs measures coming from
shifted Al-Salam-Carlitz \mbox{$q$-polynomials} (or,
alternatively, from a quartic birth and death process,
see Remark \ref{piate}). The existence, determinacy
and explicit form of orthogonalizing measures for
Al-Salam-Carlitz \mbox{$q$-polynomials}
$\{V^{(a)}_{n}(x;q)\}_{n=0}^{\infty}$ are discussed in
Section \ref{SecASC}. It is worth mentioning that
explicit examples of N-extremal measures such as those
used in the present paper are to the best of our
knowledge very rare (see, e.g.,
\cite{Ism-Mas,Ism-book}).

The necessary facts concerning composition operators
in $L^2$-spaces over discrete measure spaces,
including criteria for their hyponormality and
subnormality, are recapitulated in Section
\ref{sec3.1}. A variety of relations between
Radon-Nikodym derivatives $\{\hfin{n}\}_{n=0}^\infty$
calculated in different vertices of the directed graph
$\gcal_{\eta,\kappa}$ are established in Section
\ref{Sec3RN}.
   \subsection{Notation and terminology\label{n&t}}
Denote by $\cbb$, $\rbb$, $\rbb_+$, $\zbb$, $\zbb_+$
and $\nbb$ the sets of complex numbers, real numbers,
nonnegative real numbers, integers, nonnegative
integers and positive integers, respectively. Set
$\rbop=\rbb_+\cup\{\infty\}$ and $\nbb_2=\nbb\setminus
\{1\}$. Given $k \in \zbb_+ \cup \{\infty\}$, we write
$J_k = \{i \in \nbb\colon i \Le k\}$ (clearly $J_0 =
\emptyset$). The identity map on a set $X$ is denoted
by $\mathrm{id}_X$. We write $\card{X}$ for the
cardinality of a set $X$ and $\chi_{\varDelta}$ for
the characteristic function of a subset $\varDelta$ of
$X$. The symbol ``$\bigsqcup$'' denotes the disjoint
union of sets. A mapping from $X$ to $X$ is called a
{\em self-map} of $X$. The $\sigma$-algebra of all
Borel subsets of a topological space $X$ is denoted by
$\borel{X}$. All measures considered in this paper are
positive. Since any finite Borel measure $\nu$ on
$\rbb$ is automatically regular (cf.\ \cite[Theorem
2.18]{Rud}), we can consider its closed support; we
denote it by $\supp{\nu}$. Given $t\in \rbb$, we write
$\delta_t$ for the Borel probability measure on $\rbb$
such that $\supp{\delta_t} = \{t\}$. In this paper we
will use the notation $\int_a^b$ and $\int_a^{\infty}$
in place of $\int_{[a,b]}$ and $\int_{[a,\infty)}$
respectively ($a,b\in \rbb$). We also use the
convention that $0^0=1$. The ring of all complex
polynomials in one real variable $t$ (which in the
context of $L^2$-spaces are regarded as equivalence
classes) is denoted by $\cbb[t]$.

Let $A$ be an operator in a complex Hilbert space
$\hh$ (all operators considered in this paper are
linear). Denote by $\dz{A}$ the domain of $A$. If $A$
is closable, then the closure of $A$ is denoted by
$\bar A$. Set $\dzn{A} = \bigcap_{n=0}^\infty
\dz{A^n}$ with $A^0=I$, where $I$ is the identity
operator on $\hh$. We say that $A$ is {\em positive}
if $\is{Af}{f} \Ge 0$ for all $f\in \dz{A}$. $A$ is
said to be {\em normal} if it is densely defined,
$\dz{A}=\dz{A^*}$ and $\|A^*f\| = \|Af\|$ for all
$f\in \dz{A}$ (or, equivalently, if and only if $A$ is
closed, densely defined and $A^*A=AA^*$, see
\cite[Proposition, page 125]{Weid}). $A$ is called
{\em subnormal} if $A$ is densely defined and there
exists a normal operator $N$ in a complex Hilbert
space $\kk$ with $\hh\subseteq \kk$ (isometric
embedding) such that $\dz{A} \subseteq \dz{N}$ and $Af
= Nf$ for all $f \in \dz{A}$. $A$ is said to be {\em
hyponormal} if it is densely defined, $\dz{A}
\subseteq \dz{A^*}$ and $\|A^* f\| \Le \|Af\|$ for all
$f \in \dz{A}$. Following \cite{j-j-s0}, we say that
$A$ {\em generates Stieltjes moment sequences} if
$\dzn{A}$ is dense in $\hh$ and $\{\|A^n
f\|^2\}_{n=0}^\infty$ is a Stieltjes moment sequence
for every $f \in \dzn{A}$ (see Section \ref{secbc}
below for the definition and basic properties of
Stieltjes moment sequences). It is known that if $A$
is subnormal and $\dzn{A}$ is dense in $\hh$, then $A$
generates Stieltjes moment sequences (cf.\
\cite[Proposition 3.2.1]{b-j-j-sA}). However, the
reverse implication is not true in general (see
\cite{sto-a}; see also \cite{j-j-s0}).

In what follows $\ogr{\hh}$ stands for the
$C^*$-algebra of all bounded operators in $\hh$ whose
domains are equal to $\hh$.
   \section{{\bf Determinacy in moment problems}}
\label{Sec3}
   \subsection{Basic concepts\label{secbc}}
Denote by $\msc$ the set of all Borel measures $\nu$
on $\rbb$ such that $\int_{\rbb} |t|^n \D\nu(t) <
\infty$ for all $n\in \zbb_+$. Set $\msc^+ = \{\nu \in
\msc\colon \supp{\nu} \subseteq \rbb_+\}$. A sequence
$\gammab = \{\gamma_n\}_{n=0}^\infty \subseteq \rbb$
is said to be a {\em Hamburger} (resp.\ {\em
Stieltjes}) {\em moment sequence} if there exists $\nu
\in \msc$ (resp.\ $\nu \in \msc^+$) such that
   \begin{align*}
\gamma_n = \int_{\rbb} t^n \D \nu(t), \quad n \in
\zbb_+;
   \end{align*}
the set of all such measures, called {\em
H-representing} (resp.\ {\em S-representing}) measures
of $\gammab$, is denoted by $\msc(\gammab)$ (resp.\
$\msc^+(\gammab)$). A Hamburger (resp.\ Stieltjes)
moment sequence $\gammab$ is said to be {\em
H-determinate} (resp.\ {\em S-determinate}) if
$\card{\msc(\gammab)}=1$ (resp.\
$\card{\msc^+(\gammab)} = 1$); otherwise, we call it
{\em H-indeterminate} (resp.\ {\em S-indeterminate}).
We say that a measure $\nu \in \msc$ (resp.\ $\nu \in
\msc^+$) is H-determinate (resp.\ S-determinate) if
the sequence $\{\int_{\rbb} t^n
\D\nu(t)\}_{n=0}^\infty$ is H-determinate (resp.\
S-determinate). Similarly, we define H-indeterminacy
and S-indeterminacy of measures. Clearly, an
S-indeterminate Stieltjes moment sequence is
H-indeterminate. It is well-known that a Hamburger
moment sequence which has a compactly supported
H-representing measure is H-determinate (cf.\
\cite{fug}). Note that H-determinacy and S-determinacy
coincide for Stieltjes moment sequences having
S-representing measures vanishing on $\{0\}$ (see
\cite[Corollary, p.\ 481]{chi}; see also \cite[Lemma
2.2.5]{j-j-s0}).

   Let $\gammab=\{\gamma_n\}_{n=0}^\infty$ be a
Hamburger moment sequence. A measure $\nu \in
\msc(\gammab)$ is called an {\em N-extremal}
measure of $\gammab$ if $\gammab$ is
H-indeterminate and $\cbb[t]$ is dense in
$L^2(\nu)$. We say that $\nu \in \msc$ is an
N-extremal measure if $\nu$ is an N-extremal
measure of the Hamburger moment sequence
$\{\int_{\rbb} t^n \D \nu(t)\}_{n=0}^{\infty}$.
Denote by $\msc_e(\gammab)$ the set of all
N-extremal measures of $\gammab$ and put
$\msc_e^+(\gammab) = \msc_e(\gammab) \cap \msc^+$.

Note that if $\gammab=\{\gamma_n\}_{n=0}^\infty$ is an
H-indeterminate Hamburger moment sequence, then
$\card{\msc_e(\gammab)} = \mathfrak c$ (cf.\
\cite[Theorem 4 and Remark, p.\ 96]{sim}). Moreover,
we have:
   \begin{lem}[\mbox{\cite[Theorems 5 and 4.11]{sim}}]
\label{proN} If $\gammab=\{\gamma_n\}_{n=0}^\infty$ is
an H-indetermi\-nate Hamburger moment sequence, then
$\rbb = \bigsqcup_{\nu \in \msc_e(\gammab)}
\supp{\nu}$, and the closed support of any $\nu \in
\msc_e(\gammab)$ is countably infinite with no
accumulation point in $\rbb$.
   \end{lem}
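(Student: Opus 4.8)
The plan is to derive the lemma from the Nevanlinna parametrization of the solutions of the indeterminate Hamburger moment problem, identifying $\msc_e(\gammab)$ with the family of \emph{von Neumann solutions}. First I would fix the orthonormal polynomials $\{p_n\}_{n=0}^{\infty}$ of $\gammab$, together with the polynomials of the second kind, and form the four real entire functions $A,B,C,D$ of the associated Nevanlinna matrix. Two standard facts about them are needed: $A(z)D(z)-B(z)C(z)=1$ for every $z\in\cbb$, so that $B$ and $D$ have no common zero; and, for each $t\in\rbb$, the zeros of the entire function $B(\cdot)t-D(\cdot)$ are all real and simple (the case $t=\infty$ being that of the zeros of $B$ itself). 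For $t\in\rbb\cup\{\infty\}$ let $\mu_t\in\msc(\gammab)$ be the unique solution whose Cauchy transform is
\begin{align*}
\int_{\rbb}\frac{\D\mu_t(x)}{x-z}=-\,\frac{A(z)t-C(z)}{B(z)t-D(z)},\qquad z\in\cbb\setminus\rbb,
\end{align*}
with the evident meaning when $t=\infty$.

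Next I would invoke the classical description of these solutions, which is precisely the content of \cite[Theorems 5 and 4.11]{sim}: the map $t\mapsto\mu_t$ is a bijection of $\rbb\cup\{\infty\}$ onto $\msc_e(\gammab)$, each $\mu_t$ is purely atomic, and
\begin{align*}
\supp{\mu_t}=Z_t:=\{\lambda\in\rbb\colon B(\lambda)t-D(\lambda)=0\},
\end{align*}
with mass $\big(\sum_{n=0}^{\infty}|p_n(\lambda)|^2\big)^{-1}$ carried at each point $\lambda\in Z_t$. The defining series converges for every real $\lambda$ exactly because $\gammab$ is H-indeterminate; hence each of these masses is finite and strictly positive, so $\supp{\mu_t}$ is indeed the whole zero set $Z_t$.

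Granting this, both assertions of the lemma follow by routine bookkeeping. Since $B(\cdot)t-D(\cdot)$ is a nonzero entire function, $Z_t$ has no accumulation point in $\cbb$, and a fortiori none in $\rbb$; moreover $Z_t$ is infinite, since every $\nu\in\msc(\gammab)$ satisfies $\int_{\rbb}|p|^2\,\D\nu>0$ for all nonzero $p\in\cbb[t]$ (the Hankel matrices of an H-indeterminate sequence being strictly positive definite), which precludes a finite support. This is the second statement. For the first, fix $\lambda\in\rbb$ and set $t(\lambda)=D(\lambda)/B(\lambda)$ if $B(\lambda)\neq0$, and $t(\lambda)=\infty$ if $B(\lambda)=0$ (in which case $D(\lambda)\neq0$ by the no-common-zero property); in either case $\lambda\in Z_{t(\lambda)}=\supp{\mu_{t(\lambda)}}$, so $\rbb=\bigcup_{t}\supp{\mu_t}$. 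Finally, if $\lambda\in\supp{\mu_t}\cap\supp{\mu_s}$ with $t\neq s$, then $B(\lambda)t-D(\lambda)=0=B(\lambda)s-D(\lambda)$ forces $B(\lambda)=D(\lambda)=0$, a contradiction; hence the supports of the $\mu_t$ are pairwise disjoint, and therefore $\rbb=\bigsqcup_{\nu\in\msc_e(\gammab)}\supp{\nu}$.

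The crux --- and the only genuinely non-elementary ingredient --- is the classical input used in the second paragraph: the bijection between the N-extremal measures and the von Neumann solutions $\mu_t$ and the exact location and sizes of their atoms. Reconstructing the underlying Nevanlinna--Riesz--Stone theory would be lengthy, so in the write-up I would simply cite the relevant theorems of \cite{sim} and let the elementary manipulations above do the rest.
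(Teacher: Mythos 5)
Your argument is correct, and it rests on exactly the same source the paper invokes: the paper offers no proof of Lemma \ref{proN} beyond citing \cite[Theorems 5 and 4.11]{sim}, and your write-up simply unpacks the content of those theorems (the bijection $t\mapsto\mu_t$ between $\rbb\cup\{\infty\}$ and $\msc_e(\gammab)$, with $\supp{\mu_t}$ equal to the zero set of $B(\cdot)t-D(\cdot)$) and carries out the routine bookkeeping — disjointness from $AD-BC=1$, covering of $\rbb$ via $t(\lambda)=D(\lambda)/B(\lambda)$, discreteness from the zero set of a nonzero entire function, and infinitude from strict positive definiteness of the Hankel matrices. So this is essentially the paper's (implicit) approach, presented in slightly more self-contained form.
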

Now we state the M. Riesz characterizations of
H-determinacy and N-extrem\-ality (see \cite[p.\
223]{M-R} or \cite[Theorem, p.\ 58]{fug}) and the
Berg-Thill characterization of S-determinacy (see
\cite[Theorem 3.8]{ber-thil1} or \cite[Proposition
1.3]{ber-thil2}).
   \begin{lem} \label{MRk}
{\em (i)} A measure $\nu\in \msc$ is H-determinate
$($resp.\ N-extremal\/$)$ if and only if $\cbb[t]$ is
dense in $L^2((1+t^2)\D \nu(t))$ $($resp.\ $\cbb[t]$
is dense in $L^2(\nu)$ and not dense in $L^2((1+t^2)\D
\nu(t))$$)$. {\em (ii)} A measure $\nu\in \msc^+$ is
S-determinate if and only if $\cbb[t]$ is dense in
both $L^2((1+t)\D \nu(t))$ and $L^2(t(1+t)\D \nu(t))$.
   \end{lem}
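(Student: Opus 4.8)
The plan is to read Lemma~\ref{MRk} as a synthesis of classical facts --- part~(i) is the M.~Riesz--Naimark theory of the Hamburger moment problem (\cite{M-R,fug}) and part~(ii) the Berg--Thill refinement for the Stieltjes problem (\cite{ber-thil1,ber-thil2}) --- so at the level of this paper the honest proof is a pointer to those sources; what follows is the argument I would reconstruct. The one elementary device that does the light work throughout is this: if $\mu,\nu\in\msc$ share the moment sequence $\gammab$, then $\|q\|_{L^2(\mu)}=\|q\|_{L^2(\nu)}$ for every $q\in\cbb[t]$ (for real $t$ the function $|q(t)|^2$ is again a polynomial, hence has the same integral against $\mu$ and $\nu$), so the identity map on $\cbb[t]$ is isometric from $L^2(\nu)$ to $L^2(\mu)$ and extends to an isometry of the respective polynomial closures, fixing $1$.

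For the ``if'' half of the H-determinacy statement in~(i) I would give a self-contained proof via Cauchy transforms. Assume $\cbb[t]$ is dense in $L^2((1+t^2)\D\nu)$ and take $\mu\in\msc(\gammab)$. Fix $z\in\cbb\setminus\rbb$. Since $(1+t^2)|t-z|^{-2}$ is bounded, $f_z:=(t-z)^{-1}$ lies in $L^2((1+t^2)\D\nu)$, so there are $p_n\in\cbb[t]$ with $p_n\to f_z$ in $L^2((1+t^2)\D\nu)$; as $|t-z|^2\le C(1+t^2)$ this gives $p_n\to f_z$ and $(t-z)p_n\to1$ in $L^2(\nu)$. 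Being Cauchy in $L^2(\nu)$, $(p_n)$ is Cauchy in $L^2(\mu)$, and, $1$ being a polynomial, $\|(t-z)p_n-1\|_{L^2(\mu)}=\|(t-z)p_n-1\|_{L^2(\nu)}\to0$; passing to a $\mu$-a.e.\ convergent subsequence identifies the $L^2(\mu)$-limit of $(p_n)$ as $f_z$. Therefore $\int f_z\,\D\mu=\lim\int p_n\,\D\mu=\lim\int p_n\,\D\nu=\int f_z\,\D\nu$, and as $z$ ranged over all non-real numbers the Stieltjes inversion formula forces $\mu=\nu$; thus $\gammab$ is H-determinate.

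For the converse and for the N-extremality statement I would pass to the operator picture: let $P$ be the closure of $\cbb[t]$ in $L^2(\nu)$ and $S$ the (non-negative, symmetric) operator of multiplication by the variable on $\cbb[t]\subseteq P$. A spectral-theoretic argument --- a self-adjoint extension of $\bar S$ in $P$ is cyclic at $1$ and its spectral measure lies in $\msc(\gammab)$, so determinacy forces $P=L^2(\nu)$ and uniqueness of that extension, while distinct extensions produce distinct representing measures --- shows that $\gammab$ is H-determinate iff $P=L^2(\nu)$ and $\bar S$ is self-adjoint. Given $P=L^2(\nu)$, $\bar S$ is self-adjoint precisely when $\cbb[t]$ is a core for the maximal multiplication operator on $L^2(\nu)$, whose domain with the graph norm is isometrically $L^2((1+t^2)\D\nu)$; so this is just ``$\cbb[t]$ dense in $L^2((1+t^2)\D\nu)$''. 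Adding the triviality that density of $\cbb[t]$ in $L^2((1+t^2)\D\nu)$ forces density in $L^2(\nu)$ (each contains the bounded compactly supported functions, dense in it), one gets both equivalences of~(i); the N-extremality statement is then purely formal, since by the first equivalence $\gammab$ is H-indeterminate iff $\cbb[t]$ is not dense in $L^2((1+t^2)\D\nu)$. I expect the von Neumann deficiency-index step --- producing a genuinely new representing measure from a vector merely orthogonal to $\cbb[t]$ in $L^2((1+t^2)\D\nu)$ --- to be the main obstacle; the naive remedy of perturbing $\nu$ to $(1+\varepsilon\,h(1+t^2))\,\D\nu$ fails because such $h$ need not be bounded, which is exactly why extension theory cannot be avoided.

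For~(ii) I would reduce to~(i). Under $t\mapsto t^2$ the $\rbb_+$-supported representing measures of $\gammab$ correspond to the symmetric representing measures of the symmetrized sequence $(\gamma_0,0,\gamma_1,0,\dots)$, so S-determinacy of $\nu$ means uniqueness of this symmetric solution. The non-negativity of $S$ makes its non-negative self-adjoint extensions the relevant ones, and these are governed by two obstructions: a Friedrichs-type one, whose form domain carries the norm of $L^2((1+t)\D\nu)$, and a Krein-type one which, after the rescaling by $t$ that corresponds to passing from $\nu$ to $t\,\D\nu$, carries the norm of $L^2(t(1+t)\D\nu)$; both must degenerate for the $\rbb_+$-solution to be unique, and this is precisely the stated double density. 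The one delicate point is the bookkeeping of supports inside $\rbb_+$ under $t\mapsto t^2$ together with the interplay of H- and S-determinacy for measures charging $\{0\}$, where the remark recorded just after Lemma~\ref{proN} (and the coincidence of H- and S-determinacy away from $\{0\}$ quoted earlier) is needed; alternatively one simply quotes \cite{ber-thil1,ber-thil2}.
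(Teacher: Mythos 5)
The paper does not prove this lemma at all: it is quoted as a classical result, with part (i) attributed to M.~Riesz \cite{M-R,fug} and part (ii) to Berg and Thill \cite{ber-thil1,ber-thil2}, so your opening assessment --- that at the level of this paper the honest proof is a pointer to those sources --- is exactly what the authors do. Your reconstruction is consistent with how those sources argue, and one piece of it is genuinely complete: the Cauchy-transform proof that density of $\cbb[t]$ in $L^2((1+t^2)\D \nu(t))$ implies H-determinacy is correct as written (the equality $\|q\|_{L^2(\mu)}=\|q\|_{L^2(\nu)}$ for polynomials and any two representing measures, the membership $(t-z)^{-1}\in L^2((1+t^2)\D \nu(t))$ for non-real $z$, and the a.e.-subsequence identification of the $L^2(\mu)$-limit are all in order, and finiteness of $\mu$ converts $L^2$- into $L^1$-convergence for the final limit of integrals). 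The remaining implications --- that H-determinacy forces density of $\cbb[t]$ in $L^2(\nu)$ together with essential self-adjointness of the multiplication operator (M.~Riesz's theorem), that a nontrivial deficiency vector produces a second representing measure, and the Krein/Friedrichs form-domain identifications underlying (ii) --- are only outlined, as you yourself flag; these are precisely the nontrivial content of the cited papers (see also \cite{sim} and \cite{Ped}), and a reader must go there rather than to your sketch for them. Nothing in your outline is wrong, but only the Cauchy-transform step is actually proved; for the purposes of this paper the correct move is the citation, which both you and the authors make.
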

The above enables us to formulate a comparison test
for determinacy.
   \begin{pro}[Comparison test] \label{krytnd}
Let $\rho$ and $\nu$ be Borel measures on $\rbb$
$($resp.\ $\rbb_+$$)$ such that $\nu \in \msc$
$($resp.\ $\nu \in \msc^+$$)$ and $\rho(\sigma) \Le M
\, \nu(\sigma)$ for every $\sigma \in \borel{\rbb}$
$($resp.\ $\sigma \in \borel{\rbb_+}$$)$ and for some
$M \in \rbb_+$. Then $\rho\in \msc$ $($resp.\ $\rho\in
\msc^+$$)$. Moreover, $\rho$ is H-determinate
$($resp.\ S-determinate$)$ whenever $\nu$ is.
   \end{pro}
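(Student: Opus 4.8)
The plan is to reduce everything to the density characterizations of H-determinacy, S-determinacy and N-extremality recalled in Lemma \ref{MRk}, after first checking that $\rho$ indeed lies in $\msc$ (resp.\ $\msc^+$). The domination $\rho(\sigma) \Le M \nu(\sigma)$ for all $\sigma \in \borel{\rbb}$ (resp.\ $\borel{\rbb_+}$) says exactly that $\rho$ is absolutely continuous with respect to $\nu$ with Radon--Nikodym derivative $g = \frac{\D\rho}{\D\nu}$ satisfying $0 \Le g \Le M$ $\nu$-a.e.; equivalently, $\int f \D\rho = \int f g \D\nu \Le M \int f \D\nu$ for every nonnegative Borel function $f$. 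Applying this with $f(t) = |t|^n$ gives $\int_{\rbb} |t|^n \D\rho(t) \Le M \int_{\rbb} |t|^n \D\nu(t) < \infty$ for all $n \in \zbb_+$, so $\rho \in \msc$; and since the hypothesis forces $\rho(\rbb \setminus \rbb_+) \Le M\,\nu(\rbb\setminus\rbb_+) = 0$ in the Stieltjes case, we get $\supp{\rho} \subseteq \rbb_+$, i.e.\ $\rho \in \msc^+$.

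Next I would set up the comparison of $L^2$-spaces. The key elementary observation is that for any nonnegative Borel weight $w$ on $\rbb$, the map $h \mapsto h$ (viewed between equivalence classes) from $L^2(w\,\D\nu)$ to $L^2(w\,\D\rho)$ is a bounded operator of norm at most $\sqrt{M}$: indeed $\int |h|^2 w \,\D\rho = \int |h|^2 w g \,\D\nu \Le M \int |h|^2 w \,\D\nu$. Consequently, if $\cbb[t]$ is dense in $L^2(w\,\D\nu)$, then its image under this bounded map, which is $\cbb[t]$ sitting inside $L^2(w\,\D\rho)$ together with everything in the closure, is dense in the range of the map; but more importantly, the bounded linear map sends the dense set $\cbb[t]$ of $L^2(w\,\D\nu)$ onto a set whose closure in $L^2(w\,\D\rho)$ contains the image of $\overline{\cbb[t]}^{\,L^2(w\,\D\nu)} = L^2(w\,\D\nu)$. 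To conclude density of $\cbb[t]$ in $L^2(w\,\D\rho)$ I would argue directly: given $f \in L^2(w\,\D\rho)$ and $\varepsilon > 0$, the function $f \cdot \chi_{\{g > 0\}}$ lies in $L^2(w g\,\D\nu) = L^2(w\,\D\rho)$ after the natural identification, hence is approximated by polynomials in the $\nu$-weighted norm, and by boundedness of the map also in the $\rho$-weighted norm; the part supported on $\{g = 0\}$ is $\rho$-null and hence irrelevant. (A cleaner route: since $\rho \ll \nu$, any $L^2(w\,\D\nu)$-Cauchy sequence of polynomials converging to $1 \in L^2(w\,\D\nu)$, and likewise any polynomial approximant of an arbitrary $f \in L^2(w\,\D\rho)$ along a $\nu$-dense net, stays Cauchy and convergent in $L^2(w\,\D\rho)$.)

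With this density-transfer lemma in hand, the determinacy statements follow immediately from Lemma \ref{MRk}. For the Hamburger case, suppose $\nu$ is H-determinate; by Lemma \ref{MRk}(i), $\cbb[t]$ is dense in $L^2((1+t^2)\D\nu(t))$, so by the transfer lemma (with $w(t) = 1+t^2$) it is dense in $L^2((1+t^2)\D\rho(t))$, and Lemma \ref{MRk}(i) again gives that $\rho$ is H-determinate. For the Stieltjes case, suppose $\nu \in \msc^+$ is S-determinate; by Lemma \ref{MRk}(ii), $\cbb[t]$ is dense in both $L^2((1+t)\D\nu(t))$ and $L^2(t(1+t)\D\nu(t))$, hence by the transfer lemma in both $L^2((1+t)\D\rho(t))$ and $L^2(t(1+t)\D\rho(t))$, so $\rho$ is S-determinate by Lemma \ref{MRk}(ii). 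The only mildly delicate point---and the step I would expect to take the most care---is the rigorous justification of the density transfer when $g$ vanishes on a set of positive $\nu$-measure, i.e.\ making sure that "density in $L^2(w\,\D\nu)$" genuinely pushes forward to "density in $L^2(w\,\D\rho)$" through a map that is merely bounded (not bounded below); this is handled by the observation that $L^2(w\,\D\rho)$ is canonically isometric to $L^2(w g\,\D\nu)$, on which polynomials restricted from $L^2(w\,\D\nu)$ are dense because multiplication by $\sqrt{g} \,(\le \sqrt M)$ is a contraction with dense range behavior as above.
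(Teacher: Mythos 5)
Your overall strategy is exactly the paper's: establish $\rho\in\msc$ (resp.\ $\msc^+$) from the domination, observe that the identity map $L^2(w\D\nu)\to L^2(w\D\rho)$ is bounded for the relevant weights $w$, transfer the density of $\cbb[t]$ across it, and invoke Lemma \ref{MRk}. The paper compresses the density transfer into the single assertion that this map is ``a well-defined bounded operator with dense range,'' citing Rudin's Theorem 3.13 for the dense-range part.

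The justification you give for that transfer, however, has a genuine gap. You write that $f\cdot\chi_{\{g>0\}}$ lies in $L^2(wg\,\D\nu)$ and ``hence is approximated by polynomials in the $\nu$-weighted norm.'' This does not follow: since $0\Le g\Le M$, the containment between the weighted spaces is $L^2(w\D\nu)\subseteq L^2(wg\,\D\nu)$, not the reverse, so an element of $L^2(w\D\rho)\cong L^2(wg\,\D\nu)$ need not belong to $L^2(w\D\nu)$ at all, and there is no $\nu$-weighted polynomial approximation of it to speak of. (Concretely, if $\nu$ has atoms at points $t_n$ with $g(t_n)=n^{-2}$, a function with $\sum_n|f(t_n)|^2\nu(\{t_n\})n^{-2}<\infty$ but $\sum_n|f(t_n)|^2\nu(\{t_n\})=\infty$ lies in $L^2(w\D\rho)\setminus L^2(w\D\nu)$.) Your closing parenthetical appeals to ``dense range behavior,'' but that is precisely the assertion that needs proof. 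Two standard repairs: (a) if $f\in L^2(w\D\rho)$ is orthogonal to the range of the inclusion, then $fg\in L^2(w\D\nu)$ because $\int|f|^2g^2w\,\D\nu\Le M\int|f|^2gw\,\D\nu<\infty$, and testing orthogonality against $h=fg$ gives $\int|f|^2g^2w\,\D\nu=0$, whence $f=0$ in $L^2(w\D\rho)$; or (b) interpose a class of functions dense in $L^2(w\D\rho)$ and contained in $L^2(w\D\nu)$ --- e.g.\ continuous functions of compact support, which is exactly what the citation of Rudin's Theorem 3.13 supplies --- and approximate in two steps. With either repair your argument closes and coincides with the paper's.
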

   \begin{proof}
We deal only with the case of H-determinacy; the other
case can be treated similarly. The standard
measure-theoretic argument implies that $\rho\in
\msc$. Since $\rho \Le M \, \nu$, we deduce from
\cite[Theorem 3.13]{Rud} that $L^2((1+t^2)\D\nu(t))
\ni f \mapsto f \in L^2((1+t^2)\D\rho(t))$ is a
well-defined bounded operator with dense range. Hence,
applying Lemma \ref{MRk} completes the proof.
   \end{proof}
   \begin{cor}\label{snieg}
Suppose that $\{\gamma_{1,n}\}_{n=0}^{\infty}$ and
$\{\gamma_{2,n}\}_{n=0}^{\infty}$ are Hamburger
$($resp.\ Stieltjes$)$ moment sequences such that
$\{\gamma_{1,n}+\gamma_{2,n}\}_{n=0}^{\infty}$ is
H-determinate $($resp.\ S-determinate$)$. Then both
$\{\gamma_{1,n}\}_{n=0}^{\infty}$ and
$\{\gamma_{2,n}\}_{n=0}^{\infty}$ are H-determinate
$($resp.\ S-determinate$)$.
   \end{cor}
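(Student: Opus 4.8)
The plan is to reduce everything to the Comparison test, Proposition~\ref{krytnd}. First I would fix, for $i=1,2$, an H-representing measure $\nu_i \in \msc$ (resp.\ an S-representing measure $\nu_i \in \msc^+$) of $\{\gamma_{i,n}\}_{n=0}^{\infty}$; such measures exist by the very definition of a Hamburger (resp.\ Stieltjes) moment sequence. Then $\nu_1 + \nu_2$ lies in $\msc$ (resp.\ $\msc^+$) and satisfies $\gamma_{1,n}+\gamma_{2,n} = \int_{\rbb} t^n \D(\nu_1+\nu_2)(t)$ for all $n \in \zbb_+$, so it is an H-representing (resp.\ S-representing) measure of the sequence $\{\gamma_{1,n}+\gamma_{2,n}\}_{n=0}^{\infty}$. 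Since the latter sequence is assumed to be H-determinate (resp.\ S-determinate), $\nu_1 + \nu_2$ is its unique representing measure; in particular, $\nu_1+\nu_2$ is an H-determinate (resp.\ S-determinate) \emph{measure} in the sense of Section~\ref{secbc}.

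Next I would observe that $\nu_i(\sigma) \Le (\nu_1+\nu_2)(\sigma)$ for every $\sigma \in \borel{\rbb}$ (resp.\ $\sigma \in \borel{\rbb_+}$), i.e.\ the hypothesis of Proposition~\ref{krytnd} holds with $\rho = \nu_i$, $\nu = \nu_1+\nu_2$ and $M=1$. Applying that proposition, $\nu_i$ is H-determinate (resp.\ S-determinate) for $i=1,2$. Finally, since $\nu_i$ represents $\{\gamma_{i,n}\}_{n=0}^{\infty}$, H-determinacy (resp.\ S-determinacy) of the measure $\nu_i$ is by definition the same as H-determinacy (resp.\ S-determinacy) of the sequence $\{\gamma_{i,n}\}_{n=0}^{\infty}$, which is the desired conclusion.

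There is essentially no hard step here: the entire content is already packaged in Proposition~\ref{krytnd}. The only point that deserves a moment's care is the bookkeeping between ``determinate sequence'' and ``determinate measure'': one uses that $\{\gamma_{1,n}+\gamma_{2,n}\}_{n=0}^{\infty}$ admits at least one representing measure, namely $\nu_1+\nu_2$, in order to pass from determinacy of the sum sequence to determinacy of that particular measure, and symmetrically at the very end to pass from determinacy of $\nu_i$ back to determinacy of $\{\gamma_{i,n}\}_{n=0}^{\infty}$.
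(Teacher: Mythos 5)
Your proof is correct and is exactly the argument the paper intends: Corollary \ref{snieg} is stated without proof immediately after Proposition \ref{krytnd} precisely because it follows by applying the comparison test to $\rho=\nu_i$ and $\nu=\nu_1+\nu_2$ with $M=1$, as you do. The bookkeeping between determinacy of sequences and of measures is handled correctly via the definitions in Section \ref{secbc}.
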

   \begin{rem}  \label{snieg2}
It follows from Corollary \ref{snieg} that if
$\{\gamma_n\}_{n=0}^{\infty}$ is a Stieltjes moment
sequence such that $\{\gamma_n+c\}_{n=0}^{\infty}$ is
S-determinate for some $c\in (0,\infty)$, then
$\{\gamma_n\}_{n=0}^{\infty}$ is S-determinate. This
may suggest that if $\{\gamma_n\}_{n=0}^\infty$ is an
S-determi\-nate Stieltjes moment sequence, then so is
$\{\gamma_n + c\}_{n=0}^\infty$ for some $c \in
(0,\infty)$. However, in general, this is not true. In
fact, one can show more; namely there exists an
H-determinate Stieltjes moment sequence
$\{\gamma_n\}_{n=0}^{\infty}$ such that
$\{\gamma_n+c\}_{n=0}^{\infty}$ is S-indeterminate for
all $c\in (0,\infty)$. Indeed, as noticed by C. Berg
(private communication), if $\nu$ is an N-extremal
measure of an S-indeterminate Stieltjes moment
sequence such that $\inf\supp{\nu} = 1$ (e.g., the
orthogonalizing measure $\beta^{(a;q)}$ for the
Al-Salam-Carlitz \mbox{$q$-polynomials}
$\{V^{(a)}_{n}(x;q)\}_{n=0}^{\infty}$ with $0 < q < a
\Le 1$ is N-extremal and its closed support is equal
to $\{q^{-n}\}_{n=0}^\infty$, see Section
\ref{SecASC}), then the measure $\mu:=\nu - \nu(\{1\})
\delta_{1} \in \msc^+$ is H-determinate and for every
$c \in (0, \infty)$, the measure $\mu + c \delta_{1}$
is N-extremal (see \cite[Theorem 3.6 and Lemma
3.7]{ber-dur}) and consequently, since $\inf\supp{\mu
+ c \delta_{1}} > 0$, it is S-indeterminate (see
\cite[Corollary, p.\ 481]{chi} or \cite[Lemma
2.2.5]{j-j-s0}).
   \end{rem}
The following lemma will be used in Section
\ref{sect4.3}.
   \begin{lem} \label{kontr}
If $\{\gamma_n\}_{n=0}^\infty \subseteq \rbb_+$, then
the following conditions are equivalent{\em :}
   \begin{enumerate}
   \item[(i)] $\{\gamma_n\}_{n=0}^\infty$ is a Stieltjes
moment sequence which has an S-representing measure
vanishing on $[0,1)$,
   \item[(ii)] $0 \Le \sum_{i,j=0}^n \gamma_{i+j} \lambda_i \bar
       \lambda_j
\Le \sum_{i,j=0}^n \gamma_{i+j+1} \lambda_i \bar
\lambda_j$ for all finite sequences
$\{\lambda_i\}_{i=0}^n$ of complex numbers,
   \item[(iii)] $\{\gamma_n\}_{n=0}^\infty$ is a Stieltjes moment
sequence and $\sum_{i,j=0}^n (\gamma_{i+j+1} -
\gamma_{i+j}) \lambda_i \bar \lambda_j \Ge 0$ for all
finite sequences $\{\lambda_i\}_{i=0}^n$ of complex
numbers.
   \end{enumerate}
   \end{lem}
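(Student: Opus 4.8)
The plan is to establish the three equivalences in Lemma \ref{kontr} by proving a cycle of implications, say (i) $\Rightarrow$ (ii) $\Rightarrow$ (iii) $\Rightarrow$ (i), with the measure-theoretic content concentrated in the passage from (iii) back to (i). First I would handle (i) $\Rightarrow$ (ii): if $\nu \in \msc^+$ is an S-representing measure of $\{\gamma_n\}$ with $\nu([0,1)) = 0$, then for any finite complex sequence $\{\lambda_i\}_{i=0}^n$, writing $p(t) = \sum_{i=0}^n \lambda_i t^i$, we have $\sum_{i,j=0}^n \gamma_{i+j} \lambda_i \bar\lambda_j = \int_{[1,\infty)} |p(t)|^2 \D\nu(t) \Ge 0$, and likewise $\sum_{i,j=0}^n \gamma_{i+j+1} \lambda_i \bar\lambda_j = \int_{[1,\infty)} t\, |p(t)|^2 \D\nu(t)$; since $t \Ge 1$ on $\supp{\nu}$, subtracting gives $\int_{[1,\infty)} (t-1)|p(t)|^2 \D\nu(t) \Ge 0$, which is exactly the claimed inequality chain. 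The implication (ii) $\Rightarrow$ (iii) is essentially bookkeeping: the first inequality in (ii) says the Hankel matrix $[\gamma_{i+j}]$ is positive semidefinite and the shifted Hankel matrix $[\gamma_{i+j+1}]$ is positive semidefinite, which by Hamburger's and Stieltjes's classical theorems means $\{\gamma_n\}$ is a Stieltjes moment sequence; the second part of (iii) is just the rearrangement of the second inequality in (ii).

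The substantive step is (iii) $\Rightarrow$ (i). Here I would argue as follows. Since $\{\gamma_n\}_{n=0}^\infty$ is a Stieltjes moment sequence, pick $\nu \in \msc^+(\{\gamma_n\})$. Consider the auxiliary functional on $\cbb[t]$ defined on the basis of monomials by $L(t^n) = \gamma_{n+1} - \gamma_n$ and extended linearly; the hypothesis $\sum_{i,j} (\gamma_{i+j+1} - \gamma_{i+j})\lambda_i\bar\lambda_j \Ge 0$ says precisely that $L(|p|^2) \Ge 0$ for all $p \in \cbb[t]$, i.e.\ $\{\gamma_{n+1} - \gamma_n\}_{n=0}^\infty$ is a positive semidefinite (Hamburger) sequence. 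One should also check it is a \emph{Stieltjes} sequence, i.e.\ that the shifted form $\sum_{i,j}(\gamma_{i+j+2} - \gamma_{i+j+1})\lambda_i\bar\lambda_j \Ge 0$ is nonnegative as well: this follows because on $\supp{\nu}$ the quantity $\int t(t-1)|p(t)|^2 \D\nu(t)$ need not be nonnegative a priori, so instead I would get it directly --- observe $t \mapsto t\,(\gamma_{n+1}-\gamma_n \text{ pattern})$, or more cleanly, note that the sequence $\delta_n := \gamma_{n+1}-\gamma_n$ satisfies: $[\delta_{i+j}] \succeq 0$ from (iii), and for the shifted matrix use that $\gamma$ itself is Stieltjes together with the identity relating $\delta$-Hankel matrices to $\gamma$-Hankel matrices. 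Granting that $\{\delta_n\}$ is a Stieltjes moment sequence, let $\rho \in \msc^+(\{\delta_n\})$. Then $\gamma_{n+1} - \gamma_n = \int_{\rbb_+} t^n \D\rho(t)$ for all $n$, so the measure $\mu$ defined by $\D\mu(t) = (t-1)^{-1}\D\rho(t)$ on $(1,\infty)$ --- wait, this needs care near $t=1$; rather, I would build the S-representing measure of $\{\gamma_n\}$ supported on $[1,\infty)$ by solving the difference relation. Specifically, one checks that $\int_{\rbb_+}(t-1) t^n \D\rho(t) = \delta_{n+1} - \delta_n$ type relations force consistency, and then the measure $\tilde\nu$ on $[0,\infty)$ with $\D\tilde\nu(t) = \D\rho(t)/(t-1)$ restricted to $\{t \ne 1\}$ plus an appropriate atom at $t=1$ is the desired S-representing measure of $\{\gamma_n\}$ vanishing on $[0,1)$; the nonnegativity of $t - 1$ on the support of a suitably chosen $\rho$ (one supported in $[1,\infty)$, which exists because $\{\delta_n\}$ being Stieltjes only guarantees support in $[0,\infty)$ --- so one must additionally push the support into $[1,\infty)$) is what makes $\tilde\nu$ a genuine positive measure.

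The main obstacle, then, is precisely this last construction: producing from the data in (iii) an S-representing measure whose support lies in $[1,\infty)$ rather than merely proving nonnegativity of various Hankel forms. The clean route is the substitution $s = t - 1$: define $\eta_n := \sum_{k=0}^n \binom{n}{k}(-1)^{n-k}\gamma_k = \int (t-1)^n \D\nu(t)$ for $\nu \in \msc^+(\{\gamma_n\})$, so that $\{\gamma_n\}$ has an S-representing measure vanishing on $[0,1)$ if and only if $\{\eta_n\}$ is a Stieltjes moment sequence (representing measure supported in $[0,\infty)$, i.e.\ $t-1 \Ge 0$). Thus the whole lemma reduces to showing that (iii) is equivalent to: both $[\eta_{i+j}] \succeq 0$ and $[\eta_{i+j+1}] \succeq 0$. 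The first is automatic since $\{\gamma_n\}$ is Hamburger (indeed Stieltjes); for the second, the key computation is that the shifted Hankel matrix of $\{\eta_n\}$ equals $[\eta_{i+j+1}] = [\int (t-1)^{i+j+1}\D\nu] = [\int (t-1)^{i+j} \D((t-1)\nu)]$, and the quadratic form $\sum_{i,j}\eta_{i+j+1}\lambda_i\bar\lambda_j = \int (t-1)|q(t-1)|^2 \D\nu(t)$ where $q$ is the polynomial with coefficients $\lambda_i$; comparing with $\sum_{i,j}(\gamma_{i+j+1} - \gamma_{i+j})\lambda_i'\bar\lambda_j'$ after re-expanding in the $t$-basis shows these positivity conditions match up. I would expect that once the binomial substitution $t \mapsto t-1$ is in place, everything collapses to the classical Hamburger/Stieltjes moment criteria applied to $\{\eta_n\}$, and the only real work is verifying that the condition $\sum_{i,j}(\gamma_{i+j+1}-\gamma_{i+j})\lambda_i\bar\lambda_j \Ge 0$ translates exactly into the positive semidefiniteness of the shifted Hankel matrix of the shifted sequence $\{\eta_n\}$.
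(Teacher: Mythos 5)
Your final argument (the ``clean route'' via the substitution $t\mapsto t-1$) is correct, but it is genuinely different from the paper's. The paper disposes of the key implication (ii)$\Rightarrow$(i) by taking the Riesz functional $\varLambda(t^n)=\gamma_n$ and invoking the Markov--Luk\'acs representation: every polynomial $p$ nonnegative on $[1,\infty)$ can be written as $p=(t-1)|q_1|^2+|q_2|^2$, so (ii) forces $\varLambda(p)\Ge 0$ and the Riesz--Haviland theorem produces a representing measure supported in $[1,\infty)$. You instead pass to $\eta_n=\sum_{k}\binom{n}{k}(-1)^{n-k}\gamma_k=\varLambda((t-1)^n)$, observe that the two Hankel forms of $\{\eta_n\}$ are exactly the forms $\sum\gamma_{i+j}\lambda_i\bar\lambda_j$ and $\sum(\gamma_{i+j+1}-\gamma_{i+j})\lambda_i\bar\lambda_j$ re-expanded through the degree-preserving bijection $q\mapsto q(\cdot-1)$ of $\cbb[t]$, and then apply the classical two-Hankel-matrix solvability criterion for the Stieltjes problem (the same \cite[Theorem 6.2.5]{b-c-r} the paper cites for (ii)$\Leftrightarrow$(iii)) to $\{\eta_n\}$; pushing the resulting measure forward under $s\mapsto s+1$ gives the representing measure on $[1,\infty)$. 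Your route avoids Riesz--Haviland and the sum-of-squares decomposition at the price of the binomial bookkeeping, and it is in fact closer in spirit to the paper's own homothety machinery $T_{\vartheta,a}$ from Section \ref{Sec2.3}. Two small caveats: your intermediate attempt via $\delta_n=\gamma_{n+1}-\gamma_n$ and a density $\D\rho(t)/(t-1)$ is a dead end (as you half-acknowledge --- nothing forces $\supp\rho\subseteq[1,\infty)$, and the division degenerates at $t=1$), so the proof rests entirely on the last paragraph; and in (ii)$\Rightarrow$(iii) the positivity of $[\gamma_{i+j+1}]$ comes from chaining the two inequalities of (ii), not from the first one alone.
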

   \begin{proof}
(i)$\Rightarrow$(ii) Obvious.

(ii)$\Leftrightarrow$(iii) Apply the Stieltjes theorem
(cf.\ \cite[Theorem 6.2.5]{b-c-r}).

(ii)$\Rightarrow$(i) Let $\varLambda\colon \cbb[t] \to
\cbb$ be a linear functional such that
$\varLambda(t^n) = \gamma_n$ for all $n \in \zbb_+$.
Take $p\in \cbb[t]$ which is nonnegative on
$[1,\infty)$. Since $p(t+1)$ is nonnegative on
$[0,\infty)$, there exist $q_1, q_2 \in \cbb[t]$ such
that $p = (t-1)|q_1|^2 + |q_2|^2$ (cf.\ \cite[Problem
45, p.\ 78]{P-S}). Hence $\varLambda(p) \Ge 0$.
Applying the Riesz-Haviland theorem (cf.\ \cite{Hav2})
completes the proof.
   \end{proof}
The question of when $\msc_e^+(\gammab)$ is nonempty
has the following answer.
   \begin{lem} \label{taso+}
Suppose $\gammab$ is an H-indeterminate Stieltjes
moment sequence. Then $\msc_e^+(\gammab) \neq
\emptyset$. Moreover, $\gammab$ is S-determinate if
and only if $\card{\msc_e^+(\gammab)} = 1$.
   \end{lem}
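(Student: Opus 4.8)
The plan is to translate the assertion into statements about density of polynomials in $L^2$-spaces and then invoke the classical theory of the Friedrichs and Krein measures of an H-indeterminate Stieltjes moment sequence. First, a reduction valid for all of $\msc(\gammab)$: every $\nu\in\msc(\gammab)$ has $\gammab$ as its moment sequence and hence is H-indeterminate, so by Lemma~\ref{MRk}(i) $\cbb[t]$ is \emph{not} dense in $L^2((1+t^2)\D\nu(t))$; feeding this into the N-extremality clause of Lemma~\ref{MRk}(i) shows that, for $\nu\in\msc(\gammab)$, ``$\nu$ is N-extremal'' is equivalent to ``$\cbb[t]$ is dense in $L^2(\nu)$''. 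Thus
\[
\msc_e^+(\gammab)=\bigl\{\nu\in\msc^+(\gammab)\colon \cbb[t] \text{ is dense in } L^2(\nu)\bigr\},
\]
and by Lemma~\ref{proN} every such $\nu$ is purely atomic with discrete support (not that this is needed).

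For the nonemptiness I would appeal to the classical structure of the Stieltjes moment problem (see, e.g., \cite{sim,Ped}). Since $\gammab$ is a Stieltjes moment sequence and is H-indeterminate, the Jacobi operator $J$ it determines on the finitely supported sequences is nonnegative with deficiency indices $(1,1)$, so it has a Friedrichs extension $J_{\mathrm F}$ and a Krein--von Neumann extension $J_{\mathrm K}$, both nonnegative self-adjoint extensions of $J$. Their spectral measures $\sigma_{\mathrm F},\sigma_{\mathrm K}$ at the cyclic vector $e_0$ lie in $\msc^+(\gammab)$ (because $J_{\mathrm F},J_{\mathrm K}\Ge 0$) and, the deficiency index being $1$, $\cbb[t]$ is dense in $L^2(\sigma_{\mathrm F})$ and in $L^2(\sigma_{\mathrm K})$; by the displayed identity, $\sigma_{\mathrm F},\sigma_{\mathrm K}\in\msc_e^+(\gammab)$, whence $\msc_e^+(\gammab)\neq\emptyset$.

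The ``moreover'' clause now splits into two easy implications. If $\gammab$ is S-determinate, then $\msc^+(\gammab)$ is a singleton; as $\emptyset\neq\msc_e^+(\gammab)\subseteq\msc^+(\gammab)$, this forces $\card{\msc_e^+(\gammab)}=1$. Conversely (by contraposition), if $\gammab$ is S-indeterminate then $J$ has more than one nonnegative self-adjoint extension, so $J_{\mathrm F}\neq J_{\mathrm K}$; comparing the Cauchy transforms $z\mapsto\int(t-z)^{-1}\D\sigma(t)$ on $\cbb\setminus\rbb_+$ gives $\sigma_{\mathrm F}\neq\sigma_{\mathrm K}$, hence $\card{\msc_e^+(\gammab)}\Ge 2$.

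The single genuine obstacle is to supply — or cite in its cleanest form — the classical input of the second paragraph: that the Friedrichs and Krein measures of an H-indeterminate Stieltjes moment sequence are N-extremal, are supported on $\rbb_+$, and coincide precisely when $\gammab$ is S-determinate. If one wishes to avoid extension theory for part of the argument, the implication ``$\gammab$ S-determinate $\Rightarrow\card{\msc_e^+(\gammab)}=1$'' can be made self-contained: then the unique $\mu_0\in\msc^+(\gammab)$ is N-extremal because, by Lemma~\ref{MRk}(ii), $\cbb[t]$ is dense in $L^2((1+t)\D\mu_0(t))$, and since $\mu_0(\sigma)\Le\int_\sigma(1+t)\,\D\mu_0(t)$ for every Borel $\sigma\subseteq\rbb_+$, the argument in the proof of Proposition~\ref{krytnd} upgrades this to density of $\cbb[t]$ in $L^2(\mu_0)$. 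Only the S-indeterminate direction truly needs the distinctness of $\sigma_{\mathrm F}$ and $\sigma_{\mathrm K}$.
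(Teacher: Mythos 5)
Your proposal is correct and follows essentially the same route as the paper: both produce an element of $\msc_e^+(\gammab)$ from the Friedrichs extension of the positive symmetric operator with deficiency indices $(1,1)$ attached to $\gammab$, obtain the necessity direction from $\emptyset\neq\msc_e^+(\gammab)\subseteq\msc^+(\gammab)$ when the latter is a singleton, and reduce the sufficiency direction to the classical Krein-type result that S-indeterminacy forces distinct von Neumann (Friedrichs vs.\ Krein) solutions supported on $\rbb_+$ — precisely the fact the paper outsources to \cite[Theorem 4]{sim}. The one step you flag as requiring a clean citation is the same one the paper cites rather than proves, so there is no genuine gap.
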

   \begin{proof}
Let $A$ be a symmetric operator in a complex Hilbert
space $\hh$ and $e \in \dzn{A}$ be such that $\dz{A}$
is the linear span of $\{A^n e \colon n \in \zbb_+\}$,
and $\gamma_n = \is{A^n e}{e}$ for all $n\in \zbb_+$
(cf.\ \cite[(1.10)]{sim}). By assumption, $A$ is a
positive operator with deficiency indices $(1,1)$
(cf.\ \cite[Theorem 2 and Corollary 2.9]{sim}). Hence,
the Friedrichs extension $S$ of $A$ differs from $\bar
A$ (cf.\ \cite[Theorem 5.38]{Weid}). As a consequence,
$\is{E(\cdot)e}{e} \in \msc_e^+(\gammab)$, where $E$
is the spectral measure of $S$ (cf.\ \cite[p.\
3951]{j-j-s0}). This also proves necessity in the
``moreover'' part. The sufficiency is a direct
consequence of \cite[Theorem 4]{sim}.
   \end{proof}
Recall that if $\gammab=\{\gamma_n\}_{n=0}^\infty$ is
an S-indeterminate Stieltjes moment sequence, then
$\card{\msc^+_e(\gammab)} = \mathfrak c$ and there
exist distinct measures $\alpha, \beta \in
\msc^+_e(\gammab)$ (uniquely determined by
\eqref{KFm}) such that for every $\rho \in
\msc_e^+(\gammab) \setminus \{\alpha, \beta\}$,
   \begin{align} \label{KFm}
0 = \inf\supp{\alpha} < \inf \supp{\rho} < \inf
\supp{\beta};
   \end{align}
$\alpha$ and $\beta$ are called the {\em Krein} and
the {\em Friedrichs} measures of $\gammab$,
respectively. These two particular N-extremal measures
come from the Krein and the Friedrichs extensions of a
positive operator attached to $\gammab$. The reader is
referred to \cite{Ped} for the case of Friedrichs
extensions and to \cite{sim} for a complete and
up-to-date operator approach to moment problems (see
also \cite[Section 2]{j-j-s0}).
   \subsection{Transforming moments via homotheties}
   \label{Sec2.3} In this section we investigate
transformations acting on real sequences induced by
homotheties of $\rbb$. Such transformations are shown
to preserve many properties of Hamburger and Stieltjes
moment sequences. The particular case of
transformations induced by translations has been
considered in \cite[Section 3]{Ped} and \cite[p.\
96]{sim} (with different approaches).

Fix $\vartheta \in (0,\infty)$ and $a \in \rbb$. Let
us define the self-map $\psi_{\vartheta,a}$ of $\rbb$
by
   \begin{align} \label{defpsi}
\psi_{\vartheta,a}(t) = \vartheta(t + a), \quad
t\in\rbb.
   \end{align}
Note that $\psi_{\vartheta,a}$ is a strictly
increasing homeomorphism of $\rbb$ onto itself such
that
   \begin{align} \label{t1-1}
\quad \psi_{1,0} = \mathrm{id}_{\rbb}, \quad
\psi_{\tilde \vartheta,\tilde a} \circ
\psi_{\vartheta,a} = \psi_{\tilde \vartheta
\vartheta,\frac{\tilde a}{\vartheta} + a}, \quad
\psi_{\vartheta,a}^{-1} =
\psi_{\frac{1}{\vartheta},-a\vartheta}
   \end{align}
for all $\tilde \vartheta \in (0,\infty)$ and $\tilde
a \in \rbb$. Next, we define the linear self-map
$T_{\vartheta,a}$ of $\rbb^{\zbb_+}$ by
   \begin{align} \label{T1}
(T_{\vartheta,a}\gammab)_n = \sum_{j=0}^n \binom{n}{j}
a^{n-j} \vartheta^n \gamma_j, \quad n\in \zbb_+, \,
\gammab = \{\gamma_n\}_{n=0}^\infty \subseteq \rbb.
   \end{align}

The proof of Lemma \ref{hg} below, being elementary,
is omitted.
   \begin{lem} \label{hg}
The following hold for all $\vartheta, \tilde
\vartheta \in (0,\infty)$ and $a, \tilde a \in
\rbb${\em :} \allowdisplaybreaks
   \begin{align} \notag
& \text{$T_{\vartheta,a}$ is a bijection of
$\rbb^{\zbb_+}$ onto itself,}
   \\ \label{prod}
& T_{1,0} = \mathrm{id}_{\rbb^{\zbb_{+}}}, \quad
T_{\tilde \vartheta,\tilde a} T_{\vartheta,a} =
T_{\tilde \vartheta \vartheta,\frac{\tilde
a}{\vartheta} + a}, \quad T_{\vartheta,a}^{-1} =
T_{\frac{1}{\vartheta},-a\vartheta}.
   \end{align}
   \end{lem}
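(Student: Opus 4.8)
The plan is to identify $T_{\vartheta,a}$ with the operation on coefficient sequences induced, at the level of polynomials, by the substitution $t \mapsto \psi_{\vartheta,a}(t)$, and then to read off all four assertions from the composition identities \eqref{t1-1} for the homotheties $\psi_{\vartheta,a}$. Concretely, for $\gammab = \{\gamma_n\}_{n=0}^\infty \subseteq \rbb$ I would introduce the linear functional $L_\gammab \colon \cbb[t] \to \cbb$ determined by $L_\gammab(t^n) = \gamma_n$, $n \in \zbb_+$. Expanding $\psi_{\vartheta,a}(t)^n = \vartheta^n(t+a)^n$ by the binomial theorem and comparing with \eqref{T1}, one gets $(T_{\vartheta,a}\gammab)_n = L_\gammab\bigl(\psi_{\vartheta,a}(t)^n\bigr)$ for all $n \in \zbb_+$; since the monomials span $\cbb[t]$ and both sides below are linear in $p$, this is equivalent to
\[
L_{T_{\vartheta,a}\gammab}(p) = L_\gammab(p \circ \psi_{\vartheta,a}), \qquad p \in \cbb[t].
\]

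Next I would derive the four claims from this. Because $\psi_{1,0} = \mathrm{id}_\rbb$ by \eqref{t1-1}, the displayed identity gives $L_{T_{1,0}\gammab} = L_\gammab$, hence $T_{1,0} = \mathrm{id}_{\rbb^{\zbb_+}}$. For the composition law I would use $\psi_{\tilde\vartheta,\tilde a} \circ \psi_{\vartheta,a} = \psi_{\tilde\vartheta\vartheta,\,\tilde a/\vartheta + a}$ from \eqref{t1-1}: for every $p \in \cbb[t]$,
\[
L_{T_{\tilde\vartheta,\tilde a}(T_{\vartheta,a}\gammab)}(p)
= L_{T_{\vartheta,a}\gammab}(p \circ \psi_{\tilde\vartheta,\tilde a})
= L_\gammab\bigl(p \circ \psi_{\tilde\vartheta,\tilde a} \circ \psi_{\vartheta,a}\bigr)
= L_{T_{\tilde\vartheta\vartheta,\,\tilde a/\vartheta + a}\gammab}(p),
\]
and evaluating at $p = t^n$ gives the middle identity of \eqref{prod}. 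Specializing it to $(\tilde\vartheta,\tilde a) = (1/\vartheta,\,-a\vartheta)$ and using $\tfrac{1}{\vartheta}\vartheta = 1$ and $\tfrac{-a\vartheta}{\vartheta} + a = 0$ together with $T_{1,0} = \mathrm{id}_{\rbb^{\zbb_+}}$ yields $T_{1/\vartheta,-a\vartheta}\,T_{\vartheta,a} = \mathrm{id}_{\rbb^{\zbb_+}}$, and symmetrically $T_{\vartheta,a}\,T_{1/\vartheta,-a\vartheta} = \mathrm{id}_{\rbb^{\zbb_+}}$; hence $T_{\vartheta,a}$ is a bijection of $\rbb^{\zbb_+}$ onto itself with $T_{\vartheta,a}^{-1} = T_{1/\vartheta,-a\vartheta}$.

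I do not expect any real obstacle here: the statement is elementary, which is presumably why the authors leave it to the reader. The only point to watch is to keep the whole argument at the level of arbitrary real sequences rather than just moment sequences, and this is automatic since $L_\gammab$ is well defined for every $\gammab \in \rbb^{\zbb_+}$. If one prefers to avoid functionals altogether, the same conclusions follow by a direct computation with the lower-triangular matrix $(T_{\vartheta,a})_{n,j} = \binom{n}{j}a^{n-j}\vartheta^n$ ($0 \le j \le n$): the $(n,j)$-entry of $T_{\tilde\vartheta,\tilde a}T_{\vartheta,a}$ is $\sum_{k=j}^n \binom{n}{k}\tilde a^{\,n-k}\tilde\vartheta^{\,n}\binom{k}{j}a^{k-j}\vartheta^{k}$, and one application of $\binom{n}{k}\binom{k}{j} = \binom{n}{j}\binom{n-j}{k-j}$ followed by the binomial theorem reduces it to $\binom{n}{j}(\tilde a/\vartheta + a)^{n-j}(\tilde\vartheta\vartheta)^n$, after which the remaining assertions are immediate. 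Handling this double binomial sum is the only mildly laborious step.
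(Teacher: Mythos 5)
Your proof is correct. The paper explicitly omits the proof of this lemma as elementary, so there is no argument to compare against; your derivation via the functional identity $L_{T_{\vartheta,a}\gammab}(p)=L_{\gammab}(p\circ\psi_{\vartheta,a})$ (equivalently, the matrix computation) is exactly the intended filling-in, and it is consistent with the paper's subsequent remark that $\psi_{\vartheta,a}\mapsto T_{\vartheta,a}$ is a faithful representation of the homothety group.
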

In view of \eqref{t1-1} and Lemma \ref{hg}, the
correspondence $\psi_{\vartheta,a} \mapsto
T_{\vartheta,a}$ defines a faithful representation of
the group of all strictly increasing homotheties of
$\rbb$. Moreover, by \eqref{T1} and \eqref{prod}, we
have, for all $\vartheta \in (0,\infty)$ and $a \in
\rbb$,
   \begin{align*}
(T_{\vartheta,a}^{-1}\gammab)_n = \sum_{j=0}^n
\binom{n}{j} (-a)^{n-j} \vartheta^{-j} \gamma_j, \quad
n\in \zbb_+, \, \gammab = \{\gamma_n\}_{n=0}^\infty
\subseteq \rbb.
   \end{align*}

In Lemma \ref{transH} and Theorem \ref{taso} below we
state properties of $T_{\vartheta,a}$ which are
relevant for further considerations. If $\nu$ is a
Borel measure on $\rbb$ and $\varphi$ is a
homeomorphism of $\rbb$ onto itself, then $\nu\circ
\varphi$ is the Borel measure on $\rbb$ given by
   \begin{align} \label{dnf}
\nu\circ \varphi(\sigma) = \nu(\varphi(\sigma)), \quad
\sigma \in \borel{\rbb}.
   \end{align}
   \begin{lem}\label{transH}
Let $\vartheta \in (0,\infty)$ and $a \in \rbb$. Then
   \begin{enumerate}
   \item[(i)] $T_{\vartheta,a}$ is a self-bijection
on the set of all Hamburger moment sequences,
   \item[(ii)] if $\gammab$ is a Hamburger moment sequence,
then the mapping
   \begin{align} \label{hurra}
\msc(\gammab) \ni \nu \mapsto \nu \circ
\psi_{\vartheta,a}^{-1} \in
\msc(T_{\vartheta,a}\gammab)
   \end{align}
is a well-defined bijection with the inverse given by
   \begin{align*}
\msc(T_{\vartheta,a}\gammab) \ni \nu \mapsto \nu \circ
\psi_{\vartheta,a} \in \msc(\gammab);
   \end{align*}
in particular, $\gammab$ is H-determinate if and only
if $T_{\vartheta,a}\gammab$ is H-determinate,
   \item[(iii)] if $\gammab$ is an H-indeterminate
Hamburger moment sequence, then so is
$T_{\vartheta,a}\gammab$ and the mapping defined by
\eqref{hurra} maps $\msc_e(\gammab)$ onto
$\msc_e(T_{\vartheta,a}\gammab)$,
   \item[(iv)]  if $\gammab$ is a nonzero Hamburger moment
sequence and $\nu \in \msc(\gammab)$, then
   \begin{gather} \label{2a}
\supp{\nu \circ \psi_{\vartheta,a}^{-1}} =
\psi_{\vartheta,a}(\supp{\nu}),
   \\ \label{2aa}
\inf \supp{\nu \circ \psi_{\vartheta,a}^{-1}} =
\psi_{\vartheta,a}(\inf\supp{\nu}),
   \end{gather}
with convention that
$\psi_{\vartheta,a}(-\infty)=-\infty$,
   \item[(v)] if  $a \Ge 0$, $\gammab$ is an
H-indeterminate Stieltjes moment sequence and
$\nu\in\msc_e^+(\gammab)$, then
$T_{\vartheta,a}\gammab$ is an H-indeterminate
Stieltjes moment sequence and $\nu \circ
\psi_{\vartheta,a}^{-1} \in
\msc_e^+(T_{\vartheta,a}\gammab)$.
   \end{enumerate}
   \end{lem}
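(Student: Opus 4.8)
The plan is to derive all five parts from one change-of-variables identity for image measures. For a Borel measure $\nu$ on $\rbb$, the measure $\nu\circ\psi_{\vartheta,a}^{-1}$ of \eqref{dnf} is the transport of $\nu$ along $\psi_{\vartheta,a}$, so $\int_{\rbb} f \, \D(\nu\circ\psi_{\vartheta,a}^{-1}) = \int_{\rbb} (f\circ\psi_{\vartheta,a}) \, \D\nu$ for every nonnegative Borel function $f$ on $\rbb$. Taking $f(t)=|t|^n$ shows $\nu\circ\psi_{\vartheta,a}^{-1}\in\msc$ whenever $\nu\in\msc$; taking $f(t)=t^n$ (splitting $t^n$ into its positive and negative parts, which is legitimate once $\int_{\rbb}|t|^n\,\D\nu<\infty$) and expanding $(\psi_{\vartheta,a}(t))^n=\vartheta^n(t+a)^n$ by the binomial theorem gives
\[
\int_{\rbb} t^n \, \D(\nu\circ\psi_{\vartheta,a}^{-1})(t) \;=\; \vartheta^n\sum_{j=0}^{n}\binom{n}{j}a^{n-j}\int_{\rbb} t^j \, \D\nu(t), \qquad n\in\zbb_+.
\]
By the definition \eqref{T1} of $T_{\vartheta,a}$ this says precisely that $\nu\in\msc(\gammab)$ implies $\nu\circ\psi_{\vartheta,a}^{-1}\in\msc(T_{\vartheta,a}\gammab)$.

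Granting this, (i) and (ii) become bookkeeping on top of Lemma \ref{hg} and \eqref{t1-1}. For (i): $T_{\vartheta,a}$ is a bijection of $\rbb^{\zbb_+}$ with inverse $T_{1/\vartheta,-a\vartheta}$; the display above shows it sends Hamburger moment sequences to Hamburger moment sequences, and since its inverse has the same form, it does so bijectively. For (ii): the display shows the map \eqref{hurra} is well defined; using $\psi_{\vartheta,a}^{-1}=\psi_{1/\vartheta,-a\vartheta}$ and the trivial identity $(\nu\circ\varphi_1)\circ\varphi_2=\nu\circ(\varphi_1\circ\varphi_2)$ one checks that $\nu\mapsto\nu\circ\psi_{\vartheta,a}$ is a two-sided inverse, its well-definedness (that it lands in $\msc(\gammab)$) being the already-proven statement applied with $T_{\vartheta,a}^{-1}$ and $T_{\vartheta,a}\gammab$ in place of $T_{\vartheta,a}$ and $\gammab$. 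The equivalence of H-determinacy is then immediate from this bijection between $\msc(\gammab)$ and $\msc(T_{\vartheta,a}\gammab)$.

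For (iii), H-indeterminacy of $T_{\vartheta,a}\gammab$ follows from that cardinality equality, and to transfer N-extremality I would bring in the composition operator $Uf=f\circ\psi_{\vartheta,a}$, which the change-of-variables identity shows to be an isometry of $L^2(\nu\circ\psi_{\vartheta,a}^{-1})$ onto $L^2(\nu)$ (surjective because $\psi_{\vartheta,a}$ is a homeomorphism). Since $U(t^n)=\vartheta^n(t+a)^n$ and $U^{-1}(t^n)=(t/\vartheta-a)^n$ are polynomials of the same degree, $U$ carries $\cbb[t]$ onto $\cbb[t]$, so $\cbb[t]$ is dense in $L^2(\nu)$ if and only if it is dense in $L^2(\nu\circ\psi_{\vartheta,a}^{-1})$; together with the H-indeterminacy already established this shows \eqref{hurra} restricts to a bijection between $\msc_e(\gammab)$ and $\msc_e(T_{\vartheta,a}\gammab)$. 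For (iv), a homeomorphism takes the closed support of a measure to the closed support of its transport (neighbourhood bases correspond), and $\gammab\neq 0$ forces $\nu\neq 0$, so $\supp{\nu}\neq\emptyset$; as $\psi_{\vartheta,a}$ is a strictly increasing continuous bijection of $\rbb$ onto itself it commutes with $\inf$ on nonempty subsets, the convention $\psi_{\vartheta,a}(-\infty)=-\infty$ covering the case $\inf\supp{\nu}=-\infty$. Finally, in (v) the hypothesis $a\Ge 0$ gives $\psi_{\vartheta,a}(\rbb_+)=[\vartheta a,\infty)\subseteq\rbb_+$, so by (iv) $\supp{\rho\circ\psi_{\vartheta,a}^{-1}}\subseteq\rbb_+$ for every $\rho\in\msc^+(\gammab)$; hence $T_{\vartheta,a}\gammab$ is a Stieltjes moment sequence, it is H-indeterminate by (iii), and for the prescribed $\nu\in\msc_e^+(\gammab)=\msc_e(\gammab)\cap\msc^+$ part (iii) gives $\nu\circ\psi_{\vartheta,a}^{-1}\in\msc_e(T_{\vartheta,a}\gammab)$ while the support inclusion with $\rho=\nu$ gives $\nu\circ\psi_{\vartheta,a}^{-1}\in\msc^+$, so $\nu\circ\psi_{\vartheta,a}^{-1}\in\msc_e^+(T_{\vartheta,a}\gammab)$.

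There is no deep obstacle here: the content is the careful bookkeeping tying together $T_{\vartheta,a}$, the homothety $\psi_{\vartheta,a}$ and the composition operator $U$. The step that most repays attention is the N-extremality transfer in (iii), where it matters that $U$ preserves the polynomial subspace \emph{exactly}, not merely up to closure, so that density passes across verbatim; and one should note that $a\Ge 0$ is genuinely used in (v), since for $a<0$ the homothety would carry part of the support out of $\rbb_+$.
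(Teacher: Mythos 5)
Your proof is correct and follows essentially the same route as the paper: the measure-transport identity handles (i), (ii), (iv) and (v), and a unitary composition operator carrying $\cbb[t]$ onto $\cbb[t]$ transfers N-extremality in (iii). The only cosmetic difference is that you work directly from the paper's definition of N-extremality (H-indeterminacy plus density of $\cbb[t]$ in $L^2(\nu)$), whereas the paper also builds the analogous homeomorphism on $L^2((1+t^2)\D\nu(t))$ and invokes the Riesz characterization of Lemma \ref{MRk}(i); both are valid.
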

   \begin{proof}
(i)\&(ii) If $\gammab$ is a Hamburger moment sequence
and $\nu \in \msc(\gammab)$, then, by the measure
transport theorem (cf.\ \cite[Theorem 1.6.12]{Ash}),
we have
   \begin{align*}
(T_{\vartheta,a}\gammab)_n = \int_{\rbb}
\big(\psi_{\vartheta,a}(t)\big)^n \D \nu(t) =
\int_{\rbb} t^n \D \nu\circ
\psi_{\vartheta,a}^{-1}(t), \quad n \in \zbb_+,
   \end{align*}
which means that $T_{\vartheta,a}\gammab$ is a
Hamburger moment sequence and
$\nu\circ\psi_{\vartheta,a}^{-1} \in
\msc(T_{\vartheta,a}\gammab)$. The above combined with
\eqref{t1-1} and \eqref{prod} completes the proof of
(i) and (ii).

(iii) Let $\gammab$ be a Hamburger moment sequence and
$\nu \in \msc(\gammab)$. Since
   \begin{align*}
\sup\nolimits_ {t\in\rbb} (1+\varphi(t)^2)/(1+t^2) <
\infty, \quad \varphi \in \big\{\psi_{\vartheta,a},
\psi_{\vartheta,a}^{-1}\big\},
   \end{align*}
we deduce from the measure transport theorem that the
mapping
   \begin{align*}
W\colon L^2((1+t^2)\D\nu\circ
\psi_{\vartheta,a}^{-1}(t)) \ni f \to f\circ
\psi_{\vartheta,a} \in L^2((1+t^2)\D\nu(t))
   \end{align*}
is a well-defined linear homeomorphism (with the
inverse $g \to g\circ \psi_{\vartheta,a}^{-1}$) such
that $W(\cbb[t]) = \cbb[t]$. Similarly, $V\colon
L^2(\nu\circ \psi_{\vartheta,a}^{-1}) \ni f \to f\circ
\psi_{\vartheta,a} \in L^2(\nu)$ is a unitary
isomorphism such that $V(\cbb[t]) = \cbb[t]$. This,
(ii) and Lemma \ref{MRk}(i) yield (iii).

(iv) The equality \eqref{2a} is a direct consequence
of the definition of the closed support of a measure
(see also \cite[Lemma 3.2]{2xSt2} for a more general
result). Clearly, \eqref{2a} implies \eqref{2aa}.

(v) Apply (iii) and (iv).
   \end{proof}
   \begin{thm} \label{taso}
Let $\vartheta\in (0,\infty)$ and $a \in \rbb$.
Suppose $\gammab=\{\gamma_n\}_{n=0}^\infty$ is an
S-indeterminate Stieltjes moment sequence and $\beta$
is its Friedrichs measure. Then $T_{\vartheta,a}
\gammab$ is an H-indeterminate Hamburger moment
sequence and the following holds{\em :}
   \begin{enumerate}
   \item[(i)] if $c > 0$,
then $T_{\vartheta,a} \gammab$ is an S-indeterminate
Stieltjes moment sequence and $\beta\circ
\psi_{\vartheta,a}^{-1}$ is the Friedrichs measure of
$T_{\vartheta,a} \gammab$,
   \item[(ii)]
if $c = 0$, then $T_{\vartheta,a} \gammab$ is an
S-determinate Stieltjes moment sequence,
   \item[(iii)] if $c < 0$,
then $T_{\vartheta,a} \gammab$ is not a Stieltjes
moment sequence,
   \end{enumerate}
where $c:= \psi_{\vartheta,a} \big(\inf
\supp{\beta}\big)$.
   \end{thm}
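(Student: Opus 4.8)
The plan is to push everything through the homothety $\psi_{\vartheta,a}$ by means of Lemma \ref{transH} and Lemma \ref{taso+}, and then to separate the three cases purely by the position of $\supp{\beta}$ after the transformation. First I would note that, $\gammab$ being S-indeterminate, it is H-indeterminate, so Lemma \ref{transH}(iii) gives at once that $T_{\vartheta,a}\gammab$ is an H-indeterminate Hamburger moment sequence and that $\nu\mapsto\nu\circ\psi_{\vartheta,a}^{-1}$ is a bijection of $\msc_e(\gammab)$ onto $\msc_e(T_{\vartheta,a}\gammab)$; by \eqref{2aa} this bijection carries $\inf\supp{\nu}$ to $\psi_{\vartheta,a}(\inf\supp{\nu})$. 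The second preparatory step is the ordering fact: with $\alpha$ the Krein measure of $\gammab$, one has $\inf\supp{\beta}>0$ and $\inf\supp{\nu}\Le\inf\supp{\beta}$ for every $\nu\in\msc_e(\gammab)$, with equality only when $\nu=\beta$. Indeed $\inf\supp{\beta}>0$ because $0=\inf\supp{\alpha}$ is attained (the support of an N-extremal measure has no accumulation point, Lemma \ref{proN}), hence $0\in\supp{\alpha}$, while the supports of the distinct measures $\alpha$ and $\beta$ are disjoint by Lemma \ref{proN}; and $\inf\supp{\nu}\Le\inf\supp{\beta}$ follows from \eqref{KFm} when $\nu\in\msc_e^+(\gammab)$ and is immediate when $\supp{\nu}\not\subseteq\rbb_+$ (then $\inf\supp{\nu}<0<\inf\supp{\beta}$). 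Transporting, this says: $\beta\circ\psi_{\vartheta,a}^{-1}\in\msc_e(T_{\vartheta,a}\gammab)$ has inf of support equal to $c$, and every other element of $\msc_e(T_{\vartheta,a}\gammab)$ has inf of support strictly less than $c$.

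Given this, the trichotomy is quick. If $c<0$: were $T_{\vartheta,a}\gammab$ a Stieltjes moment sequence, Lemma \ref{taso+} would yield some $\eta\in\msc_e^+(T_{\vartheta,a}\gammab)$, forcing $0\Le\inf\supp{\eta}\Le c<0$, a contradiction; so $T_{\vartheta,a}\gammab$ is not a Stieltjes moment sequence. If $c=0$: then $\beta\circ\psi_{\vartheta,a}^{-1}$ is carried into $\rbb_+$, so $T_{\vartheta,a}\gammab$ is a Stieltjes moment sequence; and since every N-extremal measure of $T_{\vartheta,a}\gammab$ has inf of support $\Le c=0$, the only candidate for membership in $\msc_e^+(T_{\vartheta,a}\gammab)$ is $\beta\circ\psi_{\vartheta,a}^{-1}$ itself, whence $\card{\msc_e^+(T_{\vartheta,a}\gammab)}=1$ and Lemma \ref{taso+} gives S-determinacy.

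The case $c>0$ is where the substance lies. Now $\beta\circ\psi_{\vartheta,a}^{-1}$ is supported in $[c,\infty)$, so $T_{\vartheta,a}\gammab$ is a Stieltjes moment sequence which, in addition, possesses an S-representing measure vanishing on $\{0\}$; being simultaneously H-indeterminate, it must then be S-indeterminate by \cite[Corollary, p.\ 481]{chi} (equivalently \cite[Lemma 2.2.5]{j-j-s0}), the same device used in Remark \ref{snieg2}. It remains to identify the Friedrichs measure: among the elements of $\msc_e^+(T_{\vartheta,a}\gammab)$, the measure $\beta\circ\psi_{\vartheta,a}^{-1}$ has strictly the largest inf of support, namely $c$, so by the characterization \eqref{KFm} of the Friedrichs measure as the unique N-extremal S-representing measure with the largest inf of support, it is the Friedrichs measure of $T_{\vartheta,a}\gammab$.

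The only delicate point I anticipate is exactly this $c>0$ case: it is tempting to try to produce a second N-extremal S-representing measure of $T_{\vartheta,a}\gammab$ by hand, which would force one to control how $\inf\supp{}$ ranges over $\msc_e^+(\gammab)$ (information not supplied by the results available so far); the plan sidesteps this entirely by extracting S-indeterminacy from the already-established H-indeterminacy through the coincidence of H- and S-determinacy for Stieltjes sequences admitting an S-representing measure that misses the point $0$.
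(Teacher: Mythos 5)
Your proposal is correct and follows essentially the same route as the paper: both rest on Lemma \ref{transH} to transport N-extremal measures, on the key ordering fact that $\beta\circ\psi_{\vartheta,a}^{-1}$ has the strictly largest infimum of support (namely $c$) among all of $\msc_e(T_{\vartheta,a}\gammab)$, and on Chihara's coincidence of H- and S-determinacy off $\{0\}$ to get S-indeterminacy when $c>0$. The only (harmless) variations are that you establish the ordering on the source side and push it forward rather than pulling a hypothetical violator back, and that in case (ii) you invoke the cardinality criterion $\card{\msc_e^+}=1$ from Lemma \ref{taso+} instead of rerunning the Friedrichs-measure contradiction.
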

   \begin{proof}
By Lemma \ref{transH}, $T_{\vartheta,a} \gammab$ is an
H-indeterminate Hamburger moment sequence such that
   \begin{align} \label{dzth}
\beta \circ \psi_{\vartheta,a}^{-1} \in
\msc_e(T_{\vartheta,a}\gammab) \text{ and }
\inf\supp{\beta \circ \psi_{\vartheta,a}^{-1}} =
\psi_{\vartheta,a}(\inf\supp{\beta}) = c.
   \end{align}
Note that
   \begin{align} \label{kr1}
\inf\supp{\beta \circ \psi^{-1}_{\vartheta,a}} \Ge
\inf\supp{\rho}, \quad \rho \in
\msc_e(T_{\vartheta,a}\gammab).
   \end{align}
Indeed, otherwise, by \eqref{dzth} and Lemma
\ref{transH}, $\inf\supp{\beta} < \inf \supp{\rho
\circ \psi_{\vartheta,a}}$ and $\rho \circ
\psi_{\vartheta,a}\in \msc_e^+(\gammab)$, which
contradicts \eqref{KFm}.

(i) By \eqref{dzth} and \cite[Corollary, p.\ 481]{chi}
(see also \cite[Lemma 2.2.5]{j-j-s0}),
$T_{\vartheta,a} \gammab$ is an S-indetermi\-nate
Stieltjes moment sequence. Denote by $\rho$ its
Friedrichs measure. If $\beta\circ
\psi_{\vartheta,a}^{-1} \neq \rho$, then by
\eqref{KFm} and \eqref{dzth}, $\inf\supp{\beta \circ
\psi_{\vartheta,a}^{-1}} < \inf \supp{\rho}$, which
would contradict \eqref{kr1}.

(ii) By \eqref{dzth}, $T_{\vartheta,a} \gammab$ is a
Stieltjes moment sequence and $\beta \circ
\psi^{-1}_{\vartheta,a} \in
\msc_e^+(T_{\vartheta,a}\gammab)$. If
$T_{\vartheta,a}\gammab$ were S-indeterminate and
$\rho$ were its Friedrichs measure, then by
\eqref{dzth}, $\beta \circ \psi^{-1}_{\vartheta,a}
\neq \rho$, which, as in (i), would contradict
\eqref{kr1}.

(iii) If $T_{\vartheta,a} \gammab$ were a Stieltjes
moment sequence, then by Lemma \ref{taso+}, there
would exist $\rho \in
\msc_e^+(T_{\vartheta,a}\gammab)$, and thus by
\eqref{dzth}, $\inf\supp{\beta \circ
\psi^{-1}_{\vartheta,a}} < \inf\supp{\rho}$, which
would contradict \eqref{kr1}.
   \end{proof}
   \begin{cor} \label{transH+}
Let $\vartheta, a \in (0,\infty)$. Suppose $\gammab$
is an S-indeterminate Stieltjes moment sequence and
$\alpha$ and $\beta$ are its Krein and Friedrichs
measures, respectively. Then $T_{\vartheta,a} \gammab$
is an S-indeterminate Stieltjes moment sequence,
$\alpha\circ \psi_{\vartheta,a}^{-1} \in
\msc_e^+(T_{\vartheta,a} \gammab)$, $\beta\circ
\psi_{\vartheta,a}^{-1}$ is the Friedrichs measure of
$T_{\vartheta,a} \gammab$ and
   \begin{align} \label{ineqKF}
0 < \inf\supp{\alpha\circ \psi_{\vartheta,a}^{-1}} <
\inf \supp{\beta\circ \psi_{\vartheta,a}^{-1}}.
   \end{align}
In particular, $\alpha\circ \psi_{\vartheta,a}^{-1}$
is neither the Krein nor the Friedrichs measure of
$T_{\vartheta,a} \gammab$.
   \end{cor}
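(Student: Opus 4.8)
The plan is to derive the whole statement as a direct specialization of Theorem \ref{taso} and parts (iv)--(v) of Lemma \ref{transH}, the point being that the \emph{strict} positivity of $a$ (rather than merely $a \Ge 0$) is exactly what pushes us into case (i) of Theorem \ref{taso}. First I would note that an S-indeterminate Stieltjes moment sequence is H-indeterminate, so Lemma \ref{transH} applies to $\gammab$, and that by \eqref{KFm} its Krein and Friedrichs measures satisfy $\inf\supp{\alpha} = 0 < \inf\supp{\beta}$; in particular $\inf\supp{\beta} > 0$.

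Next I would compute the quantity $c$ from Theorem \ref{taso}, namely $c = \psi_{\vartheta,a}(\inf\supp{\beta}) = \vartheta(\inf\supp{\beta} + a)$. Since $\vartheta > 0$, $a > 0$ and $\inf\supp{\beta} \Ge 0$, we get $c > 0$, so Theorem \ref{taso}(i) immediately yields that $T_{\vartheta,a}\gammab$ is an S-indeterminate Stieltjes moment sequence with Friedrichs measure $\beta \circ \psi_{\vartheta,a}^{-1}$. For the Krein measure $\alpha$, I would invoke Lemma \ref{transH}(v) with $\nu = \alpha \in \msc_e^+(\gammab)$ (here only $a \Ge 0$ is needed) to obtain $\alpha \circ \psi_{\vartheta,a}^{-1} \in \msc_e^+(T_{\vartheta,a}\gammab)$. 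The inequality \eqref{ineqKF} then follows from \eqref{2aa}: since $\psi_{\vartheta,a}$ is strictly increasing, $\inf\supp{\alpha \circ \psi_{\vartheta,a}^{-1}} = \psi_{\vartheta,a}(0) = \vartheta a > 0$ and $\inf\supp{\beta \circ \psi_{\vartheta,a}^{-1}} = \psi_{\vartheta,a}(\inf\supp{\beta}) = c$, and $0 = \inf\supp{\alpha} < \inf\supp{\beta}$ gives the strict ordering of the two images. Finally, for the ``in particular'' clause, I would apply \eqref{KFm} to the S-indeterminate sequence $T_{\vartheta,a}\gammab$: its Krein measure has support infimum $0 < \vartheta a$, so $\alpha \circ \psi_{\vartheta,a}^{-1}$ is not that Krein measure, and since $\beta \circ \psi_{\vartheta,a}^{-1}$ is the Friedrichs measure while $\inf\supp{\alpha \circ \psi_{\vartheta,a}^{-1}} < \inf\supp{\beta \circ \psi_{\vartheta,a}^{-1}}$, it is not the Friedrichs measure either.

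There is no real obstacle here: the corollary is a clean bookkeeping consequence of the two preceding results. The only items deserving a moment's attention are recognizing that $a \in (0,\infty)$ forces $c > 0$ (and hence case (i) of Theorem \ref{taso}), that this same strict positivity makes $\vartheta a$ a \emph{strictly} positive lower bound for $\supp{\alpha \circ \psi_{\vartheta,a}^{-1}}$, and that the pushforward under $\psi_{\vartheta,a}^{-1}$ transforms support infima monotonically via \eqref{2aa}, so the relative positions of the Krein and Friedrichs measures are preserved under the homothety.
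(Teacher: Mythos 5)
Your proof is correct and follows the same route as the paper, which likewise deduces the corollary from \eqref{KFm}, Lemma \ref{transH} and Theorem \ref{taso}; you have simply written out in detail the steps the paper leaves implicit (the computation $c=\vartheta(\inf\supp{\beta}+a)>0$ forcing case (i) of Theorem \ref{taso}, and the use of \eqref{2aa} to track support infima).
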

   \begin{proof}
In view of \eqref{KFm} and Lemma \ref{transH},
\eqref{ineqKF} holds and $\alpha\circ
\psi_{\vartheta,a}^{-1} \in
\msc_e^+(T_{\vartheta,a}\gammab)$. This together with
\eqref{KFm} and Theorem \ref{taso} completes the
proof.
   \end{proof}
The particular case of Theorem \ref{taso} with
$\vartheta=1$ (without the statement that $\beta\circ
\psi_{1,a}^{-1}$ is the Friedrichs measure of $T_{1,a}
\gammab = \gammab(a)$) appeared in \cite[Theorem
3.3]{sim} with a very brief outline of the proof based
on the von Neumann theory of selfadjoint extensions of
symmetric operators. In turn, the particular case of
Corollary \ref{transH+} with $\vartheta=1$ (without
any statement on the Krein measure) appeared in
\cite[Proposition 3.5]{Ped} with another approach
based on the Nevanlinna parametrization.

It follows from Corollary \ref{transH+} that if
$\vartheta, a > 0$, then the transformation
$T_{\vartheta,a}$ preserves S-indeterminate Stieltjes
moment sequences, and the corresponding mapping
defined by \eqref{hurra} preserves the Friedrichs
measures (but never the Krein ones). The situation
changes drastically when $a < 0$ (for example, when we
consider $T^{-1}_{\vartheta,b}$ with $b>0$; see
\eqref{prod}). This is because the quantity
$c=\psi_{\vartheta,a} (\inf \supp{\beta})$ may happen
to be negative (cf.\ Theorem \ref{taso}).

The above-mentioned properties of self-maps
$T_{\vartheta,a}$ enable us to parameterize N-extremal
measures of H-indeterminate Stieltjes moment sequences
in a new way.
   \begin{thm} \label{tinfsup}
Suppose $\gammab=\{\gamma_n\}_{n=0}^\infty$ is an
H-indeterminate Stieltjes moment sequence. Set
$t_0=\inf \supp{\beta}$, where $\beta$ is either the
Friedrichs measure of $\gammab$ if $\gammab$ is
S-indeterminate, or $\beta$ is the unique
S-representing measure of $\gammab$ otherwise. Then
   \begin{enumerate}
   \item[(i)] if $\gamma$ is S-determinate, then
$t_0=0$ and $\beta \in \msc_e^+(\gammab)$,
   \item[(ii)] if $\gamma$ is S-indeterminate, then
$t_0 > 0$,
   \item[(iii)] for every $t \in (-\infty,t_0]$ there
exists a unique $\nu_t \in \msc_e(\gammab)$ such that
   \begin{align} \label{infsup}
\inf \supp{\nu_t} = t,
   \end{align}
   \item[(iv)] the mapping $(-\infty,t_0] \ni t \mapsto
\nu_t \in \msc_e(\gammab)$ is a bijection,
   \item[(v)] $\msc_e^+(\gammab) = \{\nu_t\colon t \in
[0,t_0]\}$,
   \item[(vi)] the closed support of each N-extremal measure
of $\gammab$ is bounded from below,
   \item[(vii)] $\supp{\nu_t}
\cap (-\infty,t_0]=\{t\}$ for every $t\in
(-\infty,t_0]$,
   \item[(viii)] $[0,t_0] \varsubsetneq
\bigcup_{\mu \in \msc_e^+(\gammab)} \supp{\mu}$ and
$\mathrm{card}\big(\rbb_+ \setminus \bigcup_{\mu \in
\msc_e^+(\gammab)} \supp{\mu}\big) = \mathfrak c$.
   \end{enumerate}
   \end{thm}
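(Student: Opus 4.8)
The plan is to settle (i) and (ii) by the classical determinacy dichotomy for Stieltjes sequences, to build the family $\{\nu_t\}_{t\Le t_0}$ by pulling Krein (or unique S-representing) measures back along the shift homotheties $\psi_{1,-t}$, and then to extract (iv)--(viii) from the disjointness of the closed supports of the N-extremal measures of $\gammab$ (Lemma \ref{proN}). \emph{For (i) and (ii):} if $\gammab$ is S-determinate, then $\card{\msc_e^+(\gammab)}=1$ by Lemma \ref{taso+}, its unique element belongs to $\msc^+(\gammab)$ and hence coincides with the unique S-representing measure $\beta$, so $\beta\in\msc_e^+(\gammab)$; and if $t_0=\inf\supp{\beta}>0$, then $\beta$ vanishes on $\{0\}$, whence by \cite[Corollary, p.\ 481]{chi} (recalled in Subsection \ref{secbc}) $\gammab$ would be H-determinate, a contradiction, so $t_0=0$. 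If $\gammab$ is S-indeterminate, then $\card{\msc_e^+(\gammab)}=\mathfrak c$ provides $\rho\in\msc_e^+(\gammab)\setminus\{\alpha,\beta\}$, and \eqref{KFm} gives $0<\inf\supp{\rho}<\inf\supp{\beta}=t_0$, so $t_0>0$. In both cases $t_0\Ge 0$ and $\beta$ is an N-extremal measure of $\gammab$ with $\inf\supp{\beta}=t_0$; it will be $\nu_{t_0}$.

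\emph{For (iii):} fix a real $t\Le t_0$ and put $\gammab_t:=T_{1,-t}\gammab$. By Lemma \ref{transH}(ii) it is H-indeterminate, and by \eqref{2aa} the measure $\beta\circ\psi_{1,-t}^{-1}\in\msc(\gammab_t)$ has infimum of support $t_0-t\Ge 0$, hence is carried by $\rbb_+$, so $\gammab_t$ is Stieltjes. Therefore $\gammab_t$ has a distinguished N-extremal measure $\mu_t$ with $\inf\supp{\mu_t}=0$: its Krein measure if $\gammab_t$ is S-indeterminate, or --- by part (i) applied to $\gammab_t$ --- its unique S-representing measure if $\gammab_t$ is S-determinate; and this $\mu_t$ is the \emph{only} N-extremal measure of $\gammab_t$ with zero infimum of support, since such a measure is carried by $\rbb_+$, hence lies in $\msc_e^+(\gammab_t)$, which is a singleton in the S-determinate case (Lemma \ref{taso+}) and whose only element of zero infimum is the Krein measure in the S-indeterminate case (by \eqref{KFm}). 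Pulling $\mu_t$ back along the bijection $\msc_e(\gammab_t)\ni\mu\mapsto\mu\circ\psi_{1,-t}\in\msc_e(\gammab)$ provided by Lemma \ref{transH}(ii)--(iii), and using \eqref{2aa} once more, we get $\nu_t:=\mu_t\circ\psi_{1,-t}\in\msc_e(\gammab)$ with $\inf\supp{\nu_t}=t$; the same bijection together with the uniqueness of $\mu_t$ shows $\nu_t$ is the unique N-extremal measure of $\gammab$ with prescribed $\inf\supp{\nu_t}=t$. This is (iii), and injectivity in (iv) is immediate.

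\emph{For (iv)--(viii):} these follow from the disjointness in Lemma \ref{proN}, applied together with the remark that $\inf\supp{\nu_s}=s\in\supp{\nu_s}$ for every real $s\Le t_0$ (a closed set with finite infimum attains it). For (vi), if some $\nu\in\msc_e(\gammab)$ had $\inf\supp{\nu}=-\infty$, choose $x\in\supp{\nu}$ with $x<0\Le t_0$; then $x\in\supp{\nu}\cap\supp{\nu_x}$ forces $\nu=\nu_x$, contradicting $\inf\supp{\nu}=-\infty\neq x$. So $\inf\supp{\nu}$ is finite for each N-extremal $\nu$, and moreover $\inf\supp{\nu}\Le t_0$ (for $\inf\supp{\nu}\Ge 0$ one has $\nu\in\msc_e^+(\gammab)$ and invokes \eqref{KFm}, resp.\ Lemma \ref{taso+}; for $\inf\supp{\nu}<0$ it is trivial), whence by the uniqueness in (iii) every N-extremal measure equals some $\nu_t$; this gives surjectivity in (iv) and also (vi). Part (v) follows since $\nu_t\in\msc^+$ exactly when $t\Ge 0$. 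For (vii), a point $y\in\supp{\nu_t}\cap(t,t_0]$ would lie in $\supp{\nu_t}\cap\supp{\nu_y}$ and force $\nu_t=\nu_y$, impossible when $y>t$; with $\inf\supp{\nu_t}=t$ this yields $\supp{\nu_t}\cap(-\infty,t_0]=\{t\}$. For (viii), $[0,t_0]\subseteq\bigcup_{t\in[0,t_0]}\supp{\nu_t}=\bigcup_{\mu\in\msc_e^+(\gammab)}\supp{\mu}$ with strict inclusion since $\supp{\beta}$ is unbounded above (Lemma \ref{proN}); and from $\rbb=\bigsqcup_{t\Le t_0}\supp{\nu_t}$ (by (iv) and Lemma \ref{proN}) one gets $\rbb_+\setminus\bigcup_{t\in[0,t_0]}\supp{\nu_t}=\bigsqcup_{t<0}(\supp{\nu_t}\cap\rbb_+)$, a disjoint union of $\mathfrak c$ nonempty sets, hence of cardinality $\mathfrak c$.

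I expect part (iii) to be the main obstacle: one must recognise that the homothety $T_{1,-t}$ converts the problem of prescribing the infimum of the support into that of singling out the ``bottom'' N-extremal measure of a new H-indeterminate Stieltjes sequence, and then carry the S-determinate versus S-indeterminate bookkeeping through carefully, including the self-referential appeal to part (i) for $\gammab_t$. After that, everything rests on the easily overlooked part of Lemma \ref{proN} asserting that the closed supports of the N-extremal measures of an H-indeterminate Hamburger sequence partition $\rbb$.
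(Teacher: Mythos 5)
Your proof is correct and follows essentially the same route as the paper's: parts (i)--(ii) via Lemma \ref{taso+}, \eqref{KFm} and the Chihara criterion, part (iii) by pulling the Krein (or unique S-representing) measure of the shifted sequence $T_{1,-t}\gammab$ back along $\psi_{1,t}$, and parts (iv)--(viii) from the support-disjointness in Lemma \ref{proN}. The only cosmetic difference is that you treat $t=t_0$ and $t<t_0$ uniformly by casing on the S-determinacy of $T_{1,-t}\gammab$, where the paper uses Theorem \ref{taso}(i) (and the Chihara criterion) to pin down its S-indeterminacy for $t<t_0$; both are sound.
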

   \begin{proof}
Assume that $\gammab$ is S-determinate. Then by
\cite[Corollary, p.\ 481]{chi} (see also \cite[Lemma
2.2.5]{j-j-s0}), $0 \in \supp{\beta}$ and thus
$t_0=0$. In view of Lemma \ref{taso+}, $\beta \in
\msc_e^+(\gammab)$ and the measure $\nu_0:=\beta$
satisfies \eqref{infsup}. Now take $t \in
(-\infty,0)$. Then by Lemma \ref{transH}, the sequence
$T_{1,-t}\gammab$ is H-indeterminate, $\beta\circ
\psi_{1,-t}^{-1} \in \msc_e(T_{1,-t}\gammab)$ and
$\inf\supp{\beta\circ \psi_{1,-t}^{-1}} = |t| > 0$.
Applying \cite[Corollary, p.\ 481]{chi} again, we see
that $T_{1,-t}\gammab$ is S-indeterminate. Let
$\rho_t$ be the Krein measure of $T_{1,-t}\gammab$.
Since $\gammab = T_{1,t}T_{1,-t}\gammab$, we infer
from Lemma \ref{transH} and \eqref{KFm} that
$\nu_t:=\rho_t \circ \psi_{1,t}^{-1} \in
\msc_e(\gammab)$ and $\inf\supp{\nu_t} = t$.

   Assume now that $\gammab$ is S-indeterminate. Let
$t\in (-\infty,t_0)$. Since
   \begin{align*}
c:=\psi_{1,-t}(\inf\supp{\beta}) = t_0-t > 0,
   \end{align*}
we deduce from Theorem \ref{taso}(i) that
$T_{1,-t}\gammab$ is S-indeterminate. Taking the Krein
measure $\rho_t$ of $T_{1,-t}\gammab$ and arguing as in
the previous paragraph, we see that $\nu_t:=\rho_t \circ
\psi_{1,t}^{-1} \in \msc_e(\gammab)$ satisfies
\eqref{infsup}. If $t=t_0$, then $\nu_{t_0}:=\beta$ does
the job.

In both cases, S-determinate and S-indeterminate, the
uniqueness of $\nu_t \in \msc_e(\gammab)$ satisfying
\eqref{infsup} follows from Lemma \ref{proN}.
Altogether this proves (i), (ii) and (iii). Clearly,
by \eqref{infsup}, the mapping $(-\infty,t_0] \ni t
\mapsto \nu_t \in \msc_e(\gammab)$ is injective. To
prove its surjectivity, take $\nu \in
\msc_e(\gammab)$. By Lemma \ref{proN} and \eqref{KFm},
there exists $t \in (-\infty,t_0] \cap \supp{\nu}$.
Then $t\in \supp{\nu_{t}} \cap \supp{\nu}$ and so, by
Lemma \ref{proN}, $\nu_{t}=\nu$. This proves (iv) and
consequently (v) and (vi). The condition (vii) is a
direct consequence of (iii) and Lemma \ref{proN}. To
prove (viii) take any $t\in (-\infty,0)$. By (iii) and
Lemma \ref{proN}, $\supp{\nu_t} \cap (t_0,\infty)$ is
a nonempty (in fact, a countably infinite) subset of
$\rbb_+$ which is disjoint with $\bigcup_{\mu \in
\msc_e^+(\gammab)} \supp{\mu}$ and the latter set
being unbounded properly contains $[0,t_0]$. It
follows from (iv) and Lemma \ref{proN} that the sets
$\supp{\nu_t} \cap (t_0,\infty)$, $t \in (-\infty,0)$,
are nonempty and disjoint. This completes the proof of
(viii) and the theorem.
   \end{proof}
   We conclude this section by stating the following
unexpected trichotomy property of N-extremal measures
of H-indeterminate Hamburger moment sequences.
   \begin{thm}[Trichotomy] \label{trichotomy}
Suppose that $\gammab=\{\gamma_n\}_{n=0}^\infty$ is an
H-indeterminate Hamburger moment sequence. Then
exactly one of the following three conditions
holds{\em :}
   \begin{enumerate}
   \item[(i)] for every $\nu\in \msc_e(\gammab)$,
$\inf\supp{\nu}=-\infty$ and $\sup\supp{\nu}=\infty$,
   \item[(ii)] for every $\nu\in \msc_e(\gammab)$,
$\inf\supp{\nu}> -\infty$ and $\sup\supp{\nu}=\infty$,
   \item[(iii)] for every $\nu\in \msc_e(\gammab)$,
$\inf\supp{\nu}=-\infty$ and $\sup\supp{\nu}< \infty$.
   \end{enumerate}
   \end{thm}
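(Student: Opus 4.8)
The plan is to derive the trichotomy from two dichotomies for the family $\msc_e(\gammab)$, which is nonempty (in fact of cardinality $\mathfrak c$) since $\gammab$ is H-indeterminate: (a) either $\inf\supp{\nu}>-\infty$ for \emph{every} $\nu\in\msc_e(\gammab)$ or for \emph{none}; and (b) likewise, either $\sup\supp{\nu}<\infty$ for every $\nu\in\msc_e(\gammab)$ or for none. Granting (a) and (b) there are a priori four combinations, and the one in which both $\inf\supp{\nu}>-\infty$ and $\sup\supp{\nu}<\infty$ hold for all $\nu\in\msc_e(\gammab)$ is impossible: it would make every such $\nu$ compactly supported, hence $\gammab$ H-determinate (cf.\ \cite{fug}), contrary to hypothesis. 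The three remaining combinations are exactly conditions (ii), (i) and (iii) of the statement, and being pairwise mutually exclusive, precisely one of them holds.

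To prove (a) I would argue as follows. Assume some $\nu_0\in\msc_e(\gammab)$ has $b:=\inf\supp{\nu_0}>-\infty$. By \eqref{2aa}, $\inf\supp{\nu_0\circ\psi_{1,-b}^{-1}}=\psi_{1,-b}(b)=0$, so $\nu_0\circ\psi_{1,-b}^{-1}$ has its closed support contained in $\rbb_+$; since this measure lies in $\msc(T_{1,-b}\gammab)$ by Lemma \ref{transH}(ii), the sequence $T_{1,-b}\gammab$ has an S-representing measure, hence is a Stieltjes moment sequence, and it is H-indeterminate by Lemma \ref{transH}(iii). Thus Theorem \ref{tinfsup}(vi) applies to $T_{1,-b}\gammab$ and gives that the closed support of \emph{every} measure in $\msc_e(T_{1,-b}\gammab)$ is bounded from below. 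Now let $\mu\in\msc_e(\gammab)$ be arbitrary; by Lemma \ref{transH}(iii) we have $\mu\circ\psi_{1,-b}^{-1}\in\msc_e(T_{1,-b}\gammab)$, so $\inf\supp{\mu\circ\psi_{1,-b}^{-1}}>-\infty$, and then, using $\mu=(\mu\circ\psi_{1,-b}^{-1})\circ\psi_{1,-b}$, the identity $\psi_{1,-b}=\psi_{1,b}^{-1}$ from \eqref{t1-1}, and \eqref{2aa} once more, we obtain $\inf\supp{\mu}=\psi_{1,b}(\inf\supp{\mu\circ\psi_{1,-b}^{-1}})>-\infty$. This establishes (a).

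For (b) I would pass to the reflected sequence $\gammab^-:=\{(-1)^n\gamma_n\}_{n=0}^{\infty}$, which is again an H-indeterminate Hamburger moment sequence, and observe that $\nu\mapsto\nu\circ r$, where $r(t)=-t$ and $\nu\circ r$ is defined as in \eqref{dnf}, is a bijection of $\msc(\gammab)$ onto $\msc(\gammab^-)$ that carries $\msc_e(\gammab)$ onto $\msc_e(\gammab^-)$ --- indeed $f\mapsto f\circ r$ is a unitary of $L^2(\nu\circ r)$ onto $L^2(\nu)$ preserving $\cbb[t]$, and the same holds after multiplying by the $r$-invariant weight $1+t^2$, so N-extremality is preserved by Lemma \ref{MRk}(i) --- and that $\sup\supp{\nu}=-\inf\supp{\nu\circ r}$. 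Consequently ``$\sup\supp{\nu}<\infty$ for some $\nu\in\msc_e(\gammab)$'' is equivalent to ``$\inf\supp{\rho}>-\infty$ for some $\rho\in\msc_e(\gammab^-)$'', which by (a) applied to $\gammab^-$ is equivalent to ``$\inf\supp{\rho}>-\infty$ for all $\rho\in\msc_e(\gammab^-)$'', i.e.\ to ``$\sup\supp{\nu}<\infty$ for all $\nu\in\msc_e(\gammab)$''. This establishes (b) and completes the argument.

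The substantive step is (a), and within it the essential move is the translation by $-b$ that brings the problem into the Stieltjes setting governed by Theorem \ref{tinfsup}; everything else is routine manipulation of the correspondence \eqref{hurra}. The only point deserving a word of caution is that the reflection $r$ is not of the form \eqref{defpsi} (whose maps have positive scaling factor), so Lemma \ref{transH} does not apply to it verbatim; but the verification that $r$ carries N-extremal measures to N-extremal measures is the same elementary computation as in the proof of Lemma \ref{transH}(iii), so this is not a real obstacle.
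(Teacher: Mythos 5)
Your proof is correct and follows essentially the same route as the paper's: the key moves in both are the translation $T_{1,-b}$ that places the problem in the Stieltjes setting governed by Theorem \ref{tinfsup}, and the reflection $t\mapsto -t$ that reduces the boundedness-above question to the boundedness-below one. Your packaging as two dichotomies plus the exclusion of the doubly-bounded case is merely a cleaner organization of the same argument (and it has the minor virtue of making explicit why no $\nu\in\msc_e(\gammab)$ can be compactly supported, a point the paper leaves implicit).
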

   \begin{proof} Suppose that  (i) does not hold.
Then there exists $\nu\in \msc_e(\gammab)$ such that
either $s:=\inf\supp{\nu}> -\infty$ or $\sup\supp{\nu}
< \infty$. In the former case, $T_{1,-s}\gammab$ is an
H-indeterminate Stieltjes moment sequence (cf.\ Lemma
\ref{transH}), and thus applying Theorem \ref{tinfsup}
to $T_{1,-s}\gammab$ and returning to $\gammab$ via
the self-map $T_{1,s}$ we get (ii). Considering the
H-indeterminate Hamburger moment sequence
$\widetilde\gammab:=\{(-1)^n\gamma_n\}_{n=0}^\infty$,
one can show that the latter case leads to (iii) (as
in the proof of Lemma \ref{transH}, we verify that the
mapping $\msc(\gammab) \ni \nu \mapsto \nu \circ
\psi^{-1} \in \msc(\widetilde\gammab)$ is a bijection
which maps $\msc_e(\gammab)$ onto
$\msc_e(\widetilde\gammab)$ and $\supp{\nu \circ
\psi^{-1}} = \psi(\supp{\nu})$ for every $\nu\in
\msc(\gammab)$, where $\psi$ is a self-map of $\rbb$
given by $\psi(t)=-t$ for $t \in \rbb$). This
completes the proof.
   \end{proof}
We refer the reader to \cite[pages 93 and 94]{Ism-Mas}
(see also \cite[Theorem 21.5.3]{Ism-book}), where all
N-extremal measures of the H-indeterminate Hamburger
moment sequence arising from \mbox{$q^{-1}$-Hermite}
polynomials with $q \in (0,1)$ are explicitly
calculated. Since their closed supports are bounded
neither from below nor from above, we see that none of
the conditions (i), (ii) and (iii) of Theorem
\ref{trichotomy} is redundant.
   \subsection{The Al-Salam-Carlitz moment problem}\label{SecASC}
   Orthogonal $q$-polyno\-mials introduced by Al-Salam
and Carlitz in \cite{als-c} give rise to examples of
S-indetermi\-nate Stieltjes moment sequences for which
some particular N-extremal measures are explicitly
known. These special measures help us to show that the
directed graph $\gcal_{2,0}$ admits a non-hyponormal
composition operator generating Stieltjes moment
sequences (cf.\ Theorem \ref{czwarte}).

We begin by recalling the definition of
\mbox{$q$-Pochhammer} symbol (called also
\mbox{$q$-shifted} factorial). For $z, q\in \cbb$, we
write
   \begin{align} \label{Poch}
(z;q)_n =
   \begin{cases}
1 & \text{ if } n=0,
   \\[.5ex]
\prod_{j=1}^n (1-zq^{j-1}) & \text{ if } n \in \nbb,
   \\[.7ex]
\prod_{j=1}^{\infty} (1-zq^{j-1}) & \text{ if }
n=\infty, |q| < 1.
   \end{cases}
   \end{align}
(See \cite[Section VII.1]{Sa-Zy} for more on infinite
products.) Following \cite[Section VI.10]{chi-b}, we
extend the original definition of $q$-polynomials of
Al-Salam and Carlitz to cover the case of $|q|>1$.
Given $a\in \cbb$ and $q\in \cbb \setminus \{0\}$, we
define $\{V^{(a)}_{n}(x;q)\}_{n=0}^{\infty}$, the
sequence of complex polynomials in one variable $x$,
by the recurrence formula
   \begin{align} \label{3termv}
   \begin{gathered}
V^{(a)}_{n+1}(x;q) = \Big(x-\frac{1+a}{q^n}\Big) V^{(a)}_{n}(x;q)
- a \frac{1-q^{n}}{q^{2n-1} } V^{(a)}_{n-1}(x;q), \quad n \in
\zbb_+,
   \\
V^{(a)}_{-1}(x;q) = 0, \qquad V^{(a)}_{0}(x;q) = 1.
   \end{gathered}
   \end{align}
The generating function for
$\{V^{(a)}_{n}(x;q)\}_{n=0}^{\infty}$ can be described
as follows (see \cite{als-c} and \cite[Section
VI.10]{chi-b}). If $a,q,z \in\cbb$, $x\in \rbb$ and
$q\neq 0$, then
   \begin{gather} \label{gf2}
   \sum_{n=0}^{\infty} V^{(a)}_n(x;q)
\frac{(-1)^nq^{\frac{n(n-1)}{2}}z^n}{(q;q)_n} =
   \begin{cases}
\frac{(xz;q)_{\infty}}{(z;q)_{\infty}(az;q)_{\infty}}
& \text{ if } |z|< r_a, |q|<1,
   \\[1ex]
\frac{\big(z\frac{1}{q};\frac{1}{q}\big)_{\infty}
\big(z\frac{a}{q};\frac{1}{q}\big)_{\infty}}
{\big(z\frac{x}{q}; \frac{1}{q}\big)_{\infty}} &
\text{ if } |z|< \frac{|q|}{|x|}, |q| >1,
   \end{cases}
   \end{gather}
where $r_a = \min\{1,\frac{1}{|a|}\}$ with the
convention that $\frac{1}{0}=\infty$. The function of
one complex variable $z$ given by the right-hand side
of \eqref{gf2} is called the generating function for
$\{V^{(a)}_{n}(x;q)\}_{n=0}^{\infty}$. Clearly, it is
meromorphic and has simple poles because
$(z;q)_{\infty}=0$ if and only if $1-zq^n=0$ for some
(unique) $n\in \zbb_+$.

By \eqref{3termv},
$\{V^{(a)}_{n}(x;q)\}_{n=0}^{\infty}$ satisfies the
following general recurrence relation
   \begin{align} \label{cokolw}
   \begin{gathered} P_{n+1}(x) = (x-c_n) P_n(x) - \lambda_n
P_{n-1}(x), \quad n \in \zbb_+,
   \\
P_{-1}(x) = 0, \qquad P_0(x)=1,
   \end{gathered}
   \end{align}
where $\{c_n\}_{n=0}^{\infty}$ and
$\{\lambda_n\}_{n=1}^{\infty}$ are sequences of
complex numbers ($\lambda_0$ can be chosen
arbitrarily) and $\{P_n(x)\}_{n=0}^{\infty}$ are
polynomials. Suppose that \eqref{cokolw} holds. Then
$\{P_n(x)\}_{n=0}^{\infty}$ is a Hamel basis of
$\cbb[x]$ and thus there exists a unique linear
functional $L\colon \cbb[x] \to \cbb$ such that
$L(x^0)=1$ and $L(P_n)=0$ for all $n\in \nbb$ (or
equivalently if and only if $L(x^0)=1$ and $L(P_m
P_n)=0$ for all $m,n \in \zbb_+$ such that $m\neq n$).
We say that $\mu \in \msc$ is an {\em orthogonalizing
measure} for $\{P_n(x)\}_{n=0}^{\infty}$ if $L(P) =
\int_{\rbb} P \D \mu$ for all $P \in \cbb[x]$. If this
is the case, then $\{L(x^n)\}_{n=0}^{\infty}$ is a
Hamburger moment sequence and $\mu$ is its
H-representing measure (clearly $\mu(\rbb) = 1$). By
Favard's theorem (cf.\ \cite[Theorems I.4.4 and
II.3.1]{chi-b}), the polynomials
$\{P_n(x)\}_{n=0}^{\infty}$ have an orthogonalizing
measure if and only if $c_n \in \rbb$ and
$\lambda_{n+1} > 0$ for all $n\in \zbb_+$.

Applying the above, we obtain the following
statement.
   \begin{align} \label{othm2}
   \begin{minipage}{72ex} {\em The polynomials
$\{V^{(a)}_{n}(x;q)\}_{n=0}^{\infty}$ have an
orthogonalizing measure if and only if either $a < 0$
and $q\in (-1,0) \cup (1,\infty)$, or $a > 0$ and $q
\in (0,1)$.}
   \end{minipage}
   \end{align}
As in \cite{als-c} we concentrate on the case of $q
\in (0,1)$. Then, by \eqref{othm2}, the polynomials
$\{V^{(a)}_{n}(x;q)\}_{n=0}^{\infty}$ have an
orthogonalizing measure if and only if $a>0$. For such
$a$ and $q$, the question of determinacy of
orthogonalizing measures for
$\{V^{(a)}_{n}(x;q)\}_{n=0}^{\infty}$ can be answered
completely. This is done below. Known orthogonalizing
measures for $\{V^{(a)}_{n}(x;q)\}_{n=0}^{\infty}$ are
discussed in detail as well. We refer the reader to
Figure $1$ which illustrates how determinacy depends
on the parameters $a$ and $q$.
  \vspace{1ex}
   \begin{center}
   \includegraphics[width=10cm]
   {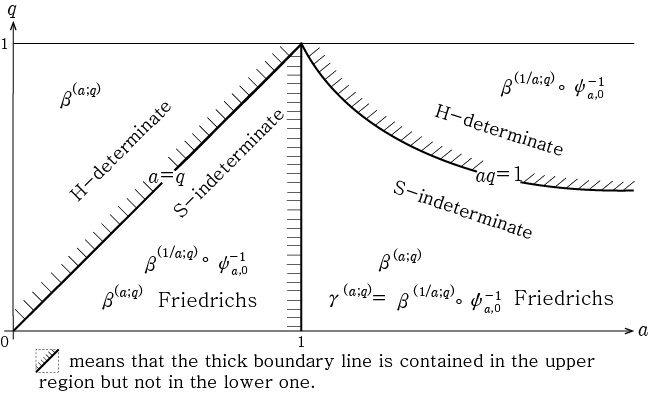}
   \\[1ex]
   \begin{minipage}{72ex}
{\small {\sf Figure $1$.} Determinacy of known
orthogonalizing measures for
$\{V^{(a)}_{n}(x;q)\}_{n=0}^{\infty}$.}
   \end{minipage}
   \end{center}
   \vspace{2ex}

Al-Salam and Carlitz showed in \cite{als-c} that if
$a>0$ and $aq<1$, then the measure
   \begin{align} \label{betaq}
\beta^{(a;q)} := (aq;q)_{\infty} \sum_{n=0}^{\infty}
\frac{a^n q^{n^2}}{(aq;q)_n (q;q)_n} \delta_{q^{-n}},
   \end{align}
is an orthogonalizing measure for
$\{V^{(a)}_{n}(x;q)\}_{n=0}^{\infty}$ (if $aq \Ge 1$,
then the right-hand side of \eqref{betaq} either does
not make sense or does not define a positive
measure),~ and
   \begin{align*}
{\sf m}_n[\beta^{(a;q)}]:=\int_{\rbb} t^n \D
\beta^{(a;q)}(t) = \sum_{k=0}^{n} \frac{(q;q)_n
q^{k(k-n)}}{(q;q)_k (q;q)_{n-k}} a^k, \quad n\in
\zbb_+.
   \end{align*}
(That $\beta^{(a;q)}$ is a probability measure was
proved in \cite[Theorem 5.1]{Ism}.) In turn, Chihara
essentially proved that if $0 < q < a \Le 1$, then
$\{{\sf m}_n[\beta^{(a;q)}]\}_{n=0}^{\infty}$ is an
S-indeterminate Stieltjes moment sequence and
$\beta^{(a;q)}$ is its Friedrichs measure (cf.\
\cite{chi}). This can be also deduced from Lemma
\ref{transH}, \cite[Proposition 4.5.1]{b-v},
\cite[Corollary, p.\ 481]{chi} (see also \cite[Lemma
2.2.5]{j-j-s0}), \cite[p.\ 483, (b)]{chi} and
\eqref{KFm} by considering the measure $\beta^{(a;q)}
\circ \psi_{1,-1}^{-1}$ which coincides with the
measure $\nu^{(a)}_0$ appearing in \cite[Proposition
4.5.1]{b-v}. A similar reasoning shows that if
$1<a<q^{-1}$, then $\{{\sf
m}_n[\beta^{(a;q)}]\}_{n=0}^{\infty}$ is an
S-indeterminate Stieltjes moment sequence (this was
first noticed by Chihara in \cite{chi}) and
$\beta^{(a;q)}$ is its N-extremal measure which is
neither the Krein nor the Friedrichs measure. Consider
now the measure
   \begin{align} \label{gamprezy}
\gamma^{(a;q)} := (q/a;q)_{\infty} \sum_{n=0}^{\infty}
\frac{a^{-n} q^{n^2}}{(q/a;q)_n (q;q)_n}
\delta_{aq^{-n}}, \quad 1<a<q^{-1}.
   \end{align}
Applying Lemmata \ref{hg} and \ref{transH},
\cite[Proposition 4.5.1]{b-v} and Theorem \ref{taso},
the latter to the measure $\gamma^{(a;q)} \circ
\psi_{1,-1}^{-1}$ which coincides with the measure
$\nu^{(a)}_{-1/\xi(a)}$ appearing in \cite[Proposition
4.5.1]{b-v} (consult also Remark \ref{fri}), we deduce
that if $1<a<q^{-1}$, then ${\sf
m}_n[\beta^{(a;q)}]={\sf m}_n[\gamma^{(a;q)}]$ for all
$n\in \zbb_+$, $\gamma^{(a;q)}$ is an orthogonalizing
measure for $\{V^{(a)}_{n}(x;q)\}_{n=0}^{\infty}$ and
$\gamma^{(a;q)}$ is the Friedrichs measure of $\{{\sf
m}_n[\beta^{(a;q)}]\}_{n=0}^{\infty}$. Finally, as
shown in \cite{chi}, if $0 < a \Le q < 1$ or $1 <
q^{-1} \Le a$, then the polynomials
$\{V^{(a)}_{n}(x;q)\}_{n=0}^{\infty}$ have an
H-determinate orthogonalizing measure. Clearly, in the
former case this measure coincides with
$\beta^{(a;q)}$ defined by \eqref{betaq}. To find the
orthogonalizing measure for
$\{V^{(a)}_{n}(x;q)\}_{n=0}^{\infty}$ in the latter
case, we follow an idea due to Ismail, which in fact
can be applied to wider set of parameters (cf.\
\cite[page 592]{Ism}). For this, note that if
$a,\tilde q \in \cbb \setminus \{0\}$, then the
polynomials $\{a^nV^{(1/a)}_{n}(x/a;\tilde
q)\}_{n=0}^{\infty}$ satisfy the same recurrence
relation as the polynomials $\{V^{(a)}_{n}(x;\tilde
q)\}_{n=0}^{\infty}$ (cf.\ \eqref{3termv}), and thus
   \begin{align} \label{1nada}
V^{(a)}_{n}(x;\tilde q) = a^nV^{(1/a)}_{n}(x/a;\tilde
q), \quad n \in \zbb_+, \, a,\tilde q \in \cbb
\setminus \{0\}.
   \end{align}

Now suppose that $a\in (0,\infty)$ and $a^{-1} q < 1$
(recall that $q\in (0,1)$). Then using the measure
transport theorem and the fact that $\beta^{(1/a;q)}$
is the orthogonalizing measure for
$\{V^{(1/a)}_{n}(x;q)\}_{n=0}^{\infty}$, we get
   \begin{align*}
\int_{\rbb} & V^{(a)}_m(x;q) V^{(a)}_n(x;q) \D
\beta^{(1/a;q)}\circ \psi_{a,0}^{-1}(x)
   \\
&\hspace{-1.8ex}\overset{\eqref{1nada}}= \int_{\rbb}
a^mV^{(1/a)}_m(\psi_{a,0}^{-1}(x);q) a^n
V^{(1/a)}_n(\psi_{a,0}^{-1}(x);q) \D
\beta^{(1/a;q)}\circ \psi_{a,0}^{-1}(x)
   \\
&= a^{m+n} \int_{\rbb} V^{(1/a)}_m(x;q)
V^{(1/a)}_n(x;q) \D \beta^{(1/a;q)}(x) = 0, \quad m,n
\in \zbb_+, \, m\neq n.
   \end{align*}
Since $\beta^{(1/a;q)} \circ \psi_{a,0}^{-1}$ is a
probability measure, we deduce that it is an
orthogonalizing measure for
$\{V^{(a)}_{n}(x;q)\}_{n=0}^{\infty}$. If additionally
$aq < 1$, then $\beta^{(a,q)}$ is another
orthogonalizing measure for
$\{V^{(a)}_{n}(x;q)\}_{n=0}^{\infty}$. Now there are
three possibilities. If $a=1$, then the measures
$\beta^{(a;q)}$ and $\beta^{(1/a;q)} \circ
\psi_{a,0}^{-1}$ coincide. If $a < 1$, then $\{{\sf
m}_n[\beta^{(a;q)}]\}_{n=0}^{\infty}$ is an
S-indeterminate Stieltjes moment sequence,
$\beta^{(a;q)}$ is its Friedrichs measure (because $0
< q < a < 1$) and $\beta^{(1/a;q)} \circ
\psi_{a,0}^{-1}$ is its N-extremal measure which is
neither the Krein nor the Friedrichs measure (because
of $1 < a^{-1} < q^{-1}$, \eqref{KFm} and Lemma
\ref{transH}). In turn, if $a > 1$, then $\{{\sf
m}_n[\beta^{(a;q)}]\}_{n=0}^{\infty}$ is an
S-indeterminate Stieltjes moment sequence,
$\beta^{(1/a;q)} \circ \psi_{a,0}^{-1}$ is its
Friedrichs measure (because of $0 < q < a^{-1} < 1$
and Theorem \ref{taso}) which coincides with
$\gamma^{(a;q)}$ defined by \eqref{gamprezy}, and
$\beta^{(a;q)}$ is its N-extremal measure which is
neither the Krein nor the Friedrichs measure (because
$1 < a < q^{-1}$).

Finally, we note that if $a \in (0,\infty)$ and $1 <
q^{-1} \Le a$ (i.e., $aq \Ge 1$), then $a^{-1} q <1$,
and thus, by the above considerations, the measure
$\beta^{(1/a;q)} \circ \psi_{a,0}^{-1}$ is the unique
orthogonalizing measure for
$\{V^{(a)}_{n}(x;q)\}_{n=0}^{\infty}$.

One more observation is at hand. Namely, using the
equality $(aq;q)_{\infty}=(aq;q)_n
(aq^{n+1};q)_{\infty}$, we get
   \begin{align*}
\beta^{(a;q)} = \sum_{n=0}^{\infty} \frac{a^{n}
q^{n^2}(aq^{n+1};q)_{\infty}}{(q;q)_n}
\delta_{q^{-n}}, \quad a>0, \, aq<1.
   \end{align*}
Now it is easily seen that for every $(q,a) \in (0,1)
\times (0,\infty)$ such that $aq\Ge 1$, the right-hand
side of the above equality defines the signed measure
(understood as in \cite{Ash}), call it
$\tilde\beta^{(a;q)}$, which is positive if and only
if $aq^k=1$ for some $k\in \nbb$. Moreover, standard
calculations show that if $aq^{k}=1$ for some $k\in
\nbb$, then $\supp{\tilde\beta^{(a;q)}} =
\{q^{-j}\colon j \Ge k\}$, which together with
\eqref{betaq} implies that
$\tilde\beta^{(a;q)}=\beta^{(1/a;q)} \circ
\psi_{a,0}^{-1}$. In a sense, this means that all
singularities appearing in \eqref{betaq} can be
removed.

The H-determinacy of the Hamburger moment problem
associated with the polynomials
$\{V^{(a)}_{n}(x;q)\}_{n=0}^{\infty}$ for $q\in (-1,0)
\cup (0,1)$ was discussed by Ismail in \cite{Ism}.
   \subsection{Index of H-determinacy}
Following \cite{ber-dur}, we define $\ind z \rho \in
\zbb_+\cup\{\infty\}$, the {\em index of
H-determinacy} of an H-determinate measure $\rho$ at a
point $z\in \cbb$, by
   \begin{align*}
\ind z \rho = \sup\{k\in \zbb_+\colon
|t-z|^{2k}\D\rho(t) \text{ is H-determinate}\}.
   \end{align*}
By the index of H-determinacy of an H-determinate
Stieltjes moment sequence $\gammab =
\{\gamma_n\}_{n=1}^{\infty}$ at a point $z\in \cbb$ we
understand the index of H-determinacy of a unique
H-representing measure of $\gammab$ at the point $z$.
Note that if $\rho$ is an H-determinate measure such
that $\ind {z_0} \rho = \infty$ for some $z_0 \in
\cbb$, then $\ind {z} \rho = \infty$ for all $z \in
\cbb$ (cf.\ \cite[Corollary 3.4(1)]{ber-dur}). If this
is the case, then we say that $\rho$ (or $\gammab$)
has {\em infinite index of H-determinacy}. Note also
that the following holds.
   \begin{align} \label{indinf}
   \begin{minipage}{69ex}
{\em If $\rho \in \msc^+$ is an H-determinate measure
such that $\ind {z} \rho = \infty$ for all $z\in
\cbb$, then the measure $t^k \D \rho(t)$ is
H-determinate for all $k \in \zbb_+$.}
   \end{minipage}
   \end{align}
Indeed, this can be deduced from Proposition
\ref{krytnd} and the inequality
   \begin{align*}
\int_{\sigma} t^k \D \rho(t) \Le \int_{\sigma}
|t-\I|^{2l} \D \rho(t), \quad \sigma \in \borel{\rbb},
\, k,l \in \zbb_+, \, k \Le 2l.
   \end{align*}

The following result can be thought of as a complement
to \cite[Theorem 3.6]{ber-dur}.
   \begin{thm} \label{b-d}
Let $\nu$ be an N-extremal measure and
$\varOmega^\prime$ be an infinite subset of
$\varOmega:=\supp{\nu}$. Set $\rho= \sum_{\lambda\in
\varOmega\setminus \varOmega^\prime} \nu(\{\lambda\})
\delta_\lambda$ $($with $\rho=0$ if
$\varOmega^\prime=\varOmega$$)$. Then $\rho$ is an
H-determinate measure such that $\ind z {\rho} =
\infty$ for all $z\in \cbb$.
   \end{thm}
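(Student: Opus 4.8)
The plan is to reduce the statement about an arbitrary N-extremal measure $\nu$ to the normalized situation in which $\inf\supp{\nu}$ can be taken to be $0$, and then to invoke \cite[Theorem 3.6]{ber-dur}, which handles the case of removing a single atom, together with an induction/limiting argument to remove infinitely many atoms at once. First I would dispose of the trivial case $\varOmega^\prime = \varOmega$, where $\rho = 0$ is manifestly H-determinate with $\ind z 0 = \infty$. So assume $\varOmega \setminus \varOmega^\prime \neq \emptyset$. By Lemma \ref{proN}, $\varOmega = \supp{\nu}$ is countably infinite with no accumulation point in $\rbb$, hence $\varOmega^\prime$ and $\varOmega \setminus \varOmega^\prime$ are themselves closed discrete sets; in particular $\rho$ is a well-defined (possibly finite) discrete measure whose closed support is $\varOmega \setminus \varOmega^\prime$, and $\rho \in \msc$ by the comparison test (Proposition \ref{krytnd}), since $\rho \Le \nu$.

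The core of the argument is the single-atom removal result \cite[Theorem 3.6]{ber-dur}: if $\mu$ is N-extremal and $\lambda_0 \in \supp{\mu}$, then $\mu - \mu(\{\lambda_0\})\delta_{\lambda_0}$ is H-determinate, and moreover (this is the crucial quantitative part used in Remark \ref{snieg2} via Lemma 3.7 of the same paper) adding back a point mass at $\lambda_0$ with any positive weight produces an N-extremal measure again. The key observation is a \emph{monotonicity of the index} under further atom removal. Concretely: enumerate $\varOmega^\prime = \{\lambda_1, \lambda_2, \lambda_3, \dots\}$ and set $\nu_0 = \nu$, $\nu_{k} = \nu_{k-1} - \nu_{k-1}(\{\lambda_k\})\delta_{\lambda_k} = \nu - \sum_{j=1}^k \nu(\{\lambda_j\})\delta_{\lambda_j}$. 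By \cite[Theorem 3.6]{ber-dur} each $\nu_k$ is H-determinate, and by the Lemma 3.7 add-back statement each $\nu_k$ sits between two N-extremal measures in a way that forces $\ind z {\nu_k} \ge \ind z {\nu_{k-1}} + 1$ for every $z \in \cbb$ (removing one atom strictly increases the index by at least one — this is exactly the mechanism in \cite[Theorem 3.6]{ber-dur}). Hence $\ind z {\nu_k} \ge k$ for all $z$, and since $k$ is arbitrary and $\varOmega^\prime$ is infinite, $\ind z {\nu_k} \to \infty$. The final step passes from the finite truncations $\nu_k$ to $\rho$: since $\rho \Le \nu_k$ for every $k$ (we are removing \emph{more} atoms), the comparison test Proposition \ref{krytnd} shows that H-determinacy of $|t-z|^{2m}\D\nu_k(t)$ — which holds whenever $m \le \ind z {\nu_k}$ — implies H-determinacy of $|t-z|^{2m}\D\rho(t)$, because $|t-z|^{2m}\D\rho(t) \Le |t-z|^{2m}\D\nu_k(t)$ as measures. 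Letting $k \to \infty$ gives $\ind z \rho = \infty$ for each fixed $z$, and then \cite[Corollary 3.4(1)]{ber-dur} (quoted in the excerpt) upgrades this to $\ind z \rho = \infty$ for all $z \in \cbb$ simultaneously.

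The main obstacle is the passage to the infinite removal, i.e.\ justifying that $\ind z {\nu_k} \to \infty$ rather than stabilizing: one must be sure that removing an atom from an \emph{already non-N-extremal} H-determinate measure still strictly raises the index. This is precisely the content built into \cite[Theorem 3.6]{ber-dur} (the index of $\mu$ with one atom removed equals the index of $\mu$ plus one when $\mu$ is N-extremal, and the general monotone-increase statement for H-determinate measures), so the obstacle is really bookkeeping: tracking that each $\nu_k$, though no longer N-extremal after the first removal, is covered by the relevant part of \cite{ber-dur}, and that the comparison-test reduction from $\nu_k$ to $\rho$ respects the weight $|t-z|^{2m}$. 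A secondary technical point is ensuring $\rho \in \msc$ and that its discrete support has no finite accumulation point, which follows from $\rho \le \nu$ and Lemma \ref{proN} as noted above. Once the monotone index growth is in hand, the comparison test and \cite[Corollary 3.4(1)]{ber-dur} close the argument cleanly.
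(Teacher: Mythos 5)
Your proposal is correct and follows essentially the same route as the paper: form the finite truncations $\nu_k$ obtained by removing $k$ atoms, invoke \cite[Theorem 3.6]{ber-dur} to get H-determinacy and an index that grows with $k$, and then pass to $\rho$ via the domination $\rho\Le\nu_k$ and the comparison test (Proposition \ref{krytnd}) applied to the weights $\chi_\sigma$ and $\chi_\sigma(t)|t-z|^{2k}$. The only cosmetic difference is that you phrase the finite step as an iterated single-atom removal with a claimed increment $\ind z{\nu_k}\Ge\ind z{\nu_{k-1}}+1$ (which is not literally what the cited theorem asserts once $\nu_{k-1}$ is no longer N-extremal), whereas the paper applies \cite[Theorem 3.6]{ber-dur} directly to the removal of $n$ atoms at once, obtaining $n\Ge\ind z{\rho_n}\Ge n-1$; this is easily repaired and does not affect the argument.
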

   \begin{proof}
Fix $n\in \nbb$. Take any subset $\varOmega_n$ of
$\varOmega^\prime$ such that $\card{\varOmega_n}=n$.
Set $\rho_n = \sum_{\lambda\in \varOmega\setminus
\varOmega_n} \nu(\{\lambda\}) \delta_\lambda$. Note
that for every Borel function $f\colon \rbb \to
\rbop$,
   \begin{align} \label{domin1}
\int_{\rbb} f \D \rho = \int_{\varOmega \setminus
\varOmega^\prime} f \D \nu \Le \int_{\varOmega
\setminus \varOmega_n} f \D \nu = \int_{\rbb} f \D
\rho_n.
   \end{align}
By \cite[Theorem 3.6]{ber-dur}, $\rho_n$ is
H-determinate and $n \Ge \ind z {\rho_n} \Ge n-1$ for
all $z \in \cbb$. Using Proposition \ref{krytnd} and
applying the inequality \eqref{domin1} first to $f=
\chi_{\sigma}$ and then to
$f(t)=\chi_{\sigma}(t)|t-z|^{2k}$, we deduce that
$\rho$ is H-determinate and $\ind z {\rho} \Ge \ind z
{\rho_n}$ for all $z \in \cbb$. This completes the
proof.
   \end{proof}
   \subsection{The Carleman condition}  \label{Sec2.5}
Suppose $\{\gamma_n\}_{n=0}^\infty$ is a Stieltjes
moment sequence. Clearly, the shifted sequence
$\{\gamma_{n+1}\}_{n=0}^\infty$ is a Stieltjes moment
sequence. It turns out that if
$\{\gamma_{n+1}\}_{n=0}^\infty$ is S-determinate, then
so is $\{\gamma_n\}_{n=0}^\infty$ (see
\cite[Proposition 5.12]{sim}; see also \cite[Lemma
2.4.1]{b-j-j-sA}). The reverse implication is not true
in general (cf.\ \cite{j-j-s0}). However, it is true
if $\{\gamma_n\}_{n=0}^\infty$ satisfies the Carleman
condition (see Proposition \ref{wouk} below). Recall
that a sequence $\{\gamma_n\}_{n=0}^\infty \subseteq
\rbb_+$ satisfies the {\em Carleman condition} if
$\sum_{n=1}^\infty \frac{1}{\gamma_n^{1/2n}} = \infty$
with the convention that $\frac{1}{0}=\infty$.

Below we collect some properties of Stieltjes moment
sequences that satisfy the Carleman condition.
   \begin{pro}\label{wouk}
Let $\gammab=\{\gamma_n\}_{n=0}^\infty$ be a Stieltjes
moment sequence. Then
   \begin{enumerate}
   \item[(i)] if
$\gammab$ satisfies the Carleman condition, then
$\gammab$ is H-determinate,
   \item[(ii)] $\gammab$ satisfies
the Carleman condition if and only if
$\{\gamma_{n+1}\}_{n=0}^\infty$ satisfies the Carleman
condition,
   \item[(iii)] if $\gammab$
satisfies the Carleman condition, then so does
\mbox{$\{\gamma_n + c\}_{n=0}^\infty$} for every $c
\in (0,\infty)$,
   \item[(iv)] $\gammab$
satisfies the Carleman condition if and only if
$\{\gamma_{jp}\}_{j=0}^\infty$ satisfies the Carleman
condition for every $p\in \nbb$ $($equivalently{\em :}
for some $p\in \nbb$$)$.
   \end{enumerate}
   \end{pro}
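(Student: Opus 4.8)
The four assertions concern only the growth of $\{\gamma_n\}_{n=0}^{\infty}$, so the plan is to reduce everything to elementary estimates on that sequence. First I would dispose of degeneracies: if $\gamma_0=0$ the S-representing measure vanishes and the statement is vacuous; if $\gamma_n=0$ for some $n\Ge 1$, then $\gammab$, being a Stieltjes moment sequence, is represented by $\gamma_0\delta_0$, hence $\gamma_n=0$ for all $n\Ge 1$, and with the convention $1/0=\infty$ every Carleman series occurring in (i)--(iv) has all its terms equal to $+\infty$, so (i)--(iv) are trivial. Thus one may assume $\gamma_n>0$ for every $n$; and since rescaling $\gamma_n\mapsto\gamma_n/\gamma_0$ multiplies the $n$-th term of each Carleman series by $\gamma_0^{1/2n}$, a factor tending to $1$ and hence bounded away from $0$ and $\infty$, one may further assume $\gamma_0=1$. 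The only structural input I use is log-convexity: by Cauchy--Schwarz $\gamma_n^2\Le\gamma_{n-1}\gamma_{n+1}$ for $n\Ge 1$, equivalently the ratios $r_n:=\gamma_{n+1}/\gamma_n$ are non-decreasing, equivalently (using $\gamma_0=1$) the sequence $b_n:=\gamma_n^{1/2n}$ is non-decreasing; so the general term $1/b_n$ of the Carleman series $\sum_n\gamma_n^{-1/2n}$ is non-increasing.

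Part (i) is the classical Carleman criterion for the Stieltjes moment problem, which---in contrast to the Hamburger criterion $\sum_n\gamma_{2n}^{-1/2n}=\infty$---uses all the moments and yields H-determinacy; I would simply cite it from the literature. A short route to S-determinacy, for the record: symmetrizing $\gammab$ through the substitution $t=s^2$ yields the Hamburger sequence carrying $\gamma_n$ in degree $2n$ and $0$ in odd degrees, whose Hamburger Carleman sum equals $\sum_n\gamma_n^{-1/2n}=\infty$; that sequence is therefore H-determinate, and pulling M.\ Riesz's density criterion (Lemma \ref{MRk}) back along $s\mapsto s^2$ via the even parts of approximating polynomials shows the S-representing measure of $\gammab$ is S-determinate. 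Strengthening this to H-determinacy is likewise classical: after splitting off a possible atom at $0$ (which leaves the Carleman condition unchanged), the criterion of Chihara recalled above applies.

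Parts (iii) and (iv) are elementary bookkeeping. For (iii): from $\gamma_n+c\Le 2\max\{\gamma_n,c\}$ and $2^{-1/2n}\Ge 1/\sqrt 2$ one gets $(\gamma_n+c)^{-1/2n}\Ge\frac1{\sqrt2}\min\{\gamma_n^{-1/2n},c^{-1/2n}\}$, which for large $n$---since $c^{-1/2n}\to1$---is $\Ge\frac1{2\sqrt2}\min\{\gamma_n^{-1/2n},1\}$; and $\sum_n\min\{\gamma_n^{-1/2n},1\}=\infty$ whenever $\sum_n\gamma_n^{-1/2n}=\infty$ (distinguish whether $\gamma_n\Le1$ holds for infinitely many $n$ or not), so $\{\gamma_n+c\}$ satisfies the Carleman condition. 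For (iv): grouping $\sum_{n\Ge1}1/b_n$ into consecutive blocks of length $p$ and using that $1/b_n$ is non-increasing gives $p\sum_{k\Ge1}1/b_{pk}\Le\sum_{n\Ge1}1/b_n\Le p/b_1+p\sum_{k\Ge1}1/b_{pk}$ (the last inequality using $b_{pk+1}\Ge b_{pk}$), so the two sides diverge together, and $\sum_{k\Ge1}1/b_{pk}=\sum_{k\Ge1}\gamma_{pk}^{-1/(2pk)}$ is precisely the Carleman sum restricted to multiples of $p$, i.e.\ the Carleman condition for the subsequence $\{\gamma_{jp}\}_{j=0}^{\infty}$; fixing any $p$ yields the ``for every $p$'' and ``for some $p$'' forms simultaneously.

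Finally, part (ii) splits into two inequalities. For ``$\{\gamma_{n+1}\}$ Carleman $\Rightarrow\gammab$ Carleman'': the ratios being non-decreasing, $\gamma_{n+1}\Ge r_0\gamma_n=\gamma_1\gamma_n$, so $\gamma_{n+1}^{-1/2n}\Le\gamma_1^{-1/2n}\gamma_n^{-1/2n}$ with $\gamma_1^{-1/2n}\to1$, whence divergence of $\sum_n\gamma_{n+1}^{-1/2n}$ forces that of $\sum_n\gamma_n^{-1/2n}$. For the converse, $\gamma_{n+1}=b_{n+1}^{2(n+1)}$, so $\gamma_{n+1}^{-1/2n}=b_{n+1}^{-(n+1)/n}=b_{n+1}^{-1}\,b_{n+1}^{-1/n}$, and I would split the indices: on $\{n:b_{n+1}>\E^{n}\}$ one has $b_{n+1}^{-1}<\E^{-n}$, so the part of $\sum_n b_{n+1}^{-1}$ over these indices converges, and since $\sum_n b_{n+1}^{-1}$ is a tail of the divergent series $\sum_m 1/b_m$ it follows that $\sum_{n:\,b_{n+1}\Le\E^{n}}b_{n+1}^{-1}=\infty$; on those indices $b_{n+1}^{-1/n}\Ge\E^{-1}$, hence $\sum_n\gamma_{n+1}^{-1/2n}\Ge\E^{-1}\sum_{n:\,b_{n+1}\Le\E^{n}}b_{n+1}^{-1}=\infty$, i.e.\ $\{\gamma_{n+1}\}$ satisfies the Carleman condition. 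The only non-elementary ingredient is the classical Carleman criterion used in (i); the only elementary step needing care is the index-split in the hard half of (ii), which works precisely because a moment exceeding $\E^{n}$ in the $n$-th slot already makes the relevant Carleman sum partially convergent.
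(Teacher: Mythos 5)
Parts (i)--(iii) of your proposal are correct, and your treatment is more self-contained than the paper's: the authors dispose of (i) by citing Simon, of (ii) by citing the equivalence (d)$\Leftrightarrow$(e) in Rudin's Denjoy--Carleman theorem, and of (iii) by a one-line comparison (with $\nu$ an S-representing measure and $a=\nu((1,\infty))>0$, one has $\gamma_n+c\Le\gamma_n+\frac{c}{a}\int_{(1,\infty)}t^n\D\nu(t)\Le(1+\frac{c}{a})\gamma_n$). Your min/max estimate for (iii) and your index-splitting argument for the hard half of (ii) work directly on the sequence and are sound; the reduction to $\gamma_0=1$ and the log-convexity bookkeeping are also fine.

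Part (iv) has a genuine gap at the final identification. Your blocking argument correctly shows that, with $b_n=\gamma_n^{1/2n}$, the sums $\sum_n 1/b_n$ and $\sum_k 1/b_{pk}=\sum_k\gamma_{pk}^{-1/(2pk)}$ diverge together. But under the paper's definition, the Carleman condition for the sequence $\{\gamma_{jp}\}_{j=0}^\infty$ is $\sum_j\gamma_{jp}^{-1/(2j)}=\sum_j b_{jp}^{-p}=\infty$: the exponent is $-1/(2j)$, not $-1/(2jp)$, since the $j$-th term of the new sequence is $\gamma_{jp}$. The two conditions differ by a $p$-th power. The implication $\sum_jb_{jp}^{-p}=\infty\Rightarrow\sum_jb_{jp}^{-1}=\infty$ survives (either $b\Le1$ everywhere and everything diverges, or $b_{jp}\Ge1$ eventually and $b_{jp}^{-p}\Le b_{jp}^{-1}$), so the ``for some $p$'' half is fine; but the converse, needed for ``$\gammab$ Carleman $\Rightarrow$ $\{\gamma_{jp}\}_j$ Carleman for every $p$,'' does not follow from your computation, and with the literal reading it actually fails: for $\gamma_n=\int_0^\infty t^n\E^{-\sqrt t}\,\D t=2(2n+1)!$ one has $\gamma_n^{1/2n}\sim 2n/\E$, so $\gammab$ satisfies the Carleman condition, while $\gamma_{2j}^{1/2j}\sim(4j)^2/\E^2$, so $\sum_j\gamma_{2j}^{-1/2j}<\infty$. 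What your argument proves is the restricted-sum equivalence $\sum_n\gamma_n^{-1/2n}=\infty\iff\sum_j\gamma_{jp}^{-1/(2jp)}=\infty$, which is precisely the content of the source the paper cites for (iv); so you have reproduced the intended result, but you should flag that the step ``$\sum_k\gamma_{pk}^{-1/(2pk)}$ is precisely the Carleman condition for $\{\gamma_{jp}\}_j$'' is not correct under the definition of the Carleman condition given in Section \ref{Sec2.5}, and that (iv) as literally stated needs this reinterpretation.
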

   \begin{proof}
(i) See \cite[Corollary 4.5]{sim}.

(ii) This can be deduced from the equivalence
(d)$\Leftrightarrow$(e) in \cite[Theorem 19.11]{Rud}
(the equivalence follows from the Carleman inequality,
cf.\ \cite[p.\ 105]{Car}).

(iii) Let $\nu$ be an S-representing measure of
$\{\gamma_n\}_{n=0}^\infty$. Set $a=\nu((1,\infty))$.
Since the case of $a = 0$ is obvious, we can assume
that $a> 0$. Then
   \begin{align*}
\gamma_n + c \Le \gamma_n + \frac{c}{a}
\int_{(1,\infty)} t^n \D \nu(t) \Le \Big(1 +
\frac{c}{a}\Big) \gamma_n, \quad n \in \zbb_+,
   \end{align*}
which implies that $\{\gamma_n + c\}_{n=0}^\infty$
satisfies the Carleman condition.

(iv) This follows from \cite[Section 1]{StSz1}
because, by the Cauchy-Schwarz inequality, $\gamma_n^2
\Le \gamma_k \gamma_l$ for all nonnegative integers
$k,l$ such that $k+l=2n$.
   \end{proof}
Note that in view of Proposition \ref{wouk}, a
Stieltjes moment sequence which satisfies the Carleman
condition has infinite index of H-determinacy because
its index of H-determinacy at $0$ is infinite.
   \section{{\bf Composition operators over one-circuit
directed graphs}}
   \subsection{Criteria for hyponormality and subnormality}
\label{sec3.1} In this paper we study composition
operators in $L^2$-spaces over discrete measure
spaces. By a {\em discrete measure} on a (nonempty)
set $X$ we understand a $\sigma$-finite measure $\mu$
on the $\sigma$-algebra $2^X$ such that $\mu(x) > 0$
for every $x \in X$, with the convention
   \begin{align} \label{dudu}
\mu(x):=\mu(\{x\}), \quad x \in X.
   \end{align}
Note that if $\mu$ is a discrete measure on $X$, then
$X$ is at most countable and $\mu(x) < \infty$ for
every $x\in X$. Moreover, any discrete measure $\mu$
on $X$ is determined by a function $\mu\colon X \to
(0,\infty)$ via $\mu(\{x\})=\mu(x)$. This one-to-one
correspondence between discrete measures and positive
functions is used frequently in the present paper.

Let $\mu$ be a discrete measure on a set $X$ and let
$\phi$ be a self-map of $X$. Then the operator
$C_{\phi}$ in $L^2(\mu)$ given by
   \begin{align*}
\dz{C_{\phi}} = \{f \in L^2(\mu) \colon f \circ \phi
\in L^2(\mu)\} \text{ and } C_{\phi} f = f \circ \phi
\text{ for } f \in \dz{C_{\phi}},
   \end{align*}
is called a {\em composition operator} in $L^2(\mu)$
with a {\em symbol} $\phi$. Since the measure
$\mu\circ \phi^{-1}$ given by $(\mu\circ
\phi^{-1})(\varDelta) = \mu(\phi^{-1}(\varDelta))$ for
$\varDelta \in 2^X$ is absolutely continuous with
respect to $\mu$, we can consider the Radon-Nikodym
derivative $\hfi=\D \mu\circ \phi^{-1}/\D\mu$. Clearly
   \begin{align} \label{hfi}
\hfi(x) = \frac{\mu(\phi^{-1}(\{x\}))}{\mu(x)}, \quad
x \in X.
   \end{align}
Hence, $\hfi(x) > 0$ for every $x \in X$ if and only
if $\phi(X)=X$. It is easily seen that (cf.\
\cite{nor})
   \begin{align} \label{ogrn}
   \begin{minipage}{65ex}
{\em $C_{\phi} \in \ogr{L^2(\mu)}$ if and only if
$\sup_{x\in X} \hfi(x) < \infty$, and if this is the
case, then $\|C_{\phi}^n\|^2 = \sup_{x\in X}
\hfin{n}(x)$ for every $n\in \zbb_+$.}
   \end{minipage}
   \end{align}
Note also that
   \begin{align} \label{chiphi}
\text{$\|C_{\phi}^{n} \chi_{\{u\}}\|^2 = \mu(u)
\hfin{n}(u)$ whenever $u \in X$, $\chi_{\{u\}} \in
\dz{C_{\phi}^n}$ and $n\in \zbb_+$.}
   \end{align}
Applying \cite[Proposition 3.2]{b-j-j-sC} and the
assertions (ii) and (iv) of \cite[Proposition
4.1]{b-j-j-sC}, we get a new criterion for the $n$th
power of $C_{\phi}$ to be densely defined.
   \begin{align} \label{dzn}
   \begin{minipage}{65ex}
{\em If $n\in \nbb$, then $C_{\phi}^n$ is densely
defined if and only if ${\mathsf h}_{\phi^n}(x) <
\infty$ for every $x\in X$.}
   \end{minipage}
   \end{align}
Thus, if $n\in \nbb$ and ${\mathsf h}_{\phi^n}(x) <
\infty$ for all $x\in X$, then ${\mathsf
h}_{\phi^j}(x) < \infty$ for all $j\in \{1, \ldots,
n\}$ and $x\in X$. This hereditary property is no
longer true for a single point $x\in X$ (see
\cite[Example 4.2]{jab}). In fact, given any nonempty
subset $\varXi$ of $\nbb$, we can construct a discrete
measure $\mu$ on a set $X$ and a self-map $\phi$ of
$X$ such that $\varXi=\{n \in \nbb\colon \hfin{n}(x_0)
= \infty\}$ for some~ $x_0 \in X$. This is shown
below.
   \begin{exa}
Fix $k \in \nbb \cup \{\infty\}$. Let $X$,
$\{x_i\}_{i=0}^\infty$,
$\{x_{i,j}\}_{i=1}^{\eta+1}{_{j=1}^{\,l_i}}$ and
$\phi$ be as in Theorem
\ref{golfclub2}\mbox{(ii-b$^*$)} with $\eta=\infty$
and $l_i=k$ for all $i\in \nbb$. Suppose that $\varXi$
is a nonempty subset of $J_{k}$ (see Section \ref{n&t}
for the definition of $J_{k}$). Define the discrete
measure $\mu$ on $X$ by
   \begin{align*}
\mu(\{x\}) =
   \begin{cases}
2^{-i} & \text{if $x \in \{x_{i,j}\colon i\in \nbb, \,
j\in J_{k} \setminus \varXi\}$,}
   \\
1 & \text{otherwise,}
   \end{cases}
\quad x \in X.
   \end{align*}
It is a matter of routine to show that
   \begin{align*}
\hfin{n}(x_0) =
   \begin{cases}
   \infty & \text{if } n \in \varXi,
   \\
   1 & \text{if $n \in J_{k} \setminus \varXi$},
   \\
   0 & \text{if $k < \infty$ and $n > k$.}
   \end{cases}
\quad n \in \nbb.
   \end{align*}
   \end{exa}
Assume now that $C_{\phi}$ is densely defined (as
before, $\mu$ is a discrete measure on a set $X$ and
$\phi$ is a self-map of $X$), or equivalently, by
\eqref{hfi} and \eqref{dzn}, that
$\mu(\phi^{-1}(\{x\})) < \infty$ for all $x\in
\phi(X)$. Then for every function $f\colon X \to
\rbop$, there exists a unique
$\phi^{-1}(2^X)$-measurable function $\efi(f)\colon X
\to \rbop$ such that
   \begin{align*}
\int_{\phi^{-1}(\varDelta)} f \D \mu =
\int_{\phi^{-1}(\varDelta)} \efi(f) \D \mu, \quad
\varDelta \subseteq X,
   \end{align*}
or equivalently, such that for every $x\in \phi(X)$,
   \begin{align} \label{cexp}
\efi(f)(z) = \frac{\sum_{y \in \phi^{-1}(\{x\})}
\mu(y) f(y)}{\mu(\phi^{-1}(\{x\}))}, \quad z \in
\phi^{-1}(\{x\}).
   \end{align}
The above definition is correct because
$X=\bigsqcup_{x \in \phi(X)}\phi^{-1}(\{x\})$. The
function $\efi(f)$ is called the {\em conditional
expectation} of a function $f\colon X \to \rbop$ with
respect to the $\sigma$-algebra $\phi^{-1}(2^X)$ (cf.\
\cite{b-j-j-sC}). Following \cite{b-j-j-sS}, we say
that a family $\{P(x,\cdot)\}_{x\in X}$ of Borel
probability measures on $\rbb_+$ satisfies the {\em
consistency condition} if
   \begin{align*}
\hfi(\phi(z)) \cdot \efi(P(\cdot,\sigma))(z) =
\int_{\sigma} t P(\phi(z), \D t), \quad z \in X, \,
\sigma \in \borel{\rbb_+}.
   \end{align*}
In view of \eqref{hfi} and \eqref{cexp}, we see that
$\{P(x,\cdot)\}_{x\in X}$ satisfies the consistency
condition if and only if
   \begin{align} \label{cc}   \tag{CC}
\frac{1}{\mu(x)}\sum_{y \in \phi^{-1}(\{x\})} \mu(y)
P(y,\sigma) = \int_{\sigma} t P(x, \D t), \quad \sigma
\in \borel{\rbb_+}, \, x \in \phi(X).
   \end{align}

The following characterization of hyponormality of
$C_\phi$ can be deduced from \eqref{hfi}, \eqref{cexp}
and \cite[Corollary 6.7]{ca-hor} (see also \cite[Lemma
2.1]{Bu}).
   \begin{pro}\label{buda}
Let $\mu$ be a discrete measure on a set $X$ and
$\phi$ be a self-map of $X$. Assume that $C_{\phi}$ is
densely defined. Then $C_\phi$ is hyponormal if and
only if for every $x\in X$, $\hfi(x) > 0$ and
   \begin{align}   \label{Bud-ski}
\frac{1}{\mu(x)}\sum_{y \in \phi^{-1}(\{x\})}
\frac{\mu(y)^2}{\mu(\phi^{-1}(\{y\}))} \Le 1.
   \end{align}
   \end{pro}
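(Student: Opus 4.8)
The plan is to reduce the statement to the known conditional-expectation characterization of hyponormality for densely defined composition operators and then to unwind it in the discrete setting. Since $C_{\phi}$ is densely defined, \eqref{dzn} applied with $n=1$ yields $\hfi(x)<\infty$ for every $x\in X$, so \cite[Corollary 6.7]{ca-hor} (equivalently \cite[Lemma 2.1]{Bu}) applies and tells us that $C_{\phi}$ is hyponormal if and only if $\hfi>0$ $\mu$-a.e.\ and $\hfi(\phi(\cdot))\,\efi(1/\hfi)(\cdot)\Le 1$ $\mu$-a.e. The whole argument then consists in transcribing these two conditions by means of \eqref{hfi} and \eqref{cexp}.

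For the first condition, observe that $\mu$ is a discrete measure, so every singleton has positive mass; hence a property valid $\mu$-a.e.\ on $X$ is valid at every point of $X$. Thus ``$\hfi>0$ $\mu$-a.e.'' becomes ``$\hfi(x)>0$ for every $x\in X$'', which by the remark following \eqref{hfi} is the same as $\phi(X)=X$. In particular, under this condition $\phi^{-1}(\{y\})\ne\emptyset$, hence $\mu(\phi^{-1}(\{y\}))>0$, for every $y\in X$, so the quotients appearing in \eqref{Bud-ski} are all well defined.

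For the second condition, fix $z\in X$ and set $x=\phi(z)$. By \eqref{cexp},
\[
\efi(1/\hfi)(z)=\frac{1}{\mu(\phi^{-1}(\{x\}))}\sum_{y\in\phi^{-1}(\{x\})}\frac{\mu(y)}{\hfi(y)},
\]
while \eqref{hfi} gives $\mu(y)/\hfi(y)=\mu(y)^2/\mu(\phi^{-1}(\{y\}))$ together with $\hfi(x)=\mu(\phi^{-1}(\{x\}))/\mu(x)$. Multiplying, the factor $\mu(\phi^{-1}(\{x\}))$ cancels and the inequality $\hfi(\phi(z))\,\efi(1/\hfi)(z)\Le 1$ turns into
\[
\frac{1}{\mu(x)}\sum_{y\in\phi^{-1}(\{x\})}\frac{\mu(y)^2}{\mu(\phi^{-1}(\{y\}))}\Le 1,
\]
that is, \eqref{Bud-ski} at the point $x=\phi(z)$. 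Since the first condition forces $\phi$ to be onto, $x=\phi(z)$ ranges over all of $X$; conversely, both sides above depend on $z$ only through $\phi(z)$, so \eqref{Bud-ski} holding for every $x\in X$ is precisely the $\mu$-a.e.\ inequality of the cited criterion. Combining the two transcriptions yields the equivalence claimed in the proposition.

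I do not expect a serious obstacle: once the abstract criterion of \cite{ca-hor} is in place, the rest is a substitution of the explicit discrete formulas \eqref{hfi} and \eqref{cexp}. The only points deserving a line of care are the passage from ``$\mu$-a.e.'' to ``everywhere'' (legitimate exactly because $\mu$ is discrete) and the surjectivity of $\phi$, which makes $\phi(z)$ exhaust $X$ and keeps the denominators in \eqref{Bud-ski} nonzero. If the criterion of \cite{ca-hor} states its positivity hypothesis only in the weaker form $\hfi\circ\phi>0$, one argues separately that a point $u\notin\phi(X)$ (so $\hfi(u)=0$ while $\efi(\chi_{\{u\}})(u)>0$ by \eqref{cexp}) forces $C_{\phi}^{*}\chi_{\{u\}}\ne 0=C_{\phi}\chi_{\{u\}}$, contradicting hyponormality, so $\phi(X)=X$ in any case.
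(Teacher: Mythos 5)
Your proposal is correct and follows exactly the route the paper intends: the paper gives no written proof, merely noting that the proposition "can be deduced from \eqref{hfi}, \eqref{cexp} and \cite[Corollary 6.7]{ca-hor}", and your argument carries out precisely that deduction, with the substitution computation and the a.e.-to-everywhere and surjectivity points handled correctly.
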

A criterion for subnormality of unbounded composition
operators given in \cite[Theorems 9 and 17]{b-j-j-sS}
takes in the present situation the following form
(recall that if $C_{\phi}$ is subnormal, then $\hfi(x)
> 0$ for every $x\in X$, or equivalently $C_{\phi}$ is
injective, cf.\ \cite[Section 6]{b-j-j-sC}).
   \begin{thm} \label{glowne}
Let $\mu$ be a discrete measure on a set $X$ and
$\phi$ be a self-map of $X$. Assume that $C_{\phi}$ is
densely defined and $\hfi(x) > 0$ for every $x\in X$.
Suppose that there exists a family
$\{P(x,\cdot)\}_{x\in X}$ of Borel probability
measures on $\rbb_+$ that satisfies \eqref{cc}. Then
$C_{\phi}$ is subnormal and
   \begin{align} \label{twomom}
\hfin{n}(x) = \int_0^\infty t^n P(x,\D t), \quad n \in
\zbb_+, \, x \in X.
   \end{align}
   \end{thm}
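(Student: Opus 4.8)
The statement is the specialization of the general subnormality criterion \cite[Theorems 9 and 17]{b-j-j-sS} to composition operators in $L^2$-spaces over discrete measure spaces, so the plan is to check that every hypothesis of those abstract results holds here and then to read off their conclusions through the formulas \eqref{hfi}, \eqref{cexp} and \eqref{chiphi}. First I would note that a discrete measure $\mu$ on $X$ is $\sigma$-finite and that $X$ is at most countable; hence the underlying $\sigma$-algebra is $2^X$, and every family $\{P(x,\cdot)\}_{x\in X}$ of Borel probability measures on $\rbb_+$ is automatically a measurable family with respect to $2^X \otimes \borel{\rbb_+}$, which takes care of the measurability requirement implicit in the abstract consistency condition. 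Next, as already observed just before \eqref{cc}, the identities \eqref{hfi} and \eqref{cexp} show that the abstract consistency condition $\hfi(\phi(z))\cdot\efi(P(\cdot,\sigma))(z) = \int_\sigma t\,P(\phi(z),\D t)$ coincides with \eqref{cc}; moreover, by \eqref{hfi}, the standing assumption $\hfi(x)>0$ for every $x\in X$ is equivalent to $\phi(X)=X$, hence to injectivity of $C_{\phi}$, which is the positivity/injectivity hypothesis used in \cite{b-j-j-sS}. Together with the assumed dense definedness of $C_{\phi}$ this verifies all hypotheses of \cite[Theorem 9]{b-j-j-sS}, giving the subnormality of $C_{\phi}$.

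For the moment formula \eqref{twomom} I would give a self-contained derivation by induction on $n$ using \eqref{cc} only (this is also covered by \cite[Theorem 17]{b-j-j-sS} together with \eqref{chiphi}, but the induction is more transparent and handles the case $\hfin{n}(x)=\infty$ uniformly). The case $n=0$ is the normalization $P(x,\rbb_+)=1$, and the case $n=1$ is obtained by putting $\sigma=\rbb_+$ in \eqref{cc}, since its left-hand side then equals $\mu(\phi^{-1}(\{x\}))/\mu(x)=\hfi(x)$ by \eqref{hfi}. For the inductive step one uses the disjoint decomposition $\phi^{-n-1}(\{x\}) = \bigsqcup_{y\in\phi^{-1}(\{x\})}\phi^{-n}(\{y\})$, which yields $\mu(x)\,\hfin{n+1}(x) = \sum_{y\in\phi^{-1}(\{x\})}\mu(y)\,\hfin{n}(y)$ (an identity valid in $\rbop$); substituting the inductive hypothesis $\hfin{n}(y)=\int_0^\infty t^n\,P(y,\D t)$ and then applying the integrated form of \eqref{cc}, namely $\frac{1}{\mu(x)}\sum_{y\in\phi^{-1}(\{x\})}\mu(y)\int_0^\infty g(t)\,P(y,\D t) = \int_0^\infty t\,g(t)\,P(x,\D t)$ for every Borel function $g\colon\rbb_+\to\rbop$ (which follows from \eqref{cc} by taking $g=\chi_\sigma$ and then passing to simple and nonnegative functions), with $g(t)=t^n$, turns the right-hand side into $\mu(x)\int_0^\infty t^{n+1}\,P(x,\D t)$, completing the induction.

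I do not expect a genuinely difficult step: the mathematical content lies entirely in \cite{b-j-j-sS}, and the proof of Theorem \ref{glowne} is a matter of matching definitions. The one point that deserves a little care is confirming that none of the hypotheses tacitly present in \cite[Theorems 9 and 17]{b-j-j-sS} --- $\sigma$-finiteness of the base space, measurability of the family $\{P(x,\cdot)\}_{x\in X}$, and the injectivity of the symbol --- are lost in passing to the discrete reformulation; each is either automatic or explicitly assumed, as indicated above. Finally I would remark that, once \eqref{twomom} is in place, \eqref{dzn} shows that for a given $x$ and $n$ the finiteness of $\hfin{n}(x)$ is equivalent to the finiteness of the $n$th moment of $P(x,\cdot)$, which is the form in which this theorem will be invoked later in the paper.
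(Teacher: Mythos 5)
Your proposal is correct and follows exactly the route the paper intends: Theorem \ref{glowne} is stated without proof as the specialization of \cite[Theorems 9 and 17]{b-j-j-sS} to discrete measure spaces, and your verification that the hypotheses of those results ($\sigma$-finiteness, measurability of the family, injectivity via $\hfi>0$ and \eqref{hfi}) survive the translation is precisely the matching of definitions that is being left to the reader. Your self-contained induction for \eqref{twomom}, based on the decomposition $\phi^{-n-1}(\{x\})=\bigsqcup_{y\in\phi^{-1}(\{x\})}\phi^{-n}(\{y\})$ and the integrated form of \eqref{cc}, is a sound (and welcome) addition that correctly handles the identity in $\rbop$.
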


Let $\gcal=(V,E)$ be a directed graph (i.e., $V$ is a
nonempty set called the {\em vertex set} of $\gcal$
and $E$ is a subset of $V\times V$ called {\em the
edge set} of $\gcal$). We say that a vertex $u$ is a
{\em parent} of a vertex $v$, and write $\pa{v}=u$, if
$(u,v) \in E$ and $u=w$ whenever $(w,v) \in E$. The
directed graph $\gcal$ is said to be {\em connected}
if for every pair $(u,v)$ of distinct vertices there
exists an {\em undirected path} joining $u$ and $v$,
i.e., a finite sequence $\{u_i\}_{i=1}^k$ of vertices
with $k\ge 2$ such that $u_1 = u$, $u_k = v$ and for
every $i \in J_{k-1}$, either $(u_i,u_{i+1}) \in E$ or
$(u_{i+1},u_i) \in E$. We say that a finite sequence
$\{u_j\}_{j=1}^n$ of distinct vertices is a {\em
circuit} if $n\ge 2$, $(u_j,u_{j+1}) \in E$ for all $j
\in J_{n-1}$ and $(u_n,u_1) \in E$. By a {\em rootless
directed tree} we mean a directed graph $\tcal =
(V,E)$ which is connected, has no circuits, each
vertex of $\tcal$ has a parent and $(u,u) \notin E$
for every $u \in V$. In this case, obviously, the
partial function $\paa$ is a self-map of $V$.

Given a rootless directed tree $\tcal=(V,E)$ and a
family $\lambdab = \{\lambda_v\}_{v\in V}$ of complex
numbers, we define a {\em weighted shift} $\slam$ on
$\tcal$ with weights $\lambdab$ via
   \begin{align*}
\dz{\slam} = \{f \in \ell^2(V)\colon \lambdab \cdot f
\circ \paa \in \ell^2(V)\} \text{ and } \slam f =
\lambdab \cdot f \circ \paa \text{ for } f \in
\dz{\slam},
   \end{align*}
where $(\lambdab \cdot f \circ \paa)(v) = \lambda_v
f(\pa{v})$ for $v\in V$. We refer the reader to
\cite{j-j-s} for more information on directed trees
and weighted shifts on them.

It follows from \cite[Lemma 4.3.1]{j-j-s0} and
\cite[Theorem 3.2.1]{j-j-s} that each weighted shift
$\slam$ on a countable rootless directed tree
$\tcal=(V,E)$ with nonzero weights is unitarily
equivalent to a composition operator $C_{\paa}$ in
$L^2(V,2^V,\mu)$ for some discrete measure $\mu$ on
$V$ (the assumption that $V$ is infinite made in
\cite[Lemma 4.3.1]{j-j-s0} is redundant). As
explicated below, the proof of \cite[Lemma
4.3.1]{j-j-s0} contains more information.
   \begin{lem} \label{jfa-ex}
Let $\tcal=(V, E)$ be a rootless directed tree.
Suppose that $\mu$ is a discrete measure on $V$. Then
the composition operator $C_{\paa}$ in $L^2(\mu)$ is
unitarily equivalent to a weighted shift on $\tcal$
with positive weights.
   \end{lem}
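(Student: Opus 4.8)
The plan is to conjugate $C_{\paa}$ by an explicit unitary onto a weighted shift on $\tcal$ whose weights are read off from the masses $\{\mu(v)\}_{v\in V}$. Since $\tcal$ is rootless, $\paa$ is an everywhere-defined self-map of $V$, so for every $v\in V$ the number $\lambda_v:=\sqrt{\mu(v)/\mu(\pa v)}$ is a well-defined element of $(0,\infty)$: it is finite because $\mu(v)<\infty$, and strictly positive because a discrete measure is strictly positive on singletons. Set $\lambdab=\{\lambda_v\}_{v\in V}$; the target operator is the weighted shift $\slam$ on $\tcal$ with these positive weights.

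Define $U\colon L^2(\mu)\to\ell^2(V)$ by $(Uf)(v)=\sqrt{\mu(v)}\,f(v)$. Then $U$ carries the orthonormal basis $\{\mu(v)^{-1/2}\chi_{\{v\}}\}_{v\in V}$ of $L^2(\mu)$ onto the standard orthonormal basis $\{\chi_{\{v\}}\}_{v\in V}$ of $\ell^2(V)$, hence is unitary. It remains to check that (a) $U\big(\dz{C_{\paa}}\big)=\dz{\slam}$ and (b) $UC_{\paa}f=\slam Uf$ for all $f\in\dz{C_{\paa}}$. Both follow from the elementary identity
\begin{align*}
\big(\slam Uf\big)(v)
&=\lambda_v\,(Uf)(\pa v)
=\sqrt{\frac{\mu(v)}{\mu(\pa v)}}\;\sqrt{\mu(\pa v)}\;f(\pa v)
=\sqrt{\mu(v)}\,f(\pa v)
=\big(U(f\circ\paa)\big)(v),
\end{align*}
valid for every function $f\colon V\to\cbb$ and every $v\in V$: summing the squared moduli over $v$ gives $\|\lambdab\cdot(Uf)\circ\paa\|^2_{\ell^2(V)}=\|f\circ\paa\|^2_{L^2(\mu)}$, which yields (a) (note $Uf\in\ell^2(V)$ automatically), while restricting the identity to $f\in\dz{C_{\paa}}$ and using $UC_{\paa}f=U(f\circ\paa)$ gives (b). Consequently $\slam=UC_{\paa}U^{-1}$, so $C_{\paa}$ is unitarily equivalent to a weighted shift on $\tcal$ with positive weights.

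I do not expect any real obstacle: this is a bookkeeping sharpening of the construction used to prove \cite[Lemma 4.3.1]{j-j-s0}, and the only point beyond that reference is the observation that the weights obtained from a discrete measure are not merely nonzero but strictly positive. The one place that warrants a word of care is the domain identity (a): the summation argument above is valid for an arbitrary discrete measure $\mu$ and does not presuppose that $C_{\paa}$ (equivalently $\slam$) is densely defined or bounded, so the unitary equivalence holds in full generality.
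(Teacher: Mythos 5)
Your proof is correct and follows essentially the same route as the paper: the paper uses exactly the weights $\sqrt{\mu(v)/\mu(\pa v)}$ and the diagonal unitary between $\ell^2(V)$ and $L^2(\mu)$, merely citing \cite[Lemma 4.3.1]{j-j-s0} for the verification that you carry out explicitly. Your explicit check of the domain identity and the intertwining relation is exactly what that citation encapsulates, so there is nothing to add.
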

   \begin{proof}
Since $\mu$ is a discrete measure on $V$, we see that
$\card{V} \Le \aleph_0$. Consider the weighted shift
$\slam$ on the directed tree $\tcal$ with weights
$\Big\{\sqrt{\frac{\mu(v)} {\mu(\pa{v})}}\,\,
\Big\}_{v\in V}$. A careful inspection of the proof of
\cite[Lemma 4.3.1]{j-j-s0} reveals that the
composition operator $C_{\paa}$ is unitarily
equivalent to $\slam$ via the unitary isomorphism
$U\colon \ell^2(V)\to L^2(\mu)$ defined in
\cite[(4.3.4)]{j-j-s0}.
   \end{proof}
   \subsection{A class of directed graphs with one circuit}
   \label{sect-gr} In this section we classify
connected directed graphs induced by self-maps whose
vertices, all but one, have valency one and the
valency of the remaining vertex is nonzero (see
Theorem \ref{golfclub} below; see also Figures $2$ and
$3$ which illustrate this theorem).

Let $X$ be a nonempty set and $\phi$ be a self-map of
$X$. Set
   \begin{align} \label{ephidef}
E^{\phi} = \{(x,y) \in X\times X \colon x = \phi(y)\}.
   \end{align}
Then $(X,E^{\phi})$ is a directed graph which we call
the {\em directed graph induced by} $\phi$. Note that
the valency of a vertex $x\in X$ is equal to
$\card{\phi^{-1}(\{x\})}$ and that $\phi(x)$ is the
parent of $x$. We will write $\phi^{-n}(A) =
(\phi^n)^{-1}(A)$ whenever $n \in \zbb_+$ and $A
\subseteq X$.
   \begin{tikzpicture}[scale = .6,
transform shape] \tikzstyle{every node} =
[circle,fill=gray!15] \node (e0)[font=\huge] at (2,-1)
{\phantom{1}$x_0$\phantom{1}}; \node (e-1)[font=\huge]
at (2,3) {$x_{\kappa-1}$}; \node (e-2)[font=\huge] at
(-1,3){$x_{\kappa-2}$}; \node (e1)[font=\huge] at
(-1,-1) {\phantom{1}$x_1$\phantom{1}}; \node
(ek)[font=\huge] at (3.5,1)
{\phantom{L}$x_\kappa$\phantom{l}}; \node
(e11)[font=\huge] at (7,3) {$x_{1,1}$}; \node
(e12)[font=\huge] at (10.5,3) {$x_{1,2}$}; \node
(e13)[font=\huge] at (14,3) {$x_{1,3}$}; \node[fill =
none] (e1n) at (16,3) {}; \node (e21)[font=\huge] at
(7,1) {$x_{2,1}$}; \node (e22)[font=\huge] at (10.5,1)
{$x_{2,2}$}; \node (e23)[font=\huge] at (14,1)
{$x_{2,3}$}; \node[fill = none] (e2n)[font=\huge] at
(16,1) {}; \node[fill = none] (e 5 1)[font=\huge] at
(7,-3) {}; \node[fill = none] (e 5 n)[font=\huge] at
(16,-3) {}; \node (e31)[font=\huge] at (7,-1)
{$x_{3,1}$}; \node (e32)[font=\huge] at (10.5,-1)
{$x_{3,2}$}; \node (e33)[font=\huge] at (14,-1)
{$x_{3,3}$}; \node (e3n)[fill = none] at (16,-1) {};
\draw[<-] (e11)--(ek) node[pos=0.5,above =
0pt,fill=none] {}; \draw[<-] (e21)--(ek) node[pos=0.5,
above = 0pt,sloped, fill=none]{}; \draw[dotted] (e 5
1)--(ek) node [pos=0.5,below=-10pt,sloped,fill=none]
{}; \draw[<-] (e31)--(ek) node[pos=0.5,
below=0pt,fill=none] {}; \draw[<-] (e12)--(e11)
node[pos=0.5,above = 0pt,fill=none] {}; \draw[<-]
(e22)--(e21) node[pos=0.5,above = 0pt,fill=none] {};
\draw[<-] (e32)--(e31)
node[pos=0.5,below=0pt,fill=none] {}; \draw[<-]
(e13)--(e12) node[pos=0.5,above = 0pt,fill=none] {};
\draw[<-] (e23)--(e22)
node[pos=0.5,above=0pt,fill=none] {}; \draw[<-]
(e33)--(e32) node[pos=0.5,below = 0pt,fill=none] {};
\draw[dotted] (e 5 n)--(e 5 1) node[pos=0.5,above =
-10pt,fill=none] {}; \draw[ <-] (ek) to [bend right]
(e-1) node[pos=0.5,above = 0pt,fill=none] {};
\draw[dashed, <-] (e1n)--(e13); \draw[dashed, <-]
(e2n)--(e23); \draw[dashed, <-] (e3n)--(e33);
\draw[dashed, <-] (e-2) to [bend right] (e1); \draw[
<-] (e0) to [bend right] (ek); \draw[ <-] (e-1)--(e-2)
node[pos=0.5,above = 0pt,fill=none] {}; \draw[ <-]
(e1)--(e0) node[pos=0.5,above = 0pt,fill=none] {};
   \end{tikzpicture}
   \begin{center}
   \begin{minipage}{72ex}
{\small {\sf Figure $2$}}. The directed graph
$(X,E^{\phi})$ in the case of \mbox{(ii-a)} (the
self-map \\ \phantom{xxxxxxxi}$\phi$ acts in
accordance with the reverted arrows).
   \end{minipage}
   \end{center}
   \vspace{1ex}
   \begin{tikzpicture}[scale = .6,
transform shape] \tikzstyle{every node} =
[circle,fill=gray!15] \node (e-1)[font=\huge] at (1,1)
{\phantom{l}$x_{1}$\phantom{l}}; \node
(e-2)[font=\huge] at
(-1.5,1){\phantom{l}$x_{2}$\phantom{l}}; \node[fill =
none] (e1)[font=\huge] at (-3.5,1) {}; \node
(ek)[font=\huge] at (3.5,1)
{\phantom{l}$x_0$\phantom{l}}; \node (e11)[font=\huge]
at (7,3) {$x_{1,1}$}; \node (e12)[font=\huge] at
(10.5,3) {$x_{1,2}$}; \node (e13)[font=\huge] at
(14,3) {$x_{1,3}$}; \node[fill = none] (e1n) at (16,3)
{}; \node (e21)[font=\huge] at (7,1) {$x_{2,1}$};
\node (e22)[font=\huge] at (10.5,1) {$x_{2,2}$}; \node
(e23)[font=\huge] at (14,1) {$x_{2,3}$}; \node[fill =
none] (e2n)[font=\huge] at (16,1) {}; \node[fill =
none] (e 5 1)[font=\huge] at (7,-3) {}; \node[fill =
none] (e 5 n)[font=\huge] at (16,-3) {}; \node
(e31)[font=\huge] at (7,-1) {$x_{3,1}$}; \node
(e32)[font=\huge] at (10.5,-1) {$x_{3,2}$}; \node
(e33)[font=\huge] at (14,-1) {$x_{3,3}$}; \node
(e3n)[fill = none] at (16,-1) {}; \draw[<-]
(e11)--(ek) node[pos=0.5,above = 0pt,fill=none] {};
\draw[<-] (e21)--(ek) node[pos=0.5, above =
0pt,sloped, fill=none]{}; \draw[dotted] (e 5 1)--(ek)
node [pos=0.5,below=-10pt,sloped,fill=none] {};
\draw[<-] (e31)--(ek) node[pos=0.5,
below=0pt,fill=none] {}; \draw[<-] (e12)--(e11)
node[pos=0.5,above = 0pt,fill=none] {}; \draw[<-]
(e22)--(e21) node[pos=0.5,above = 0pt,fill=none] {};
\draw[<-] (e32)--(e31)
node[pos=0.5,below=0pt,fill=none] {}; \draw[<-]
(e13)--(e12) node[pos=0.5,above = 0pt,fill=none] {};
\draw[<-] (e23)--(e22)
node[pos=0.5,above=0pt,fill=none] {}; \draw[<-]
(e33)--(e32) node[pos=0.5,below = 0pt,fill=none] {};
\draw[dotted] (e 5 n)--(e 5 1) node[pos=0.5,above =
-10pt,fill=none] {}; \draw[ <-] (ek) to [] (e-1);
\draw[dashed, <-] (e1n)--(e13); \draw[dashed, <-]
(e2n)--(e23); \draw[dashed, <-] (e3n)--(e33);
\draw[dashed, <-] (e-2) to [] (e1); \draw[ <-]
(e-1)--(e-2) node[pos=0.5,above = 0pt,fill=none] {};
   \end{tikzpicture}
   \begin{center}
   \begin{minipage}{72ex}
{\small {\sf Figure $3$}}. The directed graph
$(X,E^{\phi})$ in the case of \mbox{(ii-b)}.
   \end{minipage}
   \end{center}
   \vspace{1ex}
   \begin{thm} \label{golfclub}
Let $X$ and $\phi$ be as above and let $\eta \in \nbb
\cup \{\infty\}$. Then the following two conditions
are equivalent{\em :}
   \begin{enumerate}
   \item[(i)] the directed graph $(X,E^{\phi})$ is
connected and there exists $\omega \in X$ such that
$\card{\phi^{-1}(\{\omega\})} = \eta + 1$ and
$\card{\phi^{-1}(\{x\})} = 1$ for every $x \in X
\setminus \{\omega\}$,
   \item[(ii)] one of the following two conditions
is satisfied{\em :}
   \begin{enumerate}
   \item[(ii-a)] there exist $\kappa \in \zbb_+$ and
two disjoint systems $\{x_i\}_{i=0}^{\kappa}$ and
$\{x_{i,j}\}_{i=1}^{\eta}{_{j=1}^\infty}$ of distinct
points of $X$ such that
   \allowdisplaybreaks
   \begin{align} \label{Xr1}
X & = \{x_0, \ldots, x_{\kappa}\} \cup \{x_{i,j}\colon
i \in J_{\eta}, j \in \nbb\},
   \\  \label{Xr1.5}
\phi(x) & =
   \begin{cases}
x_{i,j-1} & \text{ if } x = x_{i,j} \text{ with } i
\in J_{\eta} \text{ and } \, j \Ge 2,
   \\
x_{\kappa} & \text{ if } x = x_{i,1} \text{ with } i
\in J_{\eta} \text{ or } x = x_0,
   \\
x_{i-1} & \text{ if } x = x_i \text{ with } i \in
J_{\kappa},
   \end{cases}
   \end{align}
   \item[(ii-b)] there exist two disjoint systems
$\{x_i\}_{i=0}^\infty$ and
$\{x_{i,j}\}_{i=1}^{\eta+1}{_{j=1}^\infty}$ of
distinct points of $X$ such that
   \allowdisplaybreaks
   \begin{align} \notag
X & = \{x_i\colon i \in \zbb_+\} \cup \{x_{i,j}\colon
i \in J_{\eta+1}, j \in \nbb\},
   \\ \label{Xr3}
\phi(x) & =
   \begin{cases}
x_{i,j-1} & \text{ if } x = x_{i,j} \text{ with } i
\in J_{\eta+1} \text{ and } \, j \Ge 2,
   \\
x_0 & \text{ if } x = x_{i,1} \text{ with } i \in
J_{\eta+1},
   \\
x_{i+1} & \text{ if } x = x_i \text{ with } i \in
\zbb_+.
   \end{cases}
   \end{align}
   \end{enumerate}
  \end{enumerate}
   \end{thm}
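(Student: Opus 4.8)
The plan is to obtain the easy implication (ii)$\Rightarrow$(i) by direct inspection of the two explicit families of graphs, and the substantial implication (i)$\Rightarrow$(ii) by analysing the dynamics of the self-map $\phi$. Two elementary observations do most of the work. First, $\phi$ is surjective, since $\phi^{-1}(\{x\})\neq\emptyset$ for every $x\in X$. Second, if $\phi(v)=\phi(w)$ with $v\neq w$, then necessarily $\phi(v)=\omega$, because $\omega$ is the only vertex whose $\phi$-fibre has more than one element; equivalently, for $x\neq\omega$ the fibre $\phi^{-1}(\{x\})$ is a singleton, whose element I denote $\psi(x)$, so that $\phi(\psi(x))=x$. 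I would then recast connectedness dynamically: the relation $u\approx v\iff \phi^m(u)=\phi^n(v)$ for some $m,n\in\zbb_+$ is an equivalence relation whose classes are exactly the connected components of $(X,E^\phi)$ (each edge, being of the form $(\phi(v),v)$, joins $\approx$-equivalent vertices, while an equality $\phi^m(u)=\phi^n(v)$ produces undirected paths from $u$ and from $v$ to the common value through the iterates of $\phi$). Hence (i) is equivalent to saying that for all $u,v\in X$ there exist $m,n\in\zbb_+$ with $\phi^m(u)=\phi^n(v)$.

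Using the second observation I would next build the skeleton of the graph. If the forward orbit of some vertex becomes periodic after $p\geq1$ pre-periodic steps, then the first periodic point has two distinct $\phi$-preimages and therefore equals $\omega$; together with the reachability property this forces that there is at most one cycle, that any cycle contains $\omega$, and that, in the absence of a cycle, every forward orbit is injective. This produces the dichotomy matching (ii): either $\phi^{\kappa+1}(\omega)=\omega$ for some minimal $\kappa\in\zbb_+$ (Case A), or $\phi^n(\omega)\neq\omega$ for all $n\geq1$ (Case B). In Case A the points $x_j:=\phi^{\kappa-j}(\omega)$, $0\le j\le\kappa$, are distinct with $\phi(x_j)=x_{j-1}$ for $j\in J_\kappa$ and $\phi(x_0)=x_\kappa=\omega$; in Case B the points $x_i:=\phi^i(\omega)$, $i\in\zbb_+$, are distinct with $\phi(x_i)=x_{i+1}$. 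In both cases the forward orbit of $\omega$ is exactly this set of $x$'s, which I call the \emph{spine}.

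For the branches I would inspect $\phi^{-1}(\{\omega\})$: a short check of indices shows that the spine meets $\phi^{-1}(\{\omega\})$ in the single point $x_0$ in Case A and in no point in Case B, so $\phi^{-1}(\{\omega\})$ has exactly $\eta$ points off the spine in Case A and $\eta+1$ in Case B; label them $x_{i,1}$, with $i$ ranging over $J_\eta$, resp.\ $J_{\eta+1}$. Iterating $\psi$ from each $x_{i,1}$ never reaches $\omega$ — otherwise $x_{i,1}=\phi^k(\omega)$ would lie on the spine — so one gets an infinite branch $x_{i,j}:=\psi^{j-1}(x_{i,1})$, $j\in\nbb$; applying powers of $\phi$ one checks readily that these points are pairwise distinct and disjoint from the spine and that $\phi$ acts on them as prescribed in \eqref{Xr1.5}, resp.\ \eqref{Xr3}. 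It remains to show they exhaust $X$. Given $v\in X$, the reachability property applied to $v$ and $\omega$ shows that some $\phi^m(v)$ lies on the spine; take $m$ minimal. If $v$ lay neither on the spine nor on any branch, then $m\geq1$, the vertex $w:=\phi^{m-1}(v)$ is off the spine, and $\phi(w)$ is a spine vertex admitting an off-spine preimage, which by the second observation forces $\phi(w)=\omega$, hence $w=x_{i,1}$ for some $i$; walking back along unique preimages then yields $v=x_{i,m}$, a contradiction. Thus $X$ has exactly the form in \eqref{Xr1} (Case A) or in the displayed description of (ii-b) (Case B), completing (i)$\Rightarrow$(ii).

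For the converse, starting from the graph of (ii-a) or (ii-b) one verifies directly from \eqref{Xr1.5}, resp.\ \eqref{Xr3}, that every vertex is joined to $\omega$ by a directed path — so $(X,E^\phi)$ is connected — that $\card{\phi^{-1}(\{\omega\})}=\eta+1$, and that $\card{\phi^{-1}(\{x\})}=1$ for every other $x$; this is routine case-checking against the disjointness of the two defining systems of points. I expect the only delicate part to be the bookkeeping around the branching vertex in the degenerate low-$\kappa$ situations, above all $\kappa=0$, where the circuit shrinks to a loop at $\omega$ and $x_0=\omega$ is simultaneously on the spine and a $\phi$-preimage of $\omega$; once connectedness is translated into the reachability property, the rest is a short dynamical argument.
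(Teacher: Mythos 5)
Your argument is correct and follows essentially the same route as the paper: identify the forward orbit of $\omega$ (the spine), split into the periodic and aperiodic cases according to whether $\phi^n(\omega)=\omega$ for some $n\geq 1$, attach infinite branches by iterating the unique preimage map from the off-spine elements of $\phi^{-1}(\{\omega\})$, and finally use connectedness to show that spine and branches exhaust $X$. The only point where the mechanisms genuinely differ is this last exhaustion step: the paper shows that the spine-plus-branches set $Y$ satisfies $\phi(Y)\subseteq Y$ and $\phi(X\setminus Y)\subseteq X\setminus Y$ and invokes a separate lemma (its Step 3) asserting that connectedness then forces $Y=X$, whereas you recast connectedness as the reachability relation $\phi^m(u)=\phi^n(v)$ and run a minimal-$m$ argument on an alleged exceptional vertex; the two devices are equivalent in substance, yours being slightly more dynamical and the paper's slightly more set-theoretic, and both handle the degenerate $\kappa=0$ loop correctly.
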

   \begin{proof}
Clearly, we need only to prove the implication
(i)$\Rightarrow$(ii). We do it in five steps. The
proof of Step 1 being simple is omitted.

{\sc Step 1.} If $\omega \in X$ is such that
$\card{\phi^{-1}(x)}\Le 1$ for all $x \in X \setminus
\{\omega\}$, then the restriction of $\phi$ to
$\phi^{-1}(X \setminus \{\omega\})$ is injective.

{\sc Step 2.} If $\omega \in X$ and $\varOmega
\subseteq X$ are such that $\omega \in \varOmega$,
$\phi(\varOmega) \subseteq \varOmega$,
$\card{\phi^{-1}(\{\omega\})} \Ge 2$,
$\card{\phi^{-1}(\{x\})} = 1$ for every $x \in X
\setminus \{\omega\}$ and $\varDelta :=
\phi^{-1}(\{\omega\}) \setminus \varOmega \neq
\emptyset$, then
   \begin{enumerate}
   \item[(s1)] $\card{\phi^{-n}(\{x\})} = 1$ provided
$n \in \zbb_+$ and $x \in X \setminus \varOmega$,
   \item[(s2)] $\phi^{-n}(x) \in X \setminus \varOmega$
provided $n \in \zbb_+$ and $x \in X \setminus
\varOmega$, where $\phi^{-n}(x)$ is a unique element
of $X$ such that $\{\phi^{-n}(x)\} = \phi^{-n}(\{x\})$
(see (s1)),
   \item[(s3)] $x = \phi^n(\phi^{-n}(x))$ and
$\phi^{-(m+n)}(x) = \phi^{-m}(\phi^{-n}(x))$ provided
$m,n \in \zbb_+$ and $x\in X \setminus \varOmega$,
   \item[(s4)]
$\{\phi^{-n}(x)\colon n \in \zbb_+\}$ are distinct
points of $X$ provided $x \in \varDelta$,
   \item[(s5)]
$\{\phi^{-m}(x)\colon m\in \zbb_+\} \cap
\{\phi^{-n}(y)\colon n \in \zbb_+\} = \emptyset$
provided $x,y \in \varDelta$ and~ $x\neq y$.
   \end{enumerate}

For this, take $x\in X \setminus \varOmega$. By
assumption, $\card{\phi^{-1}(\{x\})} = 1$ and thus
there exists a unique $\phi^{-1}(x) \in X$ such that
$\{\phi^{-1}(x)\} = \phi^{-1}(\{x\})$. Since
$\phi(\varOmega) \subseteq \varOmega$ and $x =
\phi(\phi^{-1}(x))$, we deduce that $\phi^{-1}(x) \in
X \setminus \varOmega$. An induction argument yields
(s1), (s2) and (s3). To prove (s4) and (s5), suppose
that $x,y \in \varDelta$ and $\phi^{-i}(x) =
\phi^{-j}(y)$ for some integers $0 \Le i \Le j$. If
$i<j$, then by (s2) and (s3), we have
   \begin{align*}
x = \phi^i(\phi^{-i}(x)) = \phi^i(\phi^{-j}(y)) =
\phi^i(\phi^{-i}(\phi^{-(j-i)}(y))) = \phi^{-(j-i)}(y)
   \end{align*}
and so
   \begin{align*}
\omega = \phi(x) = \phi(\phi^{-(j-i)}(y)) =
\phi(\phi^{-1}(\phi^{-(j-i-1)}(y))) =
\phi^{-(j-i-1)}(y),
   \end{align*}
which yields $y = \phi^{j-i-1}(\omega) \in \varOmega$,
a contradiction. Finally, if $i=j$, then
   \begin{align*}
x= \phi^i(\phi^{-i}(x)) = \phi^j(\phi^{-j}(y)) =y,
   \end{align*}
which completes the proof of (s4) and (s5).

{\sc Step 3.} If $(X,E^{\phi})$ is connected and $Y$
is a nonempty subset of $X$ such that $\phi(Y)
\subseteq Y$ and $\phi(X\setminus Y)\subseteq
X\setminus Y$, then $X=Y$.

Indeed, otherwise there exists $x \in X \setminus Y$.
Take $y \in Y$. Since the graph $(X,E^{\phi})$ is
connected, there exists a finite sequence
$\{u_i\}_{i=1}^k$ of elements of $X$ with $k\ge 2$
such that $u_1 = x$, $u_k = y$ and for every $i \in
J_{k-1}$, either $(u_i,u_{i+1}) \in E^{\phi}$ or
$(u_{i+1},u_i) \in E^{\phi}$. Then there exists $j\in
J_{k-1}$ such that $u_j \in X\setminus Y$ and $u_{j+1}
\in Y$. Thus either $u_j = \phi(u_{j+1}) \in Y$ or
$u_{j+1} = \phi(u_j) \in X\setminus Y$, a
contradiction.

{\sc Step 4.} If $(X,E^{\phi})$ is connected and
$\omega \in X$ is such that
$\card{\phi^{-1}(\{\omega\})} = \eta + 1$,
$\card{\phi^{-1}(\{x\})} = 1$ for every $x \in X
\setminus \{\omega\}$ and $\omega \in
\phi^{-n}(\{\omega\})$ for some $n\in \nbb$, then
\mbox{(ii-a)} holds.

For this, we set
   \begin{align}\label{dzien1}
\kappa &= \min\{n \in \nbb\colon \omega \in
\phi^{-n}(\{\omega\})\}-1.
   \end{align}
First we show that $\{\phi^i(\omega)\}_{i=0}^{\kappa}$
is a sequence of distinct points of $X$. Indeed, if
$\phi^i(\omega) = \phi^j(\omega)$ for some integers $0
\Le i < j \Le \kappa$, then $1 \Le \kappa+1 - (j - i)
< \kappa + 1$ and
   \begin{align*}
\omega \overset{\eqref{dzien1}}=
\phi^{\kappa+1}(\omega) =
\phi^{\kappa+1-j}(\phi^j(\omega)) =
\phi^{\kappa+1-j}(\phi^i(\omega)) = \phi^{\kappa+1 -
(j - i)}(\omega),
   \end{align*}
which contradicts \eqref{dzien1}. Set $\varOmega =
\big\{x_i \colon i \in \{0, \ldots, \kappa\}\big\}$
with $x_i = \phi^{\kappa-i}(\omega)$ for $i \in \{0,
\ldots, \kappa\}$, and $\varDelta =
\phi^{-1}(\{\omega\}) \setminus \varOmega$. Then
clearly $\phi(x_0) = x_{\kappa}=\omega$ and $\phi(x_i)
= x_{i-1}$ for $i \in J_{\kappa}$, and
$\phi(\varOmega) \subset \varOmega$. Since
$\phi|_{\varOmega}$ is injective, we see that
$\phi^{-1}(\{\omega\}) \cap \varOmega = \{x_0\}$. This
and $\card{\phi^{-1}(\{\omega\})} = \eta + 1$ imply
that $\card{\varDelta} = \eta$. Thus, by Step 2, the
conditions (s1)-(s5) hold. Let
$\{x_{i,1}\}_{i=1}^{\eta}$ be a sequence of distinct
points of $\varDelta$. Setting $x_{i,j} =
\phi^{-(j-1)}(x_{i,1})$ for $i\in J_{\eta}$ and $j \in
\nbb$, we verify that $\{x_i\}_{i=0}^{\kappa}$ and
$\{x_{i,j}\}_{i=1}^{\eta}{_{j=1}^\infty}$ are disjoint
systems of distinct points of $X$ which satisfy
\eqref{Xr1.5}. Hence, $\phi(Y) = Y$ and
$\phi^{-1}(\{x\}) \cap Y \neq \emptyset$ for every
$x\in Y$, where
   \begin{align} \label{defx0}
Y = \{x_0, \ldots, x_{\kappa}\} \cup \{x_{i,j}\colon i
\in J_{\eta}, j \in \nbb\}.
   \end{align}
 Since $\card{\phi^{-1}(\{x\})} = 1$ for every $x \in
X \setminus \{\omega\}$ and
$\card{\phi^{-1}(\{\omega\})} = \eta + 1$, we deduce
that $\phi^{-1}(Y) \subseteq Y$, or equivalently that
$\phi(X\setminus Y) \subseteq X\setminus Y$. This
together with Step 3 completes the proof of Step 4.

{\sc Step 5.} If $(X,E^{\phi})$ is connected and
$\omega \in X$ is such that
$\card{\phi^{-1}(\{\omega\})} = \eta + 1$,
$\card{\phi^{-1}(\{x\})} = 1$ for every $x \in X
\setminus \{\omega\}$ and $\omega \notin
\phi^{-n}(\{\omega\})$ for every $n\in \nbb$, then
(ii-b) holds.

   We begin by proving that
   \begin{align} \label{clem}
\text{$\{\phi^i(\omega)\}_{i=0}^\infty$ is a sequence
of distinct points of $X$.}
   \end{align}
For this, suppose that $\phi^i(\omega) =
\phi^j(\omega)$ for some integers $0\Le i< j$. Since,
by assumption $\phi^k(\omega) \neq \omega$ for all
$k\in \nbb$, we can assume that $i\Ge 1$. If $i\Ge 2$,
then $\phi(\phi^{j-1}(\omega)) =
\phi(\phi^{i-1}(\omega))$, which in view of Step 1
implies that $\phi^{j-1}(\omega) =
\phi^{i-1}(\omega)$. An induction argument shows that
$\phi^{j-i+1}(\omega) = \phi(\omega)$. Clearly, the
last equality remains valid if $i=1$. Hence, in both
cases, we see that $\omega, \phi^{j-i}(\omega) \in
\phi^{-1}(\{\phi(\omega)\})$. As $\phi(\omega) \neq
\omega$ and consequently
$\card{\phi^{-1}(\{\phi(\omega)\})}=1$, we deduce that
$\omega=\phi^{j-i}(\omega)$, a contradiction. This
proves \eqref{clem}. Set $\varOmega = \{x_i\colon i\in
\zbb_+\}$ with $x_i = \phi^i(\omega)$, and $\varDelta
= \phi^{-1}(\{\omega\}) \setminus \varOmega$. Clearly
$\omega \in \varOmega$, $\phi(\varOmega) \subseteq
\varOmega$ and $\phi^{-1}(\{\omega\}) \cap \varOmega =
\emptyset$. This yields $\card{\varDelta} = \eta+1$.
Therefore, by Step 2, the conditions (s1)-(s5) hold.
Let $\{x_{i,1}\}_{i=1}^{\eta+1}$ be a sequence of
distinct points of $\varDelta$. Setting $x_{i,j} =
\phi^{-(j-1)}(x_{i,1})$ for $i\in J_{\eta+1}$ and $j
\in \nbb$, we verify that $\{x_i\}_{i=0}^\infty$ and
$\{x_{i,j}\}_{i=1}^{\eta+1}{_{j=1}^\infty}$ are
disjoint systems of distinct points of $X$ which
satisfy \eqref{Xr3}. Now, arguing as in the final
stage of the proof of Step 4 with
   \begin{align} \label{defx2} Y :=
\{x_i\colon i \in \zbb_+\} \cup \{x_{i,j}\colon i \in
J_{\eta+1}, j \in \nbb\},
   \end{align}
we complete the proof of Step 5 and Theorem
\ref{golfclub}.
   \end{proof}

Now we make some comments on Theorem \ref{golfclub}
that support the idea of studying composition
operators with symbols of type \mbox{(ii-a)}. We also
shed more light on the question of simplicity of
directed graphs discussed in Section~ \ref{subsec1}.
   \begin{rem} \label{val1}
1) Suppose that the directed graph $(X,E^{\phi})$ is
connected and $C_{\phi}$ is a composition operator in
$L^2(X,2^X,\mu)$, where $\mu$ is a discrete measure on
$X$. By \eqref{hfi} and \cite[Proposition
6.2]{b-j-j-sC}, $C_{\phi}$ is injective if and only if
   \begin{align} \label{manum}
\text{$\card{\phi^{-1}(\{x\})} \Ge 1$ for all $x \in
X$.}
   \end{align}
To guarantee injectivity of $C_{\phi}$, we assume that
\eqref{manum} holds. We also exclude the (more
complex) case when the directed graph $(X,E^{\phi})$
has more than one vertex of valency greater than $1$.
The case when $(X,E^{\phi})$ has exactly one vertex of
valency greater than $1$ has been described in Theorem
\ref{golfclub}. If $\card{\phi^{-1}(\{x\})} = 1$ for
every $x \in X$ (the {\em flat} case), then, by
\cite[Proposition 2.4]{StB} and Step 3 of the proof of
Theorem \ref{golfclub}, $\phi$ is bijectively
isomorphic to the mapping $i+k \zbb \mapsto (i+1) + k
\zbb$ acting on $\zbb_k:=\zbb/k \zbb$ for some $k\in
\zbb_+$. Hence, the composition operators $C_{\phi}$
is unitarily equivalent to an injective bilateral
weighted shift (the case of $\zbb$) or to a bijective
finite dimensional weighted shift (the case of
$\zbb_{k}$ for $k\in \nbb$).

2) Now we assume that the directed graph
$(X,E^{\phi})$ is not connected. Note that if there
exists $\omega \in X$ such that
$\card{\phi^{-1}(\{\omega\})} = \eta + 1$ for some
$\eta \in \nbb \cup \{\infty\}$ and
$\card{\phi^{-1}(\{x\})} = 1$ for every $x \in X
\setminus \{\omega\}$ (hence \eqref{manum} holds),
then $X\setminus Y \neq \emptyset$, where $Y$ is given
either by \eqref{defx0} or by \eqref{defx2} depending
on whether $\omega \in \phi^{-n}(\{\omega\})$ for some
$n\in \nbb$ or not (see the proofs of Steps 4 and 5 of
Theorem \ref{golfclub}). Indeed, this is a consequence
of the easily verifiable fact that $(Y,E^{\phi|_{Y}})$
is a connected subgraph of $(X,E^{\phi})$. Next
observe that $\phi|_{X\setminus Y}$ is a bijective
self-map of $X\setminus Y$ and thus it is bijectively
isomorphic to a disjoint sum of a number of self-maps
$i+k \zbb \mapsto (i+1) + k \zbb$ of $\zbb_k$, where
$k \in \zbb_+$ (see \cite[Proposition 2.4]{StB}).
Clearly, the directed graph $(Y,E^{\phi|_{Y}})$
satisfies the condition (i) of Theorem \ref{golfclub}.
Hence, the composition operator $C_{\phi}$ is
unitarily equivalent to an orthogonal sum of
composition operators whose symbols are described
above (cf.\ \cite[Appendix C]{b-j-j-sS}). One can draw
a similar conclusion for symbols $\phi$ discussed in
the flat case in 1).

3) The directed graph $(X,E^{\phi})$ appearing in
(ii-b) (see Figure $3$) is isomorphic to the directed
tree $\tcal_{\eta+1,\infty}$ defined in
\cite[(6.2.10)]{j-j-s}. By Lemma \ref{jfa-ex}, the
corresponding composition operator $C_{\phi}$ is
unitarily equivalent to a weighted shift on
$\tcal_{\eta+1,\infty}$ with nonzero weights.
Subnormality of such operators has been studied in
\cite{j-j-s,b-j-j-sB}.
   \end{rem}
   \subsection{Injectivity problem}
If we allow the directed graph $(X, E^{\phi})$ to have
vertices of valency $0$, still preserving the
simplicity requirement\footnote{\;As in Remark
\ref{val1}, we exclude from our considerations the
case when $(X, E^{\phi})$ has more than one vertex of
valency greater than $1$.}, then the question is how
many such vertices can there be. The answer is given
in Proposition \ref{ibj1} and in the proof of Theorem
\ref{golfclub2} (see \eqref{hfizfi}). The question
becomes especially interesting when the composition
operator $C_{\phi}$ generates Stieltjes moment
sequences. In this version, the question is a
particular case of a more general problem, called here
the {\em injectivity problem} (see Problem
\ref{open2}). In the present section we investigate
the injectivity problem in the context of directed
graphs $(X,E^{\phi})$ having at most one vertex of
valency greater than $1$.

The following assumption will be used many times in
this section.
   \begin{align} \label{sass}
   \begin{minipage}{65ex}
Let $X$ be a nonempty set, $\phi$ be a self-map of
$X$, $E^{\phi}$ be as in \eqref{ephidef} and
$C_{\phi}$ be a composition operator in
$L^2(X,2^X,\mu)$ with symbol $\phi$, where $\mu$ is a
discrete measure on $X$.
   \end{minipage}
   \end{align}
If \eqref{sass} holds, then we set
   \begin{align*}
Z_{\phi}=\{x\in X\colon \card{\phi^{-1}(\{x\})} = 0\}.
   \end{align*}
Recall that the case of $Z_{\phi}=\emptyset$ has been
discussed in Remark \ref{val1}.

We begin by considering the situation where the
valency of each vertex of the directed graph
$(X,E^{\phi})$ does not exceed $1$.
   \begin{pro}\label{ibj1}
Assume that \eqref{sass} holds. If the directed graph
$(X,E^{\phi})$ is connected and
$\card{\phi^{-1}(\{x\})} \Le 1$ for every $x\in X$,
then
   \begin{enumerate}
   \item[(i)] $\card{Z_{\phi}} \Le 1$,
   \item[(ii)] $C_{\phi}$ is injective whenever
$C_{\phi}$ generates Stieltjes moment sequences.
   \end{enumerate}
   \end{pro}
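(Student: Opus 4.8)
The plan is to treat (i) and (ii) separately; throughout, the hypothesis $\card{\phi^{-1}(\{x\})} \Le 1$ for every $x$ says exactly that $\phi$ is injective, and I recall that $Z_{\phi} = X \setminus \phi(X)$.

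For (i) I would reduce to Step 3 of the proof of Theorem \ref{golfclub}. If $Z_{\phi}=\emptyset$ there is nothing to prove, so fix $z \in Z_{\phi}$ and set $Y := \{\phi^n(z)\colon n \in \zbb_+\}$. Injectivity of $\phi$ together with $z \notin \phi(X)$ forces the points $\phi^n(z)$ to be pairwise distinct: an equality $\phi^i(z)=\phi^j(z)$ with $i<j$ would give $z=\phi^{j-i}(z)\in\phi(X)$, a contradiction. Plainly $\phi(Y)\subseteq Y$, and also $\phi(X\setminus Y)\subseteq X\setminus Y$: indeed, if $x \notin Y$ but $\phi(x)=\phi^n(z)\in Y$, then $n=0$ would give $x \in \phi^{-1}(\{z\})=\emptyset$, while $n\Ge 1$ would give $x=\phi^{n-1}(z)\in Y$ by injectivity. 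Since $(X,E^{\phi})$ is connected and $Y \ne \emptyset$, Step 3 of the proof of Theorem \ref{golfclub} yields $X=Y$, so $\phi(X)=\{\phi^n(z)\colon n\Ge 1\}$, which by distinctness equals $X\setminus\{z\}$; hence $Z_{\phi}=X\setminus\phi(X)=\{z\}$ and $\card{Z_{\phi}}\Le 1$.

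For (ii) I would argue by contradiction: assume $C_{\phi}$ generates Stieltjes moment sequences but is not injective. Since $C_{\phi}f=f\circ\phi$ vanishes precisely for those $f\in L^2(\mu)$ that vanish on $\phi(X)$ (equivalently, by the injectivity criterion \eqref{manum} recalled in Remark \ref{val1}), there is a point $z\in X\setminus\phi(X)=Z_{\phi}$. Put $z':=\phi(z)$. Then $z'\ne z$, for otherwise $z=\phi(z)\in\phi(X)$; and $z\in\phi^{-1}(\{z'\})$ together with $\card{\phi^{-1}(\{z'\})}\Le 1$ forces $\phi^{-1}(\{z'\})=\{z\}$. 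Now $g:=\chi_{\{z'\}}\in L^2(\mu)$ satisfies $C_{\phi}g=\chi_{\phi^{-1}(\{z'\})}=\chi_{\{z\}}$ and $C_{\phi}^2g=\chi_{\phi^{-1}(\{z\})}=0$, so $g\in\dzn{C_{\phi}}$, and by \eqref{chiphi} (or a one-line direct computation) $\{\|C_{\phi}^ng\|^2\}_{n=0}^{\infty}=\{\mu(z'),\mu(z),0,0,\dots\}$. As $\mu$ is a discrete measure, $\mu(z'),\mu(z)\in(0,\infty)$; but a Stieltjes moment sequence $\{\gamma_n\}_{n=0}^\infty$ with $\gamma_2=0$ has every S-representing measure $\nu$ satisfying $\int_0^\infty t^2\,\D\nu(t)=0$, hence $\supp{\nu}\subseteq\{0\}$ and $\gamma_1=\int_0^\infty t\,\D\nu(t)=0$. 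Thus $\{\mu(z'),\mu(z),0,0,\dots\}$ is not a Stieltjes moment sequence, contradicting that $C_{\phi}$ generates Stieltjes moment sequences. Hence $C_{\phi}$ is injective.

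The substantive step is part (i): choosing the right $\phi$-invariant set and feeding it into Step 3 of the proof of Theorem \ref{golfclub}, after observing that a valency-zero vertex generates, under forward iteration of $\phi$, a repetition-free ``ray'' that must exhaust $X$. Part (ii) is then immediate once this local picture is in hand (and, in fact, even without it): it uses only the elementary observation that a Stieltjes moment sequence cannot have the form $\{a,b,0,0,\dots\}$ with $a,b>0$, and no determinacy theory whatsoever; note also that (ii) does not actually use connectedness of $(X,E^{\phi})$.
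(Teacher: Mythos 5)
Your proof is correct, but both halves take a genuinely different route from the paper's. For (i), the paper argues by contradiction from \emph{two} distinct vertices of $Z_{\phi}$: it takes a shortest undirected path joining them, notes that the first step out of each endpoint must be an application of $\phi$ (since the endpoints have no $\phi$-preimages), and locates by induction an interior vertex $u_{j+1}$ with two distinct preimages $u_j,u_{j+2}$, contradicting $\card{\phi^{-1}(\{u_{j+1}\})}\Le 1$. You instead fix a single $z\in Z_{\phi}$, show its forward orbit $Y=\{\phi^n(z)\colon n\in\zbb_+\}$ is a repetition-free ray with both $Y$ and $X\setminus Y$ invariant, and feed this into Step 3 of the proof of Theorem \ref{golfclub} to get $X=Y$; this yields $Z_{\phi}=\{z\}$ and, as a bonus, the full structural picture ($\phi$ isomorphic to $i\mapsto i+1$ on $\zbb_+$) that the paper only extracts in part (ii) via \cite[Proposition 2.4]{StB} and Steps 1 and 3. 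For (ii), the paper performs that reduction to $X=\zbb_+$ and then invokes \cite[Lemma 1.1(ii)]{sto-t} for the incompatibility of $C_{\phi}g\neq 0$, $C_{\phi}^2g=0$ with generating Stieltjes moment sequences; you bypass the reduction entirely (your remark that connectedness is not needed for (ii) is correct) and prove the moment-theoretic fact directly: a Stieltjes moment sequence with $\gamma_2=0$ has every representing measure supported in $\{0\}$, hence $\gamma_1=0$, whereas your $g=\chi_{\{z'\}}$ gives $\gamma_1=\mu(z)>0$. Both arguments are sound; yours is more self-contained and slightly more informative, while the paper's part (i) is shorter and purely combinatorial.
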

   \begin{proof}
(i) For if not, there are two distinct vertices $u, v
\in Z_{\phi}$. By the connectivity of $(X,E^{\phi})$,
there exists an undirected path $\{u_i\}_{i=1}^k
\subseteq X$ joining $u$ and $v$ of smallest possible
length $k \Ge 2$ (with $u_1=u$ and $u_k=v$). It is
easily seen that the sequence $\{u_i\}_{i=1}^k$ is
injective. Hence, since $u,v\in Z_{\phi}$, we see that
$k\Ge 3$, $u_2 = \phi(u)$ and $u_{k-1} = \phi(v)$. By
induction, there exists $j \in \{1, \ldots, k-2\}$
such that $u_{j+1}=\phi(u_j)$ and
$u_{j+1}=\phi(u_{j+2})$. As a consequence,
$u_j,u_{j+2} \in \phi^{-1}(\{u_{j+1}\})$, which
contradicts the inequality
$\card{\phi^{-1}(\{u_{j+1}\})}\Le 1$. This proves (i).

(ii) Suppose, on the contrary, that $C_{\phi}$
is not injective, or equivalently that $Z_{\phi} \neq
\emptyset$. By (i), $Z_{\phi}=\{\omega\}$ for some
$\omega \in X$, and thus, by \cite[Proposition
2.4]{StB} and Steps 1 and 3 of the proof of Theorem
\ref{golfclub}, $\phi$ is bijectively isomorphic to
the mapping $i \mapsto i+1$ acting on $\zbb_+$. Hence,
without loss of generality, we can assume that
$X=\zbb_+$ and $\phi(i)=i+1$ for all $i\in \zbb_+$.
Then $\chi_{\{1\}} \in \dzn{C_{\phi}}$, $C_{\phi}
\chi_{\{1\}} \neq 0$ and $C_{\phi}^2 \chi_{\{1\}} =
0$. This contradicts \cite[Lemma 1.1(ii)]{sto-t}
because $C_{\phi}$ generates Stieltjes moment
sequences.
   \end{proof}
It remains to consider the case when the directed
graph $(X,E^{\phi})$ has exactly one vertex of valency
greater than $1$. The following theorem which
describes such graphs (see Figures $4$ and $5$) will
be deduced from Theorem \ref{golfclub}.

   \vspace{2ex}
   \begin{tikzpicture}[scale = .6,
transform shape] \tikzstyle{every node} =
[circle,fill=gray!15] \node (e0)[font=\huge] at (2,-1)
{\phantom{1}$x_0$\phantom{1}}; \node (e-1)[font=\huge]
at (2,3) {$x_{\kappa-1}$}; \node (e-2)[font=\huge] at
(-1,3){$x_{\kappa-2}$}; \node (e1)[font=\huge] at
(-1,-1) {\phantom{1}$x_1$\phantom{1}}; \node
(ek)[font=\huge] at (3.5,1)
{\phantom{L}$x_\kappa$\phantom{l}}; \node
(e11)[font=\huge] at (7,3) {$x_{1,1}$}; \node
(e12)[font=\huge] at (10.5,3) {$x_{1,2}$}; \node
(e21)[font=\huge] at (7,1) {$x_{2,1}$}; \node[fill =
none] (e 5 1)[font=\huge] at (7,-3) {}; \node[fill =
none] (e 5 n)[font=\huge] at (16,-3) {}; \node
(e31)[font=\huge] at (7,-1) {$x_{3,1}$}; \node
(e32)[font=\huge] at (10.5,-1) {$x_{3,2}$}; \node
(e33)[font=\huge] at (14,-1) {$x_{3,3}$}; \node
(e3n)[fill = none] at (16,-1) {}; \draw[<-]
(e11)--(ek) node[pos=0.5,above = 0pt,fill=none] {};
\draw[<-] (e21)--(ek) node[pos=0.5, above =
0pt,sloped, fill=none]{}; \draw[dotted] (e 5 1)--(ek)
node [pos=0.5,below=-10pt,sloped,fill=none] {};
\draw[<-] (e31)--(ek) node[pos=0.5,
below=0pt,fill=none] {}; \draw[<-] (e12)--(e11)
node[pos=0.5,above = 0pt,fill=none] {}; \draw[<-]
(e32)--(e31) node[pos=0.5,below=0pt,fill=none] {};
\draw[<-] (e33)--(e32) node[pos=0.5,below =
0pt,fill=none] {}; \draw[dotted] (e 5 n)--(e 5 1)
node[pos=0.5,above = -10pt,fill=none] {};

\draw[ <-] (ek) to [bend right] (e-1)
node[pos=0.5,above = 0pt,fill=none] {}; \draw[dashed,
<-] (e3n)--(e33); \draw[dashed, <-] (e-2) to [bend
right] (e1); \draw[ <-] (e0) to [bend right] (ek);
\draw[ <-] (e-1)--(e-2) node[pos=0.5,above =
0pt,fill=none] {}; \draw[ <-] (e1)--(e0)
node[pos=0.5,above = 0pt,fill=none] {};
   \end{tikzpicture}
   \begin{center}
   \begin{minipage}{72ex}
{\small {\sf Figure $4$}}. The directed graph
$(X,E^{\phi})$ in the case of \mbox{(ii-a$^*$)} with
$l_1=2$, \\ \phantom{xxxxxxxi}$l_2=1$, $l_3=\infty$,
$\ldots$.
   \end{minipage}
   \end{center}

   \vspace{2ex}
   \begin{tikzpicture}[scale = .6,
transform shape] \tikzstyle{every node} =
[circle,fill=gray!15] \node (e-1)[font=\huge] at (1,1)
{\phantom{l}$x_{1}$\phantom{l}}; \node
(e-2)[font=\huge] at
(-1.5,1){\phantom{l}$x_{2}$\phantom{l}}; \node[fill =
none] (e1)[font=\huge] at (-3.5,1) {}; \node
(ek)[font=\huge] at (3.5,1)
{\phantom{l}$x_0$\phantom{l}}; \node (e11)[font=\huge]
at (7,3) {$x_{1,1}$}; \node (e12)[font=\huge] at
(10.5,3) {$x_{1,2}$}; \node (e21)[font=\huge] at (7,1)
{$x_{2,1}$}; \node (e22)[font=\huge] at (10.5,1)
{$x_{2,2}$}; \node (e23)[font=\huge] at (14,1)
{$x_{2,3}$}; \node[fill = none] (e2n)[font=\huge] at
(16,1) {}; \node[fill = none] (e 5 1)[font=\huge] at
(7,-3) {}; \node[fill = none] (e 5 n)[font=\huge] at
(16,-3) {}; \node (e31)[font=\huge] at (7,-1)
{$x_{3,1}$}; \draw[<-] (e11)--(ek) node[pos=0.5,above
= 0pt,fill=none] {}; \draw[<-] (e21)--(ek)
node[pos=0.5, above = 0pt,sloped, fill=none]{};
\draw[dotted] (e 5 1)--(ek) node
[pos=0.5,below=-10pt,sloped,fill=none] {}; \draw[<-]
(e31)--(ek) node[pos=0.5, below=0pt,fill=none] {};
\draw[<-] (e12)--(e11) node[pos=0.5,above =
0pt,fill=none] {}; \draw[<-] (e22)--(e21)
node[pos=0.5,above = 0pt,fill=none] {}; \draw[<-]
(e23)--(e22) node[pos=0.5,above=0pt,fill=none] {};
\draw[dotted] (e 5 n)--(e 5 1) node[pos=0.5,above =
-10pt,fill=none] {};

\draw[ <-] (ek) to [] (e-1); \draw[dashed, <-]
(e2n)--(e23); \draw[dashed, <-] (e-2) to [] (e1);
\draw[ <-] (e-1)--(e-2) node[pos=0.5,above =
0pt,fill=none] {};
   \end{tikzpicture}
   \begin{center}
   \begin{minipage}{72ex}
{\small {\sf Figure $5$}}. The directed graph
$(X,E^{\phi})$ in the case of \mbox{(ii-b$^*$)} with
$l_1=2$, \\ \phantom{xxxxxxxi} $l_2=\infty$, $l_3=1$,
$\ldots$.
   \end{minipage}
   \end{center}

   \vspace{1ex}
   \begin{thm}\label{golfclub2}
Assume that \eqref{sass} holds and $\eta \in \nbb \cup
\{\infty\}$. Then the following two conditions are
equivalent{\em :}
   \begin{enumerate}
   \item[(i)] the directed graph $(X,E^{\phi})$ is
connected and there exists $\omega \in X$ such that
$\card{\phi^{-1}(\{\omega\})} = \eta + 1$ and
$\card{\phi^{-1}(\{x\})} \Le 1$ for every $x \in X
\setminus \{\omega\}$,
   \item[(ii)] one of the following two conditions
is satisfied{\em :}
   \begin{enumerate}
   \item[(ii-a$^*$)] there exist $\kappa \in \zbb_+$,
a sequence $\{l_i\}_{i=1}^{\eta} \subseteq \nbb \cup
\{\infty\}$ and two disjoint systems
$\{x_i\}_{i=0}^{\kappa}$ and
$\{x_{i,j}\}_{i=1}^{\eta}{_{j=1}^{l_i}}$ of distinct
points of $X$ such that
   \allowdisplaybreaks
   \begin{align*}
X & = \{x_0, \ldots, x_{\kappa}\} \cup
\bigcup_{i=1}^{\eta}\Big\{x_{i,j}\colon j \in
J_{l_i}\Big\},
   \\
\phi(x) & =
   \begin{cases}
x_{i,j-1} & \text{ if } x = x_{i,j} \text{ with } i
\in J_{\eta} \text{ and } \, j \in J_{l_i} \setminus
\{1\},
   \\
x_{\kappa} & \text{ if } x = x_{i,1} \text{ with } i
\in J_{\eta} \text{ or } x = x_0,
   \\
x_{i-1} & \text{ if } x = x_i \text{ with } i \in
J_{\kappa},
   \end{cases}
   \end{align*}
   \item[(ii-b$^*$)] there exist a sequence
$\{l_i\}_{i=1}^{\eta+1} \subseteq \nbb \cup
\{\infty\}$ and two disjoint systems
$\{x_i\}_{i=0}^\infty$ and
$\{x_{i,j}\}_{i=1}^{\eta+1}{_{j=1}^{\,l_i}}$ of
distinct points of $X$ such that
   \allowdisplaybreaks
   \begin{align*}
X & = \{x_i\colon i \in \zbb_+\} \cup
\bigcup_{i=1}^{\eta+1}\Big\{x_{i,j}\colon j \in
J_{l_i}\Big\},
   \\
\phi(x) & =
   \begin{cases}
x_{i,j-1} & \text{ if } x = x_{i,j} \text{ with } i
\in J_{\eta+1} \text{ and } \, j \in J_{l_i} \setminus
\{1\},
   \\
x_0 & \text{ if } x = x_{i,1} \text{ with } i \in
J_{\eta+1},
   \\
x_{i+1} & \text{ if } x = x_i \text{ with } i \in
\zbb_+.
   \end{cases}
   \end{align*}
   \end{enumerate}
   \end{enumerate}
   \end{thm}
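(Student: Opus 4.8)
The implication \mbox{(ii)$\Rightarrow$(i)} is a direct verification: for each of the two graphs described in \mbox{(ii-a$^*$)} and \mbox{(ii-b$^*$)} one reads off from the explicit formula for $\phi$ that $(X,E^{\phi})$ is connected, that $\card{\phi^{-1}(\{x_{\kappa}\})}=\eta+1$ (resp.\ $\card{\phi^{-1}(\{x_{0}\})}=\eta+1$), and that every other vertex has valency $\Le 1$ — precisely, the vertices $x_{i,l_i}$ with $l_i<\infty$ have valency $0$ and all the remaining ones have valency $1$. So the entire content lies in \mbox{(i)$\Rightarrow$(ii)}, and the plan is to reduce this to Theorem \ref{golfclub} by ``completing'' the graph at its valency-zero vertices. (A direct adaptation of the five steps of the proof of Theorem \ref{golfclub} is also possible but more laborious, so I would avoid it.)

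Set $Z_{\phi}=\{x\in X\colon \card{\phi^{-1}(\{x\})}=0\}$, pick a family $\{z^{(k)}\colon z\in Z_{\phi},\,k\in\nbb\}$ of points disjoint from $X$, and let $\widehat X=X\cup\{z^{(k)}\colon z\in Z_{\phi},\,k\in\nbb\}$. Define a self-map $\widehat\phi$ of $\widehat X$ by $\widehat\phi|_{X}=\phi$, $\widehat\phi(z^{(1)})=z$ and $\widehat\phi(z^{(k)})=z^{(k-1)}$ for $k\Ge 2$. Each ray $\{z^{(k)}\colon k\in\nbb\}$ is joined to $z\in X$ by an undirected path in $(\widehat X,E^{\widehat\phi})$, so connectedness of $(X,E^{\phi})$ yields connectedness of $(\widehat X,E^{\widehat\phi})$. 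Computing preimages: $\widehat\phi^{-1}(\{x\})=\phi^{-1}(\{x\})$ for $x\in X\setminus Z_{\phi}$, $\widehat\phi^{-1}(\{z\})=\{z^{(1)}\}$ for $z\in Z_{\phi}$, and $\widehat\phi^{-1}(\{z^{(k)}\})=\{z^{(k+1)}\}$. Since $\eta+1\Ge 2$ forces $\omega\notin Z_{\phi}$, we get $\card{\widehat\phi^{-1}(\{\omega\})}=\eta+1$ and $\card{\widehat\phi^{-1}(\{x\})}=1$ for all $x\in\widehat X\setminus\{\omega\}$; that is, $\widehat\phi$ satisfies condition (i) of Theorem \ref{golfclub}.

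By Theorem \ref{golfclub}, $(\widehat X,E^{\widehat\phi})$ is of type \mbox{(ii-a)} or \mbox{(ii-b)}, the distinguished vertex being $\omega=x_{\kappa}$, resp.\ $\omega=x_{0}$. It remains to locate $X$ inside $\widehat X$, and here the point is forward invariance: $\widehat\phi(X)=\phi(X)\subseteq X$. In type \mbox{(ii-a)}, iterating $\widehat\phi$ starting from $\omega=x_{\kappa}\in X$ sweeps out the whole circuit, so $\{x_{0},\dots,x_{\kappa}\}\subseteq X$; also $\widehat\phi^{-1}(\{\omega\})=\phi^{-1}(\{\omega\})\subseteq X$, whence $x_{i,1}\in X$ for every $i\in J_{\eta}$. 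Fixing $i\in J_{\eta}$, forward invariance gives $x_{i,j}\in X\Rightarrow x_{i,j-1}\in X$ for $j\Ge 2$, so $\{j\in\nbb\colon x_{i,j}\in X\}$ is an initial segment $J_{l_i}$ of $\nbb$ with $l_i:=\sup\{j\colon x_{i,j}\in X\}\Ge 1$. As $\widehat X$ is the union of its circuit and its branches, we conclude $X=\{x_{0},\dots,x_{\kappa}\}\cup\bigcup_{i\in J_{\eta}}\{x_{i,j}\colon j\in J_{l_i}\}$, and $\phi=\widehat\phi|_{X}$ then acts exactly as prescribed in \mbox{(ii-a$^*$)}. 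Type \mbox{(ii-b)} is handled identically: iterating $\widehat\phi$ from $\omega=x_{0}$ shows $\{x_{i}\colon i\in\zbb_+\}\subseteq X$, the branches again start inside $\phi^{-1}(\{\omega\})\subseteq X$ and survive only on initial segments $J_{l_i}$ with $l_i\Ge 1$, which is exactly \mbox{(ii-b$^*$)}.

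There is no real obstacle here; the work is bookkeeping. The two points that need a little care are (a) checking that the completed graph $(\widehat X,E^{\widehat\phi})$ is still connected and retains the valency profile required by Theorem \ref{golfclub}, so that the theorem genuinely applies, and (b) the truncation step, where forward invariance is precisely what forces the set of surviving indices along each branch to be an initial segment. The degenerate configurations ($\kappa=0$, $Z_{\phi}$ empty or infinite, some or all $l_i=\infty$) are subsumed automatically.
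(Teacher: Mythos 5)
Your proposal is correct and follows essentially the same route as the paper: extend $\phi$ to $\widehat\phi$ on $\widehat X$ by grafting an infinite ray onto each valency-zero vertex, check that $(\widehat X,E^{\widehat\phi})$ satisfies condition (i) of Theorem \ref{golfclub}, and then recover $X$ inside the classified graph via $\phi\subseteq\widehat\phi$ and forward invariance, which forces each branch to meet $X$ in an initial segment $J_{l_i}$. The only cosmetic difference is that you spell out the truncation step (the paper compresses it to ``an induction argument''); no gap.
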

   \begin{proof}
(ii)$\Rightarrow$(i) Obvious.

(i)$\Rightarrow$(ii) In view of Theorem
\ref{golfclub}, we may assume that $Z_{\phi} \neq
\emptyset$. Let $\{Y_z\}_{z\in Z_{\phi}}$ be a family
of pairwise disjoint countably infinite sets such that
$X \cap \bigsqcup_{z\in Z_{\phi}} Y_z = \emptyset$.
Set $\widehat X = X \sqcup \bigsqcup_{z\in Z_{\phi}}
Y_z$. For every $z\in Z_{\phi}$, let
$\{y_{z,i}\}_{i=1}^{\infty}$ be a sequence of distinct
points of $Y_z$ such that $Y_z=\{y_{z,i}\colon i \in
\nbb\}$. Define the self-map $\hat \phi$ of $\widehat
X$ by
   \begin{align*}
\hat \phi(x) =
   \begin{cases}
   \phi(x) & \text{if } x\in X,
\\
   z & \text{if } x=y_{z,1} \text{ for some } z\in
   Z_{\phi},
\\
   y_{z,i-1} & \text{if } x=y_{z,i} \text{ for some }
   z\in Z_{\phi} \text{ and } i \Ge 2.
   \end{cases}
   \end{align*}
It is a matter of routine to verify that $\phi
\subseteq \hat\phi$ (i.e., $\hat\phi$ extends $\phi$),
the directed graph $(\widehat X, E^{\hat \phi})$ is
connected, $\hat \phi^{-1}(\{\omega\}) =
\phi^{-1}(\{\omega\})$ and $\card{\hat
\phi^{-1}(\{x\})} = 1$ for every $x \in \widehat X
\setminus \{\omega\}$. Hence, by Theorem
\ref{golfclub}, $(\widehat X, \hat \phi)$ takes the
form \mbox{(ii-a)} or \mbox{(ii-b)} (with $(\widehat
X, \hat \phi)$ in place of $(X, \phi)$). Set $\tilde
\eta=\eta$ if $(\widehat X, \hat \phi)$ takes the form
\mbox{(ii-a)} and $\tilde \eta=\eta+1$ otherwise.
Since $\hat \phi^{-1}(\{\omega\}) =
\phi^{-1}(\{\omega\})$, we deduce that $x_{i,1} \in X$
for every $i \in J_{\tilde \eta}$. This, the explicit
description of $(\widehat X,\hat\phi)$ and an
induction argument combined with $\phi \subseteq
\hat\phi$ imply that there exists a sequence
$\{l_i\}_{i=1}^{\tilde\eta} \subseteq \nbb \cup
\{\infty\}$ such that (ii) holds. In particular, we
have
   \begin{align} \label{hfizfi}
Z_{\phi} = \{x_{i,l_i}\colon i\in J_{\tilde\eta}, \,
l_i <\infty\}.
   \end{align}
This completes the proof.
   \end{proof}

Now we take a closer look at the directed graphs
described by parts \mbox{(ii-a$^*$)} and
\mbox{(ii-b$^*$)} of Theorem \ref{golfclub2} which
admit composition operators generating Stieltjes
moment sequences.
   \begin{pro}\label{golfclub3}
Assume that \eqref{sass} holds. If $(X,E^{\phi})$ is
as in Theorem {\em \ref{golfclub2}}\mbox{\em
(ii-a$^*$)} with $\eta \in \nbb \cup \{\infty\}$ and
$C_{\phi}$ generates Stieltjes moment sequences, then
   \begin{enumerate}
   \item[(i)] $l_i \in \{1\} \cup \{\infty\}$ for every
$i\in J_{\eta}$,
   \item[(ii)] $\card{Z_{\phi}} \Le \eta-1$,
   \item[(iii)] $C_{\phi}$ is injective whenever
the Stieltjes moment sequence
$\{\hfin{n+1}(x_{\kappa})\}_{n=0}^{\infty}$ is
S-determinate.
   \end{enumerate}
   \end{pro}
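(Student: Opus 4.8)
The plan is to first record a couple of elementary facts and the basic combinatorics of $\gcal_{\eta,\kappa}$, and then to treat (i) and (ii) directly and (iii) by a Jordan‑decomposition argument that feeds the S‑determinacy hypothesis into the comparison test. Since $C_{\phi}$ generates Stieltjes moment sequences, $\dzn{C_{\phi}}$ is dense, so by \eqref{dzn} every $\hfin{n}$ is finite‑valued; hence by \eqref{chiphi} $\chi_{\{u\}}\in\dzn{C_{\phi}}$ and $\{\hfin{n}(u)\}_{n=0}^{\infty}$, being a positive multiple of $\{\|C_{\phi}^{n}\chi_{\{u\}}\|^{2}\}_{n=0}^{\infty}$, is a Stieltjes moment sequence for every $u\in X$. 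I will also use that a Stieltjes moment sequence with a vanishing term of index $\Ge 1$ must vanish from index $1$ on (its S‑representing measure then sits at $0$). From \eqref{Xr1.5} one reads off $\phi^{-n}(\{x_{i,j}\})=\{x_{i,j+n}\}$ when $j+n\Le l_{i}$ and $\phi^{-n}(\{x_{i,j}\})=\emptyset$ otherwise; chaining the single‑point preimages along the circuit gives $\hfin{m+\kappa}(x_{0})=\tfrac{\mu(x_{\kappa})}{\mu(x_{0})}\hfin{m}(x_{\kappa})$ for all $m\in\zbb_{+}$; and, since the sets $\phi^{-n}(\{v\})$ for $v\in\phi^{-1}(\{x_{\kappa}\})=\{x_{0}\}\cup\{x_{i,1}\colon i\in J_{\eta}\}$ are pairwise disjoint, $\mu(x_{\kappa})\hfin{n+1}(x_{\kappa})=\mu(x_{0})\hfin{n}(x_{0})+\sum_{i\in J_{\eta}}\mu(x_{i,1})\hfin{n}(x_{i,1})$ for all $n\in\zbb_{+}$.

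For (i), if $2\Le l_{i}<\infty$ for some $i\in J_{\eta}$, then the Stieltjes moment sequence $\{\hfin{n}(x_{i,1})\}_{n=0}^{\infty}$ has $\hfin{1}(x_{i,1})=\mu(x_{i,2})/\mu(x_{i,1})>0$ but $\hfin{l_{i}}(x_{i,1})=0$, contradicting the fact recalled above; hence $l_{i}\in\{1\}\cup\{\infty\}$. For (ii), by \eqref{hfizfi} and (i) we have $Z_{\phi}=\{x_{i,1}\colon i\in J_{\eta},\ l_{i}=1\}$, so it suffices to rule out $l_{i}=1$ for every $i\in J_{\eta}$. In that event no infinite branch is present, so for $n\Ge\kappa+1$ the set $\phi^{-n}(\{x_{0}\})$ cycles, with period $\kappa+1$, through the fixed finite sets $\{x_{0}\}\cup\{x_{i,1}\colon i\in J_{\eta}\},\{x_{1}\},\dots,\{x_{\kappa}\}$; thus $\{\hfin{n}(x_{0})\}_{n=0}^{\infty}$ is a bounded Stieltjes moment sequence with positive terms, hence log‑convex (by the Cauchy--Schwarz inequality, $\hfin{n}(x_{0})^{2}\Le\hfin{n-1}(x_{0})\hfin{n+1}(x_{0})$), and a bounded log‑convex positive sequence is non‑increasing, so $\hfin{n}(x_{0})\Le\hfin{0}(x_{0})=1$ for all $n$; but $\hfin{\kappa+1}(x_{0})=\tfrac{1}{\mu(x_{0})}\bigl(\mu(x_{0})+\sum_{i\in J_{\eta}}\mu(x_{i,1})\bigr)>1$, a contradiction.

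For (iii), $C_{\phi}$ is injective if and only if $Z_{\phi}=\emptyset$ (cf.\ \eqref{hfi}, \cite[Proposition 6.2]{b-j-j-sC}, Remark~\ref{val1}), i.e., by (i) and \eqref{hfizfi}, if and only if $l_{i}=\infty$ for all $i\in J_{\eta}$; assume, for contradiction, that $l_{i_{0}}=1$ for some $i_{0}\in J_{\eta}$. Fix $\mu_{0}\in\msc^{+}$ representing $\{\hfin{n}(x_{0})\}_{n=0}^{\infty}$ and, for each $i\in J_{\eta}$, $\mu_{i}\in\msc^{+}$ representing $\{\hfin{n}(x_{i,1})\}_{n=0}^{\infty}$; note that $\mu_{i_{0}}=\delta_{0}$ because $\{\hfin{n}(x_{i_{0},1})\}_{n}=(1,0,0,\dots)$, and that $\rho:=\sum_{i\in J_{\eta}}\tfrac{\mu(x_{i,1})}{\mu(x_{0})}\mu_{i}$ is a finite measure since $\sum_{i\in J_{\eta}}\mu(x_{i,1})\Le\mu(\phi^{-1}(\{x_{\kappa}\}))<\infty$. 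Combining the two circuit identities above gives $\hfin{n+\kappa+1}(x_{0})=\hfin{n}(x_{0})+\sum_{i\in J_{\eta}}\tfrac{\mu(x_{i,1})}{\mu(x_{0})}\hfin{n}(x_{i,1})$ for all $n\in\zbb_{+}$, equivalently $\int_{0}^{\infty}t^{n}(t^{\kappa+1}-1)\,\D\mu_{0}(t)=\int_{0}^{\infty}t^{n}\,\D\rho(t)$ for all $n\in\zbb_{+}$. I would then pass to the Jordan decomposition $(t^{\kappa+1}-1)\,\D\mu_{0}(t)=\D\lambda^{+}(t)-\D\lambda^{-}(t)$, where $\lambda^{+}=\chi_{(1,\infty)}\,(t^{\kappa+1}-1)\,\D\mu_{0}$ and $\lambda^{-}=\chi_{[0,1)}\,(1-t^{\kappa+1})\,\D\mu_{0}$ both lie in $\msc^{+}$ (being dominated by $\mu_{0}$ on $[0,1)$ and by $t^{\kappa+1}\,\D\mu_{0}$ on $(1,\infty)$); the displayed identity then says that $\lambda^{+}$ and $\lambda^{-}+\rho$ have the same moment sequence. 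Since $\lambda^{+}(\sigma)\Le(t^{\kappa+1}\,\D\mu_{0})(\sigma)$ for every $\sigma\in\borel{\rbb_{+}}$, and the measure $t^{\kappa+1}\,\D\mu_{0}$ is S‑determinate — its moment sequence equals $\tfrac{\mu(x_{\kappa})}{\mu(x_{0})}\{\hfin{n+1}(x_{\kappa})\}_{n=0}^{\infty}$, which is S‑determinate by hypothesis — the comparison test (Proposition~\ref{krytnd}) gives that $\lambda^{+}$ is S‑determinate, whence $\lambda^{+}=\lambda^{-}+\rho$. Evaluating both sides at $\{0\}$ yields the contradiction $0=\lambda^{+}(\{0\})\Ge\tfrac{\mu(x_{i_{0},1})}{\mu(x_{0})}\,\mu_{i_{0}}(\{0\})=\tfrac{\mu(x_{i_{0},1})}{\mu(x_{0})}>0$. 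The crux of the argument — and the step most prone to slips — is the circuit identity tying $\hfin{n}(x_{0})$ to its own $(\kappa+1)$‑shift and to the branch derivatives $\hfin{n}(x_{i,1})$; once that is in place, the Jordan‑decomposition/comparison‑test device is what converts the rigid $\delta_{0}$‑contribution of the leaf into a forbidden atom at the origin.
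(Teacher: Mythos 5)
Your proof is correct, and for parts (ii) and (iii) it follows a genuinely different route from the paper's. For (i) both arguments rest on the same observation (a Stieltjes moment sequence cannot have a positive term followed by a zero term), only packaged differently: the paper phrases it via $C_{\phi}\chi_{\{x_{i,l_i-1}\}}\neq 0$, $C_{\phi}^2\chi_{\{x_{i,l_i-1}\}}=0$ and a citation to \cite{sto-t}, while you read it off the sequence $\{\hfin{n}(x_{i,1})\}_{n=0}^\infty$. For (ii) the paper invokes Lambert's theorem (a bounded composition operator generating Stieltjes moment sequences is subnormal) together with the Harrington--Whitley injectivity of subnormal composition operators; your argument --- boundedness of $\{\hfin{n}(x_0)\}_{n=0}^\infty$ from the eventual periodicity of $\phi^{-n}(\{x_0\})$, log-convexity of Stieltjes moment sequences forcing a bounded positive one to be non-increasing, contradicted by $\hfin{\kappa+1}(x_0)>1$ --- is elementary and self-contained, at the price of being specific to this graph. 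For (iii) the paper applies \cite[Lemma 38]{b-j-j-sS} as a black box at $x=x_\kappa$; you instead reconstruct what is essentially that lemma's content in this discrete setting: the circuit identity $\hfin{n+\kappa+1}(x_0)=\hfin{n}(x_0)+\sum_i\frac{\mu(x_{i,1})}{\mu(x_0)}\hfin{n}(x_{i,1})$ (Lemma \ref{fund}, whose proof only uses $\phi^{-1}(\{x_\kappa\})=\{x_0\}\sqcup\{x_{i,1}\colon i\in J_\eta\}$ and so is valid for finite branches as well), the splitting of $(t^{\kappa+1}-1)\,\D\mu_0$ into its positive and negative parts, the domination $\lambda^+\Le t^{\kappa+1}\,\D\mu_0$ feeding the S-determinacy of $\{\hfin{n+1}(x_\kappa)\}_{n=0}^\infty$ into the comparison test (Proposition \ref{krytnd}), and finally the forbidden atom at $0$ contributed by $\mu_{i_0}=\delta_0$. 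This makes the dependence on the determinacy hypothesis completely transparent, whereas the paper's version is shorter but opaque without the external reference. All the intermediate claims you use (finiteness of the $\hfin{n}$, the identification $\hfin{n+\kappa}(x_0)=\frac{\mu(x_\kappa)}{\mu(x_0)}\hfin{n}(x_\kappa)$, membership of $\lambda^{\pm}$ and $\rho$ in $\msc^+$) check out.
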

   \begin{proof}
(i) Indeed, otherwise $l_i \in \nbb_2$ for some $i\in
J_{\eta}$, which implies that
$\chi_{\{x_{i,l_{i}-1}\}}\in \dzn{C_{\phi}}$,
$C_{\phi} \chi_{\{x_{i,l_{i}-1}\}} \neq 0$ and
$C_{\phi}^2 \chi_{\{x_{i,l_{i}-1}\}}=0$. This
contradicts \cite[Lemma 1.1(ii)]{sto-t} because
$C_{\phi}$ generates Stieltjes moment sequences.

(ii) Suppose, on the contrary, that $\card{Z_{\phi}} >
\eta-1$. Then, by \eqref{hfizfi},
$\card{Z_{\phi}}=\eta\in \nbb$ and consequently
$C_{\phi} \in \ogr{L^2(\mu)}$ and $C_{\phi}$ is not
injective. In view of Lambert's theorem (we use
\cite[Theorem 7]{StSz2}), $C_{\phi}$ is subnormal and,
as such, is injective (see \cite[Theorem 9d]{Har-Wh}).
This gives a contradiction and proves (ii).

(iii) By \cite[Theorem 10.4]{b-j-j-sC}, the sequence
$\{\hfin{n}(x)\}_{n=0}^{\infty}$ is a Stieltjes moment
sequence for every $x\in X$. Suppose, on the contrary,
that $C_{\phi}$ is not injective. Using (i), we see
that there exists $i\in J_{\eta}$ such that $l_i=1$.
We easily verify that $\delta_0$ is an S-representing
measure of the Stieltjes moment sequence
$\{\hfin{n}(x_{i,l_i})\}_{n=0}^{\infty}$ (cf.\
\eqref{hfi}). Applying \cite[Lemma 38]{b-j-j-sS} to
$x=x_{\kappa}$, we are led to a contradiction.
   \end{proof}
The following result is an analog of Proposition
\ref{golfclub3}.
   \begin{pro}\label{golfclub4}
Assume that \eqref{sass} holds. If $(X,E^{\phi})$ is
as in Theorem {\em \ref{golfclub2}}\mbox{\em
(ii-b$^*$)} with $\eta \in \nbb \cup \{\infty\}$ and
$C_{\phi}$ generates Stieltjes moment sequences, then
   \begin{enumerate}
   \item[(i)] $l_i \in \{1\} \cup \{\infty\}$ for every
$i\in J_{\eta+1}$,
   \item[(ii)] $\card{Z_{\phi}} \Le \eta$,
   \item[(iii)] $C_{\phi}$ is injective whenever
the Stieltjes moment sequence
$\{\hfin{n+1}(x_0)\}_{n=0}^{\infty}$ is S-determinate.
   \end{enumerate}
   \end{pro}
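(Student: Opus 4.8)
The plan is to mimic the proof of Proposition \ref{golfclub3} almost verbatim, replacing Theorem \ref{golfclub2}\mbox{(ii-a$^*$)} by Theorem \ref{golfclub2}\mbox{(ii-b$^*$)} throughout, so that the index $i$ now runs over $J_{\eta+1}$ and the distinguished vertex $\omega$ equals $x_0$, and using \eqref{hfizfi} in the form $Z_{\phi}=\{x_{i,l_i}\colon i\in J_{\eta+1},\, l_i<\infty\}$ (i.e.\ with $\tilde\eta=\eta+1$). For part (i): suppose $l_i\in\nbb_2$ for some $i\in J_{\eta+1}$. Then $x_{i,l_i-1}$ is well defined, and, reading off the form of $\phi$ in Theorem \ref{golfclub2}\mbox{(ii-b$^*$)}, one has $\chi_{\{x_{i,l_i-1}\}}\in\dzn{C_\phi}$ (all but two of its $C_\phi$-iterates vanish), $C_\phi\chi_{\{x_{i,l_i-1}\}}=\chi_{\{x_{i,l_i}\}}\neq 0$ and $C_\phi^2\chi_{\{x_{i,l_i-1}\}}=0$ because $x_{i,l_i}$ is a leaf of $(X,E^{\phi})$. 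Since $C_\phi$ generates Stieltjes moment sequences, this contradicts \cite[Lemma 1.1(ii)]{sto-t}, exactly as in Proposition \ref{golfclub3}(i).

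For part (ii): by \eqref{hfizfi}, $\card{Z_{\phi}}\Le\eta+1$. If $\eta=\infty$ there is nothing to prove, so assume $\eta\in\nbb$ and, towards a contradiction, that $\card{Z_{\phi}}=\eta+1$. Then $l_i<\infty$ for every $i\in J_{\eta+1}$, whence, by part (i), $l_i=1$ for every $i\in J_{\eta+1}$. Consequently $\phi^{-1}(\{x_0\})=\{x_{i,1}\colon i\in J_{\eta+1}\}$ consists of $\eta+1\Ge 2$ leaves of $(X,E^{\phi})$, which gives $\chi_{\{x_0\}}\in\dzn{C_\phi}$, $C_\phi\chi_{\{x_0\}}=\chi_{\{x_{i,1}\colon i\in J_{\eta+1}\}}\neq 0$ and $C_\phi^2\chi_{\{x_0\}}=0$, again contradicting \cite[Lemma 1.1(ii)]{sto-t}. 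This is the one step at which the argument genuinely departs from that of Proposition \ref{golfclub3}(ii): there, the same assumption forces $X$ to be \emph{finite}, so one invokes Lambert's theorem (via \cite[Theorem 7]{StSz2}) together with \cite[Theorem 9d]{Har-Wh}; here $X$ necessarily contains the infinite tail $\{x_i\colon i\in\zbb_+\}$, so the boundedness route is unavailable and one must instead exploit the ``two-step nilpotency'' of $C_\phi$ on $\chi_{\{x_0\}}$. I expect this to be the only point requiring a new idea.

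For part (iii): by \cite[Theorem 10.4]{b-j-j-sC}, $\{\hfin{n}(x)\}_{n=0}^{\infty}$ is a Stieltjes moment sequence for every $x\in X$. Assume $C_\phi$ is not injective, i.e.\ $Z_{\phi}\neq\emptyset$. By part (i) there is $i\in J_{\eta+1}$ with $l_i=1$, and since $x_{i,1}$ is then a leaf, $\{\hfin{n}(x_{i,1})\}_{n=0}^{\infty}=(1,0,0,\dots)$, whose only S-representing measure is $\delta_0$. As $x_{i,1}\in\phi^{-1}(\{x_0\})$ and, by hypothesis, $\{\hfin{n+1}(x_0)\}_{n=0}^{\infty}$ is S-determinate, applying \cite[Lemma 38]{b-j-j-sS} with $x=x_0$ yields the desired contradiction. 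The routine verifications throughout — membership of the relevant characteristic functions in $\dzn{C_\phi}$ and the evaluation of their $C_\phi$-iterates — follow from \eqref{chiphi} and the explicit description of $\phi$ in Theorem \ref{golfclub2}\mbox{(ii-b$^*$)}, and I would dispatch them without detail, exactly as in Proposition \ref{golfclub3}.
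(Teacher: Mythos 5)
Your proposal is correct and follows essentially the same route as the paper: parts (i) and (iii) are obtained by transplanting the argument of Proposition \ref{golfclub3} with \cite[Lemma 38]{b-j-j-sS} applied at $x=x_0$, and part (ii) is handled exactly as in the paper by deducing $l_i=1$ for all $i\in J_{\eta+1}$ from (i) and then contradicting \cite[Lemma 1.1(ii)]{sto-t} via $C_{\phi}\chi_{\{x_0\}}\neq 0$ and $C_{\phi}^2\chi_{\{x_0\}}=0$. Your observation that the Lambert/boundedness route used in Proposition \ref{golfclub3}(ii) is unavailable here because of the infinite tail $\{x_i\colon i\in\zbb_+\}$ correctly identifies why the paper switches to the nilpotency argument at precisely that point.
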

   \begin{proof}
Arguing as in the proof of Proposition \ref{golfclub3}
(but now applying \cite[Lemma 38]{b-j-j-sS} to
$x=x_0$), we get (i) and (iii).

(ii) Suppose, on the contrary, that $\card{Z_{\phi}} >
\eta$. Then, by \eqref{hfizfi}, $\eta \in \nbb$ and
$\card{Z_{\phi}}=\eta+1$. It follows from (i) that
$l_i=1$ for every $i\in J_{\eta+1}$. As a consequence,
$\chi_{\{x_0\}} \in \dzn{C_{\phi}}$, $C_{\phi}
\chi_{\{x_0\}} \neq 0$ and $C_{\phi}^2 \chi_{\{x_0\}}
= 0$, which contradicts \cite[Lemma 1.1(ii)]{sto-t}
because $C_{\phi}$ generates Stieltjes moment
sequences.
   \end{proof}
Similar reasoning as in the proof of part (iii) of
Proposition \ref{golfclub3} gives a criterion for
injectivity of $C_{\phi}$ in a more general situation.
   \begin{pro}\label{sdeterm}
Suppose that \eqref{sass} holds. If $C_{\phi}$
generates Stieltjes moment sequences and the Stieltjes
moment sequence $\{\hfin{n+1}(x)\}_{n=0}^{\infty}$ is
S-determinate for every $x\in X$, then $C_{\phi}$ is
injective.
   \end{pro}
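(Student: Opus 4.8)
The plan is to argue by contradiction, using that injectivity of $C_{\phi}$ is controlled by the vertices of valency zero. By \eqref{hfi} and \cite[Proposition 6.2]{b-j-j-sC}, $C_{\phi}$ is injective if and only if $\hfi(x)>0$ for every $x\in X$, i.e.\ if and only if $Z_{\phi}=\emptyset$ (cf.\ Remark \ref{val1}). So I would assume $Z_{\phi}\neq\emptyset$, fix $\omega\in Z_{\phi}$ (so $\phi^{-1}(\{\omega\})=\emptyset$), and put $u:=\phi(\omega)$. Since $\omega\notin\phi(X)$ we get $u\neq\omega$, and $\omega\in\phi^{-1}(\{u\})$.

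First I would record what the hypothesis that $C_{\phi}$ generates Stieltjes moment sequences provides. Density of $\dzn{C_{\phi}}$ makes every $C_{\phi}^{n}$ densely defined, so \eqref{dzn} gives $\hfin{n}(x)<\infty$ for all $x\in X$, $n\in\nbb$; hence $\chi_{\{x\}}\in\dzn{C_{\phi}}$ for every $x$, and by \eqref{chiphi} (plus rescaling, or directly \cite[Theorem 10.4]{b-j-j-sC}) the sequence $\{\hfin{n}(x)\}_{n=0}^{\infty}$ is a Stieltjes moment sequence for every $x\in X$. Moreover $\phi^{-n}(\{\omega\})=\emptyset$ for $n\Ge1$, so \eqref{hfi} gives $\hfin{0}(\omega)=1$ and $\hfin{n}(\omega)=0$ for $n\Ge1$; thus $\delta_{0}$ is an S-representing measure of $\{\hfin{n}(\omega)\}_{n=0}^{\infty}$.

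The argument now runs as in the proof of Proposition \ref{golfclub3}(iii). The vertex $u$ has the ``dead'' vertex $\omega$ among its $\phi$-preimages, with $\{\hfin{n}(\omega)\}_{n=0}^{\infty}$ represented by $\delta_{0}$; applying \cite[Lemma 38]{b-j-j-sS} to $u$ then yields that $\{\hfin{n+1}(u)\}_{n=0}^{\infty}$ is S-indeterminate. This contradicts the standing hypothesis that $\{\hfin{n+1}(x)\}_{n=0}^{\infty}$ is S-determinate for every $x\in X$ (take $x=u$). Hence $Z_{\phi}=\emptyset$, and $C_{\phi}$ is injective.

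I expect the main obstacle to be the appeal to \cite[Lemma 38]{b-j-j-sS}: that is where the Stieltjes-generating hypothesis is indispensable, as one must rule out that the atom at $0$ forced at $\omega$ can be reconciled with a single S-representing measure of $\{\hfin{n+1}(u)\}_{n=0}^{\infty}$. I would also dispose of the degenerate case $\phi^{-1}(\{u\})=\{\omega\}$ directly: then $\phi^{-2}(\{u\})=\emptyset$, so $C_{\phi}^{2}\chi_{\{u\}}=0$ while $C_{\phi}\chi_{\{u\}}=\chi_{\{\omega\}}\neq0$, which already contradicts \cite[Lemma 1.1(ii)]{sto-t}; in the remaining case $\phi^{-1}(\{u\})$ contains a vertex other than $\omega$ and \cite[Lemma 38]{b-j-j-sS} applies cleanly.
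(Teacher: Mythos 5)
Your proposal is correct and follows essentially the same route as the paper: the paper's ``proof'' of Proposition \ref{sdeterm} is just the remark that one argues as in Proposition \ref{golfclub3}(iii), and that is exactly what you do --- locate $\omega\in Z_{\phi}$, observe that $\delta_{0}$ S-represents $\{\hfin{n}(\omega)\}_{n=0}^{\infty}$, and apply \cite[Lemma 38]{b-j-j-sS} at $u=\phi(\omega)$ to contradict the S-determinacy of $\{\hfin{n+1}(u)\}_{n=0}^{\infty}$. Your separate treatment of the case $\phi^{-1}(\{u\})=\{\omega\}$ via \cite[Lemma 1.1(ii)]{sto-t} is a harmless extra precaution.
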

The above considerations leads us to the following
injectivity problem (see \cite{b-j-j-sC} for the
necessary definitions).
   \begin{opq} \label{open2}
Suppose that $(X, \ascr,\mu)$ is a $\sigma$-finite
measure space, $\phi$ is a nonsingular self-map of $X$
and $C_{\phi}$ is a composition operator in $L^2(\mu)$
with symbol $\phi$ generating Stieltjes moment
sequences. Is it true that $C_{\phi}$ is injective?
   \end{opq}
Problem \ref{open2} seems to be hard to solve. To shed
more light on this we make the following remark.
   \begin{rem}
First, we note that Problem \ref{open2} has an
affirmative solution for bounded composition
operators. Indeed, by Lambert's theorem (we use
\cite[Theorem 7]{StSz2} again), a bounded composition
operator in an $L^2$-space generating Stieltjes moment
sequences is subnormal and consequently, by
\cite[Theorem 9d]{Har-Wh}, it is injective. If the
composition operator in question is over a rootless
directed tree and it has sufficiently many
quasi-analytic vectors, then the property of
generating Stieltjes moment sequences is equivalent to
subnormality (use Lemma \ref{jfa-ex}, its proof and
\cite[Theorem 5.3.1]{b-j-j-sA}). Hence, in view of
\cite[Corollary 6.3]{b-j-j-sC}, in this particular
case, Problem \ref{open2} has an affirmative solution
as well (this can be also deduced from Proposition
\ref{sdeterm} by applying \eqref{chiphi}
and Proposition \ref{wouk}). Propositions \ref{ibj1},
\ref{golfclub3}, \ref{golfclub4} and \ref{sdeterm}
provide yet another examples for which Problem
\ref{open2} has an affirmative solution.
   \end{rem}
   \subsection{The Radon-Nikodym derivatives} \label{Sec3RN}
The following assumption will be used frequently
through this paper.
   \begin{align} \label{sa}
   \begin{minipage}{65ex} Let $\eta \in \nbb
\cup \{\infty\}$ and $\kappa \in \zbb_+$, and let
$X=X_{\eta,\kappa}$ be a set satisfying \eqref{Xr1},
where $\{x_i\}_{i=0}^{\kappa}$ and
$\{x_{i,j}\}_{i=1}^{\eta}{_{j=1}^\infty}$ are two
disjoint systems of distinct points of $X$,
$\phi=\phi_{\eta,\kappa}$ be a self-map of $X$
satisfying \eqref{Xr1.5} and $\mu$ be a discrete
measure on $X$. We adhere to the convention that
$x_{-1} = x_{\kappa}$ and $x_{i,0} = x_{\kappa}$ for
$i \in J_{\eta}$.
   \end{minipage}
   \end{align}
It is easily seen that
   \begin{align} \label{injcos}
\text{\em if \eqref{sa} holds, then the composition
operator $C_{\phi}$ is injective.}
   \end{align}
We begin by deriving a formula for iterated inverses
of $\phi$ at the point $x_{\kappa}$.
   \begin{lem} \label{gc1}
Suppose \eqref{sa} holds. If $n=j(\kappa +1)+ r$ for
some $j \in \zbb_+$ and $r \in \{0, \ldots, \kappa\}$,
then
   \begin{align} \label{inver}
\phi^{-n}(\{x_{\kappa}\}) = \{x_{r-1}\} \cup
\Big\{x_{i,l(\kappa+1) + r}\colon i \in J_{\eta}, \, l
\in \{0, \ldots, j\}\Big\}.
   \end{align}
   \end{lem}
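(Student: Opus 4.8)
The plan is to establish \eqref{inver} directly, by computing the forward $\phi$-orbit of each vertex of $X$ and then recording which vertices land on $x_{\kappa}$ after exactly $n$ iterations. Guided by \eqref{Xr1}, it is natural to treat separately the \emph{circuit} $\{x_0,\ldots,x_{\kappa}\}$ and the \emph{branches} $\{x_{i,j}\colon i\in J_{\eta},\,j\in\nbb\}$. (Alternatively one could argue by induction on $n$ via $\phi^{-(n+1)}(\{x_{\kappa}\})=\phi^{-1}(\phi^{-n}(\{x_{\kappa}\}))$, but the orbit computation seems cleaner.)

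First I would record the dynamics on the circuit. A routine induction on $s\in\zbb_+$ using \eqref{Xr1.5} (together with the conventions $x_{-1}=x_{\kappa}$, $x_{i,0}=x_{\kappa}$) shows that $\phi$ restricts to a cyclic permutation of $\{x_0,\ldots,x_{\kappa}\}$ with $\phi^{s}(x_{\kappa})=x_{\kappa-s}$ for $0\Le s\Le\kappa$ and $\phi^{\kappa+1}(x_{\kappa})=x_{\kappa}$; in particular $\phi^{s}(x_{\kappa})=x_{\kappa}$ if and only if $(\kappa+1)\mid s$, and for a circuit vertex $y$ one has $\phi^{n}(y)=x_{\kappa}$ for precisely one choice of $y$, namely $y=x_{r-1}$ (reading $x_{-1}=x_{\kappa}$), since $n\equiv r\pmod{\kappa+1}$.

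Next I would do the same on a branch. Fixing $i\in J_{\eta}$ and $m\in\nbb$, another induction on $s$ based on \eqref{Xr1.5} gives $\phi^{s}(x_{i,m})=x_{i,m-s}$ for $0\Le s\Le m-1$ and $\phi^{m}(x_{i,m})=x_{\kappa}$, hence $\phi^{s}(x_{i,m})=\phi^{s-m}(x_{\kappa})$ for all $s\Ge m$. Combining this with the circuit computation, $\phi^{n}(x_{i,m})=x_{\kappa}$ holds exactly when $m\Le n$ and $(\kappa+1)\mid(n-m)$; writing $n-m=(j-l)(\kappa+1)$ and using both $0\Le n-m$ and the fact that $r\in\{0,\ldots,\kappa\}$ is the residue of $n$ modulo $\kappa+1$, this is equivalent to $m=l(\kappa+1)+r$ with $l\in\{0,\ldots,j\}$.

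Assembling the two cases then yields $\phi^{-n}(\{x_{\kappa}\})=\{x_{r-1}\}\cup\{x_{i,l(\kappa+1)+r}\colon i\in J_{\eta},\,l\in\{0,\ldots,j\}\}$, which is \eqref{inver}. I do not expect a genuine obstacle here, since the two inductions are immediate from \eqref{Xr1.5}; the only thing to be careful about will be the bookkeeping with the conventions $x_{-1}=x_{\kappa}$, $x_{i,0}=x_{\kappa}$ and with the boundary values $r\in\{0,\kappa\}$. In particular, when $r=0$ the term $x_{i,0}$ on the right-hand side of \eqref{inver} equals $x_{\kappa}=x_{-1}$, so no spurious vertex is created and the identity remains correct as an equality of sets; the same remark absorbs the degenerate case $\kappa=0$, where the ``circuit'' is the single loop at $x_0=x_{\kappa}$.
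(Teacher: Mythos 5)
Your argument is correct, but it is not the route the paper takes. The paper proves \eqref{inver} by induction on $j$, working entirely with preimages of sets: after checking the base case $j=0$ by hand, it establishes the key identity $\phi^{-(\kappa+1)}(\{x_{\kappa}\}) = \{x_{\kappa}\} \cup \{x_{i,\kappa+1}\colon i \in J_{\eta}\}$ and then applies $\phi^{-(j(\kappa+1)+r)}$ to both sides to pass from $j$ to $j+1$. You instead compute the forward orbit of every vertex and read off $\phi^{-n}(\{x_{\kappa}\})$ as $\{y\colon \phi^n(y)=x_{\kappa}\}$, splitting into the circuit (where $\phi$ acts as a cyclic permutation of order $\kappa+1$, so exactly one circuit vertex, namely $x_{r-1}$, lands on $x_{\kappa}$ after $n$ steps) and the branches (where $x_{i,m}$ reaches $x_{\kappa}$ after exactly $m$ steps and then cycles, giving the congruence condition $m\equiv r \pmod{\kappa+1}$ with $1\Le m\Le n$). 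Both arguments are elementary inductions on \eqref{Xr1.5}; the paper's is more compact to write down because a single set identity drives the whole induction, while yours is more transparent about \emph{why} the formula holds and makes the role of the residue $r$ explicit. You were right to flag the boundary bookkeeping: your observation that for $r=0$ the would-be $l=0$ term $x_{i,0}=x_{\kappa}=x_{-1}$ is absorbed into $\{x_{r-1}\}$ (consistent with the sum in \eqref{hfi-c} starting at $l=1$ when $r=0$) is exactly the point where a careless version of the orbit argument would produce a spurious element, and you have handled it correctly.
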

   \begin{proof}
We proceed by induction on $j$. The case of $j=0$ is
easily verified. This and \eqref{Xr1.5} imply that
   \begin{align} \notag
\phi^{-(\kappa+1)}(\{x_{\kappa}\}) =
\phi^{-1}(\phi^{-\kappa}(\{x_{\kappa}\})) & =
\phi^{-1}(\{x_{\kappa-1}\} \cup \{x_{i,\kappa}\colon i
\in J_{\eta}\})
   \\ \label{ciara}
& = \{x_{\kappa}\} \cup \{x_{i,\kappa + 1}\colon i \in
J_{\eta}\}.
   \end{align}
Suppose that \eqref{inver} holds for a fixed $j\in
\zbb_+$. Then, by \eqref{ciara} and induction
hypothesis, we have
   \begin{multline*}
\phi^{-((j+1)(\kappa+1) + r)}(\{x_{\kappa}\}) =
\phi^{-(j(\kappa+1) + r)}(\{x_{\kappa}\} \cup
\{x_{i,\kappa + 1}\colon i \in J_{\eta}\})
   \\
= \{x_{r-1}\} \cup \Big\{x_{i,l(\kappa+1) + r}\colon i
\in J_{\eta}, \, l \in \{0, \ldots, j\}\Big\} \cup
\Big\{x_{i,(j+1)(\kappa+1)+r}\colon i \in
J_{\eta}\Big\},
   \end{multline*}
which completes the proof.
   \end{proof}
Applying \eqref{hfi} to $\phi$ and $\phi^n$ and using
Lemma \ref{gc1}, we can easily calculate the
Radon-Nikodym derivatives $\{\hfi(x)\colon x\in X\}$
and $\{\hfin{n}(x_{\kappa})\colon n\in \zbb_+\}$.
   \begin{pro} \label{hfi2}
Suppose \eqref{sa} holds. Then for every $x\in X$,
   \begin{align} \label{ogrh}
\hfi(x) =
   \begin{cases}
\frac{\mu(x_{j+1})}{\mu(x_j)} & \text{if } x=x_j
\text{ with } j \in \{0, \ldots, \kappa-1\},
\\[1ex]
\frac{\mu(x_0) + \sum_{i=1}^{\eta}
\mu(x_{i,1})}{\mu(x_{\kappa})} & \text{if }
x=x_{\kappa},
\\[1ex]
\frac{\mu(x_{i,j+1})}{\mu(x_{i,j})} & \text{if }
x=x_{i,j} \text{ with } i \in J_{\eta} \text{ and } j
\in \nbb.
   \end{cases}
   \end{align}
If $n=j(\kappa +1)+ r$ for some $j \in \zbb_+$ and $r
\in \{0, \ldots, \kappa\}$, then
   \begin{align} \label{hfi-c}
\hfin{n}(x_{\kappa}) =
   \begin{cases}
1 & \text{ if } n=0,
   \\[1ex]
1 + \sum_{i=1}^{\eta} \sum_{l=1}^j
\frac{\mu(x_{i,l(\kappa+1)})}{\mu(x_{\kappa})} &
\text{ if } j \Ge 1 \text{ and } r=0,
   \\[1ex]
\frac{\mu(x_{r-1})}{\mu(x_{\kappa})} +
\sum_{i=1}^{\eta} \sum_{l=0}^j
\frac{\mu(x_{i,l(\kappa+1)+r})}{\mu(x_{\kappa})} &
\text{ if } r \in J_{\kappa}.
   \end{cases}
   \end{align}
   \end{pro}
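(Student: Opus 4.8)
The plan is to read both formulas off directly from the explicit form of $\phi$ in \eqref{Xr1.5}, the identity \eqref{hfi} (applied to $\phi$ and, in iterated form, to $\phi^n$), and Lemma \ref{gc1}.

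First I would prove \eqref{ogrh}. By \eqref{hfi} one has $\hfi(x) = \mu(\phi^{-1}(\{x\}))/\mu(x)$ for every $x \in X$, so it remains to identify the fibres of $\phi$. Since $x_{\kappa}$ is the only vertex of valency greater than one, inverting \eqref{Xr1.5} gives $\phi^{-1}(\{x_j\}) = \{x_{j+1}\}$ for $j \in \{0, \ldots, \kappa-1\}$, $\phi^{-1}(\{x_{i,j}\}) = \{x_{i,j+1}\}$ for $i \in J_{\eta}$ and $j \in \nbb$, and $\phi^{-1}(\{x_{\kappa}\}) = \{x_0\} \cup \{x_{i,1}\colon i \in J_{\eta}\}$. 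Computing the $\mu$-measure of each of these (at most countable) sets as $\sum_{y} \mu(y)$ and dividing by $\mu(x)$ produces \eqref{ogrh}, the value being understood in $\rbop$ when $\eta = \infty$.

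For \eqref{hfi-c} I would apply \eqref{hfi} with $\phi^n$ in place of $\phi$, which yields $\hfin{n}(x_{\kappa}) = \mu(\phi^{-n}(\{x_{\kappa}\}))/\mu(x_{\kappa})$, and then insert the description of $\phi^{-n}(\{x_{\kappa}\})$ supplied by \eqref{inver}. The case $n=0$ is immediate. For $n = j(\kappa+1) + r \Ge 1$ the only step requiring attention is counting the points in \eqref{inver} correctly. If $r \in J_{\kappa}$, then $x_{r-1}$ lies on the circuit and is therefore distinct from each branch point $x_{i,l(\kappa+1)+r}$ (whose second index is $\Ge r \Ge 1$), while these branch points are pairwise distinct; hence the $\mu$-measure of the right-hand side of \eqref{inver} equals $\mu(x_{r-1}) + \sum_{i=1}^{\eta} \sum_{l=0}^{j} \mu(x_{i,l(\kappa+1)+r})$. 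If $r = 0$, the conventions $x_{-1} = x_{\kappa}$ and $x_{i,0} = x_{\kappa}$ adopted in \eqref{sa} make the term $x_{r-1}$ and all summands with $l=0$ in \eqref{inver} coincide with $x_{\kappa}$, so that measure equals $\mu(x_{\kappa}) + \sum_{i=1}^{\eta} \sum_{l=1}^{j} \mu(x_{i,l(\kappa+1)})$. Dividing by $\mu(x_{\kappa})$ gives the two remaining cases of \eqref{hfi-c}.

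The argument is wholly computational; the one place a slip could occur is the bookkeeping of coinciding vertices in \eqref{inver} when $r = 0$, which is handled by invoking the conventions of \eqref{sa} as above.
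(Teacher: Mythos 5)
Your proposal is correct and follows exactly the route the paper intends: the paper states Proposition \ref{hfi2} without a written proof, remarking only that it follows by applying \eqref{hfi} to $\phi$ and $\phi^n$ together with Lemma \ref{gc1}, which is precisely your computation. Your handling of the coinciding vertices in \eqref{inver} when $r=0$ via the conventions $x_{-1}=x_{\kappa}$ and $x_{i,0}=x_{\kappa}$ is the one point of bookkeeping that needs care, and you have it right.
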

Now we calculate the Radon-Nikodym derivatives
$\hfin{n}$, $n\Ge 0$, at the vertices lying on the
circuit.
   \begin{lem}
Suppose \eqref{sa} holds. Then \allowdisplaybreaks
   \begin{align}
\hfin{n+1}(x_{r-1}) & = \frac{\mu(x_r)}{\mu(x_{r-1})}
\hfin{n}(x_r), && r \in J_{\kappa}, \, n \in \zbb_+,
\label{hfir-4}
   \\
\hfin{n+r}(x_0) & = \frac{\mu(x_r)}{\mu(x_0)}
\hfin{n}(x_r), && r \in \{0, \ldots, \kappa\}, \, n
\in \zbb_+. \label{hfir-2}
   \end{align}
   \end{lem}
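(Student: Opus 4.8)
The plan is to reduce both identities to the elementary observation that, for $r \in J_{\kappa}$, the vertex $x_{r-1}$ has exactly one $\phi$-preimage, namely $x_{r}$. So the first step is to read off from the explicit description of $\phi$ in \eqref{Xr1.5} that $\phi^{-1}(\{x_{r-1}\}) = \{x_{r}\}$ for every $r \in J_{\kappa}$: one has $\phi(x_{r}) = x_{r-1}$, while no branch vertex $x_{i,j}$ and no $x_{0}$ can be sent to $x_{r-1}$, since for $j \Ge 2$ we have $\phi(x_{i,j}) = x_{i,j-1}$, a point of the branch system disjoint from $\{x_{0}, \dots, x_{\kappa}\}$, and for $j = 1$ (as well as for $x_{0}$ itself) the image is $x_{\kappa}$, which differs from $x_{r-1}$ because $r - 1 \Le \kappa - 1 < \kappa$. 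Throughout I would use the formula $\hfin{n}(x) = \mu(\phi^{-n}(\{x\}))/\mu(x)$, which holds in $\rbop$ by applying \eqref{hfi} to the self-map $\phi^{n}$, together with the set-theoretic identity $\phi^{-(m+n)}(\cdot) = \phi^{-m}(\phi^{-n}(\cdot))$.

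For \eqref{hfir-4}, fix $r \in J_{\kappa}$ and $n \in \zbb_{+}$; then applying $\phi^{-n}$ to $\phi^{-1}(\{x_{r-1}\}) = \{x_{r}\}$ gives $\phi^{-(n+1)}(\{x_{r-1}\}) = \phi^{-n}(\{x_{r}\})$, and taking $\mu$-measures and dividing by $\mu(x_{r-1})$ (then multiplying and dividing by $\mu(x_{r})$) yields
\[
\hfin{n+1}(x_{r-1}) = \frac{\mu(\phi^{-n}(\{x_{r}\}))}{\mu(x_{r-1})} = \frac{\mu(x_{r})}{\mu(x_{r-1})}\,\hfin{n}(x_{r}).
\]
If $\mu(\phi^{-n}(\{x_{r}\})) = \infty$ both sides are $\infty$, so the equality is valid in $\rbop$ in every case.

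For \eqref{hfir-2}, the case $r = 0$ is trivial, the claimed equality being $\hfin{n}(x_{0}) = \hfin{n}(x_{0})$. For $1 \Le r \Le \kappa$ (which forces $\kappa \Ge 1$) I would simply iterate \eqref{hfir-4}, applying it successively with its ``$r$'' running through $1, 2, \dots, r$ (all in $J_{\kappa}$) and its ``$n$'' through $n+r-1, n+r-2, \dots, n$, so that
\[
\hfin{n+r}(x_{0}) = \frac{\mu(x_{1})}{\mu(x_{0})}\hfin{n+r-1}(x_{1}) = \cdots = \frac{\mu(x_{1})}{\mu(x_{0})}\cdots\frac{\mu(x_{r})}{\mu(x_{r-1})}\hfin{n}(x_{r}) = \frac{\mu(x_{r})}{\mu(x_{0})}\hfin{n}(x_{r}),
\]
the product telescoping. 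Equivalently, one could first prove by induction on $r \in \{1, \dots, \kappa\}$ that $\phi^{-r}(\{x_{0}\}) = \{x_{r}\}$ — the base case $\phi^{-1}(\{x_{0}\}) = \{x_{1}\}$ being the $r=1$ instance of the preimage computation in the first paragraph — and then argue exactly as for \eqref{hfir-4}.

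I do not expect a genuine obstacle here: the whole argument is bookkeeping resting on \eqref{Xr1.5} and \eqref{hfi}. The only point requiring a moment's care is the claim that $x_{r}$ is the sole $\phi$-preimage of $x_{r-1}$ — i.e.\ that neither a branch vertex nor $x_{0}$ slips in — which is precisely the case analysis carried out in the first paragraph; once that is settled, everything telescopes.
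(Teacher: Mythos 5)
Your proposal is correct and follows essentially the same route as the paper: the paper likewise writes $\hfin{n+1}(x_{r-1})=\mu(\phi^{-n}(\phi^{-1}(\{x_{r-1}\})))/\mu(x_{r-1})$, uses $\phi^{-1}(\{x_{r-1}\})=\{x_r\}$ (which you verify more explicitly), multiplies and divides by $\mu(x_r)$, and then obtains \eqref{hfir-2} by induction on $r$ exactly as in your telescoping argument.
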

   \begin{proof}
If $r \in J_{\kappa}$ and $n \in \zbb_+$, then by
\eqref{hfi} and \eqref{Xr1.5} we have
   \allowdisplaybreaks
   \begin{align*}
\hfin{n+1}(x_{r-1}) &=
   \frac{\mu(\phi^{-n}(\phi^{-1}(\{x_{r-1}\})))}
   {\mu(x_{r-1})}
   \\
&= \frac{\mu(\phi^{-n}(\{x_{r}\}))}{\mu(x_{r})}
\frac{\mu(x_{r})}{\mu(x_{r-1})}
   \\
& = \frac{\mu(x_{r})}{\mu(x_{r-1})} \hfin{n}(x_{r}),
   \end{align*}
which gives \eqref{hfir-4}. Applying induction on $r$
and \eqref{hfir-4}, we obtain \eqref{hfir-2}.
   \end{proof}
   The subsequent lemma plays an essential role in the
present paper.
   \begin{lem} \label{fund}
If \eqref{sa} holds, then
   \begin{align} \label{dif1}
\hfin{n + \kappa+1}(x_0) = \hfin{n}(x_0) +
\sum_{i=1}^{\eta} \frac{\mu(x_{i,1})}{\mu(x_0)}
\hfin{n}(x_{i,1}), \quad n \in \zbb_+.
   \end{align}
   \end{lem}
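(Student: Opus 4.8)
The plan is to compute $\hfin{n+\kappa+1}(x_0)$ directly from the formula \eqref{hfi} for $\phi^m$, which gives $\hfin{m}(x) = \mu(\phi^{-m}(\{x\}))/\mu(x)$, by using the description of the iterated preimage $\phi^{-(\kappa+1)}(\{x_0\})$. First I would observe that $\phi(x_0) = x_\kappa$ by \eqref{Xr1.5}, so $\phi^{-1}(\{x_0\}) = \emptyset$ is \emph{not} what happens; rather I need $\phi^{-1}(\{x_0\}) = \{x_1\}$ (since $\phi(x_1) = x_0$), and more to the point I need to go around the circuit. The key combinatorial fact is that $\phi^{-(\kappa+1)}(\{x_0\}) = \{x_0\} \cup \{x_{i,\kappa+1}\colon i \in J_\eta\}$. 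This can be read off from \eqref{Xr1.5}: tracing backwards from $x_0$ one step at a time gives $x_1, x_2, \ldots, x_{\kappa-1}, x_\kappa$ after $\kappa$ steps (so $\phi^{-\kappa}(\{x_0\}) = \{x_\kappa\}$), and then one more step back from $x_\kappa$ splits into $x_0$ together with the $x_{i,1}$, $i \in J_\eta$. Alternatively, I can invoke \eqref{hfir-2} with $r=\kappa$ to reduce $\hfin{n}(x_\kappa)$-type expressions, but the cleanest route is to combine Lemma \ref{gc1} with \eqref{hfir-2}.

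The main computation proceeds as follows. By \eqref{hfir-2} with $r=\kappa$, we have $\hfin{n+\kappa}(x_0) = \frac{\mu(x_\kappa)}{\mu(x_0)}\hfin{n}(x_\kappa)$ for all $n \in \zbb_+$; applying this with $n$ replaced by $n+1$ gives $\hfin{n+\kappa+1}(x_0) = \frac{\mu(x_\kappa)}{\mu(x_0)}\hfin{n+1}(x_\kappa)$. Now I need a one-step recursion for $\hfin{n+1}(x_\kappa)$ at the branching vertex. From \eqref{hfi} applied to $\phi^{n+1}$ and the identity $\phi^{-(n+1)}(\{x_\kappa\}) = \phi^{-n}(\phi^{-1}(\{x_\kappa\}))$ together with $\phi^{-1}(\{x_\kappa\}) = \{x_0\} \cup \{x_{i,1}\colon i \in J_\eta\}$ (which is exactly the $\eta+1$ preimages of the branching vertex, read from \eqref{Xr1.5}), we get
\begin{align*}
\hfin{n+1}(x_\kappa) = \frac{\mu(\phi^{-n}(\{x_0\}))}{\mu(x_\kappa)} + \sum_{i=1}^{\eta} \frac{\mu(\phi^{-n}(\{x_{i,1}\}))}{\mu(x_\kappa)} = \frac{\mu(x_0)}{\mu(x_\kappa)}\hfin{n}(x_0) + \sum_{i=1}^{\eta}\frac{\mu(x_{i,1})}{\mu(x_\kappa)}\hfin{n}(x_{i,1}).
\end{align*}
Multiplying through by $\frac{\mu(x_\kappa)}{\mu(x_0)}$ and substituting into the previous display yields exactly \eqref{dif1}.

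I should be slightly careful about two technical points. When $\eta = \infty$ the sum is infinite, so I should note that all quantities in sight are nonnegative (they are Radon--Nikodym derivatives of positive measures, cf.\ \eqref{hfi}) and hence the rearrangement of the series $\mu(\phi^{-n}(\{x_\kappa\} \text{ stuff}))$ into $\mu(\phi^{-n}(\{x_0\})) + \sum_i \mu(\phi^{-n}(\{x_{i,1}\}))$ is legitimate by countable additivity of $\mu$ over the disjoint union $\phi^{-1}(\{x_\kappa\}) = \{x_0\} \sqcup \bigsqcup_i \{x_{i,1}\}$; the values may be $+\infty$ but the identity still holds in $\rbop$. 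Second, when $\kappa = 0$ one has $x_{-1} = x_\kappa = x_0$ by the convention in \eqref{sa}, and \eqref{hfir-2} with $r=0$ reads $\hfin{n}(x_0) = \hfin{n}(x_0)$, trivially true, so the reduction still goes through with $\phi^{-1}(\{x_0\}) = \{x_0\} \cup \{x_{i,1}\colon i\in J_\eta\}$ directly. I do not anticipate a genuine obstacle here: the only thing that requires a moment's thought is getting the preimage structure of the branching vertex right and handling the $\eta=\infty$ case via nonnegativity, both of which are routine given Lemma \ref{gc1} and \eqref{hfi}.
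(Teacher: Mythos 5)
Your proof is correct and is essentially the paper's argument: both rest on the disjoint decomposition $\phi^{-1}(\{x_\kappa\}) = \{x_0\}\sqcup\{x_{i,1}\colon i\in J_\eta\}$ from \eqref{Xr1.5} together with countable additivity of $\mu$ applied to $\phi^{-n}$ of that union (the paper simply writes $\hfin{n+\kappa+1}(x_0)=\mu(\phi^{-n}(\phi^{-1}(\{x_\kappa\})))/\mu(x_0)$ directly instead of passing through $x_\kappa$ via \eqref{hfir-2}, a purely cosmetic difference). One slip in your preliminary remarks: $\phi^{-(\kappa+1)}(\{x_0\})$ equals $\{x_0\}\cup\{x_{i,1}\colon i\in J_\eta\}$, not $\{x_0\}\cup\{x_{i,\kappa+1}\colon i\in J_\eta\}$; your own tracing-backwards derivation in the next clause gives the correct set, and the main computation uses only the correct $\phi^{-1}(\{x_\kappa\})$, so nothing breaks.
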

   \begin{proof} Observe that by
\eqref{hfi} and \eqref{Xr1.5} we have
\allowdisplaybreaks
 \begin{align*} \hfin{n +
\kappa+1}(x_0) & =
\frac{\mu(\phi^{-n}(\phi^{-1}(\{x_\kappa\})))}{\mu(x_0)}
   \\
&= \frac{\mu(\phi^{-n}(\{x_0\} \sqcup \{x_{i,1}\colon
i \in J_{\eta}\}))}{\mu(x_0)}
   \\
& = \hfin{n}(x_0) + \sum_{i=1}^{\eta}
\frac{\mu(\phi^{-n}(\{x_{i,1}\}))}{\mu(x_0)}
   \\
& = \hfin{n}(x_0) + \sum_{i=1}^{\eta}
\frac{\mu(x_{i,1})}{\mu(x_0)} \hfin{n}(x_{i,1}), \quad
n \in \zbb_+,
   \end{align*}
which completes the proof of \eqref{dif1}.
   \end{proof}
The question of density of domains of powers of
$C_{\phi}$ can be answered in terms of the
Radon-Nikodym derivatives $\hfin{n}$, $n\Ge 0$,
calculated at $x_0$.
   \begin{pro} \label{hfi4}
Suppose \eqref{sa} holds and $n\in \nbb$. Then the
following conditions are equivalent{\em :}
   \begin{enumerate}
   \item[(i)] $\dz{C_{\phi}^n}$ is dense in $L^2(\mu)$,
   \item[(ii)] $\hfin{n+r}(x_0) < \infty$ for every
$r \in \{0, \ldots, \kappa\}$.
   \end{enumerate}
Moreover, if $r\in \{0,\ldots,\kappa\}$, then the
following conditions are equivalent{\em :}
   \begin{enumerate}
   \item[(iii)] $\dzn{C_{\phi}}$ is dense in $L^2(\mu)$,
   \item[(iv)] $\dz{C_{\phi}^j}$ is dense in $L^2(\mu)$
for all $j\in \nbb$,
   \item[(v)] $\hfin{j}(x) < \infty$ for all $j \in \nbb$
and $x\in X$,
   \item[(vi)] $\hfin{j}(x_r) < \infty$ for all $j \in
\nbb$.
   \end{enumerate}
   \end{pro}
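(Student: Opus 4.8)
The plan is to reduce all the conditions to finiteness of the scalars $\hfin{m}(x_0)$, $m\in\zbb_+$, by combining the density criterion \eqref{dzn} with the two structural identities \eqref{hfir-2} and \eqref{dif1}.

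First I would record the elementary fact that $\hfin{m}(x_{i,j})<\infty$ for all $i\in J_{\eta}$, $j\in\nbb$ and $m\in\zbb_+$: by \eqref{Xr1.5} one has $\phi^{-1}(\{x_{i,j}\})=\{x_{i,j+1}\}$, hence $\phi^{-m}(\{x_{i,j}\})=\{x_{i,j+m}\}$ is a singleton and \eqref{hfi} gives $\hfin{m}(x_{i,j})=\mu(x_{i,j+m})/\mu(x_{i,j})<\infty$. Since $X=\{x_0,\ldots,x_{\kappa}\}\cup\{x_{i,j}\colon i\in J_{\eta},\,j\in\nbb\}$, the criterion \eqref{dzn} then says that (i) holds if and only if $\hfin{n}(x_s)<\infty$ for every $s\in\{0,\ldots,\kappa\}$; and since $\mu(x_0),\mu(x_s)\in(0,\infty)$, the identity \eqref{hfir-2} makes this equivalent to $\hfin{n+s}(x_0)<\infty$ for every $s\in\{0,\ldots,\kappa\}$, which is precisely (ii). This proves (i)$\Leftrightarrow$(ii).

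For the ``moreover'' part I would run the cycle (iii)$\Rightarrow$(iv)$\Rightarrow$(v)$\Rightarrow$(vi)$\Rightarrow$(iii). Here (iii)$\Rightarrow$(iv) is immediate from $\dzn{C_{\phi}}\subseteq\dz{C_{\phi}^j}$, (iv)$\Rightarrow$(v) is \eqref{dzn} applied for each $j$, and (v)$\Rightarrow$(vi) is the restriction to $x=x_r$; so the substance is (vi)$\Rightarrow$(iii), which I would carry out in two steps. Step~1 ((vi)$\Rightarrow$(v)): assuming $\hfin{j}(x_r)<\infty$ for all $j\in\nbb$ (and $\hfin{0}(x_r)=1$), \eqref{hfir-2} gives $\hfin{m}(x_0)<\infty$ for all $m\Ge r$; because \eqref{dif1} yields $\hfin{m}(x_0)\Le\hfin{m+\kappa+1}(x_0)$ in $[0,\infty]$ (their difference being a sum of nonnegative terms), for an arbitrary $m\in\zbb_+$ one picks $k\in\zbb_+$ with $m+k(\kappa+1)\Ge r$ and obtains $\hfin{m}(x_0)\Le\hfin{m+(\kappa+1)}(x_0)\Le\cdots\Le\hfin{m+k(\kappa+1)}(x_0)<\infty$; thus $\hfin{m}(x_0)<\infty$ for all $m\in\zbb_+$, and then \eqref{hfir-2} once more, combined with $\hfin{j}(x_{i,l})<\infty$ (established above), gives $\hfin{j}(x)<\infty$ for all $j\in\nbb$ and $x\in X$, i.e.\ (v). Step~2 ((v)$\Rightarrow$(iii)): by \eqref{hfi}, $\|\chi_{\{u\}}\circ\phi^m\|^2=\mu(\phi^{-m}(\{u\}))=\mu(u)\hfin{m}(u)<\infty$ for every $u\in X$ and $m\in\zbb_+$, so $\chi_{\{u\}}\in\dzn{C_{\phi}}$; since $X$ is at most countable, the finitely supported functions form a dense subspace of $L^2(\mu)$ contained in $\dzn{C_{\phi}}$, whence (iii).

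The only genuine obstacle is the propagation argument in Step~1 of (vi)$\Rightarrow$(iii): a priori, finiteness of $\hfin{m}(x_0)$ for large $m$ need not control smaller $m$ --- indeed, as remarked after \eqref{dzn}, the hereditary property of the $\hfin{m}$ fails at a single vertex --- and it is exactly \eqref{dif1}, which forces $m\mapsto\hfin{m}(x_0)$ to be nondecreasing along each arithmetic progression with common difference $\kappa+1$, that makes the descent possible.
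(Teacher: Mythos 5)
Your proof is correct and follows essentially the same route as the paper: both arguments reduce every condition to finiteness of the numbers $\hfin{m}(x_0)$ by combining the density criterion \eqref{dzn} with the identity \eqref{hfir-2}. The only local differences are that you use the monotonicity $\hfin{m}(x_0)\Le\hfin{m+\kappa+1}(x_0)$ coming from \eqref{dif1} to reach the indices $m<r$, where the paper simply observes that $\phi^{-m}(\{x_0\})$ is a singleton for $m\Le\kappa$ so that $\hfin{m}(x_0)<\infty$ automatically, and that you replace the paper's appeal to an external density criterion in the step (v)$\Rightarrow$(iii) by the direct (and sound) argument with the indicator functions $\chi_{\{u\}}$.
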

   \begin{proof}
(i)$\Rightarrow$(ii) By \eqref{dzn},
$\hfin{l}(x_{\kappa}) < \infty$ for all $l \in \{0,
\ldots, n\}$, and thus, by \eqref{hfir-2} with
$r=\kappa$, $\hfin{l+\kappa}(x_0) < \infty$ for all $l
\in \{0, \ldots, n\}$. Since, by \eqref{hfi},
$\hfin{l}(x_0) < \infty$ for every $l \in \{0, \ldots,
\kappa-1\}$, we see that $\hfin{l}(x_0) < \infty$ for
every $l \in \{0, \ldots, n+\kappa\}$.

(ii)$\Rightarrow$(i) Applying \eqref{hfir-2}, we
deduce that $\hfin{n}(x_r) < \infty$ for every $r\in
\{0, \ldots, \kappa\}$. It follows from \eqref{hfi}
that $\hfin{n}(x_{i,j}) < \infty$ for all $i\in
J_{\eta}$ and $j\in \nbb$. This, \eqref{dzn} and
\eqref{Xr1} yield (i).

Now we prove the ``moreover'' part. By \eqref{dzn} and
\cite[Theorem 4.7]{b-j-j-sC}, it suffices to prove
that (vi) implies (iv). It follows from \eqref{hfir-2}
that $\hfin{j}(x_0) < \infty$ for all integers $j\Ge
\kappa$. Applying \eqref{hfi}, we deduce that
$\hfin{j}(x_0) < \infty$ for all $j \in \zbb_+$.
Hence, by the implication (ii)$\Rightarrow$(i),
$\dz{C_{\phi}^j}$ is dense in $L^2(\mu)$ for all $j
\in \nbb$.
   \end{proof}
   \begin{cor}  \label{porow}
Suppose \eqref{sa} holds and $n\in \nbb$. Then the
following conditions are equivalent{\em :}
   \begin{enumerate}
   \item[(i)] $\overline{\dz{C_{\phi}^n}} = L^2(\mu)$ and
$\overline{\dz{C_{\phi}^{n+1}}} \varsubsetneq
L^2(\mu)$,
   \item[(ii)] $\hfin{n+r}(x_0) < \infty$ for every
$r \in \{0, \ldots, \kappa\}$ and
$\hfin{n+\kappa+1}(x_0) = \infty$.
   \end{enumerate}
   \end{cor}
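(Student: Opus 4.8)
The plan is to deduce Corollary \ref{porow} directly from Proposition \ref{hfi4}, invoking that proposition twice: once with the given integer $n$, and once with $n+1$ in place of $n$ (legitimate since $n\in\nbb$ forces $n+1\in\nbb$). First I would record the two equivalences this produces. By (i)$\Leftrightarrow$(ii) of Proposition \ref{hfi4} applied to $n$, the condition $\overline{\dz{C_\phi^n}}=L^2(\mu)$ is equivalent to $\hfin{n+r}(x_0)<\infty$ for every $r\in\{0,\ldots,\kappa\}$. Applying the same proposition to $n+1$, the condition $\overline{\dz{C_\phi^{n+1}}}=L^2(\mu)$ is equivalent to $\hfin{(n+1)+r}(x_0)<\infty$ for every $r\in\{0,\ldots,\kappa\}$, i.e.\ to $\hfin{m}(x_0)<\infty$ for every $m\in\{n+1,\ldots,n+\kappa+1\}$; negating, $\overline{\dz{C_\phi^{n+1}}}\varsubsetneq L^2(\mu)$ holds if and only if $\hfin{m}(x_0)=\infty$ for some $m\in\{n+1,\ldots,n+\kappa+1\}$.

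Next I would combine these. Condition (i) of the corollary is precisely the conjunction of ``$\hfin{n+r}(x_0)<\infty$ for all $r\in\{0,\ldots,\kappa\}$'' and ``$\hfin{m}(x_0)=\infty$ for some $m\in\{n+1,\ldots,n+\kappa+1\}$''. Under the first clause, the values $\hfin{m}(x_0)$ are finite for all $m\in\{n,\ldots,n+\kappa\}$, so in the index range $\{n+1,\ldots,n+\kappa+1\}$ the only index that can possibly yield $\hfin{m}(x_0)=\infty$ is $m=n+\kappa+1$. Hence, given the first clause, the second clause is equivalent to $\hfin{n+\kappa+1}(x_0)=\infty$. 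Conversely, if $\hfin{n+\kappa+1}(x_0)=\infty$ then $m=n+\kappa+1$ witnesses the second clause. Therefore (i) is equivalent to ``$\hfin{n+r}(x_0)<\infty$ for all $r\in\{0,\ldots,\kappa\}$ and $\hfin{n+\kappa+1}(x_0)=\infty$'', which is exactly (ii), completing the proof.

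There is essentially no serious obstacle: the argument is bookkeeping with the index set $\{0,\ldots,\kappa\}$ shifted by $n$ and by $n+1$, together with a single appeal to Proposition \ref{hfi4}. The one point that requires care is that one may \emph{not} use any hereditary (monotonicity) property of $j\mapsto\hfin{j}(x_0)$ at the single vertex $x_0$ — such a property fails at a single point, as noted right after \eqref{dzn} — so the reduction of the ``some $m$'' clause to the single value $m=n+\kappa+1$ must be carried out using only the explicit finiteness hypothesis on $\hfin{n+r}(x_0)$, $r\in\{0,\ldots,\kappa\}$, and not by attempting to propagate finiteness downward from a finite $\hfin{n+\kappa+1}(x_0)$.
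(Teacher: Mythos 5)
Your proof is correct and follows exactly the route the paper intends: Corollary \ref{porow} is stated without proof as an immediate consequence of Proposition \ref{hfi4}, and your double application of that proposition (to $n$ and to $n+1$) together with the index bookkeeping is the derivation. Your closing caution about not relying on monotonicity of $j\mapsto\hfin{j}(x_0)$ at a single vertex is also well taken and consistent with the paper's remark following \eqref{dzn}.
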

   \section{{\bf Subnormality of $C_{\phi_{\eta,\kappa}}$
via the consistency condition \eqref{cc}}}
   \subsection{Characterizations of \eqref{cc}}
This section deals with the consistency condition
which, according to Theorem \ref{glowne},
automatically implies subnormality of
$C_{\phi_{\eta,\kappa}}$ (because, under our standing
assumption \eqref{sa}, ${\mathsf
h}_{\phi_{\eta,\kappa}}(x) > 0$ for all $x\in X$).
   \begin{thm} \label{cc1}
Suppose \eqref{sa} holds, $C_{\phi}$ is densely
defined and $\{P(x,\cdot)\}_{x\in X}$ is a family of
Borel probability measures on $\rbb_+$. Then
$\{P(x,\cdot)\}_{x\in X}$ satisfies \eqref{cc} if and
only if the following three conditions are
satisfied{\em :}
   \begin{enumerate}
   \item[(i)] $P(x_r,\sigma) = \frac{\mu(x_0)}{\mu(x_r)}
\int_{\sigma} t^r P(x_0, \D t)$ for all $r \in
\{0,\ldots, \kappa\}$ and $\sigma \in \borel{\rbb_+}$,
   \item[(ii)] $\sum_{i=1}^{\eta} \frac{\mu(x_{i,1})}{\mu(x_0)}
P(x_{i,1}, \sigma) = \int_{\sigma} (t^{\kappa+1}-1)
P(x_0,\D t)$ for all $\sigma \in \borel{\rbb_+}$,
   \item[(iii)] $P(x_{i,j},\sigma) = \frac{\mu(x_{i,1})}
{\mu(x_{i,j})} \int_{\sigma} t^{j-1} P(x_{i,1}, \D t)$
for all $i \in J_{\eta}$, $j \in \nbb_2$ and $\sigma
\in \borel{\rbb_+}$.
   \end{enumerate}
Moreover, if $\{P(x,\cdot)\}_{x\in X}$ satisfies
\eqref{cc}, then
   \begin{enumerate}
   \item[(iv)] $P(x_r,[0,1)) = P(x_{i,j},[0,1]) = 0$
for all $r \in \{0,\ldots, \kappa\}$, $i \in J_{\eta}$
and $j \in \nbb$,
   \item[(v)] $P(x_0,\sigma \cap (1,\infty)) =
\sum_{i=1}^{\eta} \frac{\mu(x_{i,1})} {\mu(x_0)}
\int_{\sigma} \frac{1}{t^{\kappa+1}-1} P(x_{i,1},\D
t)$ for all $\sigma \in \borel{\rbb_+}$,
   \item[(vi)] $\sum_{i=1}^{\eta} \frac{\mu(x_{i,1})}
{\mu(x_0)} \int_0^\infty \frac{1}{t^{\kappa+1}-1}
P(x_{i,1},\D t) \Le 1$,
   \item[(vii)] $P(x_0,\{1\}) =\vartheta:= 1-
\sum_{i=1}^{\eta} \frac{\mu(x_{i,1})} {\mu(x_0)}
\int_0^\infty \frac{1}{t^{\kappa+1}-1} P(x_{i,1},\D
t)$,
   \item[(viii)] $P(x_0,\sigma) =
\sum_{i=1}^{\eta} \frac{\mu(x_{i,1})} {\mu(x_0)}
\int_{\sigma} \frac{1}{t^{\kappa+1}-1} P(x_{i,1},\D t)
+ \vartheta \delta_1(\sigma)$ for all $\sigma \in
\borel{\rbb_+}$,
   \item[(ix)] $\sum_{i=1}^{\eta} \frac{\mu(x_{i,1})}
{\mu(x_0)} \int_0^\infty
\frac{t^{\kappa+1}}{t^{\kappa+1}-1} P(x_{i,1},\D t)
<\infty$.
   \end{enumerate}
   \end{thm}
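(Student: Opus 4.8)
The plan is to unwind the abstract consistency condition \eqref{cc} at each vertex of $X$ using the explicit description of $\phi=\phi_{\eta,\kappa}$ in \eqref{Xr1.5} and the formula \eqref{hfi} for the Radon--Nikodym derivative, and then to solve the resulting system. Recall that \eqref{cc} reads $\frac{1}{\mu(x)}\sum_{y\in\phi^{-1}(\{x\})}\mu(y)P(y,\sigma)=\int_\sigma t\,P(x,\D t)$ for every $x\in\phi(X)$, and under \eqref{sa} we have $\phi(X)=X$, so the condition must hold at every vertex. First I would write \eqref{cc} at the three types of vertices dictated by \eqref{Xr1.5}: at $x=x_{i,j-1}$ (equivalently, evaluating using $\phi^{-1}(\{x_{i,j-1}\})=\{x_{i,j}\}$ for $j\Ge 2$) this gives $\mu(x_{i,j})P(x_{i,j},\sigma)=\mu(x_{i,j-1})\int_\sigma t\,P(x_{i,j-1},\D t)$; at $x=x_{i-1}$ on the circuit (with $i\in J_\kappa$) it gives $\mu(x_i)P(x_i,\sigma)=\mu(x_{i-1})\int_\sigma t\,P(x_{i-1},\D t)$; and at $x=x_\kappa$, whose inverse image is $\{x_0\}\cup\{x_{i,1}\colon i\in J_\eta\}$, it gives $\mu(x_0)P(x_0,\sigma)+\sum_{i=1}^\eta\mu(x_{i,1})P(x_{i,1},\sigma)=\mu(x_\kappa)\int_\sigma t\,P(x_\kappa,\D t)$. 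These are exactly the building blocks from which (i), (ii), (iii) will follow.

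For (i): iterating the circuit recursion $\mu(x_r)P(x_r,\sigma)=\mu(x_{r-1})\int_\sigma t\,P(x_{r-1},\D t)$ from $r=1$ up to $r$ (a routine induction, using the measure-theoretic fact that $\D(t\,P(x_{r-1},\D t))$ has moments shifting the index) yields $\mu(x_r)P(x_r,\sigma)=\mu(x_0)\int_\sigma t^r P(x_0,\D t)$; note the convention $x_{-1}=x_\kappa$ from \eqref{sa} makes the case $r=\kappa$ read off $P(x_\kappa,\cdot)$ in terms of $P(x_0,\cdot)$, which is what we want. For (iii): the recursion $\mu(x_{i,j})P(x_{i,j},\sigma)=\mu(x_{i,1})\int_\sigma t^{j-1}P(x_{i,1},\D t)$ follows by the same induction on $j$, starting at $j=1$ (trivial) and using $x_{i,0}=x_\kappa$ only as bookkeeping. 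For (ii): substitute the $r=\kappa$ instance of (i), namely $\mu(x_\kappa)P(x_\kappa,\sigma)=\mu(x_0)\int_\sigma t^\kappa P(x_0,\D t)$, into the vertex equation at $x_\kappa$. On the right-hand side we get $\mu(x_\kappa)\int_\sigma t\,P(x_\kappa,\D t)=\mu(x_0)\int_\sigma t^{\kappa+1}P(x_0,\D t)$, and rearranging gives $\sum_{i=1}^\eta\mu(x_{i,1})P(x_{i,1},\sigma)=\mu(x_0)\int_\sigma(t^{\kappa+1}-1)P(x_0,\D t)$, i.e.\ (ii). The converse direction is a direct verification: assuming (i)--(iii), one checks \eqref{cc} at each of the three vertex types by reversing the above manipulations; the only genuinely new input is that (ii) is precisely the $x_\kappa$-equation after eliminating $P(x_\kappa,\cdot)$ and $P(x_0,\cdot)$ via (i). I expect the main technical obstacle to be purely notational: keeping the indexing of the nested inductions (over $r\in\{0,\dots,\kappa\}$ and over $j\in\nbb$) consistent with the conventions $x_{-1}=x_\kappa$, $x_{i,0}=x_\kappa$, and making sure one never divides by a measure of an exceptional vertex; the integrability of $t\mapsto t^r$ against $P(x_0,\cdot)$ is automatic since all $P(x,\cdot)$ are probability measures and (i) then forces $\{\int t^r P(x_0,\D t)\}$ to be finite for $r\Le\kappa$.

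For the ``moreover'' part: (iv) is obtained by testing (i) with $\sigma=[0,1)$ and using that $P(x_0,\cdot)$ is a probability measure supported in $\rbb_+$ together with the fact that the identity must remain consistent under the shift $t\mapsto t^r$ — more precisely, apply \cite[Theorem 9]{b-j-j-sS} or argue directly that if $P(x_0,[0,1))>0$ then the shifted moment sequences cannot match; similarly (ii) with $\sigma=[0,1]$ forces $P(x_{i,j},[0,1])=0$ because the right side of (ii) is $\int_{[0,1]}(t^{\kappa+1}-1)P(x_0,\D t)\Le 0$ while the left side is a sum of nonnegative terms, hence both vanish. Granting (iv), the function $t\mapsto\frac{1}{t^{\kappa+1}-1}$ is well-defined and positive on $(1,\infty)$, so we may solve (ii) restricted to $\sigma\subseteq(1,\infty)$ for $P(x_0,\cdot)$, giving (v); evaluating (v) with $\sigma=(1,\infty)$ and noting $P(x_0,(1,\infty))\Le 1$ gives (vi); since by (iv) $P(x_0,[0,1))=0$, the remaining mass sits at $\{1\}$, yielding (vii) with $\vartheta$ as defined, and then (viii) is just the decomposition $P(x_0,\cdot)=P(x_0,\cdot\cap(1,\infty))+\vartheta\delta_1$. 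Finally (ix): from (ii) with $\sigma=\rbb_+$ we have $\sum_i\frac{\mu(x_{i,1})}{\mu(x_0)}=\int_0^\infty(t^{\kappa+1}-1)P(x_0,\D t)<\infty$ (finite because $C_\phi$ is densely defined, so by \eqref{dzn} and \eqref{hfir-2}, or directly by Theorem \ref{glowne}/\eqref{twomom}, $\hfin{\kappa+1}(x_\kappa)<\infty$, equivalently this integral is finite); adding the finite quantity in (vi) to this gives $\sum_i\frac{\mu(x_{i,1})}{\mu(x_0)}\int_0^\infty\frac{t^{\kappa+1}}{t^{\kappa+1}-1}P(x_{i,1},\D t)=\sum_i\frac{\mu(x_{i,1})}{\mu(x_0)}\int_0^\infty\big(1+\frac{1}{t^{\kappa+1}-1}\big)P(x_{i,1},\D t)<\infty$, which is (ix).
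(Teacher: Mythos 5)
Your proposal is correct and follows essentially the same route as the paper: unwind \eqref{cc} at the three vertex types, induct along the circuit and along the branches to obtain (i)--(iii), and derive (iv)--(ix) by a sign analysis of (ii) on $[0,1]$ followed by integrating $1/(t^{\kappa+1}-1)$ against the branch measures. The only blemish is the vague opening clause of your argument for (iv) (``shifted moment sequences cannot match''), which is unnecessary: your subsequent observation that over $\sigma=[0,1]$ the right-hand side of (ii) is $\Le 0$ while the left-hand side is $\Ge 0$ already forces both $P(x_{i,1},[0,1])=0$ and $P(x_0,[0,1))=0$, after which (i) and (iii) give the full statement of (iv), exactly as in the paper.
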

   \begin{proof} Since, by  \eqref{dzn},  $\hfi(x_{\kappa}) < \infty$,
we infer from Proposition \ref{hfi2} that
   \begin{align} \label{sumi1}
\sum_{i=1}^{\eta} \mu(x_{i,1}) < \infty.
   \end{align}

Assume now that $\{P(x,\cdot)\}_{x\in X}$ satisfies
\eqref{cc}. Substituting $x=x_r$ with $r\in \{0,
\ldots, \kappa-1\}$ into \eqref{cc}, we get
   \begin{align*}
P(x_{r+1},\sigma) = \frac{\mu(x_r)}{\mu(x_{r+1})}
\int_{\sigma} t P(x_r, \D t), \quad \sigma \in
\borel{\rbb_+}, \, r\in \{0, \ldots, \kappa-1\}.
   \end{align*}
An induction argument shows that (i) holds.
Substituting $x=x_\kappa$ into \eqref{cc} and using
(i), we obtain
   \begin{align*}
\mu(x_0) P(x_0, \sigma) + \sum_{i=1}^{\eta}
\mu(x_{i,1}) P(x_{i,1}, \sigma) & = \mu(x_{\kappa})
\int_{\sigma} t P(x_{\kappa}, \D t)
   \\
& = \mu(x_0) \int_{\sigma} t^{\kappa+1} P(x_0, \D t),
\quad \sigma \in \borel{\rbb_+}.
   \end{align*}
This implies (ii). Substituting $x=x_{i,j}$ into
\eqref{cc} yields
   \begin{align*}
P(x_{i,j+1},\sigma) =
\frac{\mu(x_{i,j})}{\mu(x_{i,j+1})} \int_{\sigma} t
P(x_{i,j}, \D t), \quad \sigma \in \borel{\rbb_+}, \,
i \in J_{\eta}, \, j \in \nbb.
   \end{align*}
An induction argument leads to (iii).

Similar reasoning shows that the conditions (i)-(iii)
imply that $\{P(x,\cdot)\}_{x\in X}$ satisfies
\eqref{cc}.

To prove the ``moreover'' part, we assume that
$\{P(x,\cdot)\}_{x\in X}$ satisfies the condition
\eqref{cc}. Since $P(x, \rbb_+)=1$ for all $x \in X$,
we deduce from (ii) and \eqref{sumi1} that
$\int_0^\infty |t^{\kappa+1}-1| \, P(x_0,\D t) <
\infty$. Hence, by (ii) again, $P(x_{i,1},[0,1]) = 0$
for all $i\in J_{\eta}$ and $P(x_0,[0,1)) = 0$.
Applying (i) and (iii) gives (iv). It follows from
(ii) that
   \begin{align}  \label{istn}
\sum_{i=1}^{\eta} \frac{\mu(x_{i,1})}{\mu(x_0)}
P(x_{i,1}, \sigma) = \int_{\sigma \cap (1,\infty)}
(t^{\kappa+1}-1) P(x_0,\D t), \quad \sigma \in
\borel{\rbb_+}.
   \end{align}
Using (iv) and \eqref{istn} and integrating the
function $t \mapsto
\frac{\chi_{\sigma}(t)}{t^{\kappa+1}-1}$ with respect
to the measure $\sum_{i=1}^{\eta}
\frac{\mu(x_{i,1})}{\mu(x_0)} P(x_{i,1}, \cdot)$, we
obtain (v). Since $P(x_0,\rbb_+)=1$, the conditions
(vi) and (vii) follow from (v). The equality (viii) is
a direct consequence of (iv), (v) and (vii). Finally,
integrating the function $t \mapsto t^{\kappa+1}$ with
respect to the (positive) measure $\sigma \mapsto
\sum_{i=1}^{\eta} \frac{\mu(x_{i,1})} {\mu(x_0)}
\int_{\sigma} \frac{1}{t^{\kappa+1}-1} P(x_{i,1},\D
t)$, we deduce from (v) and \eqref{twomom} that
   \begin{align*}
\sum_{i=1}^{\eta} \frac{\mu(x_{i,1})} {\mu(x_0)}
\int_0^\infty \frac{t^{\kappa+1}}{t^{\kappa+1}-1}
P(x_{i,1},\D t) \Le \hfin{\kappa+1}(x_0).
   \end{align*}
Applying Proposition \ref{hfi4} with $n=1$ and
$r=\kappa$, we get (ix).
   \end{proof}
The following proposition provides new criteria for
$C_{\phi_{\eta,\kappa}}$ to have densely defined $n$th
power (cf.\ Proposition \ref{hfi4}).
   \begin{pro}\label{wk}
Suppose \eqref{sa} holds, the composition operator
$C_{\phi}$ is densely defined and
$\{P(x,\cdot)\}_{x\in X}$ is a family of Borel
probability measures on $\rbb_+$ that satisfies
\eqref{cc}. Then for every $n\in \nbb$, the following
conditions are equivalent{\em :}
   \begin{enumerate}
   \item[(i)] $C_{\phi}^n$ is densely defined,
   \item[(ii)] $\hfin{n+\kappa}(x_0) < \infty$,
   \item[(iii)] $\sum_{i=1}^{\eta} \mu(x_{i,1})
\int_0^\infty \frac{t^{n+\kappa}}{t^{\kappa+1}-1}
P(x_{i,1},\D t) <\infty$.
   \end{enumerate}
   \end{pro}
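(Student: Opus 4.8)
The plan is to prove the two equivalences (i)$\Leftrightarrow$(ii) and (ii)$\Leftrightarrow$(iii) in turn. The starting point is Theorem~\ref{glowne}: under \eqref{sa} we have $\hfi(x)>0$ for all $x\in X$ and $C_\phi$ is assumed densely defined, so the hypotheses of Theorem~\ref{glowne} are met and \eqref{twomom} holds, i.e.\ $\hfin{m}(x)=\int_0^\infty t^m\,P(x,\D t)$ for all $m\in\zbb_+$ and $x\in X$ (read as an identity in $[0,\infty]$). I would then record the elementary but decisive remark that, since each $P(x,\cdot)$ is a \emph{probability} (hence finite) measure and $t^m\Le 1+t^{m'}$ for $t\Ge 0$ whenever $m\Le m'$, finiteness of $\hfin{m'}(x)$ implies finiteness of $\hfin{m}(x)$ for every $m\Le m'$. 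In other words, the consistency condition restores at each single vertex the hereditary property which, as the text recalls (cf.\ \cite[Example 4.2]{jab}), generally fails pointwise.

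To prove (i)$\Leftrightarrow$(ii): by Proposition~\ref{hfi4}, $C_\phi^n$ is densely defined if and only if $\hfin{n+r}(x_0)<\infty$ for every $r\in\{0,\dots,\kappa\}$. The forward implication to (ii) is just the case $r=\kappa$. Conversely, if $\hfin{n+\kappa}(x_0)<\infty$, then $\int_0^\infty t^{n+\kappa}\,P(x_0,\D t)<\infty$ by \eqref{twomom}, so by the remark above $\int_0^\infty t^{n+r}\,P(x_0,\D t)<\infty$, i.e.\ $\hfin{n+r}(x_0)<\infty$, for every $r\in\{0,\dots,\kappa\}$, and Proposition~\ref{hfi4} then gives (i). (Equivalently one can localize at the vertex $x_\kappa$: by \eqref{hfir-2}, $\hfin{n+\kappa}(x_0)=\tfrac{\mu(x_\kappa)}{\mu(x_0)}\hfin{n}(x_\kappa)$, and finiteness of $\hfin{n}(x_\kappa)$ propagates downward to $\hfin{m}(x_\kappa)<\infty$ for $m\Le n$.)

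To prove (ii)$\Leftrightarrow$(iii): I would substitute the explicit description of $P(x_0,\cdot)$ from Theorem~\ref{cc1}(viii) into \eqref{twomom}. Since $P(x_0,[0,1))=0$ and $P(x_{i,1},[0,1])=0$ for all $i\in J_\eta$ by Theorem~\ref{cc1}(iv), the formula
\[
\lambda(\sigma):=\sum_{i=1}^{\eta}\frac{\mu(x_{i,1})}{\mu(x_0)}\int_\sigma\frac{1}{t^{\kappa+1}-1}\,P(x_{i,1},\D t),\qquad \sigma\in\borel{\rbb_+},
\]
defines a positive Borel measure concentrated on $(1,\infty)$ (where $1/(t^{\kappa+1}-1)$ is finite and positive), and Theorem~\ref{cc1}(viii) reads $P(x_0,\cdot)=\vartheta\,\delta_1+\lambda$ with $\vartheta=P(x_0,\{1\})\in[0,1]$. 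Integrating $t^{n+\kappa}$ and using that $\lambda$ has density $1/(t^{\kappa+1}-1)$ with respect to $\sum_{i}\tfrac{\mu(x_{i,1})}{\mu(x_0)}P(x_{i,1},\cdot)$, one obtains
\[
\hfin{n+\kappa}(x_0)=\vartheta+\frac{1}{\mu(x_0)}\sum_{i=1}^{\eta}\mu(x_{i,1})\int_0^\infty\frac{t^{n+\kappa}}{t^{\kappa+1}-1}\,P(x_{i,1},\D t).
\]
Since $\vartheta<\infty$ and $\mu(x_0)\in(0,\infty)$, the left-hand side is finite if and only if the sum on the right is, which is precisely (ii)$\Leftrightarrow$(iii).

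The main obstacle is the first equivalence. On its own, finiteness of $\hfin{n}$ at a single vertex is not hereditary, so one cannot pass from $\hfin{n+\kappa}(x_0)<\infty$ to $\hfin{n+r}(x_0)<\infty$ for $r<\kappa$ by a soft argument; the consistency condition is genuinely used here, through the moment representation \eqref{twomom}, to reduce this to a triviality about finite measures. Everything else — Proposition~\ref{hfi4} together with the bookkeeping in the second equivalence — is then routine, the only delicate point being the singularity of $1/(t^{\kappa+1}-1)$ at $t=1$, which is neutralized by Theorem~\ref{cc1}(iv).
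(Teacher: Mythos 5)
Your argument is correct and follows essentially the same route as the paper: reduce (i)$\Leftrightarrow$(ii) to Proposition~\ref{hfi4} plus a heredity property of the moments of $P(x_0,\cdot)$ supplied by \eqref{twomom} (the paper gets this from the monotonicity of $\{\hfin{j}(x_0)\}_{j}$ via Theorem~\ref{cc1}(iv), you from the bound $t^m\Le 1+t^{m'}$ for a finite measure — an immaterial difference), and obtain (ii)$\Leftrightarrow$(iii) by integrating $t^{n+\kappa}$ against the decomposition of $P(x_0,\cdot)$ in Theorem~\ref{cc1}(viii).
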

   \begin{proof}
By \eqref{twomom} and Theorem \ref{cc1}(iv), the
sequence $\{\hfin{j}(x_0)\}_{n=0}^\infty$ is
monotonically increasing. Hence, by Proposition
\ref{hfi4}, the conditions (i) and (ii) are
equivalent. Integrating the function $t \mapsto
t^{n+\kappa}$ with respect to the measure
$P(x_0,\cdot)$ and using \eqref{twomom} and Theorem
\ref{cc1}(viii), we deduce that the conditions (ii)
and (iii) are equivalent.
   \end{proof}
As a consequence of Theorem \ref{cc1}(iv) and
Proposition \ref{wk}, we have the following corollary
(cf.\ Corollary \ref{porow}).
   \begin{cor} \label{re-m}
Suppose \eqref{sa} holds, $C_{\phi}$ is densely
defined and $\{P(x,\cdot)\}_{x\in X}$ is a family of
Borel probability measures on $\rbb_+$ that satisfies
\eqref{cc}. Let $n\in \nbb$. Then
$\overline{\dz{C_{\phi}^{n+1}}} \varsubsetneq
\overline{\dz{C_{\phi}^{n}}} = L^2(\mu)$ if and only
if the following two conditions hold:
   \begin{enumerate}
   \item[(i)] $\sum_{i=1}^{\eta} \mu(x_{i,1})
\int_0^\infty \frac{t^{n+\kappa}}{t^{\kappa+1}-1}
P(x_{i,1},\D t) <\infty$,
   \item[(ii)]  $\sum_{i=1}^{\eta} \mu(x_{i,1})
\int_0^\infty \frac{t^{n+\kappa+1}}{t^{\kappa+1}-1}
P(x_{i,1},\D t) = \infty$.
   \end{enumerate}
   \end{cor}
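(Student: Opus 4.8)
The plan is to obtain the corollary by combining Corollary~\ref{porow} with Proposition~\ref{wk}, the latter invoked twice: once with the exponent $n$ and once with the exponent $n+1$.

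I would start from the observation that, because $\dz{C_{\phi}^{n+1}} \subseteq \dz{C_{\phi}^{n}}$, the relation $\overline{\dz{C_{\phi}^{n+1}}} \varsubsetneq \overline{\dz{C_{\phi}^{n}}} = L^2(\mu)$ holds precisely when $C_{\phi}^{n}$ is densely defined and $C_{\phi}^{n+1}$ is not. Equivalently, by Corollary~\ref{porow}, it holds precisely when $\hfin{n+r}(x_0) < \infty$ for every $r\in\{0,\ldots,\kappa\}$ while $\hfin{n+\kappa+1}(x_0) = \infty$. Using \eqref{twomom} and Theorem~\ref{cc1}(iv) one sees that $\{\hfin{j}(x_0)\}_{j=0}^{\infty}$ is nondecreasing (Theorem~\ref{cc1}(iv) also yields $P(x_{i,1},\{1\}) = 0$, so the singularity of $t \mapsto (t^{\kappa+1}-1)^{-1}$ at $t=1$ is harmless and the integrals occurring in (i) and (ii) are unambiguously defined); hence the clause ``$\hfin{n+r}(x_0) < \infty$ for all $r\in\{0,\ldots,\kappa\}$'' amounts to the single inequality $\hfin{n+\kappa}(x_0) < \infty$.

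It remains to recast the two $\hfin{}$-conditions by means of Proposition~\ref{wk}. The equivalence (ii)$\Leftrightarrow$(iii) of that proposition, applied with the exponent $n$, identifies $\hfin{n+\kappa}(x_0) < \infty$ with condition~(i) of the corollary. Next, writing $n+\kappa+1 = (n+1)+\kappa$ and applying the same equivalence with the exponent $n+1$ --- which is admissible since the hypotheses of Proposition~\ref{wk} concern only $C_{\phi}$ and the family $\{P(x,\cdot)\}_{x\in X}$, not any particular power $C_{\phi}^{n+1}$ --- identifies $\hfin{n+\kappa+1}(x_0) = \infty$ with the divergence of $\sum_{i=1}^{\eta} \mu(x_{i,1}) \int_0^\infty \frac{t^{(n+1)+\kappa}}{t^{\kappa+1}-1} P(x_{i,1},\D t)$, i.e.\ with condition~(ii). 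Putting these together completes the argument. I do not anticipate a real obstacle: the only points requiring attention are the exponent bookkeeping ($n \leftrightarrow n+\kappa$ and $n+1 \leftrightarrow n+\kappa+1$) and the remark that Proposition~\ref{wk} is available for the exponent $n+1$ even though $C_{\phi}^{n+1}$ need not be densely defined.
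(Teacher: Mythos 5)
Your argument is correct and is essentially the paper's own derivation: the paper presents Corollary \ref{re-m} precisely ``as a consequence of Theorem \ref{cc1}(iv) and Proposition \ref{wk}'', i.e.\ by applying Proposition \ref{wk} with the exponents $n$ and $n+1$ and using the monotonicity of $\{\hfin{j}(x_0)\}_{j=0}^\infty$ coming from \eqref{twomom} and Theorem \ref{cc1}(iv). Your side remarks (that Proposition \ref{wk} is available for the exponent $n+1$ without assuming $C_{\phi}^{n+1}$ densely defined, and that $P(x_{i,1},[0,1])=0$ makes the integrals well defined) are exactly the points that need checking, and they check out.
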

   \subsection{Modelling subnormality via \eqref{cc}}
In Procedure \ref{proceed} below, we propose a method
of constructing all possible subnormal composition
operators $C_{\phi_{\eta,\kappa}}$ in
$L^2(X_{\eta,\kappa}, \mu)$ that satisfy \eqref{cc} in
the meaning that they admit families of probability
measures satisfying \eqref{cc}. The starting point of
our procedure is a family $\{P(x_{i,1},\cdot)\}_{i\in
J_{\eta}}$ of Borel probability measures on $\rbb_+$
that satisfies the conditions \eqref{i10}-\eqref{i12}
below. Let us point out that if a densely defined
$C_{\phi_{\eta,\kappa}}$ admits a family
$\{P(x,\cdot)\}_{x\in X_{\eta,\kappa}}$ of Borel
probability measures on $\rbb_+$ that satisfies
\eqref{cc}, then, by Theorem \ref{cc1}, the measures
$P(x_{i,1}, \cdot)$, $i\in J_{\eta}$, satisfy the
conditions \eqref{i10}-\eqref{i12}.
   \begin{proc} \label{proceed}
Let $\eta \in \nbb \cup \{\infty\}$, $\kappa \in
\zbb_+$, $\phi$ be a self-map of a set $X$,
$\{x_i\}_{i=0}^{\kappa}$ and
$\{x_{i,j}\}_{i=1}^{\eta}{_{j=1}^\infty}$ be two
disjoint systems of distinct points of $X$ that
satisfy \eqref{Xr1} and \eqref{Xr1.5}. Let
$\{P(x_{i,1},\cdot)\}_{i\in J_{\eta}}$ be a family of
Borel probability measures on $\rbb_+$ that satisfies
the following three conditions: \allowdisplaybreaks
   \begin{gather}  \label{i10}
P(x_{i,1}, [0,1]) = 0, \quad i \in J_{\eta},
   \\ \label{i11}
\int_0^\infty t^j P(x_{i,1}, \D t) < \infty, \quad j
\in \nbb, \,i \in J_{\eta},
   \\ \label{i12}
\int_0^\infty \frac{t^{\kappa+1}}{t^{\kappa+1}-1}
P(x_{i,1}, \D t) < \infty, \quad i \in J_{\eta}.
   \end{gather}
Let $\{\mu(x_{i,1})\}_{i \in J_{\eta}}$ be a family of
positive real numbers such that
   \begin{align} \label{i13}
\sum_{i=1}^{\eta} \mu(x_{i,1}) \int_0^\infty
\frac{t^{\kappa+1}}{t^{\kappa+1}-1} P(x_{i,1}, \D t) <
\infty.
   \end{align}
It follows from \eqref{i10} and \eqref{i13} that
   \begin{align} \label{i14}
0 \Le \sum_{i=1}^{\eta} \mu(x_{i,1}) \int_0^\infty
\frac{t^r}{t^{\kappa+1}-1} P(x_{i,1}, \D t) < \infty,
\quad r \in \{0,\ldots, \kappa+1\}.
   \end{align}
Using \eqref{i10} and \eqref{i14}, we get
   \begin{align}    \notag
\sum_{i=1}^{\eta} \mu(x_{i,1}) & = \sum_{i=1}^{\eta}
\mu(x_{i,1}) \int_0^\infty
\frac{t^{\kappa+1}}{t^{\kappa+1}-1} P(x_{i,1},\D t)
   \\  \label{i15}
& \hspace{6ex}- \sum_{i=1}^{\eta} \mu(x_{i,1})
\int_0^\infty \frac{1}{t^{\kappa+1}-1} P(x_{i,1},\D t)
< \infty.
   \end{align}
Now, by \eqref{i14}, we can take $\mu(x_0) \in
(0,\infty)$ such that
   \begin{align} \label{tet-a}
0 \Le \varTheta:=\sum_{i=1}^{\eta}
\frac{\mu(x_{i,1})}{\mu(x_0)} \int_0^\infty
\frac{1}{t^{\kappa+1}-1} P(x_{i,1},\D t) \Le 1.
   \end{align}
Then we define the Borel measure $P(x_0, \cdot)$ on
$\rbb_+$ by
   \begin{align} \label{p-0}
P(x_0, \sigma) = \sum_{i=1}^{\eta}
\frac{\mu(x_{i,1})}{\mu(x_0)} \int_{\sigma}
\frac{1}{t^{\kappa+1}-1} P(x_{i,1},\D t) +
(1-\varTheta) \delta_1(\sigma), \quad \sigma \in
\borel{\rbb_+}.
   \end{align}
By \eqref{i10} and \eqref{tet-a}, $P(x_0, \cdot)$ is a
probability measure such that $P(x_0, [0,1))=0$.
Moreover, $P(x_0, \cdot)$ satisfies the condition (ii)
of Theorem \ref{cc1}. Since $P(x_{i,1}, \cdot)$, $i
\in J_{\eta}$, are probability measures, we infer from
\eqref{i10} and \eqref{i11} that $0 < \int_0^\infty
t^j P(x_{i,1}, \D t) < \infty$ for all $j \in \nbb$
and $i \in J_{\eta}$. This enables us to define the
family $\{\mu(x_{i,j})\}_{i=1}^{\eta}{}_{j=2}^\infty$
of positive real numbers by
   \begin{align} \label{miii}
\mu(x_{i,j}) = \mu(x_{i,1}) \int_0^\infty t^{j-1}
P(x_{i,1}, \D t), \quad i \in J_{\eta}, \, j \in
\nbb_2,
   \end{align}
and the family
$\{P(x_{i,j},\cdot)\}_{i=1}^{\eta}{}_{j=2}^\infty$ of
Borel measures on $\rbb_+$ by
   \begin{align*}
P(x_{i,j},\sigma) = \frac{\mu(x_{i,1})}{\mu(x_{i,j})}
\int_{\sigma} t^{j-1} P(x_{i,1},\D t), \quad i \in
J_{\eta}, \, j \in \nbb_2, \, \sigma \in
\borel{\rbb_+}.
   \end{align*}
In view of \eqref{miii}, the family
$\{P(x_{i,j},\cdot)\}_{i=1}^{\eta}{}_{j=2}^\infty$
consists of probability measures. According to
\eqref{i10}, \eqref{i14} and \eqref{p-0}, $0 <
\int_0^\infty t^{r} P(x_0,\D t) < \infty$ for every $r
\in \{0, \ldots, \kappa\}$. Hence, we can define
positive real numbers $\mu(x_r)$, $r \in J_{\kappa}$,
via
   \begin{align*}
\mu(x_r) = \mu(x_0) \int_0^\infty t^{r} P(x_0,\D t),
\quad r \in J_{\kappa}.
   \end{align*}
As a consequence, the measures $P(x_r, \cdot)$, $r \in
J_{\kappa}$, defined by
   \begin{align*}
P(x_r, \sigma) = \frac{\mu(x_0)}{\mu(x_r)}
\int_{\sigma} t^r P(x_0, \D t), \quad r \in
J_{\kappa}, \, \sigma \in \borel{\rbb_+},
   \end{align*}
are Borel probability measures on $\rbb_+$. Let $\mu$
be the discrete measure on $X$ such that
$\mu(\{x_r\})=\mu(x_r)$ and $\mu(\{x_{i,j}\}) =
\mu(x_{i,j})$ for all $r \in \{0,\ldots,\kappa\}$,
$i\in J_{\eta}$ and $j\in \nbb$, and let $C_\phi$ be
the corresponding composition operator in $L^2(\mu)$
with $\phi=\phi_{\eta,\kappa}$. By \eqref{dzn},
\eqref{ogrh} and \eqref{i15}, $C_{\phi}$ is densely
defined. Applying Theorem \ref{cc1}, we see that the
family $\{P(x,\cdot)\}_{x\in X}$ satisfies \eqref{cc}.
Hence, by Theorem \ref{glowne}, $C_{\phi}$ is
subnormal.

Our procedure enables us to model all subnormal
composition operators $C_{\phi_{\eta,\kappa}}$ that
satisfy \eqref{cc} and have densely defined $n$th
power ($n$ is a fixed positive integer). It suffices
to replace \eqref{i12} by the condition
   \begin{gather} \label{i12n-1}
\int_0^\infty \frac{t^{\kappa+n}}{t^{\kappa+1}-1}
P(x_{i,1},\D t) < \infty, \quad i \in J_{\eta},
   \end{gather}
(leaving the assumptions \eqref{i10} and \eqref{i11}
unchanged) and to choose a family $\{\mu(x_{i,1})\}_{i
\in J_{\eta}} \subseteq (0,\infty)$ that satisfies, in
place of \eqref{i13}, the following inequality
   \begin{gather} \label{i12n}
\sum_{i=1}^{\eta} \mu(x_{i,1}) \int_0^\infty
\frac{t^{\kappa+n}}{t^{\kappa+1}-1} P(x_{i,1},\D t) <
\infty.
   \end{gather}
Indeed, by \eqref{i10}, the conditions \eqref{i12n-1}
and \eqref{i12n} imply \eqref{i12} and \eqref{i13},
respectively. On the other hand, under the assumptions
\eqref{i10}-\eqref{i12}, $C_{\phi}^n$ is densely
defined if and only if \eqref{i12n} holds (cf.\
Proposition \ref{wk}). As a consequence (see also
Corollary \ref{re-m}), $\overline{\dz{C_{\phi}^{n+1}}}
\varsubsetneq \overline{\dz{C_{\phi}^{n}}} = L^2(\mu)$
if and only if both \eqref{i12n} and \eqref{i12nN}
hold, where
   \begin{align} \label{i12nN}
\sum_{i=1}^{\eta} \mu(x_{i,1}) \int_0^\infty
\frac{t^{\kappa+n+1}}{t^{\kappa+1}-1} P(x_{i,1},\D t)
= \infty.
   \end{align}
   \end{proc}
Using Procedure \ref{proceed}, we will show that for
every $n\in \nbb$, there exists a subnormal
composition operator $C_{\phi}$ such that $C_{\phi}^n$
is densely defined, while $C_{\phi}^{n+1}$ is not.
Examples of this kind have been given in
\cite{b-d-j-s} by using weighted shifts on directed
trees (see also a recent paper \cite{b-j-j-sSq} for
more subtle examples).
   \begin{exa}
Fix $n \in \nbb$. Consider a sequence $\{P(x_{i,1},
\cdot)\}_{i=1}^\infty$ of Borel probability measures
on $\rbb_+$ given by $P(x_{i,1},
\sigma)=\delta_{i+1}(\sigma)$ for all $\sigma \in
\borel{\rbb_+}$ and $i \in \nbb$. Set $\mu(x_{i,1}) =
\frac{1}{(i+1)^{n+1}}$ for $i \in \nbb$. It is now a
routine matter to verify that the conditions
\eqref{i10}, \eqref{i11}, \eqref{i12n} and
\eqref{i12nN} hold for $\eta=\infty$ and for arbitrary
$\kappa \in \zbb_+$. Hence, applying Procedure
\ref{proceed}, we get a composition operator
$C_{\phi}$ with the required properties, i.e.,
$\overline{\dz{C_{\phi}^{n+1}}} \varsubsetneq
\overline{\dz{C_{\phi}^{n}}} = L^2(\mu)$. Note that by
\eqref{hfi-c} and Proposition \ref{hfi4}, $C_{\phi}^j$
is densely defined for every $j \in \nbb$ whenever
$\eta < \infty$.
   \end{exa}
   \subsection{Criteria for subnormality related to
$x_0$} \label{sect4.3}
   In this section we give criteria for subnormality
of composition operators $C_{\phi}$ in $L^2(X, \mu)$
with $X=X_{\eta,\kappa}$ and $\phi=\phi_{\eta,\kappa}$
written in terms of the Radon-Nikodym derivatives
$\{\hfin{n}\}_{n=0}^\infty$ calculated at the points
$x_0$ and $x_{i,1}$, $i\in J_\eta$ (cf.\ Theorem
\ref{suff}). Surprisingly, in the case of $\eta=1$ the
subnormality of $C_{\phi}$ can be inferred from the
behaviour of $\{\hfin{n}\}_{n=0}^\infty$ only at the
point $x_0$ (cf.\ Proposition \ref{eta1}). We begin by
stating two necessary lemmata.
   \begin{lem}\label{det} Let
$\{\gamma_n\}_{n=0}^\infty$ be a Stieltjes moment
sequence and let $p \in \nbb$. Then
$\{\gamma_{jp}\}_{j=0}^\infty$ is a Stieltjes moment
sequence and the following assertions hold{\em :}
   \begin{enumerate}
   \item[(i)] if
$\{\gamma_{jp}\}_{j=0}^\infty$ is S-determinate, then
so is $\{\gamma_n\}_{n=0}^\infty$,
   \item[(ii)] if $\{\gamma_{jp}\}_{j=0}^\infty$ is
S-determinate and $\{\gamma_{(j+1)p} -
\gamma_{jp}\}_{j=0}^\infty$ is a Stieltjes moment
sequence, then $\{\gamma_n\}_{n=0}^\infty$ is
S-determinate and its unique S-representing measure
vanishes on $[0,1)$,
   \item[(iii)] if $\{\gamma_{n + 1} -
\gamma_{n}\}_{n=0}^\infty$ is a Stieltjes moment
sequence, then $\{\gamma_{n}\}_{n=0}^\infty$ satisfies
the Carleman condition if and only if $\{\gamma_{n+1}
- \gamma_{n}\}_{n=0}^\infty$ satisfies the Carleman
condition.
   \end{enumerate}
   \end{lem}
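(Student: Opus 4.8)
The plan is to push everything through the homeomorphism $\psi$ of $\rbb_+$ onto itself given by $\psi(t)=t^p$, and then invoke Lemma \ref{kontr} and Proposition \ref{wouk}. First I would observe that if $\nu\in\msc^+(\{\gamma_n\}_{n=0}^\infty)$, then the measure transport theorem gives $\int_0^\infty s^j\,\D(\nu\circ\psi^{-1})(s)=\int_0^\infty t^{jp}\,\D\nu(t)=\gamma_{jp}$ for all $j\in\zbb_+$, where $\nu\circ\psi^{-1}$ is as in \eqref{dnf}; in particular $\nu\circ\psi^{-1}\in\msc^+$, so $\{\gamma_{jp}\}_{j=0}^\infty$ is a Stieltjes moment sequence and $\nu\mapsto\nu\circ\psi^{-1}$ carries $\msc^+(\{\gamma_n\})$ into $\msc^+(\{\gamma_{jp}\})$. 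Since $\psi$ is a bijection of $\rbb_+$ (so $\psi^{-1}(\psi(\sigma))=\sigma$ for every $\sigma\in\borel{\rbb_+}$), this assignment is injective. Hence (i): if $\{\gamma_{jp}\}$ admits only one S-representing measure, the same holds for $\{\gamma_n\}$.

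For (ii), part (i) already yields S-determinacy of $\{\gamma_n\}$; let $\nu$ denote its unique S-representing measure and $\mu:=\nu\circ\psi^{-1}$ the unique S-representing measure of $\{\gamma_{jp}\}$. The hypothesis that $\{\gamma_{(j+1)p}-\gamma_{jp}\}_{j=0}^\infty$ is a Stieltjes moment sequence says in particular that the Hankel matrix $[\gamma_{(i+k+1)p}-\gamma_{(i+k)p}]_{i,k}$ is positive semidefinite, so condition (iii) of Lemma \ref{kontr} holds for the sequence $\{\gamma_{jp}\}$; therefore, by condition (i) of that lemma, $\{\gamma_{jp}\}$ has an S-representing measure vanishing on $[0,1)$, and by determinacy $\mu([0,1))=0$. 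Because $\psi^{-1}([0,1))=[0,1)$, we get $\nu([0,1))=\mu([0,1))=0$, as asserted.

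Part (iii) is where the actual estimation lies. Applying Lemma \ref{kontr} to $\{\gamma_n\}$ itself --- its condition (iii) holding since $\{\gamma_{n+1}-\gamma_n\}$ is a Stieltjes moment sequence --- produces an S-representing measure $\nu$ of $\{\gamma_n\}$ with $\supp{\nu}\subseteq[1,\infty)$. Writing $\delta_n=\gamma_{n+1}-\gamma_n=\int_1^\infty(t-1)t^n\,\D\nu(t)$, I would record the elementary bounds $\delta_0\Le\delta_n\Le\gamma_{n+1}$ (the lower one because $t^n\Ge1$ on $[1,\infty)$, which also shows $\{\delta_n\}$ is nondecreasing) and $\gamma_{n+1}=\gamma_0+\sum_{k=0}^n\delta_k\Le\gamma_0+(n+1)\delta_n$. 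If $\delta_0=0$ then $\{\gamma_n\}$ and $\{\delta_n\}$ are constant and both trivially satisfy the Carleman condition, so I may assume $\delta_0>0$; then $\gamma_{n+1}\Le C(n+1)\delta_n$ with $C=1+\gamma_0/\delta_0$, and since $(C(n+1))^{1/2n}\to1$ this is bounded, say by $M\Ge1$, so that $\delta_n\Le\gamma_{n+1}\Le M^{2n}\delta_n$ for $n\Ge1$. This chain shows that $\sum_{n\Ge1}\gamma_{n+1}^{-1/2n}=\infty$ exactly when $\sum_{n\Ge1}\delta_n^{-1/2n}=\infty$, i.e.\ $\{\gamma_{n+1}\}$ satisfies the Carleman condition iff $\{\delta_n\}$ does; Proposition \ref{wouk}(ii) then lets me replace $\{\gamma_{n+1}\}$ by $\{\gamma_n\}$, completing (iii). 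The main obstacle is purely bookkeeping in (iii): treating the degenerate case $\delta_0=0$ and handling the index shift between $\{\gamma_{n+1}\}$ and $\{\gamma_n\}$ cleanly; parts (i) and (ii) are transport-of-measure formalities.
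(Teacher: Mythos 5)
Your proof is correct and follows essentially the same route as the paper: parts (i) and (ii) are the same pushforward-under-$t\mapsto t^p$ argument combined with Lemma \ref{kontr}, and part (iii) rests on the same telescoping estimate $\gamma_{n+1}\Le\gamma_0+(n+1)\delta_n$ together with Proposition \ref{wouk}(ii) and the fact that $(Cn)^{1/2n}$ stays bounded. The only (harmless) variation is in (iii): you get monotonicity of the differences $\delta_n$ and the lower bound $\delta_n\Ge\delta_0$ by first using Lemma \ref{kontr} to pin a representing measure of $\{\gamma_n\}_{n=0}^\infty$ to $[1,\infty)$, whereas the paper works with a representing measure of the difference sequence itself and splits according to whether it charges $(1,\infty)$; both give the same two-sided comparison between $\gamma_{n+1}$ and $\delta_n$.
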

   \begin{proof}
(i) Let $\rho$ be an S-representing measure of
$\{\gamma_n\}_{n=0}^\infty$, $W\colon \rbb_+ \to
\rbb_+$ be a function given by $W(t) = t^{p}$ for $t
\in \rbb_+$, and $\rho \circ W^{-1}$ be a Borel
measure on $\rbb_+$ given by $\rho \circ
W^{-1}(\sigma) = \rho(W^{-1}(\sigma))$ for $\sigma \in
\borel{\rbb_+}$. Using the measure transport theorem,
we see that $\{\gamma_{jp}\}_{j=0}^\infty$ is a
Stieltjes moment sequence with the S-representing
measure $\rho \circ W^{-1}$. If $\rho^{\prime}$ is
another S-representing measure of
$\{\gamma_n\}_{n=0}^\infty$, then the measure
$\rho^{\prime} \circ W^{-1}$, being an S-representing
measure of $\{\gamma_{jp}\}_{j=0}^\infty$, coincides
with $\rho \circ W^{-1}$, and consequently $\rho =
\rho^{\prime}$.

(ii) In view of (i), $\{\gamma_n\}_{n=0}^\infty$ is
S-determinate. Denote by $\rho$ its unique
S-represent\-ing measure. Let $\nu$ be an
S-representing measure of $\{\gamma_{(j+1)p} -
\gamma_{jp}\}_{j=0}^\infty$. Then
   \begin{align*}
\sum_{i,j=0}^n \big(\gamma_{(i+j+1)p} -
\gamma_{(i+j)p}\big) \lambda_i \bar \lambda_j =
\int_0^\infty \Big|\sum_{j=0}^n \lambda_j t^{j}\Big|^2
\D \nu(t) \Ge 0
   \end{align*}
for all finite sequences $\{\lambda_j\}_{j=0}^n
\subseteq \cbb$. By Lemma \ref{kontr} and the
S-determinacy of $\{\gamma_{jp}\}_{j=0}^\infty$, we
deduce that $\rho\circ W^{-1}([0,1))=0$. Hence
$\rho([0,1))=0$.

(iii) Set $\varDelta_n = \gamma_{n+1} - \gamma_{n}$
for $n \in \zbb_+$. Since $\varDelta_n \Le
\gamma_{n+1}$ for all $n \in \zbb_+$, we infer from
Proposition \ref{wouk}(ii) that if
$\{\gamma_{n}\}_{n=0}^\infty$ satisfies the Carleman
condition, then so does
$\{\varDelta_n\}_{n=0}^\infty$. To prove the converse
implication assume that $\{\varDelta_n\}_{n=0}^\infty$
satisfies the Carleman condition. Note that
   \begin{align} \label{didif}
\gamma_{n} = \sum_{l=0}^{n-1} \varDelta_l + \gamma_0,
\quad n \in \nbb.
   \end{align}
Let $\tau$ be an S-representing measure of
$\{\varDelta_n\}_{n=0}^\infty$. Set
$\varDelta_n^{\prime} = \int_{(1,\infty)} t^n \D
\tau(t)$ for $n \in \zbb_+$. If $\tau((1,\infty)) =
0$, then, by \eqref{didif},
   \begin{align} \label{baegam}
\gamma_n \Le n \big(\tau([0,1]) + \gamma_0\big), \quad
n \in \nbb.
   \end{align}
If $\tau((1,\infty)) > 0$, then using \eqref{didif}
and the fact that the sequence
$\{\varDelta_n^{\prime}\}_{n=0}^\infty$ is
monotonically increasing, we obtain
   \begin{align} \label{baegam2}
\gamma_n \Le n \big(\tau([0,1]) + \gamma_0 +
\varDelta_n^{\prime}\big) \Le n
\bigg(\frac{\tau([0,1]) + \gamma_0}{\tau((1,\infty))}
+ 1\bigg) \varDelta_n, \quad n \in \nbb.
   \end{align}
Combining \eqref{baegam} and \eqref{baegam2} completes
the proof.
   \end{proof}
The next lemma is a direct consequence of Lemma
\ref{fund}.
   \begin{lem}\label{suf1}
Assume that \eqref{sa} holds, $\hfin{n}(x_0) < \infty$
for every $n\in \nbb$ and
$\{\hfin{n}(x_{i,1})\}_{n=0}^\infty$ is a Stieltjes
moment sequence with an S-representing measure
$P(x_{i,1}, \cdot)$ for every $i\in J_{\eta}$. Then
$\{\hfin{n + \kappa + 1}(x_0) -
\hfin{n}(x_0)\}_{n=0}^\infty$ is a Stieltjes moment
sequence with an S-representing measure $\nu$ given by
   \begin{align*}
\nu(\sigma) = \sum_{i=1}^{\eta}
\frac{\mu(x_{i,1})}{\mu(x_0)} P(x_{i,1}, \sigma),
\quad \sigma \in \borel{\rbb_+}.
   \end{align*}
   \end{lem}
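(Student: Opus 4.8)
The plan is to read the assertion off from Lemma \ref{fund} almost immediately, the only preparatory point being to check that $\nu$ is a genuine finite Borel measure on $\rbb_+$. First I would note that $\hfin{0}(x)=1$ for every $x\in X$ (since $\phi^{0}=\mathrm{id}_X$), so evaluating \eqref{dif1} at $n=0$ yields
\[
\sum_{i=1}^{\eta}\frac{\mu(x_{i,1})}{\mu(x_0)}=\hfin{\kappa+1}(x_0)-\hfin{0}(x_0)=\hfin{\kappa+1}(x_0)-1,
\]
which is finite by hypothesis. Consequently $\nu=\sum_{i=1}^{\eta}\frac{\mu(x_{i,1})}{\mu(x_0)}P(x_{i,1},\cdot)$ is a countable sum of nonnegative Borel measures with summable weights and with each $P(x_{i,1},\cdot)$ a probability measure, hence a well-defined finite Borel measure on $\rbb_+$ with $\nu(\rbb_+)=\sum_{i=1}^{\eta}\frac{\mu(x_{i,1})}{\mu(x_0)}<\infty$ ($\sigma$-additivity being justified by the nonnegativity of the summands).

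Next I would compute the moments of $\nu$. Using Tonelli's theorem to interchange the (possibly infinite) summation with the integration — legitimate because all the integrands are nonnegative — one obtains, for every $n\in\zbb_+$,
\[
\int_0^\infty t^n\D\nu(t)=\sum_{i=1}^{\eta}\frac{\mu(x_{i,1})}{\mu(x_0)}\int_0^\infty t^n\,P(x_{i,1},\D t)=\sum_{i=1}^{\eta}\frac{\mu(x_{i,1})}{\mu(x_0)}\hfin{n}(x_{i,1}),
\]
where the last equality uses that $P(x_{i,1},\cdot)$ is an S-representing measure of $\{\hfin{n}(x_{i,1})\}_{n=0}^\infty$. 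By Lemma \ref{fund} the right-hand side equals $\hfin{n+\kappa+1}(x_0)-\hfin{n}(x_0)$, which is finite for every $n$ since $\hfin{m}(x_0)<\infty$ for all $m\in\nbb$ by hypothesis. Thus $\nu\in\msc^+$, and the displayed identity exhibits $\{\hfin{n+\kappa+1}(x_0)-\hfin{n}(x_0)\}_{n=0}^\infty$ as a Stieltjes moment sequence with S-representing measure $\nu$, as claimed.

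I expect no real obstacle here: the argument is essentially bookkeeping on top of Lemma \ref{fund}. The only point deserving a moment's care is the case $\eta=\infty$, where one must be sure that $\nu$ is genuinely $\sigma$-additive and finite and that the termwise integration is valid; both are covered by the summability of $\{\mu(x_{i,1})\}_{i\in J_\eta}$ obtained in the first step together with the monotone convergence theorem. Since no determinacy statement about $\nu$ is being made, nothing further is needed.
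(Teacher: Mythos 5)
Your argument is correct and is exactly the intended one: the paper gives no written proof, stating only that the lemma is a direct consequence of Lemma \ref{fund}, and your computation (finiteness of the weights via the $n=0$ case of \eqref{dif1}, termwise integration by Tonelli, then \eqref{dif1} again for the moments) is precisely that direct consequence spelled out. Nothing is missing.
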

   The above enables us to prove the aforementioned
criteria for subnormality.
   \begin{thm}\label{suff}
Assume that \eqref{sa} holds and
$\{\hfin{n}(x)\}_{n=0}^\infty$ is a Stieltjes moment
sequence for every $x \in \{x_0\} \cup \{x_{i,1}
\colon i\in J_{\eta}\}$. If one of the following four
conditions is satisfied\/\footnote{\;Recall that by
Lemma \ref{suf1}, $\{\hfin{n + \kappa + 1}(x_0) -
\hfin{n}(x_0)\}_{n=0}^\infty$ is a Stieltjes moment
sequence.}{\em :}
   \begin{enumerate}
   \item[(i)] $\{\hfin{n + \kappa + 1}(x_0) - \hfin{n}(x_0)
\}_{n=0}^\infty$ is S-determinate and there exists an
S-repre\-senting measure $P(x_0, \cdot)$ of
$\{\hfin{n}(x_0)\}_{n=0}^\infty$ such that $P(x_0,
[0,1))=0$,
   \item[(ii)] $\{\hfin{n + \kappa + 1}(x_0) -
\hfin{n}(x_0) \}_{n=0}^\infty$ and
$\{\hfin{j(\kappa+1)}(x_0)\}_{j=0}^\infty$ are
S-determinate,
   \item[(iii)]
$\{\hfin{j(\kappa+1)}(x_0)\}_{j=0}^\infty$ satisfies
the Carleman condition,
   \item[(iv)]
$\{\hfin{(j+1)(\kappa+1)}(x_0) -
\hfin{j(\kappa+1)}(x_0)\}_{j=0}^\infty$ satisfies the
Carleman condition,
   \end{enumerate}
then $C_{\phi}$ is subnormal.
   \end{thm}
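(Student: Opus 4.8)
The plan is to deduce subnormality from Theorem~\ref{glowne} by exhibiting a family $\{P(x,\cdot)\}_{x\in X}$ of Borel probability measures on $\rbb_+$ satisfying the consistency condition \eqref{cc}. First I check that the standing hypotheses of Theorems~\ref{glowne} and \ref{cc1} are available. Since $\{\hfin{n}(x_0)\}_{n=0}^\infty$ is a Stieltjes moment sequence, all its terms are finite, so by the equivalence (vi)$\Leftrightarrow$(iv) of Proposition~\ref{hfi4} (taken with $r=0$) the operator $C_\phi^j$ is densely defined for every $j\in\nbb$; in particular $C_\phi$ is densely defined. Moreover, under \eqref{sa} we have $\phi(X)=X$, hence $\hfi(x)>0$ for all $x\in X$. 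Thus it suffices to produce the required family, and I do this under condition (i); afterwards I reduce (ii), (iii) and (iv) to (i).

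Assume (i) holds. Let $P(x_0,\cdot)$ be an S-representing measure of $\{\hfin{n}(x_0)\}_{n=0}^\infty$ with $P(x_0,[0,1))=0$, and for each $i\in J_\eta$ fix any S-representing measure $P(x_{i,1},\cdot)$ of the Stieltjes moment sequence $\{\hfin{n}(x_{i,1})\}_{n=0}^\infty$; all of these are probability measures because $\hfin{0}\equiv 1$. I then \emph{define} the remaining measures by the formulas of Theorem~\ref{cc1}: $P(x_r,\sigma)=\frac{\mu(x_0)}{\mu(x_r)}\int_0^\infty \chi_\sigma(t)\,t^r\,P(x_0,\D t)$ for $r\in J_\kappa$, and $P(x_{i,j},\sigma)=\frac{\mu(x_{i,1})}{\mu(x_{i,j})}\int_0^\infty\chi_\sigma(t)\,t^{j-1}\,P(x_{i,1},\D t)$ for $i\in J_\eta$, $j\in\nbb_2$. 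Using \eqref{hfir-2} with $n=0$ one has $\int_0^\infty t^r\,P(x_0,\D t)=\hfin{r}(x_0)=\mu(x_r)/\mu(x_0)$, and using \eqref{hfi} (applied to $\phi^{j-1}$) together with \eqref{Xr1.5} along the $i$th branch one has $\int_0^\infty t^{j-1}\,P(x_{i,1},\D t)=\hfin{j-1}(x_{i,1})=\mu(x_{i,j})/\mu(x_{i,1})$; hence all these measures are probability measures, and conditions (i) and (iii) of Theorem~\ref{cc1} hold by construction.

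The only remaining point is condition (ii) of Theorem~\ref{cc1}, i.e.\ the equality of Borel measures $\sum_{i=1}^\eta\frac{\mu(x_{i,1})}{\mu(x_0)}P(x_{i,1},\cdot)=(t^{\kappa+1}-1)\,P(x_0,\D t)$ on $\rbb_+$. The measure on the right is nonnegative, because $P(x_0,[0,1))=0$ and $t^{\kappa+1}-1\Ge 0$ on $[1,\infty)$, and its $n$th moment equals $\hfin{n+\kappa+1}(x_0)-\hfin{n}(x_0)$; thus it lies in $\msc^+$ and S-represents $\{\hfin{n+\kappa+1}(x_0)-\hfin{n}(x_0)\}_{n=0}^\infty$. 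The measure on the left is the measure $\nu$ of Lemma~\ref{suf1}, which, via \eqref{dif1}, S-represents the same sequence. Since that sequence is S-determinate by hypothesis (i), the two measures coincide, so $\{P(x,\cdot)\}_{x\in X}$ satisfies \eqref{cc} by Theorem~\ref{cc1}, and Theorem~\ref{glowne} yields the subnormality of $C_\phi$. (As a byproduct one recovers $P(x_{i,1},[0,1])=0$.)

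It remains to show that (ii), (iii) and (iv) each imply (i). Write $\delta_n=\hfin{n+\kappa+1}(x_0)-\hfin{n}(x_0)$ and $\gamma_j=\hfin{j(\kappa+1)}(x_0)$. By Lemma~\ref{suf1}, $\{\delta_n\}_{n=0}^\infty$ is a Stieltjes moment sequence, hence so is its subsequence $\{\delta_{j(\kappa+1)}\}_{j=0}^\infty=\{\gamma_{j+1}-\gamma_j\}_{j=0}^\infty$ (Lemma~\ref{det}, first assertion, with $p=\kappa+1$). If (ii) holds, then Lemma~\ref{det}(ii) applied to $\{\hfin{n}(x_0)\}_{n=0}^\infty$ with $p=\kappa+1$ shows that $\{\hfin{n}(x_0)\}_{n=0}^\infty$ is S-determinate with S-representing measure vanishing on $[0,1)$, which together with the assumed S-determinacy of $\{\delta_n\}$ gives (i). If (iv) holds, i.e.\ $\{\gamma_{j+1}-\gamma_j\}$ satisfies the Carleman condition, then $\{\gamma_j\}$ does as well by Lemma~\ref{det}(iii) and $\{\delta_n\}$ does as well by Proposition~\ref{wouk}(iv); both sequences are then H-determinate by Proposition~\ref{wouk}(i), hence S-determinate, so (ii) holds. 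Finally (iii) implies (iv) by Lemma~\ref{det}(iii). The measure-theoretic heart of the argument---reducing \eqref{cc} to a single S-determinacy statement via Lemmata~\ref{fund} and \ref{suf1}---is quite direct; the only real care is needed in this last paragraph, in passing between a sequence and its subsequence along $j(\kappa+1)$ and in matching the Carleman and determinacy hypotheses.
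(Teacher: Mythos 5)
Your proposal is correct and follows essentially the same route as the paper: reduce (iii), (iv) and (ii) to (i) via Lemma \ref{det} and Proposition \ref{wouk}, then under (i) build the family $\{P(x,\cdot)\}_{x\in X}$ from the conditions (i) and (iii) of Theorem \ref{cc1} and verify its condition (ii) by matching the two S-representing measures of the S-determinate sequence $\{\hfin{n+\kappa+1}(x_0)-\hfin{n}(x_0)\}_{n=0}^\infty$ supplied by Lemma \ref{suf1} and by $P(x_0,\cdot)$. The only (immaterial) deviation is in the step (iv)$\Rightarrow$(ii), where you transfer the Carleman condition from the subsequence to the full sequence via Proposition \ref{wouk}(iv) before invoking determinacy, while the paper transfers S-determinacy via Lemma \ref{det}(i).
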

   \begin{proof}
By Proposition \ref{hfi4}, $C_{\phi}$ is densely
defined. It follows from Lemmata \ref{det} and
\ref{suf1} that the conditions (iii) and (iv) are
equivalent. If (iv) holds, then by Proposition
\ref{wouk}(i) and Lemma \ref{det}(i), $\{\hfin{n +
\kappa + 1}(x_0) - \hfin{n}(x_0) \}_{n=0}^\infty$ is
S-determinate, and thus, because of
(iv)$\Rightarrow$(iii), the condition (ii) holds.
Applying Lemma \ref{det}(ii), we see that (ii) implies
(i). All this means that it suffices to prove that (i)
implies the subnormality of $C_{\phi}$.

To this end, assume that (i) holds. Let $P(x_{i,1},
\cdot)$ be an S-representing measure of
$\{\hfin{n}(x_{i,1})\}_{n=0}^\infty$ for $i\in
J_{\eta}$. Note that
   \begin{align*}
\hfin{n + \kappa+1}(x_0) - \hfin{n}(x_0) =
\int_0^\infty t^n (t^{\kappa+1} -1) P(x_0,\D t), \quad
n \in \zbb_+.
   \end{align*}
As $P(x_0,[0,1))=0$, the set-function $\sigma \mapsto
\int_{\sigma} (t^{\kappa+1} -1) P(x_0,\D t)$ is a
(positive) measure. Applying Lemma \ref{suf1} and
using the S-determinacy assumption, we deduce that the
condition (ii) of Theorem \ref{cc1} holds. Now we
define the measures $\{P(x_r, \cdot)\colon r \in
J_{\kappa}\}$ and $\{P(x_{i,j}, \cdot)\colon i \in
J_{\eta}, \, j\in \nbb_2\}$ by the conditions (i) and
(iii) of Theorem \ref{cc1}, respectively. Using
\eqref{hfi} and the fact that $P(x, \cdot)$ is an
S-representing measure of
$\{\hfin{n}(x)\}_{n=0}^\infty$ for every $x \in
\{x_0\} \cup \{x_{i,1} \colon i\in J_{\eta}\}$, we
verify that $\{P(x, \cdot)\}_{x\in X}$ is a family of
Borel probability measures on $\rbb_+$ that satisfies
\eqref{cc}. Applying Theorem \ref{glowne}, we conclude
that $C_{\phi}$ is subnormal.
   \end{proof}
The situation changes drastically if $\eta$ equals
$1$.
   \begin{pro}\label{eta1}
Suppose \eqref{sa} holds and $\eta=1$. Then
$\overline{\dzn{C_{\phi}}}=L^2(\mu)$,
$\{\hfin{n}(x)\}_{n=0}^{\infty}\subseteq (0,\infty)$
for every $x\in X$ and the following conditions are
equivalent{\em :}
   \begin{enumerate}
   \item[(i)] there exists a family
$\{P(x, \cdot)\}_{x\in X}$ of Borel probability
measures on $\rbb_+$ that satisfies \eqref{cc},
   \item[(ii)] $\{\hfin{n}(x_0)\}_{n=0}^\infty$
is a Stieltjes moment sequence which has an
S-representing measure $\rho$ vanishing on $[0,1)$,
   \item[(iii)] $0 \Le
\sum_{i,j=0}^n \hfin{i+j}(x_0) \lambda_i \bar
\lambda_j \Le \sum_{i,j=0}^n \hfin{i+j+1}(x_0)
\lambda_i \bar \lambda_j$ for all finite sequences
$\{\lambda_i\}_{i=0}^n$ of complex numbers.
   \end{enumerate}
Moreover, if any of the above conditions holds, then
$C_{\phi}$ is subnormal. \end{pro}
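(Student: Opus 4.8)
The plan is to establish, besides the two displayed auxiliary assertions, the chain \mbox{(i)$\Rightarrow$(ii)$\Leftrightarrow$(iii)$\Rightarrow$(i)}; the ``moreover'' clause then follows from Theorem \ref{glowne}. For the auxiliary assertions, note that since $\eta=1$ the right-hand side of \eqref{hfi-c} is, for each $n$, a \emph{finite} sum of ratios of the (finite and strictly positive) values of the discrete measure $\mu$; hence $\hfin{n}(x_{\kappa})<\infty$ for all $n\in\nbb$. By the equivalence \mbox{(iii)$\Leftrightarrow$(v)$\Leftrightarrow$(vi)} of Proposition \ref{hfi4} (applied with $r=\kappa$) this gives both $\overline{\dzn{C_{\phi}}}=L^2(\mu)$ and $\hfin{n}(x)<\infty$ for all $n\in\nbb$ and $x\in X$. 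Moreover $\phi^{n}(X)=X$ for every $n$ (every vertex of $(X,E^{\phi})$ has a nonempty $\phi$-preimage, hence also a nonempty $\phi^{n}$-preimage), so applying \eqref{hfi} to $\phi^{n}$ yields $\hfin{n}(x)>0$; thus $\{\hfin{n}(x)\}_{n=0}^{\infty}\subseteq(0,\infty)$ for every $x\in X$. In particular $C_{\phi}$ is densely defined and ${\mathsf h}_{\phi}(x)>0$ for all $x\in X$, so Theorems \ref{glowne} and \ref{cc1} are at our disposal.

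For \mbox{(i)$\Rightarrow$(ii)}: if a family $\{P(x,\cdot)\}_{x\in X}$ satisfies \eqref{cc}, then Theorem \ref{glowne} gives $\hfin{n}(x_0)=\int_{0}^{\infty}t^{n}\,P(x_0,\D t)$ for all $n$, so $\{\hfin{n}(x_0)\}_{n=0}^{\infty}$ is a Stieltjes moment sequence whose S-representing measure $\rho:=P(x_0,\cdot)$ vanishes on $[0,1)$ by Theorem \ref{cc1}(iv). The equivalence \mbox{(ii)$\Leftrightarrow$(iii)} is just Lemma \ref{kontr} applied to $\{\hfin{n}(x_0)\}_{n=0}^{\infty}\subseteq\rbb_+$.

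For \mbox{(ii)$\Rightarrow$(i)}, which is the substantive step: when $\eta=1$, condition (ii) of Theorem \ref{cc1} reads $\tfrac{\mu(x_{1,1})}{\mu(x_0)}P(x_{1,1},\sigma)=\int_{\sigma}(t^{\kappa+1}-1)\,P(x_0,\D t)$, so it expresses $P(x_{1,1},\cdot)$ explicitly in terms of $P(x_0,\cdot)$, which is why, in contrast with Theorem \ref{suff}, no S-determinacy assumption is needed. Accordingly, starting from an S-representing measure $\rho\in\msc^+$ of $\{\hfin{n}(x_0)\}_{n=0}^{\infty}$ with $\rho([0,1))=0$ — automatically a probability measure, since $\hfin{0}(x_0)=1$ — I set $P(x_0,\cdot):=\rho$,
\[
P(x_{1,1},\sigma):=\frac{\mu(x_0)}{\mu(x_{1,1})}\int_{\sigma}(t^{\kappa+1}-1)\,\rho(\D t),\qquad \sigma\in\borel{\rbb_+},
\]
and define $P(x_r,\cdot)$ for $r\in J_{\kappa}$ and $P(x_{1,j},\cdot)$ for $j\in\nbb_2$ by the formulas in parts (i) and (iii) of Theorem \ref{cc1}. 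One checks that each member of $\{P(x,\cdot)\}_{x\in X}$ is a Borel probability measure on $\rbb_+$: for $P(x_{1,1},\cdot)$ positivity is clear because $\supp{\rho}\subseteq[1,\infty)$, and its total mass equals $\tfrac{\mu(x_0)}{\mu(x_{1,1})}\big(\hfin{\kappa+1}(x_0)-1\big)=1$ by Lemma \ref{fund} with $n=0$; for $P(x_r,\cdot)$ the total mass is $\tfrac{\mu(x_0)}{\mu(x_r)}\hfin{r}(x_0)=1$ by \eqref{hfir-2} with $n=0$; and for $P(x_{1,j},\cdot)$ it is $\tfrac{\mu(x_{1,1})}{\mu(x_{1,j})}\hfin{j-1}(x_{1,1})=1$, since $\phi^{-(j-1)}(\{x_{1,1}\})=\{x_{1,j}\}$. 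By construction this family satisfies conditions (i) and (iii) of Theorem \ref{cc1}, and it also satisfies (ii), because $\tfrac{\mu(x_{1,1})}{\mu(x_0)}P(x_{1,1},\sigma)=\int_{\sigma}(t^{\kappa+1}-1)\,\rho(\D t)=\int_{\sigma}(t^{\kappa+1}-1)\,P(x_0,\D t)$. Hence, by Theorem \ref{cc1}, $\{P(x,\cdot)\}_{x\in X}$ satisfies \eqref{cc}, proving (i). Finally, if any of (i)--(iii) holds then so does (i), and Theorem \ref{glowne} (applicable as noted at the outset) shows that $C_{\phi}$ is subnormal.

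The proof is mostly bookkeeping, and the one point calling for care is the verification in \mbox{(ii)$\Rightarrow$(i)} that the measures manufactured from $\rho$ are genuine \emph{probability} measures — that is, that they are compatible with the \emph{prescribed} discrete measure $\mu$, rather than with one tailored to $\rho$ as in Procedure \ref{proceed}. This compatibility reduces, via Lemma \ref{fund}, \eqref{hfir-2} and the elementary identity $\phi^{-(j-1)}(\{x_{1,1}\})=\{x_{1,j}\}$, to routine computation, and it is exactly this self-consistency that allows the case $\eta=1$ to dispense with the S-determinacy hypotheses required in Theorem \ref{suff}.
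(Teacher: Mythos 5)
Your proposal is correct and follows essentially the same route as the paper: the auxiliary assertions via \eqref{hfi-c} and Proposition \ref{hfi4}, (i)$\Rightarrow$(ii) via Theorems \ref{glowne} and \ref{cc1}(iv), (ii)$\Leftrightarrow$(iii) via Lemma \ref{kontr}, and (ii)$\Rightarrow$(i) by manufacturing the family from $\rho$ through the conditions of Theorem \ref{cc1} and checking the total masses with Lemma \ref{fund}, \eqref{hfir-2} and \eqref{hfi}. The mass computations you flag as the delicate point are exactly the ones carried out in the paper's proof.
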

   \begin{proof}
It follows from \eqref{hfi}, \eqref{hfi-c} and
Proposition \ref{hfi4} that
$\overline{\dzn{C_{\phi}}}=L^2(\mu)$ and
$\{\hfin{n}(x)\}_{n=0}^{\infty}\subseteq (0,\infty)$
for every $x\in X$.

(i)$\Rightarrow$(ii) Apply Theorem \ref{glowne} and
Theorem \ref{cc1}(iv).

(ii)$\Rightarrow$(i) Set $P(x_0, \cdot) =
\rho(\cdot)$. By Lemma \ref{fund},
$\{\hfin{n}(x_{1,1})\}_{n=0}^\infty$ is a Stieltjes
moment sequence with an S-representing probability
measure $P(x_{1,1}, \cdot)$ given by
   \begin{align} \label{pp11}
P(x_{1,1}, \sigma) = \frac{\mu(x_0)}{\mu(x_{1,1})}
\int_{\sigma} (t^{\kappa+1}-1) P(x_0, \D t), \quad
\sigma \in \borel{\rbb_+}.
   \end{align}
Clearly, the condition (ii) of Theorem \ref{cc1}
holds. Next, we define $\{P(x_r,
\cdot)\}_{r=1}^{\kappa}$, the Borel measures on
$\rbb_+$, using the condition (i) of Theorem
\ref{cc1}. Since $P(x_0, \cdot)$ is an S-representing
measure of $\{\hfin{n}(x_0)\}_{n=0}^\infty$, we deduce
from \eqref{hfi} that the so-defined measures are
probabilistic. Finally, we define $\{P(x_{1,j},
\cdot)\}_{j=2}^\infty$, the Borel measures on
$\rbb_+$, using the condition (iii) of Theorem
\ref{cc1}. Noting that \allowdisplaybreaks
   \begin{align*} \int_0^\infty t^{j-1} P(x_{1,1},
\D t) &\overset{\eqref{pp11}}=
\frac{\mu(x_0)}{\mu(x_{1,1})} \int_0^\infty t^{j-1}
(t^{\kappa+1}-1) P(x_0, \D t)
   \\
& \hspace{1.7ex}= \frac{\mu(x_0)}{\mu(x_{1,1})}
(\hfin{j-1 + (\kappa+1)}(x_0) - \hfin{j-1}(x_0))
   \\
& \overset{\eqref{dif1}} = \hfin{j-1}(x_{1,1})
   \\
& \overset{\eqref{hfi}}=
\frac{\mu(x_{1,j})}{\mu(x_{1,1})}, \quad j \in \nbb_2,
   \end{align*}
we see that the measures $\{P(x_{1,j},
\cdot)\}_{j=2}^\infty$ are probabilistic. Now,
applying Theorem \ref{cc1}, we conclude that $\{P(x,
\cdot)\}_{x\in X}$ satisfies \eqref{cc}.

(ii)$\Leftrightarrow$(iii) Apply Lemma \ref{kontr}.

The ``moreover'' part is a direct consequence of
Theorem \ref{glowne}.
   \end{proof}
   Regarding the implication (ii)$\Rightarrow$(i) of
Proposition \ref{eta1}, we note that the assumption
that $\{\hfin{n}(x_0)\}_{n=0}^\infty$ is a Stieltjes
moment sequence is not sufficient for $C_{\phi}$ to be
subnormal, even if $C_{\phi}\in\ogr{L^2(\mu)}$.
   \begin{exa} \label{final}
First, we show that if
   \begin{align} \label{dzieci500+}
\text{$\{\gamma_n\}_{n=0}^\infty\subseteq (0,\infty)$,
$\gamma_0=1$ and $\gamma_{n+\kappa+1} - \gamma_{n} >
0$ for every $n\in \zbb_+$,}
   \end{align}
then there exists a discrete measure $\mu$ on
$X=X_{1,\kappa}$ such that $\hfin{n}(x_0)=\gamma_n$
for every $n \in \zbb_+$ with $\phi=\phi_{1,\kappa}$.
For this, take any $\mu(x_0) \in(0,\infty)$ and set
$\mu(x_r)=\mu(x_0) \gamma_r$ for every $r \in
J_{\kappa}$. Next we put
   \begin{align} \label{roznic}
\mu(x_{1,n+1}) = \mu(x_0) (\gamma_{n+\kappa+1} -
\gamma_{n}), \quad n\in\zbb_+.
   \end{align}
Since $\gamma_{n+\kappa+1} - \gamma_{n} > 0$ for every
$n\in \zbb_+$, we see that $\mu(x_{1,j}) \in
(0,\infty)$ for every $j\in \nbb$. Clearly, by
\eqref{hfi}, $\hfin{r}(x_0) = \gamma_r$ for every $r
\in \{0, \ldots, \kappa\}$. Using induction, Lemma
\ref{fund}, \eqref{hfi} and \eqref{roznic}, we verify
that $\hfin{n}(x_0)=\gamma_n$ for every $n \in
\zbb_+$.

It is worth mentioning that if \eqref{sa} holds and
$\eta=1$, then the sequence
$\{\gamma_n\}_{n=0}^\infty$ defined by
$\gamma_n=\hfin{n}(x_0)$ for every $n \in \zbb_+$
satisfies \eqref{dzieci500+}. Indeed, this is a direct
consequence of Proposition \ref{eta1}, Lemma
\ref{fund} and \eqref{hfi}.

Now, we can apply the above procedure as follows. Take
$a\in (0,1)$ and set $\gamma_n = \frac{1}{2} (a^n +
2^n)$ for every $n \in \zbb_+$. Clearly,
$\{\gamma_n\}_{n=0}^\infty$ is an S-determinate
Stieltjes moment sequence with the S-representing
measure $\frac{1}{2} (\delta_{a} + \delta_2)$. Since
   \begin{align*}
a^{n+\kappa+1} + 2^{n+\kappa+1} > 2^{n+\kappa+1} \Ge 1
+ 2^n \Ge a^n + 2^n, \quad n\in \zbb_+,
   \end{align*}
we see that $\gamma_{n+\kappa+1} - \gamma_{n} > 0$ for
every $n\in \zbb_+$. Applying our procedure, we get a
discrete measure $\mu$ on $X$ such that
$\hfin{n}(x_0)=\gamma_n$ for every $n \in \zbb_+$.
Note that $C_{\phi}\in \ogr{L^2(\mu)}$. Indeed, by
\eqref{hfi} and \eqref{roznic}, we have
   \begin{align*}
\lim_{n \to \infty} \hfi(x_{1,n}) = \lim_{n \to
\infty} \frac{\gamma_{n+\kappa+1}-\gamma_{n}}
{\gamma_{n+\kappa}-\gamma_{n-1}} =
\frac{2^{\kappa+2}-2}{2^{\kappa+1}-1} < \infty.
   \end{align*}
This, combined with \eqref{hfi} and \eqref{ogrn},
yields $C_{\phi}\in \ogr{L^2(\mu)}$. It is worth
mentioning that $C_{\phi}$ is not subnormal (hence, by
Proposition \ref{wn0gr} below,
$\{\hfin{n}(x_{1,1})\}_{n=0}^\infty$ is not a
Stieltjes moment sequence). Indeed, otherwise, by
\cite[Theorem 13]{b-j-j-sS}, $C_{\phi}$ admits a
family $\{P(x,\cdot)\}_{x\in X}$ of Borel probability
measures on $\rbb_+$ that satisfies \eqref{cc}. This,
together with Proposition \ref{eta1}, contradicts the
S-determinacy of $\{\gamma_n\}_{n=0}^\infty$.
   \end{exa}
The question of characterizing subnormality of bounded
composition operators of the form
$C_{\phi_{\eta,\kappa}}$ has a simple solution.
   \begin{pro} \label{wn0gr}
Suppose \eqref{sa} holds and $C_\phi \in
\ogr{L^2(\mu)}$. Then $C_{\phi}$ is subnormal if and
only if $\{\hfin{n}(x)\}_{n=0}^\infty$ is a Stieltjes
moment sequence for every $x \in \{x_0\} \cup
\{x_{i,1} \colon i\in J_{\eta}\}$.
   \end{pro}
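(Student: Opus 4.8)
The plan is to prove the two implications separately; the forward implication is a direct consequence of (the easy half of) Lambert's characterization, while the converse reduces cleanly to Theorem~\ref{suff}.

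For the ``only if'' part, I would use that every bounded subnormal operator generates Stieltjes moment sequences: since $C_\phi \in \ogr{L^2(\mu)}$ we have $\dzn{C_\phi} = L^2(\mu)$, so this is the half of Lambert's characterization recalled in Section~\ref{subsec1} (see also \cite[Proposition~3.2.1]{b-j-j-sA}). Evaluating on the characteristic functions $\chi_{\{x\}}$, which lie in $L^2(\mu)$ because $\mu(x) < \infty$, and invoking \eqref{chiphi}, we find that $\{\mu(x)\,\hfin{n}(x)\}_{n=0}^\infty = \{\|C_\phi^n \chi_{\{x\}}\|^2\}_{n=0}^\infty$ is a Stieltjes moment sequence for every $x \in X$. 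Dividing by $\mu(x) > 0$ shows that $\{\hfin{n}(x)\}_{n=0}^\infty$ is a Stieltjes moment sequence for every $x \in X$, in particular for $x \in \{x_0\} \cup \{x_{i,1}\colon i \in J_\eta\}$.

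For the ``if'' part, the key observation is that boundedness of $C_\phi$ forces $\{\hfin{n}(x_0)\}_{n=0}^\infty$ to grow at most geometrically, hence to satisfy the Carleman condition. Indeed, by \eqref{ogrn}, $\hfin{n}(x_0) \Le \sup_{x \in X}\hfin{n}(x) = \|C_\phi^n\|^2 \Le \|C_\phi\|^{2n}$ for all $n \in \zbb_+$; putting $M = \max\{1,\|C_\phi\|^2\}$ we get $\hfin{n}(x_0)^{1/2n} \Le \sqrt{M}$ for every $n \in \nbb$, and therefore $\sum_{n=1}^\infty \hfin{n}(x_0)^{-1/2n} = \infty$. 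Since $\{\hfin{n}(x_0)\}_{n=0}^\infty$ is a Stieltjes moment sequence by hypothesis, Proposition~\ref{wouk}(iv) applied with $p = \kappa+1$ shows that the subsequence $\{\hfin{j(\kappa+1)}(x_0)\}_{j=0}^\infty$ also satisfies the Carleman condition. This is precisely condition~(iii) of Theorem~\ref{suff}, whose remaining hypotheses (namely \eqref{sa} and the requirement that $\{\hfin{n}(x)\}_{n=0}^\infty$ be a Stieltjes moment sequence for every $x \in \{x_0\} \cup \{x_{i,1}\colon i \in J_\eta\}$) are exactly what we are assuming; hence Theorem~\ref{suff} yields that $C_\phi$ is subnormal.

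I do not anticipate any genuine difficulty here: the proposition is essentially a repackaging of Theorem~\ref{suff} together with the elementary fact that the power norms of a bounded operator are geometrically bounded. The only minor points to check are that $\hfin{n}(x_0) < \infty$ for all $n$ (immediate from boundedness, and in any case implicit in the Stieltjes moment hypothesis used inside Theorem~\ref{suff} via Proposition~\ref{hfi4}) and that the sequence to which Proposition~\ref{wouk}(iv) is applied is indeed a Stieltjes moment sequence, which again holds by hypothesis.
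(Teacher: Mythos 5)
Your proposal is correct and follows essentially the same route as the paper: the paper likewise derives sufficiency from the bound $\hfin{n}(x_0)\Le\|C_{\phi}\|^{2n}$ of \eqref{ogrn} feeding into Theorem \ref{suff}(iii), and obtains necessity from the easy half of Lambert's characterization. Your extra intermediate step via Proposition \ref{wouk}(iv) and the explicit use of \eqref{chiphi} merely spell out details the paper leaves implicit.
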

   \begin{proof}
Since, by \eqref{ogrn}, $\hfin{n}(x_0) \Le
\|C_{\phi}\|^{2n}$ for all $n \in \zbb_+$, the
sufficiency follows from Theorem \ref{suff}(iii). The
sufficiency can also be deduced from Lambert's theorem
(cf.\ \cite{lam1}) by using \eqref{hfi} and
\eqref{hfir-2}. The necessity is a direct consequence
of the corresponding part of Lambert's theorem.
   \end{proof}
Now we discuss the question of ``optimality'' of the
assumptions of Proposition \ref{wn0gr}. As shown in
Example \ref{final}, for $\eta=1$ and for every
$\kappa \in \zbb_+$, there exists a non-subnormal
$C_{\phi} \in \ogr{L^2(\mu)}$ such that
$\{\hfin{n}(x_0)\}_{n=0}^\infty$ is a Stieltjes moment
sequence. In Example \ref{prz2} below, we will show
that for $\eta=1$ and $\kappa=0$ the assumption that
$\{\hfin{n}(x_{1,1})\}_{n=0}^\infty$ is a Stieltjes
moment sequence is not sufficient for $C_{\phi}$ to be
subnormal, even if $C_{\phi} \in \ogr{L^2(\mu)}$.
   \begin{exa} \label{prz2}
Assume that $\eta=1$ and $\kappa=0$. Take any positive
real numbers $\mu(x_0)$ and $\mu(x_{1,1})$. Let
$\{\gamma_n\}_{n=0}^\infty$ be a Stieltjes moment
sequence with an S-representing measure $\nu$ such
that $\gamma_0=1$, $\nu((1,\infty))=0$ and
$\nu(\{1\})> 0$. Then
   \begin{align} \label{Levy2}
0 < \gamma_n \Le 1, \quad n \in \zbb_+,
   \end{align}
which implies that $\{\gamma_n\}_{n=0}^\infty$ is
S-determinate (see e.g., Proposition \ref{wouk}(i)).
Set $\mu(x_{1,j})=\mu(x_{1,1}) \gamma_{j-1}$ for every
$j\in \nbb_2$. Clearly, $\hfin{n}(x_{1,1}) = \gamma_n$
for all $n\in \zbb_+$, which means that
$\{\hfin{n}(x_{1,1})\}_{n=0}^\infty$ is a Stieltjes
moment sequence. Since $\{\gamma_{n}\}_{n=0}^\infty$
is monotonically decreasing, we see that
$\hfi(x_{1,j}) = \frac{\gamma_{j}}{\gamma_{j-1}} \Le
1$ for every $j \in \nbb$. This, together with
\eqref{ogrn}, implies that $C_{\phi} \in
\ogr{L^2(\mu)}$. Note that
   \begin{align} \notag
\hfin{n}(x_0) & = \hfin{0}(x_0) + \sum_{l=0}^{n-1}
(\hfin{l+1}(x_0) - \hfin{l}(x_0))
   \\ \label{Levy}
& \hspace{-1.8ex}\overset{\eqref{dif1}} = 1 +
\frac{\mu(x_{1,1})}{\mu(x_0)} \sum_{l=0}^{n-1}
\gamma_l, \quad n \in \nbb.
   \end{align}
This and \eqref{Levy2} imply that
   \begin{align} \label{limsup}
\lim_{n\to\infty} \hfin{n}(x_0)^{1/n} = 1.
   \end{align}
Now we show that $\{\hfin{n}(x_0)\}_{n=0}^\infty$ is
not a Stieltjes moment sequence. Indeed, otherwise by
\eqref{limsup} and \cite[Exercise 4(e), p.\ 71]{Rud},
we see that $\rho((1,\infty)) = 0$, where $\rho$ is an
S-representing measure of
$\{\hfin{n}(x_0)\}_{n=0}^\infty$. Hence
   \begin{align*}
1 = \rho(\rbb_+) \Ge \hfin{n}(x_0)
\overset{\eqref{Levy}} \Ge 1 + \frac{\mu(x_{1,1})
\nu(\{1\})}{\mu(x_0)} \, n, \quad n \in \nbb,
   \end{align*}
which contradicts $\nu(\{1\})> 0$. By Proposition
\ref{wn0gr}, $C_{\phi}$ is not subnormal.
   \end{exa}
   \subsection{Extending to families satisfying
\eqref{cc}} \label{s4.4}
    We begin this section by providing necessary and
sufficient conditions for the extendibility of a given
family of Borel probability measures on $\rbb_+$
indexed by $\{x_{i,1}\}_{i\in J_{\eta}}$ to a family
of Borel probability measures on $\rbb_+$ satisfying
\eqref{cc}.
   \begin{thm} \label{main}
Suppose \eqref{sa} holds and $C_{\phi}$ is densely
defined. Then the following assertions hold.
   \begin{enumerate}
   \item[(i)] If $\{P(x,\cdot)\}_{x\in X}$ is a
family of Borel probability measures on $\rbb_+$ that
satisfies \eqref{cc}, then
   \begin{enumerate}
   \item[(i-a)]
$\{\hfin{n}(x_{i,1})\}_{n=0}^\infty$ is a Stieltjes
moment sequence for every $i\in J_{\eta}$,
   \item[(i-b)]  for every $i\in J_{\eta}$, the measure $P(x_{i,1}, \cdot)$
is an S-representing measure of
$\{\hfin{n}(x_{i,1})\}_{n=0}^\infty$ vanishing on
$[0,1]$,
   \item[(i-c)] $\sum_{i=1}^{\eta} \frac{\mu(x_{i,1})}
{\mu(x_r)} \int_0^\infty \frac{t^r-1}{t^{\kappa+1}-1}
P(x_{i,1},\D t) + \frac{\mu(x_0)} {\mu(x_r)}= 1$ for
every $r \in J_{\kappa}$,
   \item[(i-d)] $\sum_{i=1}^{\eta} \frac{\mu(x_{i,1})}
{\mu(x_0)} \int_0^\infty \frac{1}{t^{\kappa+1}-1}
P(x_{i,1},\D t) \Le 1$.
   \end{enumerate}
   \item[(ii)] If {\em (i-a)} holds and
$\{P(x_{i,1},\cdot)\}_{i\in J_{\eta}}$ is a family of
Borel probability measures on $\rbb_+$ satisfying
\mbox{{\em (i-b)}}, \mbox{{\em (i-c)}} and \mbox{{\em
(i-d)}}, then there exists a family $\{P(x,
\cdot)\}_{x \in X \setminus \{x_{i,1}\colon i \in
J_{\eta}\}}$ of Borel probability measures on $\rbb_+$
such that $\{P(x,\cdot)\}_{x\in X}$ satisfies
\eqref{cc}.
   \end{enumerate}
   \end{thm}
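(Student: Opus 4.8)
\emph{Part (i)} should follow almost immediately from Theorem \ref{cc1} and Theorem \ref{glowne}. Given a family $\{P(x,\cdot)\}_{x\in X}$ satisfying \eqref{cc}, Theorem \ref{glowne} applies (under \eqref{sa} one has $\phi(X)=X$, hence $\hfi(x)>0$ for all $x\in X$), and \eqref{twomom} identifies each $P(x_{i,1},\cdot)$ as an S-representing measure of $\{\hfin{n}(x_{i,1})\}_{n=0}^\infty$; this yields (i-a), while Theorem \ref{cc1}(iv) shows that this measure vanishes on $[0,1]$, which is (i-b). Condition (i-d) is precisely Theorem \ref{cc1}(vi). For (i-c) I would fix $r\in J_\kappa$, put $\sigma=\rbb_+$ in Theorem \ref{cc1}(i) to get $\mu(x_r)=\mu(x_0)\int_0^\infty t^r\,P(x_0,\D t)$, substitute the explicit description of $P(x_0,\cdot)$ from Theorem \ref{cc1}(viii) together with the value of $\vartheta$ from (vii) and the fact that $P(x_{i,1},\cdot)$ vanishes at $1$, so that $\int_0^\infty t^r\,P(x_0,\D t)=1+\sum_{i=1}^{\eta}\frac{\mu(x_{i,1})}{\mu(x_0)}\int_0^\infty\frac{t^r-1}{t^{\kappa+1}-1}P(x_{i,1},\D t)$, and then divide by $\mu(x_r)/\mu(x_0)$.

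\emph{Part (ii)} amounts to running the construction of Procedure \ref{proceed} in reverse, the new feature being that the masses $\mu$ are now prescribed, so that (i-c) and (i-d) must be used to force the measures built below to have total mass one. First I would note that, since $C_\phi$ is densely defined, \eqref{dzn} and Proposition \ref{hfi2} give $\sum_{i=1}^{\eta}\mu(x_{i,1})<\infty$, and that by (i-b) each $P(x_{i,1},\cdot)$ is supported in $(1,\infty)$, so $\frac{1}{t^{\kappa+1}-1}$ is positive and finite there and, by (i-d), $\varTheta:=\sum_{i=1}^{\eta}\frac{\mu(x_{i,1})}{\mu(x_0)}\int_0^\infty\frac{1}{t^{\kappa+1}-1}P(x_{i,1},\D t)\in[0,1]$. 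Then I would define, for $\sigma\in\borel{\rbb_+}$, the measure $P(x_0,\cdot)$ by the right-hand side of \eqref{p-0} with this $\varTheta$, the measures $P(x_r,\cdot)$ for $r\in J_\kappa$ by $P(x_r,\sigma)=\frac{\mu(x_0)}{\mu(x_r)}\int_\sigma t^r\,P(x_0,\D t)$, and the measures $P(x_{i,j},\cdot)$ for $i\in J_\eta$, $j\in\nbb_2$ by $P(x_{i,j},\sigma)=\frac{\mu(x_{i,1})}{\mu(x_{i,j})}\int_\sigma t^{j-1}P(x_{i,1},\D t)$. These make sense because (i-a)--(i-b) make all moments of $P(x_{i,1},\cdot)$ finite, and combining (i-c) with (i-d) gives $\int_0^\infty t^r\,P(x_0,\D t)<\infty$ for $r\in\{0,\dots,\kappa\}$.

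The remaining step is verification via Theorem \ref{cc1}: it suffices to check that the full family $\{P(x,\cdot)\}_{x\in X}$ consists of probability measures satisfying conditions (i), (ii), (iii) there. That $P(x_0,\cdot)$ is a probability measure (total mass $\varTheta+(1-\varTheta)=1$) vanishing on $[0,1)$ and satisfying Theorem \ref{cc1}(ii) is a direct computation, since $(1-\varTheta)\delta_1$ is annihilated by the factor $t^{\kappa+1}-1$ and $(t^{\kappa+1}-1)\cdot\frac{1}{t^{\kappa+1}-1}=1$ holds $P(x_{i,1},\cdot)$-a.e. That $P(x_r,\cdot)$ is a probability measure is exactly where (i-c) enters, as $P(x_r,\rbb_+)=\frac{\mu(x_0)}{\mu(x_r)}\int_0^\infty t^r\,P(x_0,\D t)=1$ by the identity obtained in Part (i); condition (i) of Theorem \ref{cc1} then holds by construction. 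Finally, since $\phi^{-(j-1)}(\{x_{i,1}\})=\{x_{i,j}\}$, formula \eqref{hfi} gives $\mu(x_{i,j})=\mu(x_{i,1})\hfin{j-1}(x_{i,1})=\mu(x_{i,1})\int_0^\infty t^{j-1}P(x_{i,1},\D t)$ by (i-a)--(i-b), so $P(x_{i,j},\rbb_+)=1$ and condition (iii) of Theorem \ref{cc1} holds; with (i), (ii), (iii) in force that theorem yields \eqref{cc}, finishing the proof.

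I expect the difficulty here to be organizational rather than conceptual: the key is to recognize (i-c) as precisely the normalization of the circuit measures $P(x_r,\cdot)$ and (i-d) as the normalization of $P(x_0,\cdot)$, and then to track all finiteness requirements so that every integral appearing in Theorem \ref{cc1} is legitimate — each of (i-a), (i-b), (i-c), (i-d) and the density of $\dz{C_\phi}$ being used for exactly one of these checks.
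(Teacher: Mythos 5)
Your proposal is correct and follows essentially the same route as the paper: part (i) via Theorems \ref{glowne} and \ref{cc1} (with (i-c) obtained by integrating $t\mapsto t^r$ against the representation of $P(x_0,\cdot)$ from Theorem \ref{cc1}(vii)--(viii)), and part (ii) by constructing $P(x_0,\cdot),\dots,P(x_\kappa,\cdot)$ and $P(x_{i,j},\cdot)$ exactly as in \eqref{xxx} and Theorem \ref{cc1}(iii), with (i-c) and (i-d) supplying the normalizations. The only cosmetic difference is that you define $P(x_r,\cdot)$ through $P(x_0,\cdot)$ rather than by the explicit formula \eqref{xxx}, which is equivalent.
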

   \begin{proof}
(i) The conditions \mbox{(i-a)} and \mbox{(i-b)}
follow from \eqref{hfi}, Theorem \ref{glowne} and
Theorem \ref{cc1}(iv). The condition \mbox{(i-d)} is a
direct consequence of Theorem \ref{cc1}(vi). Using the
conditions (i), (vii) and (viii) of Theorem \ref{cc1}
and integrating the function $t \mapsto t^r$ with
respect to the measure $P(x_0, \cdot)$, we obtain
   \allowdisplaybreaks
   \begin{align*}
\frac{\mu(x_r)}{\mu(x_0)} & = \int_0^\infty t^r P(x_0,
\D t)
   \\
   & = \sum_{i=1}^{\eta} \frac{\mu(x_{i,1})}{\mu(x_0)}
\int_0^\infty \frac{t^r}{t^{\kappa+1}-1} P(x_{i,1}, \D
t)
   \\
& \hspace{6.5ex} + 1 - \sum_{i=1}^{\eta}
\frac{\mu(x_{i,1})}{\mu(x_0)} \int_0^\infty
\frac{1}{t^{\kappa+1}-1} P(x_{i,1}, \D t)
   \\
& = 1 + \sum_{i=1}^{\eta} \frac{\mu(x_{i,1})}
{\mu(x_0)} \int_0^\infty \frac{t^r-1}{t^{\kappa+1}-1}
P(x_{i,1},\D t), \quad r \in J_{\kappa},
   \end{align*}
which implies \mbox{(i-c)}.

(ii) For $i\in J_{\eta}$ and $j\in \nbb_2$, we define
the measure $P(x_{i,j}, \cdot)$ by the condition (iii)
of Theorem \ref{cc1}. Note that $P(x_{i,j}, \rbb_+)=1$
because
   \begin{align*}
P(x_{i,j}, \rbb_+) = \frac{\mu(x_{i,1})}{\mu(x_{i,j})}
\int_0^\infty t^{j-1} P(x_{i,1}, \D t)
\overset{\rm{(i-b)}}=
\frac{\mu(x_{i,1})}{\mu(x_{i,j})} \hfin{j-1}(x_{i,1})
\overset{\eqref{hfi}}=1.
   \end{align*}
Next, we define the measures $P(x_0,\cdot)$, \ldots,
$P(x_{\kappa},\cdot)$ by
   \begin{align} \label{xxx}
P(x_r,\sigma) = \sum_{i=1}^{\eta}
\frac{\mu(x_{i,1})}{\mu(x_r)} \int_{\sigma}
\frac{t^r}{t^{\kappa+1}-1} P(x_{i,1}, \D t) +
\vartheta \frac{\mu(x_0)}{\mu(x_r)} \delta_1(\sigma)
   \end{align}
for $\sigma \in \borel{\rbb_+}$ and $r \in \{0,
\ldots,\kappa\}$ with $\vartheta=1- \sum_{i=1}^{\eta}
\frac{\mu(x_{i,1})} {\mu(x_0)} \int_0^\infty
\frac{1}{t^{\kappa+1}-1} P(x_{i,1},\D t)$. By
\mbox{(i-b)}, \mbox{(i-c)} and \mbox{(i-d)}, the
quantity $\vartheta$ is well-defined, $\vartheta \in
[0,1]$ and $P(x_r,\cdot)$ is a well-defined finite
measure. That it is probabilistic follows from the
equalities
   \begin{align*}
P(x_r,\rbb_+) & \overset{\eqref{xxx}} =
\sum_{i=1}^{\eta} \frac{\mu(x_{i,1})}{\mu(x_r)}
\int_0^\infty \frac{t^r-1}{t^{\kappa+1}-1} P(x_{i,1},
\D t) + \frac{\mu(x_0)}{\mu(x_r)}
   \\
& \hspace{.5ex}\overset{\rm{(i-c)}} = 1, \quad r \in
\{0,\ldots,\kappa\}.
   \end{align*}
Now, integrating the function $t \mapsto t^r$ with
respect to $P(x_0,\cdot)$, we see that the condition
(i) of Theorem \ref{cc1} is satisfied. Finally,
integrating the function $t \mapsto t^{\kappa+1} -1$
(which is positive on $(1,\infty)$) with respect to
$P(x_0,\cdot)$, we deduce that the condition (ii) of
Theorem \ref{cc1} is satisfied. This combined with
Theorem \ref{cc1} completes the proof.
   \end{proof}
Below we introduce the condition
\mbox{(i-d$^{\prime}$)}, a weaker version of the
condition \mbox{(i-d)} of Theorem \ref{main} that will
lead us to constructing exotic examples (cf.\ Section
\ref{sec5}). For more information concerning the
conditions (e1) and (e2) below, the reader is referred
to Lemma \ref{snieg} and Remark \ref{snieg2}.
   \begin{thm} \label{6.2}
Suppose \eqref{sa} holds, the condition {\em (i-a)} of
Theorem {\em \ref{main}} is satisfied and
$\{P(x_{i,1}, \cdot)\}_{i\in J_{\eta}}$ is a family of
Borel probability measures on $\rbb_+$ satisfying the
conditions \mbox{{\em (i-b)}} and \mbox{{\em (i-c)}}
of Theorem {\em \ref{main}} and the condition
below{\em :}
   \begin{enumerate}
   \item[(i-d$^{\prime}$)] $\sum_{i=1}^{\eta}
\mu(x_{i,1}) \int_0^\infty \frac{1}{t^{\kappa+1}-1}
P(x_{i,1},\D t) < \infty$.
   \end{enumerate}
Consider the following three conditions{\em :}
   \begin{enumerate}
   \item[(e1)]
$\{\hfin{n}(x_{\kappa}) + c\}_{n=0}^\infty$ is an
S-determinate Stieltjes moment sequence for every $c
\in (0,\infty)$,
   \item[(e2)]
$\{\hfin{n+1}(x_{\kappa}) + c\}_{n=0}^\infty$ is an
S-determinate Stieltjes moment sequence for every $c
\in (0,\infty)$,
   \item[(e3)] $\{\hfin{n}(x_{\kappa})\}_{n=0}^\infty$
is a Stieltjes moment sequence that satisfies the
Carleman condition.
   \end{enumerate}
Then {\em (e3)}$\Rightarrow${\em
(e2)}$\Rightarrow${\em (e1)} and, if {\em (e1)} holds,
then the composition operator $C_{\phi}$ is densely
defined and there exists a family $\{P(x, \cdot)\}_{x
\in X \setminus \{x_{i,1}\colon i \in J_{\eta}\}}$ of
Borel probability measures on $\rbb_+$ such that
$\{P(x,\cdot)\}_{x\in X}$ satisfies \eqref{cc}.
Moreover, if {\em (e3)} holds, then
$\{\hfin{n}(x_{r})\}_{n=0}^\infty$ satisfies the
Carleman condition for every $r \in \{0,\ldots,
\kappa\}$.
   \end{thm}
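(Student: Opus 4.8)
The plan is to establish the implications (e3)$\Rightarrow$(e2)$\Rightarrow$(e1) first, then the main conclusion, and finally the ``moreover'' part. For (e3)$\Rightarrow$(e2): by Proposition \ref{wouk}(ii) the Carleman condition for $\{\hfin{n}(x_\kappa)\}_{n=0}^\infty$ passes to the shifted sequence $\{\hfin{n+1}(x_\kappa)\}_{n=0}^\infty$, and then by Proposition \ref{wouk}(iii) it passes to $\{\hfin{n+1}(x_\kappa)+c\}_{n=0}^\infty$ for every $c\in(0,\infty)$; finally Proposition \ref{wouk}(i) gives H-determinacy, hence S-determinacy, of each such sequence. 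For (e2)$\Rightarrow$(e1): fix $c>0$. The sequence $\{\hfin{n}(x_\kappa)+c\}_{n=0}^\infty$ is a Stieltjes moment sequence (its representing measure is any S-representing measure of $\{\hfin{n}(x_\kappa)\}_{n=0}^\infty$ plus $c\delta_0$), and its shifted sequence is $\{\hfin{n+1}(x_\kappa)+c\}_{n=0}^\infty$, which is S-determinate by (e2); by \cite[Proposition 5.12]{sim} (quoted in Section \ref{Sec2.5}) S-determinacy of the shifted sequence forces S-determinacy of the original, which is exactly (e1).

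Now assume (e1). First I would verify that $C_\phi$ is densely defined. By \eqref{hfir-2} with $r=\kappa$, $\hfin{n}(x_\kappa)<\infty$ for all $n$ is equivalent to $\hfin{n+\kappa}(x_0)<\infty$ for all $n$, hence (with \eqref{hfi} for the finitely many indices below $\kappa$) to $\hfin{n}(x_0)<\infty$ for all $n\in\zbb_+$; and (e1) presupposes that $\{\hfin{n}(x_\kappa)+c\}_{n=0}^\infty$ is a (finite-valued) Stieltjes moment sequence, so $\hfin{n}(x_\kappa)<\infty$ for every $n$. Thus Proposition \ref{hfi4} gives $\overline{\dz{C_\phi^j}}=L^2(\mu)$ for all $j$. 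The core of the argument is to upgrade (i-d$^{\prime}$) to (i-d) of Theorem \ref{main}, so that Theorem \ref{main}(ii) produces the desired extension. Put
\[
c = \sum_{i=1}^{\eta}\frac{\mu(x_{i,1})}{\mu(x_0)}\int_0^\infty\frac{1}{t^{\kappa+1}-1}\,P(x_{i,1},\D t),
\]
which is finite by (i-d$^{\prime}$) (note $P(x_{i,1},\cdot)$ vanishes on $[0,1]$ by (i-b), so the integrand is positive and bounded near $1$); if $c=0$ there is nothing to prove, so assume $c>0$. The plan is to write down the Borel measure $P(x_0,\cdot)$ as in \eqref{p-0} (allowing the atom weight $1-\varTheta$ with $\varTheta = c$ to be negative for now), compute $\int_0^\infty t^n\,P(x_0,\D t)$ using (i-b), and identify it with $\hfin{n}(x_0) + (c-1)$ for all $n\in\zbb_+$: indeed, by (i-c) the formula already yields $\mu(x_r)/\mu(x_0) = 1 + \sum_i \frac{\mu(x_{i,1})}{\mu(x_0)}\int_0^\infty\frac{t^r-1}{t^{\kappa+1}-1}P(x_{i,1},\D t)$ for $r\in J_\kappa$, and Lemma \ref{fund} together with (i-b) gives the recursion $\hfin{n+\kappa+1}(x_0) = \hfin{n}(x_0) + \sum_i\frac{\mu(x_{i,1})}{\mu(x_0)}\hfin{n}(x_{i,1})$ which matches the $n$-fold iterate of the putative moment identity. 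Hence $\{\hfin{n}(x_0)+(c-1)\}_{n=0}^\infty$ is a signed-measure moment sequence; but by \eqref{hfir-2} and \eqref{hfi} it agrees, up to the shift and rescaling encoded in $\mu(x_r)/\mu(x_0)$, with $\{\hfin{n}(x_\kappa)+c'\}_{n=0}^\infty$ for a suitable $c'>0$ after translating the index — more directly, $\hfin{n}(x_0)+(c-1)$ is a genuine Stieltjes moment sequence when $c\ge 1$ and the point is to rule out $c>1$. Suppose $c>1$. Then $\{\hfin{n}(x_0) + (c-1)\}_{n=0}^\infty$ is a Stieltjes moment sequence whose representing measure is $\mu(x_0)^{-1}$ times the positive measure $\sigma\mapsto\sum_i\mu(x_{i,1})\int_\sigma\frac{1}{t^{\kappa+1}-1}P(x_{i,1},\D t)$ plus a \emph{negative} atom $-(c-1)\delta_1$; on the other hand $\{\hfin{n}(x_0)+(c-1)\}_{n=0}^\infty = \{\hfin{n}(x_0)\} + (c-1)\delta_{0,n}$-type perturbation is also represented by [S-representing measure of $\{\hfin{n}(x_0)\}$] $+\,(c-1)\delta_0$, and $\{\hfin{n}(x_0)\}_{n=0}^\infty$ is S-indeterminate or S-determinate but in either case, by the passage through \eqref{hfir-2} to $x_\kappa$ and (e1), the perturbed sequence $\{\hfin{n}(x_\kappa)+c''\}$ is \emph{S-determinate}; two distinct S-representing measures (one with an atom at $0$, one vanishing at $0$ by (i-b)) then contradict S-determinacy. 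This forces $c\le 1$, i.e. (i-d) holds, and Theorem \ref{main}(ii) supplies $\{P(x,\cdot)\}_{x\in X\setminus\{x_{i,1}\}}$ with $\{P(x,\cdot)\}_{x\in X}$ satisfying \eqref{cc}.

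For the ``moreover'' part, assume (e3). Since $\hfin{n+1}(x_\kappa) \Le \hfin{n+1}(x_\kappa) + $ whatever, the Carleman condition for $\{\hfin{n}(x_\kappa)\}_{n=0}^\infty$ gives it for $\{\hfin{n+1}(x_\kappa)\}_{n=0}^\infty$ by Proposition \ref{wouk}(ii), and then \eqref{hfir-4} (with $r=\kappa$) shows $\hfin{n+1}(x_\kappa) = \frac{\mu(x_\kappa)}{\mu(x_{\kappa-1})}\hfin{n}(x_\kappa)$ is a constant multiple, so $\{\hfin{n}(x_{\kappa-1})\}_{n=0}^\infty$ satisfies the Carleman condition; iterating \eqref{hfir-4} downward from $r=\kappa$ to $r=1$, and using \eqref{hfir-2} for the relation between $x_0$ and $x_r$, one gets the Carleman condition at each $x_r$, $r\in\{0,\ldots,\kappa\}$. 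The main obstacle I anticipate is the (i-d$^{\prime}$)$\Rightarrow$(i-d) upgrade: one must carefully track how the S-determinacy hypothesis (e1) — stated for $\{\hfin{n}(x_\kappa)+c\}$ — transfers, via \eqref{hfir-2} and the index-shift, to a statement about $\{\hfin{n}(x_0)+c'\}$ sharp enough to exclude the ``overshoot'' $c>1$; this is precisely where an S-determinacy assumption (rather than just a moment-sequence assumption) is indispensable, and where dropping it opens the gap exploited later in Theorems \ref{drugie} and \ref{drugie1}.
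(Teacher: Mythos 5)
Your overall strategy coincides with the paper's: derive (e3)$\Rightarrow$(e2) from Proposition \ref{wouk}, upgrade \mbox{(i-d$^{\prime}$)} to \mbox{(i-d)} by exhibiting two S-representing measures of an S-determinate sequence, and then invoke Theorem \ref{main}(ii). However, there are two concrete problems. First, your proof of (e2)$\Rightarrow$(e1) is circular: you represent $\{\hfin{n}(x_{\kappa})+c\}_{n=0}^\infty$ by ``an S-representing measure of $\{\hfin{n}(x_{\kappa})\}_{n=0}^\infty$ plus $c\delta_0$'', but nothing in the hypotheses \mbox{(i-a)}--\mbox{(i-c)}, \mbox{(i-d$^{\prime}$)}, (e2) tells you that $\{\hfin{n}(x_{\kappa})\}_{n=0}^\infty$ is a Stieltjes moment sequence; (e2) only controls the shifted sequence, and positive semidefiniteness of $[\hfin{i+j}(x_{\kappa})]$ does not follow from that of $[\hfin{i+j+1}(x_{\kappa})]$. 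The paper resolves this by proving the main conclusion under (e2) \emph{first} --- using the closed formula $\hfin{n+1}(x_{\kappa})=\xi+\sum_{i=1}^{\eta}\frac{\mu(x_{i,1})}{\mu(x_{\kappa})}\int_0^\infty t^n\frac{t^{\kappa+1}}{t^{\kappa+1}-1}P(x_{i,1},\D t)$ to obtain \mbox{(i-d)}, then Theorem \ref{main}(ii) and \eqref{twomom} to conclude that $\{\hfin{n}(x_{\kappa})\}_{n=0}^\infty$ is a Stieltjes moment sequence --- and only then deduces (e1) via \cite[Proposition 5.12]{sim}. Second, your constant perturbations are attached to the wrong point: $\int_0^\infty t^n c\,\D\delta_0(t)=c\cdot 0^n$, which equals $c$ only for $n=0$, so $\rho+c\delta_0$ does not represent $\{\gamma_n+c\}_{n=0}^\infty$; you must use $c\delta_1$. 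This is not cosmetic, because your contradiction in the case ``$c>1$'' rests on comparing two representing measures at a single point, and the correct point is $1$: one measure carries the atom $|\xi|\delta_1$ with $|\xi|>0$, the other vanishes on $[0,1]$ by \mbox{(i-b)}, and evaluating both at $\{1\}$ forces $\xi=0$.

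A further unresolved step comes from your choice to run the argument at $x_0$ via Lemma \ref{fund}: the hypothesis (e1) is a determinacy statement about $\{\hfin{n}(x_{\kappa})+c\}_{n=0}^\infty$, whereas your contradiction needs determinacy of $\{\hfin{n}(x_0)+c^{\prime}\}_{n=0}^\infty$; by \eqref{hfir-2} the latter agrees with a rescaling of the former only after shifting the index by $\kappa$, so you would still have to descend through $\kappa$ applications of \cite[Proposition 5.12]{sim}, which you do not carry out. The paper sidesteps this by computing $\hfin{n}(x_{\kappa})$ and $\hfin{n+1}(x_{\kappa})$ directly from \eqref{hfi-c} (summing the geometric series $\sum_l t^{l(\kappa+1)}$), which yields $\hfin{n}(x_{\kappa})=\xi+\int_0^\infty t^n\D\nu^{\prime}(t)$ with $\nu^{\prime}$ positive and vanishing on $[0,1]$ --- the form in which the contradiction with (e1) (or (e2)) is immediate. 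Your treatment of (e3)$\Rightarrow$(e2) and of the ``moreover'' part matches the paper and is correct.
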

   \begin{proof}
It follows from \mbox{(i-b)} and
\mbox{(i-d$^{\prime}$)} that the quantity $\xi$
defined below
   \begin{align}  \label{claim}
\xi := \frac{\mu(x_0)} {\mu(x_{\kappa})} -
\sum_{i=1}^{\eta} \frac{\mu(x_{i,1})}
{\mu(x_{\kappa})} \int_0^\infty
\frac{1}{t^{\kappa+1}-1} P(x_{i,1},\D t)
   \end{align}
is a real number. By \mbox{(i-b)}, \mbox{(i-c)} and
\mbox{(i-d$^{\prime}$)}, we have
   \begin{align} \label{zlam}
\xi = \frac{\mu(x_r)} {\mu(x_{\kappa})} -
\sum_{i=1}^{\eta} \frac{\mu(x_{i,1})}
{\mu(x_{\kappa})} \int_0^\infty
\frac{t^r}{t^{\kappa+1}-1} P(x_{i,1},\D t), \quad r
\in \{0, \ldots, \kappa\}.
   \end{align}
In particular, the above series are convergent in
$\rbb_+$.

First, we show that\footnote{\;It follows from the
proof of \eqref{xi} that $\int_0^\infty
\frac{t^n}{t^{\kappa+1}-1} P(x_{i,1},\D t) < \infty$
for all $n\in \zbb_+$ and $i\in J_{\eta}$, and in the
case of $\eta=\infty$, $\sum_{i=1}^{\eta}
\frac{\mu(x_{i,1})} {\mu(x_{\kappa})} \int_0^\infty
\frac{t^n}{t^{\kappa+1}-1} P(x_{i,1},\D t)< \infty$
for any integer $n$ such that $0 \Le n \Le \kappa$.
However, this series may be divergent to infinity for
some integers $n\Ge \kappa+1$.}
   \begin{align} \label{xi}
\hfin{n+1}(x_{\kappa}) = \xi + \sum_{i=1}^{\eta}
\frac{\mu(x_{i,1})} {\mu(x_{\kappa})} \int_0^\infty
t^n \frac{t^{\kappa+1}}{t^{\kappa+1}-1} P(x_{i,1},\D
t), \quad n \in \zbb_+.
   \end{align}
To this end, we fix $n\in \zbb_+$. Then $n=j(\kappa +
1) + r$ for some $j\in \zbb_+$ and $r \in
\{0,\ldots,\kappa\}$. We begin by considering the case
of $r < \kappa$. Since then $r+1$ is the remainder of
the division of $n+1$ by $\kappa + 1$, we infer from
\mbox{(i-b)} that \allowdisplaybreaks
   \begin{align*}
\hfin{n+1}(x_\kappa) & \overset{\eqref{hfi-c}}=
\frac{\mu(x_{r})}{\mu(x_{\kappa})} + \sum_{i=1}^{\eta}
\sum_{l=0}^j
\frac{\mu(x_{i,l(\kappa+1)+r+1})}{\mu(x_{\kappa})}
   \\
& \overset{\eqref{hfi}}=
\frac{\mu(x_{r})}{\mu(x_{\kappa})} + \sum_{i=1}^{\eta}
 \sum_{l=0}^j \frac{\mu(x_{i,1})}{\mu(x_{\kappa})}
\hfin{l(\kappa+1)+r}(x_{i,1})
   \\
& \hspace{1.8ex}= \frac{\mu(x_{r})}{\mu(x_{\kappa})} +
\sum_{i=1}^{\eta} \frac{\mu(x_{i,1})}{\mu(x_{\kappa})}
\int\limits_{(1,\infty)} \sum_{l=0}^j (t^{\kappa+1})^l
t^r P(x_{i,1}, \D t)
   \\
& \hspace{1.8ex}= \frac{\mu(x_{r})}{\mu(x_{\kappa})} +
\sum_{i=1}^{\eta} \frac{\mu(x_{i,1})}{\mu(x_{\kappa})}
\int\limits_{(1,\infty)}
\frac{t^{(j+1)(\kappa+1)}-1}{t^{\kappa+1}-1} \, t^r
P(x_{i,1}, \D t)
   \\
& \overset{\eqref{zlam}}= \xi + \sum_{i=1}^{\eta}
\frac{\mu(x_{i,1})} {\mu(x_{\kappa})} \int_0^\infty
t^n \frac{t^{\kappa+1}}{t^{\kappa+1}-1} P(x_{i,1},\D
t).
   \end{align*}
If $r=\kappa$, then mimicking the above proof we get
\allowdisplaybreaks
   \begin{align*}
\hfin{n+1}(x_\kappa) & \overset{\eqref{hfi-c}}= 1 +
\sum_{i=1}^{\eta} \sum_{l=1}^{j+1}
\frac{\mu(x_{i,l(\kappa+1)})}{\mu(x_{\kappa})}
   \\
& \hspace{1.7ex}= 1 + \sum_{i=1}^{\eta}
 \sum_{l=1}^{j+1} \frac{\mu(x_{i,1})}{\mu(x_{\kappa})}
\hfin{l(\kappa+1)-1}(x_{i,1})
   \\
& \overset{\eqref{zlam}}= \xi + \sum_{i=1}^{\eta}
\frac{\mu(x_{i,1})} {\mu(x_{\kappa})} \int_0^\infty
t^n \frac{t^{\kappa+1}}{t^{\kappa+1}-1} P(x_{i,1},\D
t),
   \end{align*}
which proves \eqref{xi}.

It follows from \eqref{xi} that
   \begin{align} \label{xi-1}
\hfin{n}(x_{\kappa}) = \xi + \sum_{i=1}^{\eta}
\frac{\mu(x_{i,1})} {\mu(x_{\kappa})} \int_0^\infty
t^n \frac{t^{\kappa}}{t^{\kappa+1}-1} P(x_{i,1},\D t),
\quad n \in \zbb_+.
   \end{align}
(The case of $n=0$ can be deduced from \eqref{zlam}
with $r=\kappa$.)

(e3)$\Rightarrow$(e2) Apply the assertions (i)-(iii)
of Proposition \ref{wouk}.

(e2)$\Rightarrow$(e1) Since the class of Stieltjes
moment sequences is closed under the operation of
taking pointwise limits (which follows from the
Stieltjes theorem, cf.\ \cite[Theorem 6.2.5]{b-c-r})
and $\hfin{n+1}(x_{\kappa}) = \lim_{c \to 0+}
(\hfin{n+1}(x_{\kappa})+c)$ for all $n\in \zbb_+$, we
see that $\{\hfin{n+1}(x_{\kappa})\}_{n=0}^\infty$ is
a Stieltjes moment sequence. Let $\rho$ be an
S-representing measure of
$\{\hfin{n+1}(x_{\kappa})\}_{n=0}^\infty$. Then, by
\eqref{xi}, we have
   \begin{align} \label{xi+}
\int_0^\infty t^n \D\rho(t) = \hfin{n+1}(x_{\kappa}) =
\xi + \int_0^\infty t^n \D \nu(t), \quad n \in \zbb_+,
   \end{align}
where $\nu$ is the Borel measure on $\rbb_+$ given by
   \begin{align} \label{xi++}
\nu(\sigma) = \sum_{i=1}^{\eta} \frac{\mu(x_{i,1})}
{\mu(x_{\kappa})} \int_{\sigma}
\frac{t^{\kappa+1}}{t^{\kappa+1}-1} P(x_{i,1},\D t),
\quad \sigma \in \borel{\rbb_+}.
   \end{align}
Now we prove that the condition \mbox{(i-d)} of
Theorem \ref{main} is satisfied. Indeed, otherwise
$\xi \in (-\infty,0)$. By (e2), the Stieltjes moment
sequence $\{\hfin{n+1}(x_{\kappa}) +
|\xi|\}_{n=0}^\infty$ is S-determinate. This together
with \eqref{xi+} implies that
   \begin{align} \label{leq}
\rho(\sigma) + |\xi| \delta_1 (\sigma) = \nu(\sigma),
\quad \sigma \in \borel{\rbb_+}.
   \end{align}
Using \eqref{xi++} and (i-b), and substituting $\sigma
= \{1\}$ into \eqref{leq}, we deduce that $\xi=0$, a
contradiction. This shows that the condition
\mbox{(i-d)} of Theorem \ref{main} is satisfied.

Since $\hfin{n}(x_{\kappa}) < \infty$ for all $n \in
\zbb_+$, we infer from Proposition \ref{hfi4} that
$\dzn{C_{\phi}}$ is dense in $L^2(\mu)$. Hence, by
Theorem \ref{main}(ii), there exists $\{P(x,
\cdot)\}_{x \in X \setminus \{x_{i,1}\colon i \in
J_{\eta}\}}$, a family of Borel probability measures
on $\rbb_+$, such that $\{P(x,\cdot)\}_{x\in X}$
satisfies \eqref{cc}. By Theorem \ref{glowne},
$\hfin{n}(x_{\kappa}) = \int_0^\infty t^n
P(x_{\kappa}, \D t)$ for all $n \in \zbb_+$. This
means that for every $c \in (0, \infty)$,
$\{\hfin{n}(x_{\kappa}) + c\}_{n=0}^\infty$ is a
Stieltjes moment sequence and, by (e2),
$\{\hfin{n+1}(x_{\kappa}) + c\}_{n=0}^\infty$ is an
S-determinate Stieltjes moment sequence, which implies
that $\{\hfin{n}(x_{\kappa}) + c\}_{n=0}^\infty$ is
S-determinate for every $c \in (0, \infty)$ (see
\cite[Proposition 5.12]{sim}; see also \cite[Lemma
2.4.1]{b-j-j-sA}).

Assume now that (e1) holds. Passing to the limit, as
in the proof of (e2)$\Rightarrow$(e1), we see that
$\{\hfin{n}(x_{\kappa})\}_{n=0}^\infty$ is a Stieltjes
moment sequence. Let $\rho^{\prime}$ be an
S-represent\-ing measure of
$\{\hfin{n}(x_{\kappa})\}_{n=0}^\infty$. Then, by
\eqref{xi-1}, we have
   \begin{align*}
\int_0^\infty t^n \D\rho^{\prime}(t) =
\hfin{n}(x_{\kappa}) = \xi + \int_0^\infty t^n \D
\nu^{\prime}(t), \quad n \in \zbb_+,
   \end{align*}
where $\nu^{\prime}$ is the Borel measure on $\rbb_+$
given by
   \begin{align*}
\nu^{\prime}(\sigma) = \sum_{i=1}^{\eta}
\frac{\mu(x_{i,1})} {\mu(x_{\kappa})} \int_{\sigma}
\frac{t^{\kappa}}{t^{\kappa+1}-1} P(x_{i,1},\D t),
\quad \sigma \in \borel{\rbb_+}.
   \end{align*}
Arguing as in the paragraph containing \eqref{xi++}
(using (e1) in place of (e2)), we deduce that the
condition \mbox{(i-d)} of Theorem \ref{main} is
satisfied. According to Proposition \ref{hfi4},
$C_{\phi}$ is densely defined. Applying Theorem
\ref{main}(ii), we get the required family of Borel
probability measures on $\rbb_+$ that satisfies
\eqref{cc}.

Now we prove the ``moreover'' part. Assume (e3) holds.
First note that, by what has been proved above, the
assumptions of Theorem \ref{glowne} are satisfied.
Hence, by Proposition \ref{hfi4},
$\{\hfin{n}(x)\}_{n=0}^\infty$ is a Stieltjes moment
sequence for every $x\in X$. Since
$\{\hfin{n}(x_{\kappa})\}_{n=0}^\infty$ satisfies the
Carleman condition, we deduce from \eqref{hfir-4} and
Proposition \ref{wouk}(ii) that
$\{\hfin{n}(x_{\kappa-1})\}_{n=0}^\infty$ satisfies
the Carleman condition as well. An induction argument
completes the proof.
   \end{proof}
The following is related to the ``moreover'' part of
the conclusion of Theorem \ref{6.2}.
   \begin{pro}
Suppose \eqref{sa} holds,
$\{\hfin{n}(x_{r})\}_{n=0}^\infty$ is a Stieltjes
moment sequence for every $r \in \{0, \ldots,
\kappa\}$, and $\{\hfin{n}(x_{\kappa})\}_{n=0}^\infty$
is S-determinate. Then for every $r \in \{0, \ldots,
\kappa\}$, $\{\hfin{n}(x_r)\}_{n=0}^\infty$ is
S-determinate.
   \end{pro}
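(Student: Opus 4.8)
The plan is to argue by downward induction on $r$, running from $r=\kappa$ (where S-determinacy is assumed) down to $r=0$, using the recursion \eqref{hfir-4} together with the standard fact recalled in Section \ref{Sec2.5}, namely that if the once-shifted Stieltjes moment sequence $\{\gamma_{n+1}\}_{n=0}^\infty$ is S-determinate, then so is $\{\gamma_n\}_{n=0}^\infty$ (cf.\ \cite[Proposition 5.12]{sim}; see also \cite[Lemma 2.4.1]{b-j-j-sA}).

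First I would take the base case $r=\kappa$, which is precisely the hypothesis of the proposition. For the inductive step, fix $r\in J_{\kappa}$ and assume that $\{\hfin{n}(x_r)\}_{n=0}^\infty$ is S-determinate. By \eqref{hfir-4},
\begin{align*}
\hfin{n+1}(x_{r-1}) = \frac{\mu(x_r)}{\mu(x_{r-1})}\,\hfin{n}(x_r),\qquad n\in\zbb_+,
\end{align*}
so $\{\hfin{n+1}(x_{r-1})\}_{n=0}^\infty$ arises from $\{\hfin{n}(x_r)\}_{n=0}^\infty$ by multiplication by the positive constant $c:=\mu(x_r)/\mu(x_{r-1})$ (positive because $\mu$ is a discrete measure). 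Since $\nu\mapsto c\nu$ is a bijection of $\msc^+(\{\hfin{n}(x_r)\}_{n=0}^\infty)$ onto $\msc^+(\{\hfin{n+1}(x_{r-1})\}_{n=0}^\infty)$, the latter is again an S-determinate Stieltjes moment sequence.

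Now I would invoke the implication just recalled, applied to $\gamma_n:=\hfin{n}(x_{r-1})$; this is a Stieltjes moment sequence by assumption, and its once-shifted sequence $\{\gamma_{n+1}\}_{n=0}^\infty=\{\hfin{n+1}(x_{r-1})\}_{n=0}^\infty$ has just been shown to be S-determinate. Hence $\{\hfin{n}(x_{r-1})\}_{n=0}^\infty$ is S-determinate, which closes the induction and yields the assertion for all $r\in\{0,\ldots,\kappa\}$.

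There is no real obstacle here: the only things to verify are that scaling by a positive constant preserves S-determinacy (immediate, as above) and that the shift relation \eqref{hfir-4} holds for all $n\in\zbb_+$, which is already established. I would simply remark that the argument is vacuous when $\kappa=0$.
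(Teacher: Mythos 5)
Your proof is correct and is essentially identical to the paper's: the paper likewise applies \eqref{hfir-4} at $r=\kappa$ to see that $\{\hfin{n+1}(x_{\kappa-1})\}_{n=0}^\infty$ is S-determinate (a positive scalar multiple of an S-determinate sequence), invokes \cite[Proposition 5.12]{sim} to descend to $\{\hfin{n}(x_{\kappa-1})\}_{n=0}^\infty$, and finishes by the same downward induction. No gaps.
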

   \begin{proof}
It follows from \eqref{hfir-4} applied to $r=\kappa$
that the Stieltjes moment sequence
$\{\hfin{n+1}(x_{\kappa - 1})\}_{n=0}^\infty$ is
S-determinate. This combined with \cite[Proposition
5.12]{sim} (see also \cite[Lemma 2.1.1]{j-j-s0})
implies that $\{\hfin{n}(x_{\kappa -
1})\}_{n=0}^\infty$ is S-determinate as well. Thus, an
induction argument completes the proof.
   \end{proof}
   \section{{\bf Examples of exotic non-hyponormal operators}}
   \label{sec5}
   \subsection{Outline}
   In the last part of the paper we construct
non-hyponormal injective composition operators
generating Stieltjes moment sequences. The
construction relies on the key observation that there
is a gap between the conditions \mbox{(i-d)} and
\mbox{(i-d$^\prime$)} of Theorems \ref{main} and
\ref{6.2}. The indices of H-determinacy of the
sequences $\{\hfin{n}(x)\}_{n=0}^\infty$, $x \in X$,
are discussed as well.

We begin by introducing for any $\eta \in \nbb \cup
\{\infty\}$ and $\kappa \in \zbb_+$ a class of
composition operators over the directed graph
$\gcal_{\eta,\kappa} := (X_{\eta,\kappa},
E^{\phi_{\eta,\kappa}})$ which admit families
$\{P(x_{i,1},\cdot)\}_{i\in J_{\eta}}$ of Borel
probability measures on $\rbb_+$ satisfying the
conditions \mbox{(i-a)}, \mbox{(i-b)}, \mbox{(i-c)}
and \mbox{(i-d$^\prime$)} of Theorems \ref{main} and
\ref{6.2}, but not \mbox{(i-d)} of Theorem \ref{main}
(see Procedure \ref{proceed2} and Lemma
\ref{pierwsze}). The construction of these operators
essentially depends on a choice of specific N-extremal
measures $\nu$ and $\tau$, and a partition
$\{\varDelta_i\}_{i=1}^\eta$ of $\supp{\tau}$. The
fundamental properties of the so-constructed
operators, including the characterization of their
hyponormality for $\kappa=0$, are proven in Lemma
\ref{pierwsze}. Section \ref{s5.3} shows that the gap
between the conditions \mbox{(i-d)} and
\mbox{(i-d$^\prime$)} does exist. Theorem
\ref{czwarte} is the culminating result of the present
paper. It shows that there exists a non-hyponormal
composition operator generating Stieltjes moment
sequences over the locally finite directed graph
$\gcal_{2,0}$. Its proof heavily depends on the
existence of N-extremal probability measures $\zeta$
and $\rho$ satisfying a restrictive condition which is
not easy to deal with. Fortunately, there are
N-extremal probability measures coming from shifted
Al-Salam-Carlitz \mbox{$q$-polynomials} or from a
quartic birth and death process that satisfy this
condition. It is worth pointing out that, without the
use of these special N-extremal measures, Step 1 of
the proof of Theorem \ref{czwarte} combined with
\eqref{gs1+} implies that for every sufficiently large
integer $\eta$, there exists a non-hyponormal
composition operator generating Stieltjes moment
sequences over the locally finite directed graph
$\gcal_{\eta,0}$.
   \subsection{General scheme}
In this section we introduce the aforementioned class
of composition operators. We do it according to the
following procedure.
   \begin{proc} \label{proceed2}
Fix $\eta \in \nbb \cup \{\infty\}$ and $\kappa \in
\zbb_+$. Suppose that \allowdisplaybreaks
   \begin{gather} \label{gs1}
\text{$\nu$ and $\tau$ are N-extremal measures of the
same Stieltjes moment sequence,}
   \\[.5ex] \label{gs2}
1=\inf\supp{\nu} < \inf\supp{\tau},
   \\ \label{gs3}
\nu(\rbb_+) = 1+ \nu(\{1\}),
   \\ \label{gs4}
1 + \int_0^\infty \frac{1}{t^{\kappa}} \D \tau(t) >
\tau(\rbb_+),
   \end{gather}
and
   \begin{align} \label{gs6}
\text{$\{\varDelta_i\}_{i=1}^\eta$ is a partition of
$\supp{\tau}$.}
   \end{align}
(A partition of a nonempty set is always assumed to
consist of nonempty sets.) Since, by Lemma \ref{proN},
$\card{\supp{\tau}} = \aleph_0$, such partition always
exists.

It follows from Lemma \ref{proN}, \eqref{gs1} and
\eqref{gs2} that $\int_{\varDelta_i}
\frac{t^{\kappa+1}-1}{t^{\kappa}} \D \tau(t) \in
(0,\infty)$ for every $i\in J_{\eta}$ (see Section
\ref{n&t} for the definition of $J_{\eta} $). Hence,
   \begin{align} \label{gs10}
c_i:= \Big(\int_{\varDelta_i}
\frac{t^{\kappa+1}-1}{t^{\kappa}} \D \tau(t)\Big)^{-1}
\in (0,\infty), \quad i\in J_{\eta}.
   \end{align}
Set
   \begin{align} \label{gs7}
P(x_{i,1}, \sigma) = c_i \int_{\varDelta_i \cap
\sigma} \frac{t^{\kappa+1}-1}{t^{\kappa}} \D \tau(t),
\quad \sigma \in \borel{\rbb_+}, \, i \in J_{\eta}.
   \end{align}
Clearly, $P(x_{i,1}, \cdot)$ is a Borel probability
measure on $\rbb_+$ such that $\int_0^\infty t^n
P(x_{i,1}, \D t) \in (0,\infty)$ for all $n \in
\zbb_+$ and $i\in J_{\eta}$ (use \eqref{gs1} and
\eqref{gs2}). Take any $\mu(x_{\kappa}) \in
(0,\infty)$ and define the family
$\{\mu(x_{i,1})\}_{i\in J_{\eta}}$ of positive real
numbers by
   \begin{align} \label{gs8}
\mu(x_{i,1}) = \frac{1}{c_i} \, \mu(x_{\kappa}), \quad
i \in J_{\eta}.
   \end{align}
Next, we define the family $\{\mu(x_{i,j})\}_{(i,j)\in
J_\eta \times \nbb_2}$ of positive real numbers and
the family $\{P(x_{i,j}, \cdot)\}_{(i,j)\in J_\eta
\times \nbb_2}$ of Borel probability measures on
$\rbb_+$ by
   \allowdisplaybreaks
   \begin{align} \label{muxij}
\mu(x_{i,j}) & = \mu(x_{i,1}) \int_0^\infty t^{j-1}
P(x_{i,1}, \D t), \quad i \in J_{\eta}, \, j\in
\nbb_2,
   \\ \label{muxij1}
P(x_{i,j},\sigma) & =
\frac{\mu(x_{i,1})}{\mu(x_{i,j})} \int_{\sigma}
t^{j-1} P(x_{i,1}, \D t), \quad \sigma \in
\borel{\rbb_+}, \, i \in J_{\eta}, \, j\in \nbb_2.
   \end{align}
Let $P(x_{\kappa}, \cdot)$ be the Borel measure on
$\rbb_+$ given by
   \begin{align}  \label{gs11}
P(x_{\kappa}, \sigma)=\nu(\sigma) - \nu(\{1\})
\delta_1(\sigma), \quad \sigma \in \borel{\rbb_+}.
   \end{align}
By \eqref{gs1} and \eqref{gs3}, $P(x_{\kappa}, \cdot)$
is a probability measure. It is also clear that $0 <
\int_0^\infty t^n \D P(x_{\kappa}, \D t) < \infty$ for
every $n\in \zbb_+$. In view of \eqref{gs1} and
\eqref{gs2}, we have
   \begin{align} \label{xxx1}
0 < \int_0^\infty \frac{1}{t^n} \D \tau(t)<\infty,
\quad n \in \zbb_+,
   \end{align}
and
   \begin{align}  \notag
\int_0^\infty \frac{1}{t^{\kappa-r}} \D \tau(t) -
\nu(\{1\}) & \Ge \int_0^\infty \frac{1}{t^{\kappa}} \D
\tau(t) - \nu(\{1\})
   \\  \notag
& \hspace{-1.7ex}\overset{\eqref{gs4}} > \tau(\rbb_+)
-1 - \nu(\{1\})
   \\  \notag
& \hspace{-1.7ex} \overset{\eqref{gs1}}= \nu(\rbb_+)
-1 - \nu(\{1\})
   \\ \label{xxx2}
& \hspace{-1.7ex} \overset{\eqref{gs3}} = 0, \quad r
\in \{0,\ldots, \kappa\}.
   \end{align}
If $\kappa \Ge 1$, then we set
   \begin{align} \label{dd1}
\mu(x_r) = \mu(x_{\kappa}) \bigg(\int_0^\infty
\frac{1}{t^{\kappa-r}} \D \tau(t) - \nu(\{1\})\bigg),
\quad r \in \{0, \ldots, \kappa-1\}.
   \end{align}
It follows from \eqref{xxx1} and \eqref{xxx2} that
$\mu(x_r) \in (0,\infty)$ for every $r\in \{0, \ldots,
\kappa-1\}$.

Finally, let $\mu$ be the (unique) discrete measure on
$X=X_{\eta,\kappa}$ such that $\mu(\{x\})=\mu(x)$ for
every $x \in X$ (we follow the convention
\eqref{dudu}), and let $C_{\phi}$ be the corresponding
composition operator in $L^2(\mu)$ with the symbol
$\phi=\phi_{\eta,\kappa}$. Since $\phi(X)=X$, we infer
from \eqref{hfi} that $\hfi(x) > 0$ for every $x\in
X$.
   \end{proc}
   \subsection{Three key lemmata}
We begin by listing the most fundamental properties of
the composition operator $C_\phi$ constructed in
Procedure \ref{proceed2}.
   \begin{lem} \label{pierwsze}
Let $\kappa$, $\eta$, $\nu$, $\tau$,
$\{\varDelta_i\}_{i=1}^\eta$, $X$, $\mu$,
$P(x_{\kappa}, \cdot)$, $\{P(x_{i,j},\cdot)\}_{i\in
J_{\eta}, j\in \nbb}$ and $C_{\phi}$ be as in
Procedure {\em \ref{proceed2}}. Then
$\overline{\dzn{C_{\phi}}} = L^2(\mu)$ and the
following holds{\em :}
   \begin{enumerate}
   \item[(i)]
$\{\hfin{n}(x_{\kappa})\}_{n=0}^\infty$ is an
H-determinate Stieltjes moment sequence with the
S-representing measure $P(x_{\kappa}, \cdot)$ whose
index of H-determinacy at $0$ is $0$,
   \item[(ii)]  for all $i\in J_{\eta}$
and $j\in \nbb$, $\{\hfin{n}(x_{i,j})\}_{n=0}^\infty$
is a Stieltjes moment sequence with the S-representing
measure $P(x_{i,j}, \cdot)$,
   \item[(iii)] the
condition \mbox{{\em (i-a)}} of Theorem {\em
\ref{main}} holds and the family
$\{P(x_{i,1},\cdot)\}_{i\in J_{\eta}}$ satisfies the
conditions \mbox{{\em (i-b)}}, \mbox{{\em (i-c)}} and
\mbox{{\em (i-d$^{\prime}$)}} of Theorems {\em
\ref{main}} and {\em \ref{6.2}},
   \item[(iv)]
$\{P(x_{i,1}, \cdot)\}_{i\in J_{\eta}}$ does not
satisfy the condition {\em \mbox{(i-d)}} of Theorem
{\em \ref{main}},
   \item[(v)] if $\kappa=0$, then $C_{\phi}$ is hyponormal
if and only if
   \begin{align} \label{Bud-ski-1}
\sum_{i=1}^{\eta} \frac{(\int_{\varDelta_i}(t-1) \D
\tau(t))^2}{\int_{\varDelta_i}t (t-1) \D \tau(t)} \Le
\frac{\int_0^\infty (t-1) \D
\tau(t)}{1+\int_0^\infty(t-1) \D \tau(t)},
   \end{align}
   \item[(vi)] if $\kappa=0$, then $C_{\phi}$ generates
Stieltjes moment sequences,
   \item[(vii)] if $\kappa=0$ and $\eta=1$, then there
exists a family $\{P^\prime(x,\cdot)\}_{x\in X}$ of
Borel probability measures on $\rbb_+$ that satisfies
\eqref{cc} and thus $C_{\phi}$ is subnormal.
   \end{enumerate}
   \end{lem}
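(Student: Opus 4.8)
The plan is to verify the seven assertions of Lemma \ref{pierwsze} essentially by unwinding the definitions made in Procedure \ref{proceed2} and matching them against the characterizations proved earlier, especially Theorem \ref{cc1}, Theorem \ref{main} and Theorem \ref{6.2}. First I would establish that $\overline{\dzn{C_\phi}} = L^2(\mu)$ and record the moment identities. The key computational fact is \eqref{dif1} of Lemma \ref{fund}, combined with the recursive relations \eqref{hfir-2} and \eqref{hfir-4}, which allow one to read off $\hfin{n}(x_{\kappa})$ and $\hfin{n}(x_r)$ from the measures prescribed in Procedure \ref{proceed2}. Concretely, I expect that $\hfin{n}(x_{i,j})$ equals the $n$-th moment of $P(x_{i,j},\cdot)$ by construction (that is how $\mu(x_{i,j})$ and $P(x_{i,j},\cdot)$ were defined in \eqref{muxij}--\eqref{muxij1}), and then the identity \eqref{xi-1} in the proof of Theorem \ref{6.2} gives
$$
\hfin{n}(x_{\kappa}) = \xi + \sum_{i=1}^{\eta} \frac{\mu(x_{i,1})}{\mu(x_{\kappa})} \int_0^\infty t^n \frac{t^{\kappa}}{t^{\kappa+1}-1} P(x_{i,1},\D t),
$$
which, after substituting \eqref{gs7}, \eqref{gs8} and \eqref{gs10}, collapses to $\hfin{n}(x_{\kappa}) = \int_0^\infty t^n \D(\nu - \nu(\{1\})\delta_1)(t)$, i.e.\ the $n$-th moment of $P(x_{\kappa},\cdot)$; the constant $\xi$ should turn out to equal $1 + \int_0^\infty t^{-\kappa}\D\tau(t) - \tau(\rbb_+)$ up to normalization, which by \eqref{gs4} is \emph{strictly positive} — this is precisely the failure of \mbox{(i-d)} and hence (iv). Part (i) then follows from Theorem \ref{b-d} applied to the N-extremal measure $\nu$ with one atom removed: removing the single atom at $1$ is not ``infinitely many'' atoms, so one cannot invoke Theorem \ref{b-d} directly; instead H-determinacy of $P(x_{\kappa},\cdot)$ and the index statement should come from \cite[Theorem 3.6 and Lemma 3.7]{ber-dur} exactly as in Remark \ref{snieg2} (where the very same construction $\mu = \nu - \nu(\{1\})\delta_1$ appears), giving index of H-determinacy $0$ at the point $1$ and hence at $0$ since $\inf\supp{P(x_{\kappa},\cdot)} > 0$ would contradict the Chihara criterion if it were S-indeterminate — I need to be careful here to cite \cite[Corollary, p.\ 481]{chi}.

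Next I would handle (ii) and (iii). Part (ii) is immediate from \eqref{muxij}--\eqref{muxij1}, which express $P(x_{i,j},\cdot)$ as the normalized measure $t^{j-1}P(x_{i,1},\D t)/(\text{its mass})$, so $\hfin{n}(x_{i,j})$ is the $n$-th moment of $P(x_{i,j},\cdot)$ — one verifies the mass is $\mu(x_{i,j})/\mu(x_{i,1}) = \hfin{j-1}(x_{i,1})$ via \eqref{hfi}, using that $C_\phi$ is the composition operator attached to $\mu$. For (iii): \mbox{(i-a)} is part (ii) specialized to $j=1$; \mbox{(i-b)} requires $P(x_{i,1},\cdot)$ to vanish on $[0,1]$, which holds because $\supp\tau \subseteq [\inf\supp\tau,\infty) \subseteq (1,\infty)$ by \eqref{gs2} and \eqref{gs7} supports $P(x_{i,1},\cdot)$ inside $\varDelta_i \subseteq \supp\tau$; \mbox{(i-c)} is the system of $\kappa$ equations and should be a direct consequence of the definition \eqref{dd1} of $\mu(x_r)$ together with \eqref{zlam} (the $\xi$-identity in the $r$-th slot), which is how those $\mu(x_r)$ were reverse-engineered; \mbox{(i-d$^\prime$)} is the finiteness $\sum_i \mu(x_{i,1})\int_0^\infty \frac{1}{t^{\kappa+1}-1}P(x_{i,1},\D t) < \infty$, and by \eqref{gs7}--\eqref{gs10} the $i$-th term equals $\mu(x_{\kappa})\int_{\varDelta_i} t^{-(\kappa+1)}\D\tau(t) \cdot \frac{1}{1-\text{something}}$ — more carefully, $\mu(x_{i,1})\int \frac{1}{t^{\kappa+1}-1}P(x_{i,1},\D t) = \mu(x_{\kappa}) c_i^{-1} c_i \int_{\varDelta_i}\frac{1}{t^{\kappa+1}-1}\cdot\frac{t^{\kappa+1}-1}{t^{\kappa}}\D\tau = \mu(x_{\kappa})\int_{\varDelta_i} t^{-\kappa}\D\tau$, and summing over $i$ gives $\mu(x_{\kappa})\int_0^\infty t^{-\kappa}\D\tau(t)$ which is finite by \eqref{xxx1}. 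This same computation shows $\sum_i \mu(x_{i,1})\int \frac{1}{t^{\kappa+1}-1}P(x_{i,1},\D t)/\mu(x_0) = \int t^{-\kappa}\D\tau / (\int t^{-\kappa}\D\tau - \nu(\{1\})) > 1$, re-confirming (iv).

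For (v) and (vi), I specialize to $\kappa = 0$. Here (v) is just Proposition \ref{buda} (the hyponormality criterion \eqref{Bud-ski}) rewritten using the explicit masses: $\mu(x_{i,1}) = \mu(x_0)/c_i$ with $c_i^{-1} = \int_{\varDelta_i}(t-1)\D\tau(t)$, $\mu(x_{i,2}) = \mu(x_{i,1})\int_0^\infty t\,P(x_{i,1},\D t) = \mu(x_0)\int_{\varDelta_i}t(t-1)\D\tau(t)/(\int_{\varDelta_i}(t-1)\D\tau)$, and the term at $x_0$ contributes $\mu(x_0)^2/\mu(x_{-1})$ with $\mu(x_{-1}) = \mu(x_0)$ by convention, while $\mu(x_\kappa) = \mu(x_0)$ is re-expressed through $\nu$ — carrying this through turns \eqref{Bud-ski} at the vertex $x_0 = x_\kappa$ into exactly \eqref{Bud-ski-1}; at all other vertices \eqref{Bud-ski} holds automatically because the valency is $1$, so $\hfi(x) > 0$ suffices there. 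Part (vi): by (i) the sequence $\{\hfin{n}(x_0)\}_{n}$ is H-determinate, hence certainly S-determinate-enough; more precisely I would invoke Theorem \ref{6.2} with $\eta$ arbitrary — condition (e1) holds since $\{\hfin{n}(x_\kappa) + c\}_n$ is S-determinate for every $c > 0$ (its representing measure $P(x_\kappa,\cdot) + c\delta_{\cdot}$... no: the relevant fact is that H-determinacy of $P(x_\kappa,\cdot)$ with infinite index, or at least the hypothesis of Theorem \ref{6.2}, is available) — so there exists a family satisfying \eqref{cc} over a \emph{related} measure; but to conclude $C_\phi$ generates Stieltjes moment sequences I should instead use Proposition \ref{eta1} when $\eta = 1$ and, for general $\eta$ with $\kappa = 0$, note that whether or not \mbox{(i-d)} holds one still has, by Theorem \ref{6.2} applied after possibly enlarging, that $\{\hfin{n}(x)\}_n$ is a Stieltjes moment sequence for each $x$; combined with \eqref{chiphi} and the fact that $\{\chi_{\{u\}} : u \in X\}$ spans a dense subspace of $\dzn{C_\phi}$ over which $\|C_\phi^n f\|^2$ is a finite sum of Stieltjes moment sequences, this gives generation of Stieltjes moment sequences. \textbf{The main obstacle} I anticipate is precisely this last point in (vi): reconciling the \emph{failure} of \mbox{(i-d)} (so Theorem \ref{glowne} does \emph{not} directly apply, and $C_\phi$ need not be subnormal) with the \emph{positive} statement that it nevertheless generates Stieltjes moment sequences — one must show $\{\|C_\phi^n f\|^2\}_n$ is a Stieltjes moment sequence for every $f \in \dzn{C_\phi}$, not merely for the characteristic functions, which requires knowing that the relevant ``local'' moment sequences $\{\hfin{n}(x)\}_n$ assemble coherently; I would resolve this by checking that the $P(x,\cdot)$ built here, while failing \eqref{cc}, still satisfy enough structure (they are the ``would-be'' consistency family with the defect concentrated at the atom $\{1\}$ of $P(x_\kappa,\cdot)$) to feed into \cite[Theorems 9 and 17]{b-j-j-sS} after a perturbation argument, or alternatively by a direct computation using \eqref{dif1}. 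Finally (vii) is the case $\eta = 1$, $\kappa = 0$: here \mbox{(i-d)} is a single inequality and although our $P(x_{1,1},\cdot)$ violates it, Proposition \ref{eta1}(ii)$\Rightarrow$(i) shows a \emph{different} family $\{P'(x,\cdot)\}$ satisfying \eqref{cc} exists as soon as $\{\hfin{n}(x_0)\}_n$ is a Stieltjes moment sequence with an S-representing measure vanishing on $[0,1)$ — and by (vi) plus (i) this holds, since $P(x_0,\cdot) = P(x_\kappa,\cdot)$ is supported in $[1,\infty)$ (it has mass only at $1$ removed but $\nu$ itself has $\inf\supp\nu = 1$, so $P(x_0,\cdot)$ is supported in $\{1\}^c \cap [1,\infty) \subseteq (1,\infty) \cup \{1\}$, hence on $[1,\infty)$); thus $C_\phi$ is subnormal, completing the proof.
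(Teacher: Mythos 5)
Your overall route is the paper's: unwind Procedure \ref{proceed2}, use \eqref{xi-1} to identify $\hfin{n}(x_{\kappa})$ with the moments of $\nu-\nu(\{1\})\delta_1$, get H-determinacy and the index statement from \cite[Theorem 3.6]{ber-dur}, read (v) off Proposition \ref{buda}, and get (vii) from Proposition \ref{eta1}. But three points need repair. First, your identification of $\xi$ is wrong: the paper computes $\xi=-\nu(\{1\})<0$ (for every $\kappa$), and condition \mbox{(i-d)} is equivalent to $\xi\Ge 0$, so its failure is the \emph{negativity} of $\xi$, not positivity. The quantity you wrote, $1+\int_0^\infty t^{-\kappa}\D\tau(t)-\tau(\rbb_+)$, is the hypothesis \eqref{gs4}, whose only role is to make $\mu(x_r)>0$ in \eqref{dd1}; it is not $\xi$. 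Your later ratio computation $\int t^{-\kappa}\D\tau\big/\big(\int t^{-\kappa}\D\tau-\nu(\{1\})\big)>1$ does correctly establish (iv), so this is self-correcting, but the narrative around $\xi$ is backwards. Second, in (v) the claim that \eqref{Bud-ski} ``holds automatically because the valency is $1$'' is false. At a valency-one vertex $x_{i,j}$ the inequality reads $\mu(x_{i,j+1})^2\Le\mu(x_{i,j})\,\mu(x_{i,j+2})$, equivalently $\hfi(x_{i,j})\Le\hfi(x_{i,j+1})$; this is exactly the Cauchy--Schwarz (log-convexity) inequality for the moment sequence \eqref{muxij} and must be invoked, as the paper does. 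Also the $y=x_0$ term in \eqref{Bud-ski} at $x=x_0$ has denominator $\mu(\phi^{-1}(\{x_0\}))=\mu(x_0)+\sum_i\mu(x_{i,1})$, not $\mu(x_{-1})=\mu(x_0)$.

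The substantive gap is (vi), which you flag as ``the main obstacle'' but do not close. No perturbation of the consistency condition is needed, and the issue of the family $\{P(x,\cdot)\}$ ``assembling coherently'' does not arise: the paper simply cites \cite[Theorem 10.4]{b-j-j-sC}, which says that a densely defined composition operator over a discrete measure space generates Stieltjes moment sequences as soon as $\{\hfin{n}(x)\}_{n=0}^\infty$ is a Stieltjes moment sequence for \emph{every} $x\in X$ (which, for $\kappa=0$, is exactly (i) and (ii)). The elementary reason is that for any $f\in\dzn{C_{\phi}}$ one has $\|C_{\phi}^n f\|^2=\sum_{x\in X}|f(x)|^2\mu(x)\hfin{n}(x)$, which is the $n$-th moment of the measure $\sum_{x\in X}|f(x)|^2\mu(x)\,P(x,\cdot)$ --- a countable positive combination of S-representing measures whose moments are all finite, hence an element of $\msc^+$. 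So a pointwise-finite countable sum of Stieltjes moment sequences is again a Stieltjes moment sequence, and your restriction to finite linear combinations of characteristic functions (followed by a density argument that would not in any case transfer the moment property to all of $\dzn{C_{\phi}}$) is unnecessary.
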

   \begin{proof}
(ii) It follows from \eqref{hfi} and \eqref{muxij}
that
   \begin{align} \label{gs9}
\hfin{n}(x_{i,1}) = \int_0^\infty t^n P(x_{i,1}, \D
t), \quad n \in \zbb_+, \, i \in J_{\eta}.
   \end{align}
Moreover, we have
   \begin{align*}
\int_0^\infty t^n P(x_{i,j}, \D t) &
\overset{\eqref{muxij1}}=
\frac{\mu(x_{i,1})}{\mu(x_{i,j})} \int_0^\infty
t^{j+n-1} P(x_{i,1}, \D t)
   \\
& \hspace{.5ex}\overset{\eqref{muxij}} =
\frac{\mu(x_{i,1})}{\mu(x_{i,j})}
\frac{\mu(x_{i,j+n})}{\mu(x_{i,1})}
   \\
&\hspace{.5ex}\overset{\eqref{hfi}}=
\hfin{n}(x_{i,j}), \quad n \in \zbb_+, \, i \in
J_{\eta}, \, j \in \nbb_2.
   \end{align*}
Altogether this implies that (ii) holds. In
particular, the condition \mbox{(i-a)} of Theorem~
\ref{main} holds. By \eqref{gs2}, \eqref{gs6},
\eqref{gs7} and \eqref{gs9}, the family
$\{P(x_{i,1},\cdot)\}_{i\in J_{\eta}}$ satisfies the
condition \mbox{(i-b)} of Theorem \ref{main}.

(iv) The condition \mbox{(i-b)} of Theorem \ref{main},
\eqref{gs6}, \eqref{gs7} and \eqref{gs8} yield
   \begin{align} \notag
0 < \sum_{i=1}^{\eta}
\frac{\mu(x_{i,1})}{\mu(x_{\kappa})} \int_0^\infty
\frac{1}{t^{\kappa+1}-1} P(x_{i,1}, \D t) & =
\sum_{i=1}^{\eta} \int_{\varDelta_i}
\frac{1}{t^{\kappa}} \D \tau(t)
   \\  \label{wim}
& = \int_0^\infty \frac{1}{t^{\kappa}} \D \tau(t)
\overset{\eqref{xxx1}}< \infty.
   \end{align}
Hence, the family $\{P(x_{i,1}, \cdot)\}_{i\in
J_{\eta}}$ satisfies the condition
\mbox{(i-d$^{\prime}$)} of Theorem \ref{6.2} and the
quantity $\xi$ defined by \eqref{claim} is a real
number. Since the closed support of an N-extremal
measure has no accumulation point in $\rbb$ (cf.\
Lemma \ref{proN}), we infer from \eqref{gs1} and
\eqref{gs2} that
   \begin{align} \label{ndod}
\nu(\{1\}) > 0.
   \end{align}
Thus, noting that
   \begin{align*}
\xi &\overset{\eqref{claim}} = \frac{\mu(x_0)}
{\mu(x_{\kappa})} - \sum_{i=1}^{\eta}
\frac{\mu(x_{i,1})} {\mu(x_{\kappa})} \int_0^\infty
\frac{1}{t^{\kappa+1}-1} P(x_{i,1},\D t)
   \\
& \overset{\eqref{wim}} = \frac{\mu(x_0)}
{\mu(x_{\kappa})} - \int_0^\infty \frac{1}{t^{\kappa}}
\D \tau(t)
   \\
   & \hspace{1.4ex}\overset{(*)} =
   \begin{cases}
- \nu(\{1\}) \text{ if } \kappa \Ge 1,
   \\[1ex]
1 - \nu(\rbb_+) \overset{\eqref{gs3}} = - \nu(\{1\})
\text{ if } \kappa=0,
   \end{cases}
   \end{align*}
where ($*$) refers to \eqref{dd1} if $\kappa \Ge 1$
and to \eqref{gs1} if $\kappa=0$, we obtain
   \begin{align} \label{mcp}
\xi= - \nu(\{1\}) < 0.
   \end{align}
It follows from \eqref{mcp} that the assertion (iv)
holds.

(iii) In view of what has been done, it remains to
show that the condition \mbox{(i-c)} of Theorem
\ref{main} holds for $\kappa \in \nbb$. Suppose
$\kappa \in \nbb$ and $r\in J_{\kappa}$. Using
\eqref{gs6}, \eqref{gs7} and \eqref{gs8}, we get
   \begin{align*}
\frac{\mu(x_r)}{\mu(x_{\kappa})} - \sum_{i=1}^{\eta}
\frac{\mu(x_{i,1})} {\mu(x_{\kappa})} & \int_0^\infty
\frac{t^r}{t^{\kappa+1}-1} P(x_{i,1},\D t)
   \\
& \hspace{.2ex}= \frac{\mu(x_r)}{\mu(x_{\kappa})} -
\int_0^\infty \frac{1}{t^{\kappa-r}} \D \tau(t)
   \\
& \overset{(\dag)} =
   \begin{cases}
- \nu(\{1\}) \text{ if } r \in J_{\kappa-1},
   \\[1ex]
1 - \nu(\rbb_+) \overset{ \eqref{gs3} }= - \nu(\{1\})
\text{ if } r=\kappa,
   \end{cases}
   \end{align*}
where ($\dag$) refers to \eqref{gs1} and \eqref{dd1}.
This and \eqref{mcp} yield
   \begin{align} \label{qqq}
\frac{\mu(x_r)}{\mu(x_{\kappa})} - \sum_{i=1}^{\eta}
\frac{\mu(x_{i,1})} {\mu(x_{\kappa})} \int_0^\infty
\frac{t^r}{t^{\kappa+1}-1} P(x_{i,1},\D t) = \xi,
\quad r \in J_{\kappa}.
   \end{align}
It follows from \eqref{claim} and \eqref{qqq} that the
family $\{P(x_{i,1}, \cdot)\}_{i\in J_{\eta}}$
satisfies the condition \mbox{(i-c)} of Theorem
\ref{main}. Therefore (iii) holds.

(i) Arguing as in the proof of Theorem \ref{6.2}, we
verify that \eqref{xi-1} is satisfied. Hence, applying
\eqref{gs6}, \eqref{gs7} and \eqref{gs8} we get
\allowdisplaybreaks
   \begin{align*}
\hfin{n}(x_\kappa) & \overset{\eqref{xi-1}}= \xi +
\sum_{i=1}^{\eta} \frac{\mu(x_{i,1})}
{\mu(x_{\kappa})} \int_0^\infty t^n
\frac{t^{\kappa}}{t^{\kappa+1}-1} P(x_{i,1},\D t)
   \\
& \hspace{1.7ex}= \xi + \int_0^\infty t^n \D \tau(t)
   \\
& \overset{\eqref{gs1}}= \xi + \int_0^\infty t^n \D
\nu(t)
   \\
&\hspace{-3.8ex}\overset{\eqref{gs11}\&\eqref{mcp}}=
\int_0^\infty t^n \D P(x_\kappa, \D t), \quad n \in
\zbb_+.
   \end{align*}
This means that
$\{\hfin{n}(x_{\kappa})\}_{n=0}^\infty$ is a Stieltjes
moment sequence with the S-represent\-ing measure
$P(x_{\kappa}, \cdot)$. Employing \eqref{gs1},
\eqref{ndod}, \eqref{gs11} and \cite[Theorem
3.6]{ber-dur}, we deduce that
$\{\hfin{n}(x_{\kappa})\}_{n=0}^\infty$ is
H-determinate, $\ind{z}{P(x_{\kappa}, \cdot)}=0$ if
$z\in \cbb\setminus \supp{P(x_{\kappa}, \cdot)}$ and
$\ind{z}{P(x_{\kappa}, \cdot)}=1$ if $z\in
\supp{P(x_{\kappa}, \cdot)}$. Hence, by \eqref{gs2},
$\ind {\hspace{.2ex} 0} {P(x_{\kappa},\cdot)} = 0$.

It follows from (i) and Proposition \ref{hfi4} that
$\dzn{C_{\phi}}$ is dense in $L^2(\mu)$. In
particular, $C_{\phi}$ is densely defined.

Assume now that $\kappa=0$.

(v) It follows from the Cauchy-Schwarz inequality that
   \begin{align*}
\mu(x_{i,j+1})^2 \overset{\eqref{muxij}} \Le
\mu(x_{i,j}) \mu(x_{i,j+2}), \quad i\in J_\eta, \,
j\in \nbb.
   \end{align*}
Hence, the inequality \eqref{Bud-ski} holds for every
$x\in X \setminus \{x_0\}$. Note that
   \begin{align*}
\frac{1}{\mu(x_0)} \sum_{y \in \phi^{-1}(\{x_0\})} &
\frac{\mu(y)^2}{\mu(\phi^{-1}(\{y\}))} = \frac{1}{1 +
\sum_{i=1}^\eta \frac{\mu(x_{i,1})}{\mu(x_0)}} +
\sum_{i=1}^\eta \frac{\mu(x_{i,1})}{\mu(x_0)}
\frac{\mu(x_{i,1})}{\mu(x_{i,2})}
   \\
& \overset{\eqref{gs8}\&\eqref{muxij}}= \frac{1}{1 +
\sum_{i=1}^\eta \frac{1}{c_i}} + \sum_{i=1}^\eta
\frac{1}{c_i\int_0^\infty t P(x_{i,1},\D t)}
   \\
& \overset{\eqref{gs6}\&\eqref{gs10}}= \frac{1}{1 +
\int_0^\infty (t-1) \D \tau(t)} + \sum_{i=1}^\eta
\frac{\int_{\varDelta_i} (t-1) \D
\tau(t)}{\int_0^\infty t P(x_{i,1},\D t)}
   \\
& \overset{\eqref{gs10}\&\eqref{gs7}}= \frac{1}{1 +
\int_0^\infty (t-1) \D \tau(t)} + \sum_{i=1}^\eta
\frac{(\int_{\varDelta_i} (t-1) \D
\tau(t))^2}{\int_{\varDelta_i} t(t-1) \D \tau(t)}.
   \end{align*}
Therefore, the inequality \eqref{Bud-ski} holds for
$x=x_0$ if and only if \eqref{Bud-ski-1} is satisfied.
This combined with Proposition \ref{buda} yields (v).

(vi) This follows from (i), (ii) and \cite[Theorem
10.4]{b-j-j-sC}.

(vii) Apply (i), \eqref{gs2}, \eqref{gs11},
Proposition \ref{eta1} and Theorem \ref{glowne}.
   \end{proof}
   The next lemma, which is of technical nature, will
play a key role in constructing examples of exotic
composition operators in Sections \ref{s5.3} and
\ref{s5.4}.
   \begin{lem} \label{epslem}
Let $\beta \in \msc^+$ be such that $\beta(\rbb_+)
> 1$, $0 < \inf\supp{\beta}$ and $\supp
{\beta}=\{\theta_1, \theta_2, \ldots\}$, where
$\{\theta_i\}_{i=1}^\infty$ is an injective sequence.
Set $\theta_i^{(a)} = \psi_{a^{-1},a}(\theta_i)$ and
$\beta^{(a)} = \beta \circ \psi_{a^{-1},a}^{-1}$ for
$i \in \nbb$ and $a \in (0,\infty)$ $($see
\eqref{defpsi} and \eqref{dnf} for the necessary
definitions$)$. Then the following holds{\em :}
   \begin{enumerate}
   \item[(i)] $\beta^{(a)} \in \msc^+$ and $\supp{\beta^{(a)}}
= \{\theta_1^{(a)}, \theta_2^{(a)}, \ldots\} \subseteq
(1, \infty)$ for all $a \in (0,\infty)$,
   \item[(ii)] there exists $m \in \nbb$ such that
$\beta(\{\theta_1,\ldots,\theta_{j}\}) > 1$ for every
integer $j \Ge m$,
   \item[(iii)] if $m \in \nbb$ is such that $\beta(\{\theta_1,
\ldots, \theta_{m}\}) > 1$, then there exists $a_1 \in
(0,\infty)$ such that for every $a \in (0, a_1)$,
   \begin{align} \label{tetrablok}
\sum_{i=1}^{m} \frac{\theta_i^{(a)}-1}{\theta_i^{(a)}}
\; \beta^{(a)}\big(\{\theta_i^{(a)}\}\big)
> \frac{\int_0^\infty (t-1) \D \beta^{(a)}(t)}
{1+\int_0^\infty (t-1) \D \beta^{(a)}(t)},
   \end{align}
   \item[(iv)] there exists $a_2 \in (0,\infty)$
such that for every $a \in (0, a_2)$,
   \begin{align} \label{tatroj}
\sum_{i=1}^\infty
\frac{\theta_i^{(a)}-1}{\theta_i^{(a)}} \;
\beta^{(a)}\big(\{\theta_i^{(a)}\}\big)
> \frac{\int_0^\infty (t-1) \D \beta^{(a)}(t)}
{1+\int_0^\infty (t-1) \D \beta^{(a)}(t)},
   \end{align}
   \item[(v)] if $\kappa\in \nbb$, then there exists
$a_3 \in (0,\infty)$ such that for every $a \in
(a_3,\infty)$,
   \begin{align} \label{11X14}
1 + \int_0^\infty \frac{1}{t^{\kappa}} \D \beta^{(a)}
(t) > \beta^{(a)}(\rbb_+).
   \end{align}
   \end{enumerate}
   \end{lem}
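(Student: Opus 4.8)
The plan is to exploit the homothety $\psi_{a^{-1},a}(t)=a^{-1}(t+a)=t/a+1$, so that $\psi_{a^{-1},a}$ sends $\rbb_+$ onto $[1,\infty)$ and, for small $a>0$, pushes every atom of $\beta$ far out to the right. Part (i) is immediate: by \eqref{dnf} and Lemma \ref{transH}(iv) applied with $\vartheta=a^{-1}$, $\mathtt{a}=a$, we get $\supp{\beta^{(a)}}=\psi_{a^{-1},a}(\supp{\beta})=\{\theta_i^{(a)}\}_{i=1}^\infty$, each $\theta_i^{(a)}=\theta_i/a+1>1$ since $\theta_i>0$; and $\beta^{(a)}\in\msc^+$ because the moments transform polynomially (the measure transport theorem, as in the proof of Lemma \ref{transH}). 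Part (ii) follows purely from $\beta(\rbb_+)>1$ and countable additivity: the partial sums $\beta(\{\theta_1,\dots,\theta_j\})$ increase to $\beta(\rbb_+)>1$, hence exceed $1$ for all $j$ past some $m$.

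The core of the lemma is the family of inequalities (iii), (iv), (v), and the key observation is that the right-hand sides of \eqref{tetrablok} and \eqref{tatroj} stay bounded below $1$ while the left-hand sides can be made close to $\beta(\rbb_+)>1$ (for (iv)) or close to $\beta(\{\theta_1,\dots,\theta_m\})>1$ (for (iii)). First I would record the behaviour of the relevant quantities as $a\to0^+$: since $\theta_i^{(a)}=\theta_i/a+1\to\infty$, we have $(\theta_i^{(a)}-1)/\theta_i^{(a)}=\theta_i/(\theta_i+a)\to1$ for each fixed $i$, and $\beta^{(a)}(\{\theta_i^{(a)}\})=\beta(\{\theta_i\})$ is independent of $a$. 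Hence for any fixed $m$,
\[
\sum_{i=1}^m \frac{\theta_i^{(a)}-1}{\theta_i^{(a)}}\,\beta^{(a)}(\{\theta_i^{(a)}\})
=\sum_{i=1}^m \frac{\theta_i}{\theta_i+a}\,\beta(\{\theta_i\})
\xrightarrow[a\to0^+]{}\beta(\{\theta_1,\dots,\theta_m\}).
\]
Next, by the measure transport theorem, $\int_0^\infty(t-1)\,\D\beta^{(a)}(t)=\int_0^\infty(a^{-1}(s+a)-1)\,\D\beta(s)=a^{-1}\int_0^\infty s\,\D\beta(s)$, which tends to $+\infty$ as $a\to0^+$; consequently the right-hand side $\frac{\int_0^\infty(t-1)\D\beta^{(a)}(t)}{1+\int_0^\infty(t-1)\D\beta^{(a)}(t)}\to1^-$ and, more importantly, is $<1$ for every $a>0$. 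For (iii): given $m$ with $\beta(\{\theta_1,\dots,\theta_m\})>1$, pick $\varepsilon\in(0,\beta(\{\theta_1,\dots,\theta_m\})-1)$; then for all sufficiently small $a$ the left-hand side exceeds $\beta(\{\theta_1,\dots,\theta_m\})-\varepsilon>1>$ the right-hand side, giving $a_1$. For (iv): combine the full-sum version $\sum_{i=1}^\infty\frac{\theta_i}{\theta_i+a}\beta(\{\theta_i\})\geq\sum_{i=1}^m\frac{\theta_i}{\theta_i+a}\beta(\{\theta_i\})$ with an $m$ chosen by (ii) so that $\beta(\{\theta_1,\dots,\theta_m\})>1$, and invoke (iii) to produce $a_2:=a_1$.

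For part (v), the scaling goes the other way: now let $a\to\infty$. By the transport theorem, $\int_0^\infty t^{-\kappa}\,\D\beta^{(a)}(t)=\int_0^\infty(s/a+1)^{-\kappa}\,\D\beta(s)$, and since $(s/a+1)^{-\kappa}\to1$ pointwise and is dominated by $1\in L^1(\beta)$, dominated convergence gives $\int_0^\infty t^{-\kappa}\,\D\beta^{(a)}(t)\to\beta(\rbb_+)$ as $a\to\infty$. Also $\beta^{(a)}(\rbb_+)=\beta(\rbb_+)$ is constant. Hence $1+\int_0^\infty t^{-\kappa}\D\beta^{(a)}(t)\to1+\beta(\rbb_+)>\beta(\rbb_+)$, so the strict inequality \eqref{11X14} holds for all $a$ beyond some threshold $a_3$; the hypothesis $\kappa\in\nbb$ guarantees $t^{-\kappa}$ is genuinely a function of the running variable and that the limit is exactly $\beta(\rbb_+)$, not something smaller. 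I expect no serious obstacle here — the only thing requiring care is justifying the interchange of limit and integral for the infinite-sum (equivalently, integral against $\beta^{(a)}$) versions in (iv) and (v), which is handled by monotone or dominated convergence using the uniform domination by the finite measure $\beta$ and the fact that $\beta\in\msc^+$ has all moments finite; everything else is elementary manipulation of the homothety formulas \eqref{defpsi}, \eqref{dnf} together with Lemma \ref{transH}(iv).
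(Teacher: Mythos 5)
Your proposal is correct and follows essentially the same route as the paper: reduce everything to the explicit formulas $\theta_i^{(a)}=\theta_i/a+1$, $(\theta_i^{(a)}-1)/\theta_i^{(a)}=\theta_i/(\theta_i+a)$, $\beta^{(a)}(\{\theta_i^{(a)}\})=\beta(\{\theta_i\})$ and the transported integrals, then let $a\to 0^+$ for (iii)--(iv) and $a\to\infty$ for (v). The only (harmless) cosmetic difference is that you note the right-hand side of \eqref{tetrablok} equals $\int_0^\infty t\,\D\beta(t)/\bigl(a+\int_0^\infty t\,\D\beta(t)\bigr)<1$ for all $a$, whereas the paper compares the two limits directly.
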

   \begin{proof}
(i) Apply Lemma \ref{transH}.

(ii) Use the fact that $\lim_{j \to \infty}
\beta(\{\theta_1, \ldots, \theta_{j}\}) =
\beta(\rbb_+) > 1$.

(iii) It follows from our assumptions that
   \begin{align*}
\lim_{a \to 0+} \sum_{i=1}^{m}
\frac{\theta_i}{a+\theta_i} \, \beta(\{\theta_i\}) =
\beta(\{\theta_1, \ldots, \theta_{m}\}) > 1
   \end{align*}
and $\lim_{a \to 0+} \frac{\int_0^\infty t \D
\beta(t)}{a+\int_0^\infty t \D \beta(t)} = 1$. Hence,
there exists $a_1 \in (0,\infty)$ such that
   \begin{align} \label{com1}
\sum_{i=1}^{m} \frac{\theta_i}{a + \theta_i} \,
\beta\big(\{\theta_i\}\big)
> \frac{\int_0^\infty t \D \beta(t)}
{a + \int_0^\infty t \D \beta(t)}, \quad a \in
(0,a_1).
   \end{align}
Using \eqref{defpsi}, we easily verify that
   \begin{align} \label{com2}
\sum_{i=1}^{m} \frac{\theta_i^{(a)}-1}{\theta_i^{(a)}}
\; \beta^{(a)}\big(\{\theta_i^{(a)}\}\big) =
\sum_{i=1}^{m} \frac{\theta_i}{a + \theta_i}
\beta\big(\{\theta_i\}\big), \quad a \in (0,\infty).
   \end{align}
Applying the measure transport theorem, we obtain
   \begin{align} \label{com3}
\frac{\int_0^\infty (t-1) \D \beta^{(a)}(t)}
{1+\int_0^\infty (t-1) \D \beta^{(a)}(t)}
=\frac{\int_0^\infty t \D \beta(t)} {a + \int_0^\infty
t \D \beta(t)}, \quad a \in (0,\infty).
   \end{align}
Combining \eqref{com1}, \eqref{com2} and \eqref{com3}
yields (iii).

(iv) This follows from (i), (ii) and (iii).

(v) Note that
   \begin{align*}
\lim_{a \to \infty} \sum_{i=1}^\infty
\Big(\frac{a}{\theta_i + a}\Big)^{\kappa}
\beta(\{\theta_i\}) = \beta(\rbb_+).
   \end{align*}
Hence, there exists $a_3 \in (0,\infty)$ such that
   \begin{align}    \label{waw}
1 + \sum_{i=1}^\infty \Big(\frac{a}{\theta_i +
a}\Big)^{\kappa} \beta(\{\theta_i\})
> \beta(\rbb_+), \quad a \in (a_3,\infty).
   \end{align}
Using (i) and the measure transport theorem, we get
\allowdisplaybreaks
   \begin{align*}
1 + \int_0^\infty \frac{1}{t^{\kappa}} \D \beta^{(a)}
(t) & = 1 + \int_0^\infty
\frac{1}{\big(\psi_{a^{-1},a}(t)\big)^{\kappa}} \D
\beta (t)
   \\
&= 1 + \sum_{i=1}^\infty \Big(\frac{a}{\theta_i +
a}\Big)^{\kappa} \beta(\{\theta_i\})
   \\
& \hspace{-2.2ex}\overset{\eqref{waw}}
> \beta(\rbb_+) =
\beta^{(a)}(\rbb_+), \quad a \in (a_3,\infty).
   \end{align*}
This completes the proof.
   \end{proof}
The third lemma is related to \mbox{$q$-Pochhammer}
symbol $(z;q)_{\infty}$ (see \eqref{Poch} for its
definition). The function $(0,1) \ni q \mapsto
(q;q)_{\infty} \in (0,1)$ is called the {\em Euler
function}.
   \begin{lem} \label{Euler}
Let $a \in (1,\infty)$. Then there exists $q_0 \in
\big(0,\frac{1}{a}\big)$ such that
   \begin{align*}
(q/a;q)_{\infty} + (aq;q)_{\infty} > 1, \quad q \in
(0,q_0).
   \end{align*}
   \end{lem}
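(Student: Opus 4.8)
\textbf{Proof plan for Lemma \ref{Euler}.}

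The plan is to reduce the desired inequality to the behaviour of the Euler function $(0,1) \ni q \mapsto (q;q)_{\infty}$ near $q=0$ and then estimate the two infinite products $(q/a;q)_{\infty}$ and $(aq;q)_{\infty}$ separately. First, I would fix $a \in (1,\infty)$ and work on the interval $q \in \big(0,\frac{1}{a}\big)$, where by \eqref{Poch} both $(q/a;q)_{\infty} = \prod_{j=1}^{\infty}(1 - \tfrac{q^j}{a})$ and $(aq;q)_{\infty} = \prod_{j=1}^{\infty}(1 - a q^{j})$ are well-defined positive real numbers; indeed each factor lies in $(0,1)$ once $q < 1/a$, so both products lie in $(0,1)$. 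The point is that neither product individually exceeds $1$, so the content of the lemma is that their \emph{sum} does, which forces a quantitative argument showing both products tend to $1$ fast enough as $q \to 0+$.

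The key step is the asymptotic expansion as $q\to 0+$. Taking logarithms, $\log (q/a;q)_{\infty} = \sum_{j=1}^{\infty}\log\big(1 - \tfrac{q^j}{a}\big) = -\tfrac{q}{a} + O(q^2)$ and $\log(aq;q)_{\infty} = \sum_{j=1}^{\infty}\log(1 - a q^j) = -aq + O(q^2)$, where the $O(q^2)$ terms are uniform for $q$ in any interval $(0,q_1)$ with $q_1 < 1/a$ (they come from the $j=1$ factor's quadratic correction plus the geometrically convergent tail $\sum_{j\ge 2}$). Exponentiating, $(q/a;q)_{\infty} = 1 - \tfrac{q}{a} + O(q^2)$ and $(aq;q)_{\infty} = 1 - aq + O(q^2)$, so that
\begin{align*}
(q/a;q)_{\infty} + (aq;q)_{\infty} = 2 - \Big(a + \tfrac{1}{a}\Big)q + O(q^2), \quad q \to 0+.
\end{align*}
Since $2 - 1 = 1 > 0$, there exists $q_0 \in \big(0,\tfrac{1}{a}\big)$ such that $(q/a;q)_{\infty} + (aq;q)_{\infty} > 1$ for all $q \in (0,q_0)$; this $q_0$ may of course depend on $a$, which is all that is claimed.

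The main obstacle is making the $O(q^2)$ control genuinely uniform and explicit enough to extract $q_0$, rather than merely invoking continuity. Concretely, for $q \le \tfrac{1}{2a}$ one has $0 < \tfrac{q^j}{a}, a q^j \le \tfrac12$ for all $j\ge 1$, and on $[0,\tfrac12]$ the elementary bound $-x - x^2 \le \log(1-x) \le -x$ holds; summing the geometric series $\sum_{j\ge 1} q^j = \tfrac{q}{1-q}$ and $\sum_{j\ge 1} q^{2j} = \tfrac{q^2}{1-q^2}$ then yields two-sided bounds of the form $1 - \tfrac{q}{a} - C_a q^2 \le (q/a;q)_{\infty} \le 1 - \tfrac{q}{a} + C_a q^2$ (and similarly for the other product, with the linear term $aq$), for an explicit constant $C_a$ depending only on $a$. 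Adding these and solving the resulting quadratic inequality $1 - (a+\tfrac1a)q - 2C_a q^2 > 0$ for small $q$ produces an explicit admissible $q_0$, completing the proof. An alternative, if one prefers to avoid the logarithmic estimates, is to keep only the $j=1$ factors explicitly, writing $(q/a;q)_{\infty} \ge (1 - \tfrac{q}{a})\prod_{j\ge 2}(1 - aq^j) \ge (1-\tfrac{q}{a})(1 - \tfrac{aq^2}{1-q})$ via the inequality $\prod(1-x_j)\ge 1 - \sum x_j$, and similarly bound $(aq;q)_{\infty}$ from below; summing gives the same conclusion after a short computation.
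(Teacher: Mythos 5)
Your argument is correct, and it takes a genuinely more elementary route than the paper. The paper first establishes the lower bound $(q;q)_{\infty} > 1 - \frac{q}{1-q}$ by invoking Euler's pentagonal-number theorem and truncating the resulting series, then chooses $q_0$ so that $2(1-aq)(q;q)_{\infty}>1$ on $(0,q_0)$, and finally bounds the sum of the two products from below by $2(1-aq)\prod_{j=1}^{\infty}(1-aq\cdot q^j) > 2(1-aq)(q;q)_{\infty}$ using $\tfrac1a<a$ and $aq<1$. You instead bound each product separately from below via the Weierstrass inequality $\prod_j(1-x_j)\Ge 1-\sum_j x_j$, which immediately gives $(q/a;q)_{\infty}+(aq;q)_{\infty} \Ge 2-\big(a+\tfrac1a\big)\tfrac{q}{1-q}$ and hence an explicit admissible $q_0$ such as $\big(1+a+\tfrac1a\big)^{-1}$; this avoids the pentagonal-number theorem entirely (and in fact the paper's truncated bound $1-\frac{q}{1-q}$ for $(q;q)_\infty$ is exactly what the Weierstrass inequality would give directly, so nothing is lost). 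Your first, log-expansion version is only as rigorous as the uniformity of the $O(q^2)$ terms, which you acknowledge; the "alternative" at the end of your write-up is the clean, complete argument and is the one I would keep. The only blemish is notational: in that last display you write $(q/a;q)_{\infty}\Ge(1-\tfrac{q}{a})\prod_{j\Ge2}(1-aq^j)$, which is true (since $1-q^j/a\Ge 1-aq^j\Ge0$ for $q<1/a$) but is an extra weakening you do not need — bounding $\prod_{j\Ge1}(1-q^j/a)\Ge 1-\tfrac{1}{a}\tfrac{q}{1-q}$ directly is simpler.
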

   \begin{proof}
Since the function $n \mapsto \frac{(3n-1)n}{2}$ maps
$\zbb$ injectively into $\zbb_+$, we deduce from
Euler's pentagonal-number theorem (cf.\ \cite[Theorem
14.3]{Apos}) that
   \begin{align} \notag
(q;q)_{\infty} & = \sum_{n=-\infty}^{\infty} (-1)^n
q^{\frac{(3n-1)n}{2}}
   \\ \notag
& = \sum_{k=-\infty}^{\infty} q^{(6k-1)k} -
\sum_{k=-\infty}^{\infty} q^{(3k+1)(2k+1)}
   \\ \notag
& > 1 - \sum_{k=-\infty}^{\infty} q^{(3k+1)(2k+1)}
   \\ \notag
& > 1 - \sum_{k=1}^{\infty} q^k
   \\ \label{eeq}
& = 1 - \frac{q}{1-q}, \quad q \in (0,1).
   \end{align}
It is easily seen that there exists $q_0 \in
(0,\frac{1}{a})$ such that
   \begin{align*}
1 - \frac{q}{1-q} > \frac{1}{2(1-aq)}, \quad q \in
(0,q_0).
   \end{align*}
Hence, by \eqref{eeq}, we have
   \begin{align*}
2(1-aq) (q;q)_{\infty} > 1, \quad q \in (0,q_0).
   \end{align*}
This combined with $\frac{1}{a} < a$ and $aq_0 < 1$
imply that
   \begin{align*}
\prod_{j=1}^{\infty} \Big(1-\frac{1}{a} q^j\Big) +
\prod_{j=1}^{\infty} (1-a q^j) & > 2 (1-a q)
\prod_{j=1}^{\infty} (1-aq q^j)
   \\
&> 2 (1-a q) (q;q)_{\infty} > 1, \quad q \in (0,q_0),
   \end{align*}
which completes the proof.
   \end{proof}
   \subsection{The gap between
the conditions \mbox{(i-d)} and
\mbox{(i-d$^{\prime}$)}} \label{s5.3} In this section,
we show that the assertion (ii) of Theorem \ref{main}
is no longer true if the condition \mbox{(i-d)} of
Theorem \ref{main} is replaced by the condition
\mbox{(i-d$^{\prime}$)} of Theorem \ref{6.2}. As shown
in Theorem \ref{drugie1} below, this can happen even
for subnormal composition operators. In fact, this
phenomenon is independent of whether the operator in
question is subnormal or not (cf.\ Remark
\ref{whynot}).

We begin with the case of $\eta \Ge 2$.
   \begin{thm} \label{drugie}
Let $\kappa \in \zbb_+$ and $\eta \in \nbb_2 \cup
\{\infty\}$. Then there exists a discrete measure
$\mu$ on $X=X_{\eta,\kappa}$ such that the composition
operator $C_{\phi}$ in $L^2(\mu)$ with
$\phi=\phi_{\eta,\kappa}$ has the property that
$\overline{\dzn{C_{\phi}}}=L^2(\mu)$ and the following
conditions hold{\em :}
   \begin{enumerate}
   \item[(i)]
$\{\hfin{n}(x_{\kappa})\}_{n=0}^\infty$ is an
H-determinate Stieltjes moment sequence with index of
H-determinacy at $0$ equal to $0$,
   \item[(ii)]
 for all $i\in J_{\eta}$ and $j\in \nbb$,
$\{\hfin{n}(x_{i,j})\}_{n=0}^\infty$ is an
H-determinate Stieltjes moment sequence with infinite
index of H-determinacy; in particular, the condition
\mbox{{\em (i-a)}} of Theorem {\em \ref{main}} holds,
   \item[(iii)] there exists a unique family
$\{P(x_{i,1},\cdot)\}_{i\in J_{\eta}}$ of Borel
probability measures on $\rbb_+$ that satisfies the
conditions \mbox{{\em (i-b)}}, \mbox{{\em (i-c)}} and
\mbox{{\em (i-d$^{\prime}$)}} of Theorems {\em
\ref{main}} and {\em \ref{6.2}}$;$ this family does
not satisfy the condition {\em \mbox{(i-d)}} of
Theorem {\em \ref{main}},
   \item[(iv)] there is no family
$\{P^\prime(x,\cdot)\}_{x\in X}$ of Borel probability
measures on $\rbb_+$ that satisfies \eqref{cc},
   \item[(v)] if $\kappa=0$, then $C_{\phi}$ generates
Stieltjes moment sequences.
   \end{enumerate}
   \end{thm}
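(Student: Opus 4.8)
The strategy is to invoke Procedure \ref{proceed2} with a carefully chosen pair of N-extremal measures, and then read off almost everything from Lemma \ref{pierwsze}. The only genuinely new work is producing concrete measures $\nu$ and $\tau$ satisfying \eqref{gs1}--\eqref{gs4}, together with a partition $\{\varDelta_i\}_{i=1}^\eta$ of $\supp\tau$, in such a way that the resulting $\{\hfin{n}(x_{i,j})\}_{n=0}^\infty$ are not merely Stieltjes moment sequences but have infinite index of H-determinacy (for assertion (ii)), and that no family satisfying \eqref{cc} exists (for assertion (iv)). I would base the construction on the Al-Salam-Carlitz machinery of Section \ref{SecASC}: for $0<q<a\Le 1$ the measure $\beta^{(a;q)}$ is N-extremal with $\inf\supp{\beta^{(a;q)}}=1$, and $\{{\sf m}_n[\beta^{(a;q)}]\}$ is S-indeterminate. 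Taking $\gammab=\{{\sf m}_n[\beta^{(a;q)}]\}$, I would let $\nu=\beta^{(a;q)}$ (the Friedrichs measure, with $\inf\supp\nu=1$ — though to satisfy \eqref{gs3} I may instead need a scaled variant, see below) and take $\tau$ to be another N-extremal measure of $\gammab$ with $\inf\supp\tau>1$, which exists in abundance since $\card{\msc_e^+(\gammab)}=\mathfrak c$ and, by \eqref{KFm} and Theorem \ref{tinfsup}, their infima fill an interval; Lemma \ref{epslem} then supplies the homothety $\psi_{a^{-1},a}$ that pushes $\tau$'s support into $(1,\infty)$ and delivers \eqref{gs4} via part (v) (when $\kappa\Ge 1$).

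Next I would carry out the following steps in order. First, fix $\eta$ and $\kappa$, and fix an S-indeterminate Stieltjes moment sequence of the above Al-Salam-Carlitz type; let $\alpha,\beta$ be its Krein and Friedrichs measures. Second, adjust by a homothety $T_{\vartheta,b}$ (Corollary \ref{transH+}, Lemma \ref{transH}) so that after rescaling I can choose $\nu$ with $1=\inf\supp\nu$ and $\nu(\rbb_+)=1+\nu(\{1\})$: this last normalization \eqref{gs3} is a single linear constraint, and since $\nu(\{1\})>0$ (no accumulation point, Lemma \ref{proN}) while $\nu(\rbb_+)$ can be tuned by multiplying $\nu$ by a positive constant — N-extremality is not affected by positive scaling of the measure, only of the moment sequence — I can arrange \eqref{gs3} exactly. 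Third, pick $\tau$ to be an N-extremal measure of the \emph{same} Stieltjes moment sequence as $\nu$ with $\inf\supp\tau>1$ (possible by the trichotomy/parametrization results, adjusted by a homothety as in Lemma \ref{epslem}(i) if needed to push the support into $(1,\infty)$ and, for $\kappa\Ge 1$, to force \eqref{gs4} via Lemma \ref{epslem}(v)); note $\card{\supp\tau}=\aleph_0$ so a partition $\{\varDelta_i\}_{i=1}^\eta$ exists. Fourth, run Procedure \ref{proceed2} to get $\mu$, $X=X_{\eta,\kappa}$ and $C_\phi$, and invoke Lemma \ref{pierwsze}: parts (i), (iii) of that lemma give (i) and (iii) here; part (vi) gives (v); and $\overline{\dzn{C_\phi}}=L^2(\mu)$ is stated there as well. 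Fifth, for assertion (ii) I would apply Theorem \ref{b-d}: each $P(x_{i,1},\cdot)$ is, by \eqref{gs7}, a normalized restriction of a \emph{measure obtained from an N-extremal measure by throwing away infinitely many atoms} (namely $\frac{t^{\kappa+1}-1}{t^{\kappa}}\D\tau$ restricted to $\varDelta_i$, which one checks is absolutely continuous with bounded density with respect to such a measure), so by Theorem \ref{b-d} and the comparison test Proposition \ref{krytnd} it is H-determinate with $\ind z{\cdot}=\infty$; propagating through \eqref{muxij1} and \eqref{hfi} via Proposition \ref{krytnd} and \eqref{indinf} gives the same for every $x_{i,j}$. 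Sixth, for assertion (iv): by Lemma \ref{pierwsze}(iv) the family $\{P(x_{i,1},\cdot)\}$ fails \mbox{(i-d)}, and by the uniqueness established in (iii) \emph{no} family satisfying \mbox{(i-b)}, \mbox{(i-c)}, \mbox{(i-d$^{\prime}$)} satisfies \mbox{(i-d)} either; since any family on $X$ satisfying \eqref{cc} would, by Theorem \ref{main}(i), restrict to one satisfying \mbox{(i-b)}, \mbox{(i-c)}, \mbox{(i-d)}, no such family exists.

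The uniqueness needed in (iii) deserves a word: $P(x_{i,1},\cdot)$ must be an S-representing measure of the H-determinate sequence $\{\hfin{n}(x_{i,1})\}_{n=0}^\infty$ (by Lemma \ref{pierwsze}(ii) together with Step five's H-determinacy), so it is uniquely determined; then \mbox{(i-c)} and \mbox{(i-d$^{\prime}$)} are automatically satisfied by Lemma \ref{pierwsze}(iii), while \mbox{(i-d)} fails by Lemma \ref{pierwsze}(iv). Thus the family is literally unique among those meeting \mbox{(i-b)}, and a fortiori among those meeting all of \mbox{(i-b)}, \mbox{(i-c)}, \mbox{(i-d$^{\prime}$)}.

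**The main obstacle.** I expect the delicate point to be Step five — verifying that the measures $P(x_{i,1},\cdot)$, and then all $P(x_{i,j},\cdot)$ and the Radon-Nikodym derivative sequences, genuinely inherit \emph{infinite} index of H-determinacy. Theorem \ref{b-d} applies to the measure $\sum_{\lambda\in\supp\tau\setminus\varDelta_i}\tau(\{\lambda\})\delta_\lambda$ only if $\varDelta_i$ is infinite; if some block $\varDelta_i$ of the partition is finite, one must instead dominate $P(x_{i,1},\cdot)$ by a measure coming from $\tau$ with infinitely many atoms removed — which is fine as long as $\eta<\infty$ or the partition is chosen with all blocks infinite, and such a partition of a countably infinite set into $\eta\le\infty$ infinite pieces always exists. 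The weight $\frac{t^{\kappa+1}-1}{t^{\kappa}}$ is bounded on $\varDelta_i$ only if $\varDelta_i$ is bounded away from $0$ and $\infty$; the first holds since $\inf\supp\tau>1$, but $\varDelta_i$ may be unbounded, so for the comparison test one should compare against $\frac{t^{\kappa+1}-1}{t^\kappa}\D\tau$ restricted to $\varDelta_i$ directly (which is itself a measure with infinitely many atoms removed from an N-extremal measure only after we recognize $\frac{t^{\kappa+1}-1}{t^{\kappa}}\D\tau$ as having index $\infty$ — which follows from \eqref{indinf} applied to $\tau$, since $\tau$ being N-extremal of an S-indeterminate sequence is \emph{not} H-determinate, so one must instead argue on each $\varDelta_i$ using Theorem \ref{b-d} with $\varOmega'=\supp\tau\setminus\varDelta_i$ and then Proposition \ref{krytnd} for the density $\frac{t^{\kappa+1}-1}{t^\kappa}$, which is bounded on $\varDelta_i$ provided $\varDelta_i$ is bounded — so in the end the partition blocks should be taken both infinite and bounded, again clearly possible). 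Tracking these absolute-continuity and boundedness conditions carefully, and then propagating index $\infty$ through \eqref{muxij1}, \eqref{hfi}, \eqref{hfir-2} and \eqref{indinf}, is the technical heart of the argument; everything else is bookkeeping on top of Lemma \ref{pierwsze}.
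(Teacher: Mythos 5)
Your overall architecture is the paper's: reduce to Procedure \ref{proceed2} and Lemma \ref{pierwsze}, manufacture $\nu$ and $\tau$ from the Krein and Friedrichs measures of an S-indeterminate Stieltjes moment sequence by rescaling and the homothety of Lemma \ref{epslem}(v), and obtain (ii) from Theorem \ref{b-d} plus the comparison test, with (iv) then following formally from (i)--(iii) via Theorem \ref{glowne} and Theorem \ref{main}(i). (One remark on your opening paragraph: you cannot take $\nu=\beta^{(a;q)}$, the Friedrichs measure, and then find $\tau\in\msc_e^+$ of the same sequence with $\inf\supp{\tau}>\inf\supp{\nu}$, because by \eqref{KFm} the Friedrichs measure maximizes $\inf\supp{\cdot}$; your ordered steps, which transport the Krein/Friedrichs pair simultaneously so that the image of the \emph{Krein} measure has infimum $1$, are the correct version.)

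The genuine flaw is in your resolution of what you yourself call the technical heart, Step five. You propose to choose the partition blocks $\varDelta_i$ ``both infinite and bounded'' so that the density $\frac{t^{\kappa+1}-1}{t^{\kappa}}$ is bounded on each block. No such blocks exist: by Lemma \ref{proN} the set $\supp{\tau}$ has no accumulation point in $\rbb$, so every bounded subset of it is finite and every infinite subset is unbounded (for finite $\eta$ the impossibility is even more immediate, since a finite partition of the unbounded set $\supp{\tau}$ must contain an unbounded block). The repair needs no bounded blocks and stays inside your own toolkit: take all blocks infinite, so that each complement $\supp{\tau}\setminus\varDelta_i$ is infinite when $\eta\Ge 2$ --- note that it is the \emph{complement}, not $\varDelta_i$ itself, that Theorem \ref{b-d} requires to be infinite --- conclude $\ind{z}{\xi_i}=\infty$ for $\xi_i=\sum_{\lambda\in\varDelta_i}\tau(\{\lambda\})\delta_{\lambda}$, and then apply \eqref{indinf} to $\xi_i$ (not to $\tau$, which is H-indeterminate) to see that $t^{2m}\D\xi_i(t)$ is H-determinate for every $m$. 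Since $\supp{\xi_i}\subseteq(1,\infty)$, one has $|t-z|^{2k}\,t^{j-1}\,\frac{t^{\kappa+1}-1}{t^{\kappa}}\Le(1+|z|)^{2k}\,t^{2(j+\kappa+k)}$ there, so $|t-z|^{2k}\D P(x_{i,j},\cdot)$ is dominated by a constant multiple of $t^{2(j+\kappa+k)}\D\xi_i(t)$, and Proposition \ref{krytnd} yields $\ind{z}{P(x_{i,j},\cdot)}=\infty$. This is exactly the paper's computation; as written, your Step five does not go through for any value of $\eta$.
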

   \begin{proof}
First, we note that if (i) holds, then by Proposition
\ref{hfi4}, $\dzn{C_{\phi}}$ is dense in $L^2(\mu)$.
We begin by proving that the condition (iv) follows
from (i), (ii) and (iii). Suppose, on the contrary,
that there exists a family
$\{P^\prime(x,\cdot)\}_{x\in X}$ of Borel probability
measures on $\rbb_+$ that satisfies \eqref{cc}. It
follows from Theorem \ref{glowne} that
$P^\prime(x_{i,1},\cdot)$ is an S-representing measure
of $\{\hfin{n}(x_{i,1})\}_{n=0}^\infty$ for every
$i\in J_\eta$. Hence, by (ii) and (iii),
$P^\prime(x_{i,1},\cdot)=P(x_{i,1},\cdot)$ for every
$i\in J_\eta$. This together with Theorem
\ref{main}(i) implies that the family
$\{P(x_{i,1},\cdot)\}_{i\in J_\eta}$ satisfies the
condition \mbox{(i-d)} of Theorem \ref{main}, which
contradicts (iii).

Take any S-indeterminate Stieltjes moment sequence
$\gammab = \{\gamma_n\}_{n=0}^{\infty}$ (see e.g., the
classical example due to Stieltjes \cite{Sti}; see
also Section \ref{SecASC}). Let $\alpha$ and $\beta$
be the Krein and the Friedrichs measures of $\gammab$
(see Section \ref{secbc}). Then by Lemma \ref{proN}
and \eqref{KFm}, $0 < \alpha((0,\infty)) <
\alpha(\rbb_+)$. Hence, replacing $(\alpha, \beta)$ by
$(r\alpha, r\beta)$ with $r :=
\alpha((0,\infty))^{-1}$ if necessary, we may assume
that
   \begin{align} \label{gs1+}
   \begin{minipage}{58ex}
$\alpha$ and $\beta$ are N-extremal measures of the
same Stieltjes moment sequence such that
$0=\inf\supp{\alpha} < \inf\supp{\beta}$ and
$\alpha(\rbb_+) = 1+ \alpha(\{0\}) > 1$.
   \end{minipage}
   \end{align}
It follows from Lemma \ref{epslem}(v) that there
exists $a \in (0,\infty)$ such that \eqref{11X14}
holds. This combined with \eqref{gs1+} and Lemma
\ref{transH} shows that the measures
$\nu:=\alpha^{(a)}$ and $\tau:=\beta^{(a)}$ satisfy
the conditions \eqref{gs1}-\eqref{gs4} of Procedure
\ref{proceed2}. Since the measure $\tau$ is N-extremal
and $\eta \Ge 2$, we infer from Lemma \ref{proN} that
there exists a partition
$\{\varDelta_i\}_{i=1}^{\eta}$ of $\supp{\tau}$ such
that
   \begin{align} \label{dell}
\card{\supp{\tau} \setminus \varDelta_i} = \aleph_0,
\quad i\in J_{\eta}.
   \end{align}

Let $X$, $\mu$, $P(x_{\kappa}, \cdot)$,
$\{P(x_{i,j},\cdot)\}_{i\in J_{\eta}, j\in \nbb}$ and
$C_{\phi}$ be as in Procedure \ref{proceed2}. We will
show that the operator $C_{\phi}$ has all the required
properties. Indeed, set $\xi_i = \sum_{\lambda \in
\varDelta_i} \tau(\{\lambda\}) \delta_{\lambda}$ for
every $i \in J_{\eta}$. We deduce from the equality
\eqref{dell} and Theorem \ref{b-d} that each $\xi_i$
is H-determinate and
   \begin{align} \label{ajaja}
\text{$\ind z {\xi_i} = \infty$ for all $z\in \cbb$
and $i \in J_{\eta}$.}
   \end{align}
Fix $(i,j) \in J_{\eta} \times \nbb$ and $z\in \cbb$.
It follows from \eqref{gs2} that $\supp{\xi_i}
\subseteq (1,\infty)$. Hence, by \eqref{gs7} and
\eqref{muxij1}, we see that for every $k\in \zbb_+$,
\allowdisplaybreaks
   \begin{align*}
\int_{\sigma} |t-z|^{2k} P(x_{i,j},\D t) & = c_i
\frac{\mu(x_{i,1})}{\mu(x_{i,j})} \int_{\sigma}
|t-z|^{2k} t^{j-1} \frac{t^{\kappa+1}-1}{t^{\kappa}}
\chi_{\varDelta_i}(t) \D \tau(t)
   \\
& \Le c_i \frac{\mu(x_{i,1})}{\mu(x_{i,j})}
(1+|z|)^{2k} \int_{\sigma} t^{j+\kappa+2k} \D \xi_i(t)
   \\
& \Le c_i \frac{\mu(x_{i,1})}{\mu(x_{i,j})}
(1+|z|)^{2k} \int_{\sigma} t^{2(j+\kappa + k)} \D
\xi_i(t), \quad \sigma \in \borel{\rbb}.
   \end{align*}
This, \eqref{ajaja}, \eqref{indinf} and Proposition
\ref{krytnd} imply that the measure $P(x_{i,j},
\cdot)$ is H-determinate and $\ind z {P(x_{i,j},
\cdot)} = \infty$. Applying Lemma \ref{pierwsze}, we
conclude that $C_{\phi}$ satisfies the conditions
(i)-(v). This completes the proof.
   \end{proof}
The case of $\eta=1$ turns out to be surprisingly
different from the previous one (compare Theorem
\ref{drugie}(iv) with Theorem \ref{drugie1}(iv)).
   \begin{thm} \label{drugie1}
Let $\kappa \in \zbb_+$. Then there exists a discrete
measure $\mu$ on $X=X_{1,\kappa}$ such that the
composition operator $C_{\phi}$ in $L^2(\mu)$ with
$\phi=\phi_{1,\kappa}$ has the property that
$\overline{\dzn{C_{\phi}}}=L^2(\mu)$ and the following
conditions hold{\em :}
   \begin{enumerate}
   \item[(i)]
$\{\hfin{n}(x_{\kappa})\}_{n=0}^\infty$ is an
H-determinate Stieltjes moment sequence with index of
H-determinacy at $0$ equal to $0$,
   \item[(ii)]
for every $j\in \nbb$,
$\{\hfin{n}(x_{1,j})\}_{n=0}^\infty$ is an
S-indeterminate Stieltjes moment sequence$;$ in
particular, the condition \mbox{{\em (i-a)}} of
Theorem {\em \ref{main}} holds,
   \item[(iii)] there exists a Borel probability
measure $P(x_{1,1},\cdot)$ on $\rbb_+$ that satisfies
the conditions \mbox{{\em (i-b)}}, \mbox{{\em (i-c)}}
and \mbox{{\em (i-d$^{\prime}$)}} of Theorems {\em
\ref{main}} and {\em \ref{6.2}}, and does not satisfy
the condition {\em \mbox{(i-d)}} of Theorem {\em
\ref{main}},
   \item[(iv)]
if $\kappa=0$, then there exists a family
$\{P^{\prime}(x,\cdot)\}_{x\in X}$ of Borel
probability measures on $\rbb_+$ that satisfies
\eqref{cc} and, consequently, $C_{\phi}$ is subnormal.
   \end{enumerate}
   \end{thm}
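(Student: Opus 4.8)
The plan is to follow the pattern of the proof of Theorem~\ref{drugie}, but with the (necessarily trivial) partition dictated by $\eta=1$, and to extract most conclusions directly from Lemma~\ref{pierwsze}. First I would fix an S-indeterminate Stieltjes moment sequence $\gammab=\{\gamma_n\}_{n=0}^\infty$ (for instance the classical example of Stieltjes, or one arising from Al-Salam-Carlitz $q$-polynomials as in Section~\ref{SecASC}), let $\alpha$ and $\beta$ be its Krein and Friedrichs measures, and rescale by $\alpha((0,\infty))^{-1}$ so that $0=\inf\supp{\alpha}<\inf\supp{\beta}$ and $\alpha(\rbb_+)=1+\alpha(\{0\})>1$; since $\alpha$ and $\beta$ represent the same moment sequence, also $\beta(\rbb_+)=\alpha(\rbb_+)>1$. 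When $\kappa\Ge 1$, Lemma~\ref{epslem}(v) applied to $\beta$ provides $a\in(0,\infty)$ for which $\beta^{(a)}$ satisfies \eqref{11X14}; when $\kappa=0$ the requirement \eqref{gs4} reduces to $1+\tau(\rbb_+)>\tau(\rbb_+)$ and holds for any $a$, so one may take $a=1$. Put $\nu:=\alpha^{(a)}$ and $\tau:=\beta^{(a)}$. By Lemma~\ref{transH} (together with the normalization recorded in \eqref{gs1+}) these are N-extremal measures of one and the same Stieltjes moment sequence satisfying \eqref{gs1}--\eqref{gs4}. Since $\eta=1$, the only admissible partition in \eqref{gs6} is $\varDelta_1=\supp{\tau}$, which is legitimate because $\card{\supp{\tau}}=\aleph_0$ by Lemma~\ref{proN}. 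Let $X=X_{1,\kappa}$, $\mu$, the measures $P(x_{\kappa},\cdot)$, $\{P(x_{1,j},\cdot)\}_{j\in\nbb}$ and the operator $C_{\phi}$ be the data produced by Procedure~\ref{proceed2}.

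Now Lemma~\ref{pierwsze} yields at once $\overline{\dzn{C_{\phi}}}=L^2(\mu)$, conclusion (i) (part (i) of that lemma), and, specializing parts (iii) and (iv) of that lemma to $\eta=1$, conclusion (iii). For conclusion (ii) I would argue as follows. By Lemma~\ref{pierwsze}(ii) each $\{\hfin{n}(x_{1,j})\}_{n=0}^\infty$ is a Stieltjes moment sequence with S-representing measure $P(x_{1,j},\cdot)$, and by \eqref{gs7}, \eqref{muxij1} and the triviality of the partition, $P(x_{1,j},\cdot)$ is a positive constant times $t^{j-1}\frac{t^{\kappa+1}-1}{t^{\kappa}}\,\D\tau(t)$. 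Writing $s_0:=\inf\supp{\tau}$, we have $s_0>1$ by \eqref{gs2}, so the density $t^{j-1}(t-t^{-\kappa})$ is bounded below on $\supp{\tau}$ by a positive constant, whence $\tau(\sigma)\Le M_j\,P(x_{1,j},\sigma)$ for all $\sigma\in\borel{\rbb_+}$ and some $M_j\in(0,\infty)$. If $P(x_{1,j},\cdot)$ were H-determinate, then, by the comparison test (Proposition~\ref{krytnd}), $\tau$ would be H-determinate, contradicting the H-indeterminacy of the N-extremal measure $\tau$. Hence $P(x_{1,j},\cdot)$ is H-indeterminate, and since it vanishes on $\{0\}$ it is S-indeterminate by \cite[Corollary, p.\ 481]{chi}; therefore $\{\hfin{n}(x_{1,j})\}_{n=0}^\infty$ is S-indeterminate, which is (ii), and in particular the condition \mbox{(i-a)} of Theorem~\ref{main} holds.

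Finally, conclusion (iv) is immediate when $\kappa=0$: Lemma~\ref{pierwsze}(vii) directly supplies a family $\{P^\prime(x,\cdot)\}_{x\in X}$ of Borel probability measures on $\rbb_+$ satisfying \eqref{cc}, and then $C_{\phi}$ is subnormal by Theorem~\ref{glowne}. The step I expect to carry the real content is precisely the compatibility of (ii) with (iv), which is also what makes this case differ sharply from Theorem~\ref{drugie}(iv): there, for $\eta\Ge 2$, the measures $P(x_{i,1},\cdot)$ are forced to be the unique S-representing measures of their moment sequences (they have infinite index of H-determinacy), so their failure of \mbox{(i-d)} genuinely obstructs \eqref{cc}; here, for $\eta=1$, the single measure $P(x_{1,1},\cdot)$ is on the contrary S-indeterminate, \mbox{(i-d)} is no longer the relevant obstruction, and, by Proposition~\ref{eta1}, a family satisfying \eqref{cc} exists as soon as $\{\hfin{n}(x_0)\}_{n=0}^\infty$ is a Stieltjes moment sequence with an S-representing measure vanishing on $[0,1)$ --- a property that the measure $P(x_0,\cdot)$ furnished by Procedure~\ref{proceed2} has by construction. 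Recognizing this is the crux; everything else is bookkeeping inherited from Lemmata~\ref{epslem}, \ref{pierwsze} and Procedure~\ref{proceed2}.
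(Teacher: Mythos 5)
Your proposal is correct and follows essentially the same route as the paper: Procedure \ref{proceed2} with the trivial partition $\varDelta_1=\supp{\tau}$, Lemma \ref{pierwsze} for everything except (ii), and for (ii) a comparison of $\tau$ against $P(x_{1,j},\cdot)$ via Proposition \ref{krytnd} combined with Chihara's corollary. The only (harmless) difference is that the paper first reduces to $j=1$ by backward induction using $\hfin{n}(x_{1,k}) = \frac{\mu(x_{1,k-1})}{\mu(x_{1,k})}\hfin{n+1}(x_{1,k-1})$ and \cite[Proposition 5.12]{sim}, whereas you run the comparison directly for every $j$, which works because the density $t^{j-1}(t^{\kappa+1}-1)/t^{\kappa}$ is bounded below by a positive constant on $\supp{\tau}\subseteq(1,\infty)$ for each $j$.
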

   \begin{proof}
As in the proof of Theorem \ref{drugie}, we see that
there exist measures $\nu$ and $\tau$ satisfying the
conditions \eqref{gs1}-\eqref{gs4} of Procedure
\ref{proceed2}. The only possible partition
$\{\varDelta_i\}_{i=1}^{\eta}$ of $\supp{\tau}$ with
$\eta=1$ is $\varDelta_1=\supp{\tau}$. Let $X$, $\mu$,
$P(x_{\kappa}, \cdot)$,
$\{P(x_{1,j},\cdot)\}_{j=1}^{\infty}$ and $C_{\phi}$
be as in Procedure \ref{proceed2}. In view of Lemma
\ref{pierwsze}, it remains to show that for every
$j\in \nbb$, the Stieltjes moment sequence
$\{\hfin{n}(x_{1,j})\}_{n=0}^\infty$ is
S-indeterminate.

Suppose, on the contrary, that there exists
$k\in \nbb$ such that
$\{\hfin{n}(x_{1,k})\}_{n=0}^\infty$ is S-determinate.
We show that then $\{\hfin{n}(x_{1,1})\}_{n=0}^\infty$
is S-determinate. For this we may assume that $k \Ge
2$. It follows from \eqref{hfi} that
   \begin{align*}
\hfin{n}(x_{1,k}) = \frac{\mu(x_{1,k-1})}
{\mu(x_{1,k})} \hfin{n+1}(x_{1,k-1}), \quad n \in
\zbb_+.
   \end{align*}
Hence, by \cite[Proposition 5.12]{sim},
$\{\hfin{n}(x_{1,k-1})\}_{n=0}^\infty$ is
S-determinate. Applying backward induction, we deduce
that $\{\hfin{n}(x_{1,1})\}_{n=0}^\infty$ is
S-determinate. Since, by Lemma \ref{pierwsze}(ii),
$P(x_{1,1}, \cdot)$ is an S-representing measure of
$\{\hfin{n}(x_{1,1})\}_{n=0}^\infty$ and $P(x_{1,1},
\{0\}) = 0$ (see \eqref{gs2} and \eqref{gs7}), we
infer from \cite[Corollary, p.\ 481]{chi} (see also
\cite[Lemma 2.2.5]{j-j-s0}) that $P(x_{1,1}, \cdot)$
is H-determinate. By \eqref{gs2}, there exists $M \in
(0,\infty)$ such that
   \begin{align*}
\frac{t^{\kappa+1}-1}{t^{\kappa}} \Ge M, \quad t \in
[\inf\supp{\tau}, \infty).
   \end{align*}
This combined with \eqref{gs7} and Proposition
\ref{krytnd} (applied to $\tau$ and $P(x_{1,1},
\cdot)$) implies that $\tau$ is H-determinate, which
contradicts \eqref{gs1}.
   \end{proof}
Regarding Theorem \ref{drugie1}, note that if
$\kappa=0$, then (ii) can also be deduced from (iii)
and (iv) by arguing as in the first paragraph of the
proof of Theorem \ref{drugie}.
   \subsection{Exotic non-hyponormality} \label{s5.4}
   The main purpose of this section is to construct
non-hyponormal composition operators $C_{\phi}$ in
$L^2(X,2^{X},\mu)$ that generate Stieltjes moment
sequences with $X=X_{\eta,0}$ and
$\phi=\phi_{\eta,0}$. Recall that
$C_{\phi_{\eta,\kappa}}$ is always injective (cf.\
\eqref{injcos}). We begin with the case of
$\eta=\infty$.
   \begin{thm} \label{trzecie}
There exists a discrete measure $\mu$ on
$X=X_{\infty,0}$ such that the composition operator
$C_{\phi}$ in $L^2(\mu)$ with $\phi=\phi_{\infty,0}$
has the following properties{\em :}
   \begin{enumerate}
   \item[(i)] $C_{\phi}$ is injective and generates Stieltjes
moment sequences,
   \item[(ii)] $C_{\phi}$ is not hyponormal,
   \item[(iii)]
$\{\hfin{n}(x_{0})\}_{n=0}^\infty$ is an H-determinate
Stieltjes moment sequence with index of H-determinacy
at $0$ equal to $0$,
   \item[(iv)] $\{\hfin{n}(x)\}_{n=0}^\infty$ is an
H-determinate Stieltjes moment sequence with infinite
index of H-determinacy for all $x\in X \setminus
\{x_0\}$.
   \end{enumerate}
   \end{thm}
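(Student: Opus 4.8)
The plan is to instantiate Procedure \ref{proceed2} with $\kappa = 0$ and $\eta = \infty$, starting from a carefully chosen pair of N-extremal measures, and then invoke Lemma \ref{pierwsze} together with the auxiliary estimates of Lemma \ref{epslem}. First I would take an S-indeterminate Stieltjes moment sequence $\gammab$ with Krein measure $\alpha$ and Friedrichs measure $\beta$; by \eqref{KFm} and Lemma \ref{proN} we have $0 = \inf\supp{\alpha} < \inf\supp{\beta}$ and $0 < \alpha((0,\infty)) < \alpha(\rbb_+)$, so after rescaling both by $\alpha((0,\infty))^{-1}$ we may assume \eqref{gs1+} holds: $\alpha$ and $\beta$ are N-extremal measures of a common Stieltjes moment sequence with $\alpha(\rbb_+) = 1 + \alpha(\{0\}) > 1$. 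Applying the homothety $\psi_{a^{-1},a}$ of Lemma \ref{epslem} and using part (i) of that lemma, the pushed-forward measures $\nu := \alpha^{(a)}$ and $\tau := \beta^{(a)}$ satisfy \eqref{gs1}, \eqref{gs2} and \eqref{gs3} for every $a \in (0,\infty)$; since $\kappa = 0$, the condition \eqref{gs4} reads $1 + \tau(\rbb_+) > \tau(\rbb_+)$, which is automatic. So for $\kappa = 0$ any $a > 0$ works at this stage, and I must only fix $a$ small enough so that the hyponormality criterion fails.

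The heart of the matter is arranging that $C_\phi$ is \emph{not} hyponormal. By Lemma \ref{pierwsze}(v), with $\kappa = 0$ the operator $C_\phi$ is hyponormal if and only if
\begin{align*}
\sum_{i=1}^{\infty} \frac{(\int_{\varDelta_i}(t-1)\,\D\tau(t))^2}{\int_{\varDelta_i} t(t-1)\,\D\tau(t)} \Le \frac{\int_0^\infty (t-1)\,\D\tau(t)}{1+\int_0^\infty(t-1)\,\D\tau(t)},
\end{align*}
so I want to \emph{violate} this inequality. The key idea is that if each $\varDelta_i$ is a singleton $\{\theta_i^{(a)}\}$ (the points of $\supp\tau$, which is countably infinite by Lemma \ref{proN}), then the left-hand sum collapses to $\sum_i \frac{\theta_i^{(a)}-1}{\theta_i^{(a)}}\,\tau(\{\theta_i^{(a)}\})$, and Lemma \ref{epslem}(iv) supplies an $a_2 > 0$ such that for all $a \in (0,a_2)$ this quantity strictly exceeds $\frac{\int_0^\infty(t-1)\,\D\tau(t)}{1+\int_0^\infty(t-1)\,\D\tau(t)}$ — exactly the negation of hyponormality. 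The mechanism behind Lemma \ref{epslem}(iv) is that as $a \to 0+$ the weight $\frac{\theta_i}{a+\theta_i}$ tends to $1$ while $\tau = \beta^{(a)}$ has total mass $\beta(\rbb_+) > 1$, so the sum exceeds $1$ while the right-hand side tends to $1$; parts (ii) and (iii) make this quantitative. I would therefore take the partition $\varDelta_i = \{\theta_i^{(a)}\}$ (an admissible partition of the countably infinite set $\supp\tau$ into singletons, which uses $\eta = \infty$ crucially), fix $a \in (0,a_2)$, and run Procedure \ref{proceed2} to produce $\mu$ on $X = X_{\infty,0}$ and the operator $C_\phi$.

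It then remains to read off the conclusions. Injectivity of $C_\phi$ is \eqref{injcos} (equivalently $\hfi(x) > 0$ for all $x$, which holds since $\phi(X) = X$); that $C_\phi$ generates Stieltjes moment sequences is Lemma \ref{pierwsze}(vi), giving (i). Non-hyponormality is Lemma \ref{pierwsze}(v) combined with the violated inequality above, giving (ii). For (iii), Lemma \ref{pierwsze}(i) says $\{\hfin{n}(x_0)\}_{n=0}^\infty$ is an H-determinate Stieltjes moment sequence with S-representing measure $P(x_0,\cdot)$ of index of H-determinacy $0$ at $0$ (here $x_0 = x_\kappa$ since $\kappa = 0$). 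For (iv), the argument mirrors the one in the proof of Theorem \ref{drugie}: because $\tau$ is N-extremal, $\supp\tau$ is countably infinite, so $\supp\tau \setminus \varDelta_i$ is infinite for every $i$; hence by Theorem \ref{b-d} each $\xi_i := \sum_{\lambda\in\varDelta_i}\tau(\{\lambda\})\delta_\lambda$ is H-determinate with $\ind{z}{\xi_i} = \infty$ for all $z \in \cbb$, and since by \eqref{gs7} and \eqref{muxij1} the measure $P(x_{i,j},\cdot)$ is dominated (up to a polynomial-in-$t$ factor, absorbed into higher $\xi_i$-moments via \eqref{indinf}) by a constant times a fixed power of $t$ against $\xi_i$, Proposition \ref{krytnd} gives H-determinacy of $P(x_{i,j},\cdot)$ with infinite index of H-determinacy. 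This covers every $x \in X \setminus \{x_0\}$, completing the proof. The main obstacle is really the already-packaged Lemma \ref{epslem}(iv): verifying that small dilations push the Friedrichs measure's total mass excess into a genuine violation of the hyponormality sum; once that lemma is in hand, everything else is bookkeeping through Procedure \ref{proceed2} and Lemma \ref{pierwsze}.
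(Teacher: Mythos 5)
Your proposal is correct and follows essentially the same route as the paper: rescale the Krein/Friedrichs pair to get \eqref{gs1+}, push forward by $\psi_{a^{-1},a}$ with $a$ small enough that Lemma \ref{epslem}(iv) kills the hyponormality inequality, partition $\supp{\tau}$ into singletons, and run Procedure \ref{proceed2} and Lemma \ref{pierwsze}. The only (harmless) divergence is in (iv), where the paper simply observes that the singleton partition forces $P(x_{i,j},\cdot)=\delta_{\theta_i^{(a)}}$, so the relevant sequences are trivially H-determinate with infinite index, whereas you re-run the Theorem \ref{b-d} plus Proposition \ref{krytnd} domination argument from Theorem \ref{drugie} — which also works but is heavier than needed here.
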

   \begin{proof}
Arguing as in the proof of Theorem \ref{drugie}, we
see that there exist measures $\alpha$ and $\beta$
satisfying \eqref{gs1+}. Let
$\{\theta_i\}_{i=1}^\infty$ be an injective sequence
such that $\supp{\beta} = \{\theta_1, \theta_2,
\ldots\}$ (cf.\ Lemma \ref{proN}). It follows from
Lemma \ref{epslem}(iv) that there exists $a \in
(0,\infty)$ such that \eqref{tatroj} holds. This and
Lemma \ref{transH} show that the measures
$\nu:=\alpha^{(a)}$ and $\tau:=\beta^{(a)}$ satisfy
the conditions \eqref{gs1}-\eqref{gs4} of Procedure
\ref{proceed2} for $\kappa=0$ and $\eta=\infty$. Note
also that in view of Lemma \ref{epslem}(i),
$\supp{\tau} = \{\theta_1^{(a)}, \theta_2^{(a)},
\ldots\}$. Set $\varDelta_i = \{\theta_i^{(a)}\}$ for
$i \in \nbb$. Clearly, the sequence
$\{\varDelta_i\}_{i=1}^{\infty}$ satisfies the
condition \eqref{gs6} for $\eta=\infty$.

Let $X$, $\mu$, $\{P(x, \cdot)\}_{x\in X}$ and
$C_{\phi}$ be as in Procedure \ref{proceed2} for
$\kappa=0$ and $\eta=\infty$. It follows from
\eqref{gs7}, \eqref{muxij} and \eqref{muxij1} that
$P(x_{i,j},\cdot) = \delta_{\theta_i^{(a)}}(\cdot)$
for all $i,j\in \nbb$. This is easily seen to imply
(iv). Since \eqref{tatroj} yields
   \begin{align*}
\sum_{i=1}^{\infty} \frac{(\int_{\varDelta_i}(t-1) \D
\tau(t))^2}{\int_{\varDelta_i}t (t-1) \D \tau(t)} >
\frac{\int_0^\infty (t-1) \D
\tau(t)}{1+\int_0^\infty(t-1) \D \tau(t)},
   \end{align*}
an application of Lemma \ref{pierwsze} completes the
proof.
   \end{proof}
Now we concern ourselves with the case when $\eta$ is
an arbitrary integer greater than or equal to $2$. The
most striking case is that of $\eta=2$. Below, given
$x\in \rbb$, we write $\lfloor x \rfloor$ for the
largest integer not greater than $x$, and $\lceil x
\rceil$ for the smallest integer not less than $x$.
   \begin{thm} \label{czwarte}
Let $\eta \in \nbb_2$. Then there exists a discrete
measure $\mu$ on $X=X_{\eta,0}$ such that the
composition operator $C_{\phi}$ in $L^2(\mu)$ with
$\phi=\phi_{\eta,0}$ has the following properties{\em
:}
   \begin{enumerate}
   \item[(i)] $C_{\phi}$ is injective and generates Stieltjes
moment sequences,
   \item[(ii)] $C_{\phi}$ is not hyponormal,
   \item[(iii)] $\{\hfin{n}(x)\}_{n=0}^\infty$ is a
Stieltjes moment sequence for every $x\in X$,
   \item[(iv)]
$\{\hfin{n}(x_{0})\}_{n=0}^\infty$ is H-determinate
with index of H-determinacy at $0$ equal to~ $0$,
   \item[(v)]
for all $i\in J_{\eta-1}$ and $j\in \nbb$, the
sequence $\{\hfin{n}(x_{i,j})\}_{n=0}^\infty$ is
H-determinate with infinite index of H-determinacy,
   \item[(vi)] for every $j \in J_{2\eta-4}$,
the sequence $\{\hfin{n}(x_{\eta,j})\}_{n=0}^\infty$
is H-determinate and its unique H-representing measure
$P(x_{\eta,j}, \cdot)$ satisfies the following
condition
   \begin{align} \label{ind0}
\eta-2 - \lceil j/2 \rceil \Le \ind 0 {P(x_{\eta,j},
\cdot)} \Le \eta-2 - \lfloor j/2 \rfloor.
   \end{align}
   \end{enumerate}
   \end{thm}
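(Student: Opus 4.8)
The plan is to apply Procedure \ref{proceed2} with $\kappa = 0$ and the given $\eta \in \nbb_2$, choosing the N-extremal measures $\nu$ and $\tau$ so that Lemma \ref{pierwsze} yields all of the stated conclusions. By Lemma \ref{pierwsze}, once we produce measures $\nu$ and $\tau$ satisfying \eqref{gs1}--\eqref{gs4} together with a partition $\{\varDelta_i\}_{i=1}^\eta$ of $\supp{\tau}$, the resulting composition operator $C_\phi$ automatically generates Stieltjes moment sequences (part (vi) of Lemma \ref{pierwsze}), is injective (by \eqref{injcos}), satisfies (iii) and (iv) here via Lemma \ref{pierwsze}(i),(ii), and fails to be subnormal since Lemma \ref{pierwsze}(iv) says \mbox{(i-d)} of Theorem \ref{main} is violated — although for non-hyponormality we need the sharper criterion. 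The non-hyponormality (ii) will follow from Lemma \ref{pierwsze}(v): $C_\phi$ is hyponormal iff \eqref{Bud-ski-1} holds, so we must arrange the partition $\{\varDelta_i\}$ so that \eqref{Bud-ski-1} \emph{fails}. Thus the construction has two competing demands on the partition: for $i \in J_{\eta-1}$ we want $\varDelta_i$ to be cofinite-complement (i.e. $\card{\supp{\tau}\setminus\varDelta_i} = \aleph_0$) so that Theorem \ref{b-d} gives infinite index of H-determinacy for $\{\hfin{n}(x_{i,j})\}$, yielding (v); but the last block $\varDelta_\eta$ must be chosen carefully to both violate \eqref{Bud-ski-1} and produce the delicate index bounds in (vi).

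The first step I would carry out is to fix a suitable S-indeterminate Stieltjes moment sequence and normalize its Krein measure $\alpha$ and Friedrichs measure $\beta$ as in \eqref{gs1+}, so that $0 = \inf\supp{\alpha} < \inf\supp{\beta}$ and $\alpha(\rbb_+) = 1 + \alpha(\{0\}) > 1$. The crucial observation, foreshadowed in the Outline, is that for $\eta = 2$ one needs a \emph{specific} N-extremal measure, not an arbitrary one — this is where the shifted Al-Salam-Carlitz $q$-polynomials (or the quartic birth and death process, cf.\ Remark \ref{piate}) enter. Concretely, I expect one takes $\beta = \beta^{(a;q)}$ from \eqref{betaq} with $1 < a < q^{-1}$ (so that $\beta^{(a;q)}$ is N-extremal and its support is $\{q^{-n}\}_{n=0}^\infty$), together with the companion measure $\gamma^{(a;q)}$ from \eqref{gamprezy}, whose explicit atomic structure is what makes the index-of-H-determinacy bookkeeping in \eqref{ind0} tractable. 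Lemma \ref{Euler} provides the estimate $(q/a;q)_\infty + (aq;q)_\infty > 1$ for $q$ small, which is precisely what is needed to verify the Cauchy-Schwarz-type strict inequality that negates \eqref{Bud-ski-1}; and Lemma \ref{epslem}, applied via the homothety $\psi_{a^{-1},a}$, lets us push the supports into $(1,\infty)$ while preserving the relevant mass inequalities \eqref{gs2}--\eqref{gs4}.

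Next I would set up the partition. For $i \in J_{\eta-1}$, choose $\varDelta_i \subseteq \supp{\tau}$ to be any set whose complement in $\supp{\tau}$ is still countably infinite — e.g.\ distribute the atoms so that each $\varDelta_i$ ($i < \eta$) omits infinitely many, and let $\varDelta_\eta$ collect a designated block of atoms together with "most" of the remaining mass. Then $P(x_{i,1},\cdot)$ for $i < \eta$ is (a rescaling of) a measure supported on $\varDelta_i$, and by Theorem \ref{b-d} the measure $\sum_{\lambda \in \varDelta_i} \tau(\{\lambda\})\delta_\lambda$ is H-determinate with $\ind z {\cdot} = \infty$; the domination estimate used in the proof of Theorem \ref{drugie} (applied to the weights $t^{j-1}(t^{\kappa+1}-1)/t^\kappa$ with $\kappa = 0$, i.e.\ $t^{j-1}(t-1)$) then transfers this to $P(x_{i,j},\cdot)$ for all $j$, giving (v). For $\varDelta_\eta$ and the induced measures $P(x_{\eta,j},\cdot)$, the homothety identity $P(x_{\eta,j},\sigma) = c_\eta \frac{\mu(x_{\eta,1})}{\mu(x_{\eta,j})} \int_{\varDelta_\eta \cap \sigma} t^{j-1}(t-1)\,\D\tau(t)$ shows that $P(x_{\eta,j},\cdot)$ differs from a fixed N-extremal-type measure by multiplication by a polynomial of degree roughly $j$, and removing finitely many atoms (those in $\supp{\tau}\setminus\varDelta_\eta$, of which there are finitely many relevant ones near the bottom); using \cite[Theorem 3.6]{ber-dur} together with the multiplication-by-$t^k$ stability in \eqref{indinf} and the comparison test Proposition \ref{krytnd} one reads off the bounds $\eta - 2 - \lceil j/2\rceil \le \ind 0 {P(x_{\eta,j},\cdot)} \le \eta - 2 - \lfloor j/2 \rfloor$. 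The floor/ceiling appears because each factor of $(t-1)$ inside $t^{j-1}(t-1)$ near $t = 1$ raises the order of vanishing by one, and $j-1$ factors of $t$ contribute at rate one per two (via the $|t - \I|^{2l}$ comparison that relates $t^k$ to squared moduli).

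The main obstacle will be the simultaneous satisfaction of the non-hyponormality inequality (negation of \eqref{Bud-ski-1}) and the precise index estimates \eqref{ind0}, because the block $\varDelta_\eta$ must carry enough "concentrated" mass to make $\sum_{i} \frac{(\int_{\varDelta_i}(t-1)\D\tau)^2}{\int_{\varDelta_i} t(t-1)\D\tau}$ exceed $\frac{\int(t-1)\D\tau}{1+\int(t-1)\D\tau}$, yet cannot carry \emph{so many} bottom atoms that the index drops below $\eta - 2 - \lceil j/2\rceil$. This balancing is exactly why generic N-extremal measures will not work and one must invoke the explicit $\beta^{(a;q)}$, $\gamma^{(a;q)}$: their geometric atom spacing $q^{-n}$ gives tight, computable control of both the quadratic-form sum (via Lemma \ref{Euler}'s Euler-function estimate) and the index (via Berg-Duran's Theorem 3.6 applied atom-by-atom). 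I would organize the proof as: Step 1 — reduce to producing $\nu, \tau$ and a partition with the three properties (non-hyponormality, infinite index for $i < \eta$, the sandwich \eqref{ind0} for $i = \eta$), citing Lemma \ref{pierwsze} and Theorem \ref{b-d} for everything else; Step 2 — construct $\tau = \beta^{(a;q)}\circ\psi_{a^{-1},a}^{-1}$ with $a, q$ chosen so that Lemma \ref{Euler} and Lemma \ref{epslem} apply, and exhibit the partition; Step 3 — verify the index bounds for $P(x_{\eta,j},\cdot)$ by the domination/comparison argument. As in the outline's final sentence, dropping the demand $\eta = 2$ and using only Step 1 plus \eqref{gs1+} would already give the result for all sufficiently large $\eta$; the content here is getting it down to every $\eta \ge 2$, and that is where the special measures are indispensable.
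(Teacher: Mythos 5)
Your skeleton matches the paper's: Procedure \ref{proceed2} with $\kappa=0$, Lemma \ref{pierwsze} to reduce everything to a choice of $\nu$, $\tau$ and a partition, Lemma \ref{epslem} with the homothety $\psi_{a^{-1},a}$, the Al-Salam-Carlitz pair plus Lemma \ref{Euler} as the special input, Berg--Duran for the index bookkeeping, and the same Step 1/Step 2 organization. However, the partition you describe would not deliver (ii) and (vi) simultaneously. Condition (vi) forces the partition completely: by \cite[Theorem 3.6]{ber-dur} the measure $\xi_\eta=\sum_{\lambda\in\varDelta_\eta}\tau(\{\lambda\})\delta_\lambda$ has $\ind 0 {\xi_\eta}=N-1$, where $N=\card{\supp{\tau}\setminus\varDelta_\eta}$, and $\ind 0 {\xi_\eta}=\infty$ if $N=\aleph_0$ (Theorem \ref{b-d}). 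The sandwich \eqref{ind0} requires $\ind 0 {\xi_\eta}=\eta-2$ exactly, i.e.\ $N=\eta-1$, so $\varDelta_1,\dots,\varDelta_{\eta-1}$ must be singletons and $\varDelta_\eta$ the complementary tail (this is \eqref{dopor}). Your prescription -- each $\varDelta_i$ ($i<\eta$) of infinite complement, with $\varDelta_\eta$ "collecting most of the remaining mass" -- is compatible with singletons but does not force them; if any $\varDelta_i$ ($i<\eta$) were infinite, Theorem \ref{b-d} would give $\ind 0 {\xi_\eta}=\infty$ and ruin the upper bound in \eqref{ind0}. (With singletons, part (v) is immediate because each $P(x_{i,j},\cdot)$ is a point mass; Theorem \ref{b-d} is not needed there, unlike in Theorem \ref{drugie}.)

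You also assign the violation of \eqref{Bud-ski-1} to the block $\varDelta_\eta$ "carrying enough concentrated mass", which is the wrong mechanism: by Cauchy--Schwarz the quotient $(\int_{\varDelta}(t-1)\D\tau)^2/\int_{\varDelta}t(t-1)\D\tau$ equals its ceiling $\int_{\varDelta}\frac{t-1}{t}\D\tau$ precisely on singletons and is strictly smaller on multi-atom blocks (cf.\ Proposition \ref{supinf}), so lumping mass into $\varDelta_\eta$ pushes the left side of \eqref{Bud-ski-1} \emph{down}, toward hyponormality. In the paper the inequality is defeated by the $\eta-1$ singletons alone: as $a\to 0+$ their contribution tends to $\beta(\{\theta_1,\dots,\theta_{\eta-1}\})$ while the right-hand side tends to $1$ (Lemma \ref{epslem}(iii)), so the single quantitative condition needed is $\beta(\{\theta_1,\dots,\theta_{\eta-1}\})>1$ -- for $\eta=2$, that the bottom atom of the normalized Friedrichs measure has mass $(q/a;q)_{\infty}/(1-(aq;q)_{\infty})>1$, which is exactly what Lemma \ref{Euler} provides. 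You gesture at the Euler estimate but never isolate this atom-mass condition, which is the precise reason generic N-extremal measures fail for small $\eta$ and the explicit ones are indispensable. Finally, your heuristic for the floor/ceiling in \eqref{ind0} (order of vanishing at $t=1$) is off target: since $\supp{\xi_\eta}\subseteq(1,\infty)$ and the index is computed at $0$, the bounds come from sandwiching $t^{2k}\,t^{j-1}(t-1)$ between constant multiples of $t^{2(k+\lfloor j/2\rfloor)}$ and $t^{2(k+\lceil j/2\rceil)}$ on $\supp{\xi_\eta}$ and applying Proposition \ref{krytnd} against $\xi_\eta$.
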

   \begin{proof}
We split the proof into two steps.

{\sc Step 1.} Let $\eta\in\nbb_2$. Then the conclusion
of Theorem \ref{czwarte} holds if there exist measures
$\alpha$ and $\beta$ satisfying \eqref{gs1+} and an
injective sequence $\{\theta_i\}_{i=1}^{\eta-1}
\subseteq \supp {\beta}$ such that
   \begin{align}  \label{zemeta}
\beta(\{\theta_1, \ldots, \theta_{\eta-1}\}) > 1.
   \end{align}

Indeed, by Lemma \ref{proN}, we can extend the
sequence $\{\theta_i\}_{i=1}^{\eta-1}$ to an injective
sequence $\{\theta_i\}_{i=1}^\infty$ such that $\supp
{\beta}=\{\theta_1, \theta_2, \ldots\}$. It follows
from \eqref{zemeta} and Lemma \ref{epslem}(iii) that
there exists $a \in (0,\infty)$ such that the
inequality \eqref{tetrablok} holds for $m=\eta-1$.
This together with \eqref{gs1+} and Lemma \ref{transH}
imply that the measures $\nu:=\alpha^{(a)}$ and
$\tau:=\beta^{(a)}$ satisfy the conditions
\eqref{gs1}-\eqref{gs4} of Procedure \ref{proceed2}
for $\kappa=0$, and $\supp{\tau} = \{\theta_1^{(a)},
\theta_2^{(a)}, \ldots\}$. Now we define a partition
$\{\varDelta_i\}_{i=1}^{\eta}$ of $\supp{\tau}$~ by
   \begin{align} \label{dopor}
\varDelta_i =
   \begin{cases}
\{\theta_i^{(a)}\} & \text{ if } i \in J_{\eta-1},
   \\[1ex]
   \{\theta_{\eta}^{(a)}, \theta_{\eta+1}^{(a)},
   \dots\} & \text{ if } i = \eta.
   \end{cases}
   \end{align}

Let $X$, $\mu$, $\{P(x, \cdot)\}_{x\in X}$ and
$C_{\phi}$ be as in Procedure \ref{proceed2} for
$\kappa=0$. Since $\eta \in \nbb_2$, it follows from
\eqref{gs2} that \allowdisplaybreaks
   \begin{align*}
\sum_{i=1}^{\eta} \frac{(\int_{\varDelta_i}(t-1) \D
\tau(t))^2}{\int_{\varDelta_i}t (t-1) \D \tau(t)} & >
\sum_{i=1}^{\eta-1} \frac{(\int_{\varDelta_i}(t-1) \D
\tau(t))^2}{\int_{\varDelta_i}t (t-1) \D \tau(t)}
   \\
& = \sum_{i=1}^{\eta-1}
\frac{\theta_i^{(a)}-1}{\theta_i^{(a)}} \;
\tau\big(\{\theta_i^{(a)}\}\big)
   \\
&\hspace{-1.7ex}\overset{\eqref{tetrablok}}>
\frac{\int_0^\infty (t-1) \D
\tau(t)}{1+\int_0^\infty(t-1) \D \tau(t)}.
    \end{align*}
Now applying Lemma \ref{pierwsze} we see that for
every $x \in X$, $\{\hfin{n}(x)\}_{n=0}^\infty$ is a
Stieltjes moment sequence with the S-representing
measure $P(x, \cdot)$ (in particular, (iii) holds),
and $C_{\phi}$ satisfies (i), (ii) and (iv) (note that
if $\eta=1$, then, by Lemma \ref{pierwsze}(vii),
$C_{\phi}$ is hyponormal). Arguing as in the proof of
Theorem \ref{trzecie}, we deduce that (v) holds as
well.

Our next aim is to prove (vi). Assume that $\eta \Ge
3$. Set $\xi_{\eta} = \sum_{\lambda \in
\varDelta_{\eta}} \tau(\{\lambda\}) \delta_{\lambda}$.
Clearly $\xi_{\eta} \in \msc^+$. By \cite[Theorem
3.6]{ber-dur}, $\xi_{\eta}$ is H-determinate and
   \begin{align} \label{ind1}
\ind 0 {\xi_{\eta}}=\eta-2.
   \end{align}
Put $m_j = c_{\eta} \frac{\mu(x_{\eta,1})}
{\mu(x_{\eta,j})}$ for $j\in \nbb$. It follows from
\eqref{gs7} and \eqref{muxij1} that
   \nobreak
   \begin{align} \notag
\int_{\sigma} t^{2k} P(x_{\eta,j},\D t) = m_j
\int_{\sigma} t^{2k+j-1} & (t-1) \D \xi_{\eta} (t),
   \\  \label{indn}
& \sigma \in \borel{\rbb}, k\in \zbb_+, j \in \nbb.
   \end{align}
Since, by \eqref{gs2}, $\supp{\xi_{\eta}} \subseteq
(1,\infty)$, we infer from \eqref{indn} that
\allowdisplaybreaks
   \begin{multline} \label{subst}
\int_{\sigma} t^{2k} P(x_{\eta,j},\D t) \Le m_j
\int_{\sigma} t^{2(k+\lceil j/2 \rceil)} \D \xi_{\eta}
(t) \Le m_j \int_{\sigma} t^{2(\eta-2)} \D \xi_{\eta}
(t),
   \\
\sigma \in \borel{\rbb}, \, k\in \zbb_+,\, k + \lceil
j/2 \rceil \Le \eta -2, \, j \in \nbb .
   \end{multline}
Fix $j \in J_{2\eta-4}$. Substituting $k=0$ into
\eqref{subst} and using \eqref{ind1} and Proposition
\ref{krytnd}, we deduce that $P(x_{\eta,j},\cdot)$ is
H-determinate. Hence, applying \eqref{ind1},
\eqref{subst} with $k=\eta-2 - \lceil j/2 \rceil$, and
Proposition \ref{krytnd}, we see that
   \begin{align} \label{ind3}
\ind 0 {P(x_{\eta,j}, \cdot)} \Ge \eta-2 - \lceil j/2
\rceil.
   \end{align}
Take now $k\in \zbb_+$ such that the measure $t^{2k}
P(x_{\eta,j},\D t)$ is H-determinate. Since
$\supp{\xi_{\eta}} \subseteq (1,\infty)$, there exists
$M \in (0,1)$ such that for every $\sigma \in
\borel{\rbb}$,
   \begin{align*}
\int_{\sigma} t^{2k} P(x_{\eta,j},\D t)
\overset{\eqref{indn}}\Ge M m_j \int_{\sigma} t^{2k+j}
\D \xi_{\eta} (t) \Ge M m_j \int_{\sigma}
t^{2(k+\lfloor j/2 \rfloor)} \D \xi_{\eta}(t).
   \end{align*}
Applying \eqref{ind1} and Proposition \ref{krytnd}
again, we deduce that $k+\lfloor j/2 \rfloor \Le
\eta-2$. This implies that
   \begin{align*}
\ind 0 {P(x_{\eta,j}, \cdot)} \Le \eta-2 - \lfloor j/2
\rfloor.
   \end{align*}
Combining the above inequality with \eqref{ind3}, we
obtain \eqref{ind0}.

{\sc Step 2.} There exist measures $\alpha$ and
$\beta$ satisfying \eqref{gs1+} such that
   \begin{align*}
\beta\big(\{\inf\supp{\beta}\}\big) > 1.
   \end{align*}

Indeed, let $a \in (1,\infty)$. By Lemma \ref{Euler},
there exists $q \in \big(0,\frac{1}{a}\big)$ such that
   \begin{align} \label{qaq}
(q/a;q)_{\infty} + (aq;q)_{\infty} > 1.
   \end{align}
As in \cite[Proposition 4.5.1]{b-v} (see also
\cite[eq.\ (4.4)]{b-v}), we define the Borel measures
$\zeta$ and $\rho$ on $\rbb$ (with different notation)
by \allowdisplaybreaks
   \begin{align} \label{qaq1}
   \begin{aligned}
\zeta & := \beta^{(a;q)} \circ \psi_{1,-1}^{-1}
\overset{\eqref{betaq}}= (aq;q)_{\infty}
\sum_{n=0}^{\infty} \frac{a^{n}
q^{n^2}}{(aq;q)_{n}(q;q)_{n}} \delta_{q^{-n} - 1},
   \\
\rho & := \gamma^{(a;q)} \circ \psi_{1,-1}^{-1}
\overset{\eqref{gamprezy}}= (q/a;q)_{\infty}
\sum_{n=0}^{\infty} \frac{a^{-n}
q^{n^2}}{(q/a;q)_{n}(q;q)_{n}} \,\delta_{aq^{-n} - 1}.
   \end{aligned}
   \end{align}
(Another possible choice of measures $\zeta$ and
$\rho$ is discussed in Remark \ref{piate}.) In view of
the proof of \cite[eq.\ (5.10)]{Ism}, $\zeta$ and
$\rho$ are probability measures. It follows from
\cite[Proposition 4.5.1]{b-v} that $\zeta$ and $\rho$
are N-extremal measures of the same Stieltjes moment
sequence (see also Section \ref{SecASC} for a detailed
discussion of this matter). Note that $\zeta(\{0\}) =
(aq;q)_{\infty} \in (0,1)$. Following the proof of
Theorem \ref{drugie}, we set $\alpha=r \zeta$ and
$\beta = r \rho$ with $r = (1- \zeta(\{0\}))^{-1}$. It
is easily seen that the measures $\alpha$ and $\beta$
satisfy \eqref{gs1+}. Now, combining \eqref{qaq} with
\eqref{qaq1}, we get
   \begin{align*}
\beta(\{\inf\supp{\beta}\}) =
\frac{(q/a;q)_{\infty}}{1-(aq;q)_{\infty}} > 1.
   \end{align*}

To finish the proof of the theorem, take measures
$\alpha$ and $\beta$ as in Step 2. Then $\supp
{\beta}=\{\theta_1, \theta_2, \ldots\}$, where
$\{\theta_i\}_{i=1}^\infty$ is a strictly increasing
sequence (cf.\ Lemma \ref{proN}). This implies that
$\beta(\{\theta_1\}) > 1$, and thus the inequality
\eqref{zemeta} holds for every $\eta \in \nbb_2$.
Applying Step 1 completes the proof.
   \end{proof}
To the best of our knowledge, there are few explicit
examples (or rather classes of examples) of N-extremal
measures. The ones used in the proof of Theorem
\ref{czwarte} are related to the Al-Salam-Carlitz
\mbox{$q$-polynomials} (see Section \ref{SecASC}). In
turn, the measures discussed below come from an
S-indeterminate Stieltjes moment problem associated
with a quartic birth and death process (cf.\
\cite{b-v}).
   \begin{rem} \label{piate}
Define the Borel measures $\zeta$ and $\rho$ on $\rbb$
by \allowdisplaybreaks
   \begin{align} \label{qaq2}
   \begin{aligned}
\zeta & = \frac{\pi}{K_0^2} \delta_{x_0} + \frac{4
\pi}{K_0^2} \sum_{n=1}^{\infty} \frac{2 n \pi}{\sinh(2
n \pi)} \delta_{x_{2n}},
   \\
\rho & = \frac{4 \pi}{K_0^2} \sum_{n=0}^{\infty}
\frac{(2n+1)\pi}{\sinh\big((2n+1)\pi\big)}
\delta_{x_{2n+1}},
   \end{aligned}
   \end{align}
where
   \begin{align*}
K_0 = \frac{\varGamma(\frac{1}{4})^2}{4 \sqrt{\pi}}
\quad \text{and} \quad x_k = \bigg(\frac{k
\pi}{K_0}\bigg)^{\!4\,} \text{ for } k\in \zbb_+.
   \end{align*}
($\varGamma(\cdot)$ stands for the Euler gamma
function.) It was proved in \cite[Proposition
3.4.1]{b-v} that $\zeta$ and $\rho$ are N-extremal
measures of the same Stieltjes moment sequence. A
careful inspection of \cite{b-v} reveals that these
measures are probabilistic. As a consequence,
$\zeta(\{0\}) = \pi K_0^{-2} \in (0,1)$. Note that
$\beta(\{\theta_1\}) > 1$, where, as in the proof of
Step 2 of Theorem \ref{czwarte}, $\beta = r \rho$ with
$r = (1- \zeta(\{0\}))^{-1}$ and
$\theta_1=\inf\supp{\beta}$. Indeed, since $\pi > 3$,
$\varGamma(\frac{1}{4}) < 4$ and $\sinh(\pi) < 12$, we
get
   \begin{align*}
\beta(\{\theta_1\}) = \frac{64
\pi^3}{(\varGamma(\frac{1}{4})^4 - 16 \pi^2)
\sinh(\pi)} > \frac{9}{7} > 1.
   \end{align*}
(In fact, $\beta(\{\theta_1\}) > \frac{23}{2}$.)
Putting all this together, we see that the proof of
Theorem \ref{czwarte} goes through without change if
the measures $\zeta$ and $\rho$ are defined by
\eqref{qaq2} instead of \eqref{qaq1}.
   \end{rem}
Below we make an important remark on the measures
$\zeta$ and $\rho$ and their relatives $\nu$ and
$\tau$ appearing in the proof of Theorem \ref{czwarte}
and in Remark \ref{piate}.
   \begin{rem} \label{fri}
Let $\zeta$ and $\rho$ be measures as in \eqref{qaq1}.
Recall that they are representing measures of the same
Stieltjes moment sequence, say $\gammab$. Since
$\inf\supp{\zeta}=0$, we infer from \eqref{KFm} that
$\zeta$ is the Krein measure of $\gammab$. In turn,
the measure $\rho$ is the Friedrichs measure of
$\gammab$. Indeed, this can be deduced by
combining\footnote{\;In fact \cite[Proposition
3.1]{Ped} is essentially due to Chihara (see
\cite[Theorem 5]{chi}). The reader is also referred to
\cite[Theorem 4.18, Proposition 5.20, and Remark, p.\
127]{sim} for another way of parameterizing N-extremal
measures in which the traditional interval
$[\alpha,0]$ is replaced by $[-1/\alpha, \infty) \cup
\{\infty\}$, where $\alpha$ is as in \cite[Proposition
3.2]{Ped}.} \eqref{KFm} and \cite[Proposition
3.1]{Ped} with \cite[Proposition 3.2]{Ped} and
\cite[Lemma 4.4.2]{b-v} (that this particular $\rho$
is the Friedrichs measure of $\gammab$ follows also
from Theorem \ref{taso}(i) and the discussion in the
paragraph containing \eqref{gamprezy}). Hence, by
Corollary \ref{transH+} (see also Theorem \ref{taso}),
the measures $\beta$ and $\tau$ are the Friedrichs
measures of appropriate S-indeterminate Stieltjes
moment sequences (see Step 1 of the proof of Theorem
\ref{czwarte}); however, $\nu$ is not the Krein
measure (of the S-indeterminate Stieltjes moment
sequence $\{\int_0^{\infty} t^n \D
\nu(t)\}_{n=0}^{\infty}$). It is easily seen that the
above discussion applies to the measures $\zeta$ and
$\rho$ defined by \eqref{qaq2} and the corresponding
measures $\nu$ and $\tau$ (but now we have to use
\cite[Corollary 3.3.3]{b-v} in place of \cite[Lemma
4.4.2]{b-v}).
   \end{rem}
We end this section with yet another remark.
   \begin{rem} \label{whynot}
Since Lemma \ref{pierwsze} was used as one of the main
tools to prove Theorems \ref{trzecie} and
\ref{czwarte}, the following statement can be added to
their conclusions:
   \begin{align*}
   \begin{minipage}{72ex}
{\em there exists a family $\{P(x_{i,1},\cdot)\}_{i\in
J_{\eta}}$ of Borel probability measures on $\rbb_+$
that satisfies the conditions {\em \mbox{(i-b)}}, {\em
\mbox{(i-c)}} and {\em \mbox{(i-d$^{\prime}$)}} of
Theorems {\em \ref{main}} and {\em \ref{6.2}}, and
does not satisfy the condition {\em \mbox{(i-d)}} of
Theorem {\em \ref{main}}.}
   \end{minipage}
   \end{align*}
   \end{rem}
   \subsection{Addendum}
The construction of the composition operator
$C_{\phi}$ that appears in the proofs of theorems of
Section \ref{sec5} depends on the choice of a
partition $\{\varDelta_i\}_{i=1}^\eta$ of
$\supp{\tau}$ (cf.\ Procedure \ref{proceed2}). In
particular, the hyponormality of $C_{\phi}$ requires
finding a partition $\{\varDelta_i\}_{i=1}^\eta$
satisfying the inequality \eqref{Bud-ski-1}. Hence, it
seems to be of some independent interest to calculate
the infimum and the supremum of the quantity appearing
on the left-hand side of \eqref{Bud-ski-1} over all
partitions $\{\varDelta_i\}_{i=1}^\eta$ of
$\supp{\tau}$. The following general proposition sheds
more light on this issue.
   \begin{pro} \label{supinf}
Let $\tau$ be an N-extremal measure such that $1 <
\inf\supp{\tau}$. Then for every $\eta \in \nbb \cup
\{\infty\}$, the following two conditions hold{\em :}
\allowdisplaybreaks
   \begin{align} \label{oczko1}
\inf\bigg\{\varLambda(\{\varDelta_i\}_{i=1}^\eta)
\colon \{\varDelta_i\}_{i=1}^\eta \text{ satisfies }
\eqref{gs6} \bigg\} & = \frac{\big(\int_0^\infty (t-1)
\D \tau(t)\big)^2}{\int_0^\infty t (t-1) \D \tau(t)},
   \\ \label{oczko2}
\sup\bigg\{\varLambda(\{\varDelta_i\}_{i=1}^\eta)
\colon \{\varDelta_i\}_{i=1}^\eta \text{ satisfies }
\eqref{gs6} \bigg\} &\Le \int_0^\infty \frac{t-1}{t}
\D \tau(t),
   \end{align}
where
   \begin{align} \label{oczko2.5}
\varLambda(\{\varDelta_i\}_{i=1}^\eta) =
\sum_{i=1}^{\eta} \frac{(\int_{\varDelta_i}(t-1) \D
\tau(t))^2}{\int_{\varDelta_i}t (t-1) \D \tau(t)},
\quad \{\varDelta_i\}_{i=1}^\eta \text{ satisfies }
\eqref{gs6}.
   \end{align}
If $\eta=\infty$, then the inequality in
\eqref{oczko2} becomes an equality. Moreover, we have
   \begin{align}  \label{oczko3}
\sup\bigg\{\varLambda(\{\varDelta_i\}_{i=1}^{\eta})
\colon \{\varDelta_i\}_{i=1}^{\eta} \text{ satisfies }
\eqref{gs6}, \, \eta \in \nbb\bigg\} = \int_0^\infty
\frac{t-1}{t} \D \tau(t).
   \end{align}
   \end{pro}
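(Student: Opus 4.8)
The plan is to reduce the whole statement to two applications of the Cauchy--Schwarz inequality for the finite positive measure $\D\mu(t):=(t-1)\,\D\tau(t)$ on $\rbb_+$. Since $\tau$ is an N-extremal measure we have $\tau\in\msc$, and by Lemma~\ref{proN} the set $\supp{\tau}=\{\theta_1,\theta_2,\dots\}$ is countably infinite with no accumulation point in $\rbb$; as $1<\theta_i$ for every $i$, the measure $\mu$ is a genuine (finite, and with finite first moment) positive measure carried by $(1,\infty)$, and all integrals occurring in \eqref{oczko1}--\eqref{oczko3} are finite. Abbreviating $A(\varDelta)=\int_{\varDelta}(t-1)\,\D\tau(t)=\mu(\varDelta)$ and $B(\varDelta)=\int_{\varDelta}t(t-1)\,\D\tau(t)=\int_{\varDelta}t\,\D\mu(t)$, the quantity defined in \eqref{oczko2.5} reads $\varLambda(\{\varDelta_i\}_{i=1}^{\eta})=\sum_{i}A(\varDelta_i)^2/B(\varDelta_i)$.

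For the lower bound \eqref{oczko1} I would invoke the elementary inequality $\sum_i a_i^2/b_i\Ge(\sum_i a_i)^2/(\sum_i b_i)$ (a form of the Cauchy--Schwarz inequality) with $a_i=A(\varDelta_i)$, $b_i=B(\varDelta_i)$; since $A$ and $B$ are countably additive and $\{\varDelta_i\}$ partitions $\supp{\tau}$, this gives $\varLambda(\{\varDelta_i\}_{i=1}^{\eta})\Ge A(\supp{\tau})^2/B(\supp{\tau})$, the right-hand side of \eqref{oczko1}. Sharpness is trivial for $\eta=1$ (the only partition is $\varDelta_1=\supp{\tau}$); for $\eta\Ge2$, finite or not, I would use ``almost trivial'' partitions, namely one large block $\varDelta_1=\{\theta_1,\dots,\theta_N\}$ together with the far-out atoms $\{\theta_{N+1}\},\{\theta_{N+2}\},\dots$ distributed among the remaining cells (one per cell when $\eta=\infty$). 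As $N\to\infty$, $A(\varDelta_1)\to A(\supp{\tau})$ and $B(\varDelta_1)\to B(\supp{\tau})$ by continuity from below of $\mu$ and by monotone convergence, while every tail cell $\{\theta_k\}$ contributes $A(\{\theta_k\})^2/B(\{\theta_k\})=\mu(\{\theta_k\})/\theta_k\Le\mu(\{\theta_k\})$, whose sum over $k>N$ tends to $0$ since $\mu$ is finite; hence $\varLambda\to A(\supp{\tau})^2/B(\supp{\tau})$, establishing \eqref{oczko1}.

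For the upper bound \eqref{oczko2} I would apply Cauchy--Schwarz on each cell: $\mu(\varDelta_i)^2=\bigl(\int_{\varDelta_i}\sqrt{t}\cdot t^{-1/2}\,\D\mu\bigr)^2\Le\bigl(\int_{\varDelta_i}t\,\D\mu\bigr)\bigl(\int_{\varDelta_i}t^{-1}\,\D\mu\bigr)$, so $A(\varDelta_i)^2/B(\varDelta_i)\Le\int_{\varDelta_i}t^{-1}\,\D\mu$, and summing over $i$ yields $\varLambda(\{\varDelta_i\}_{i=1}^{\eta})\Le\int_0^\infty t^{-1}\,\D\mu(t)=\int_0^\infty\frac{t-1}{t}\,\D\tau(t)$, which is finite because $\frac{t-1}{t}<1$ on $\supp{\tau}$. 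When $\eta=\infty$ the singleton partition $\varDelta_i=\{\theta_i\}$ is admissible and makes each cellwise inequality an equality (the integrand is constant on a point), so $\varLambda=\int_0^\infty\frac{t-1}{t}\,\D\tau(t)$; together with the bound just proved this gives the asserted equality in \eqref{oczko2}. Finally, \eqref{oczko3} follows by pairing \eqref{oczko2} (for each finite $\eta$, giving ``$\Le$'') with the partitions $\{\theta_1\},\dots,\{\theta_N\},\{\theta_{N+1},\theta_{N+2},\dots\}$ taken with $\eta=N+1$: as $N\to\infty$ the singleton part of $\varLambda$ increases to $\int_0^\infty t^{-1}\,\D\mu$ while the single remaining block contributes at most $\mu(\{\theta_{N+1},\theta_{N+2},\dots\})\to0$.

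I do not foresee a serious obstacle; the only points needing care are the verification that $\sum_i A(\varDelta_i)$ and $\sum_i B(\varDelta_i)$ sum to $A(\supp{\tau})$ and $B(\supp{\tau})$ for an arbitrary (possibly countably infinite) partition, and that the tail contributions of the limiting partitions vanish — both of which come down to $\mu$ being a finite measure with finite first moment, i.e.\ to $\tau\in\msc$.
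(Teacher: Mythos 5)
Your proposal is correct and follows essentially the same route as the paper's proof: the same two Cauchy--Schwarz inequalities (the superadditivity $\sum_i a_i^2/b_i \Ge (\sum_i a_i)^2/\sum_i b_i$ for the infimum and the cellwise bound $(\int_{\varDelta}(t-1)\D\tau)^2 \Le \int_{\varDelta}t(t-1)\D\tau\cdot\int_{\varDelta}\frac{t-1}{t}\D\tau$ for the supremum), and the same limiting partitions (one growing block plus far-out cells for \eqref{oczko1}, growing families of singletons for \eqref{oczko3}, and the all-singleton partition for equality when $\eta=\infty$). The only cosmetic slip is that for finite $\eta$ your ``tail cells'' are not singletons, so their contribution should be estimated by $\int_{\varDelta_i}\frac{t-1}{t}\D\tau \Le \int_{\varDelta_i}(t-1)\D\tau$ via the cellwise Cauchy--Schwarz rather than by the singleton formula; this changes nothing in the argument.
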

   \begin{proof}
We begin with two observations. First, if
$\{\varDelta_i\}_{i=1}^{\eta}$ is a partition of
$\supp{\tau}$, then by Lemma \ref{proN},
$\int_{\varDelta_i}t (t-1) \D \tau(t) \in (0,\infty)$
for every $i\in J_{\eta}$. Second, due to the
Cauchy-Schwarz inequality, the following two
inequalities hold \allowdisplaybreaks
   \begin{align} \label{kwxy}
& \bigg(\sum_{i=1}^{\eta} x_i\bigg)^2 \Le
\bigg(\sum_{i=1}^{\eta} y_i\bigg)
\bigg(\sum_{i=1}^{\eta} \frac{x_i^2}{y_i}\bigg), \quad
\{x_i\}_{i=1}^{\eta} \subseteq \rbb_+, \,
\{y_i\}_{i=1}^{\eta} \subseteq (0,\infty),
   \\ \label{deltain}
& \Big(\int_{\varDelta} (t-1) \D \tau(t)\Big)^2 \Le
\int_{\varDelta} t (t-1) \D \tau(t)\int_{\varDelta}
\frac{t-1}{t} \D \tau(t), \quad \varDelta \in
\borel{\rbb}.
   \end{align}
Therefore, if $\{\varDelta_i\}_{i=1}^\eta$ is a
partition of $\supp{\tau}$, then \allowdisplaybreaks
   \begin{align}  \notag
\frac{\big(\int_0^\infty (t-1) \D
\tau(t)\big)^2}{\int_0^\infty t (t-1) \D \tau(t)} & =
\frac{\big(\sum_{i=1}^{\eta} \int_{\varDelta_i} (t-1)
\D \tau(t)\big)^2}{\sum_{i=1}^{\eta}
\int_{\varDelta_i} t (t-1) \D \tau(t)}
   \\ \notag
& \hspace{-1.8ex}\overset{\eqref{kwxy}} \Le
\sum_{i=1}^{\eta} \frac{(\int_{\varDelta_i}(t-1) \D
\tau(t))^2}{\int_{\varDelta_i}t (t-1) \D \tau(t)}
\overset{\eqref{oczko2.5}} =
\varLambda(\{\varDelta_i\}_{i=1}^{\eta})
   \\  \notag
& \hspace{-1.8ex} \overset{\eqref{deltain}} \Le
\sum_{i=1}^{\eta} \int_{\varDelta_i} \frac{t-1}{t} \D
\tau(t)
   \\  \label{kwaint}
& = \int_0^{\infty} \frac{t-1}{t} \D \tau(t).
   \end{align}
This proves \eqref{oczko2} and the inequality
``$\Ge$'' in \eqref{oczko1}. To show the reverse
inequality, we may assume that $\eta \Ge 2$. Let us
define for every $k\in \nbb$ a partition
$\{\varDelta_{k,i}\}_{i=1}^\eta$ of $\supp{\tau}$ by
   \begin{align*}
\varDelta_{k,i} =
   \begin{cases}
   \{\theta_1, \ldots, \theta_k\} & \text{if } i=1,
\\
   \{\theta_{i+k-1}\} & \text{if } 2 \Le i< \eta,
\\
   \{\theta_{\eta+k-1}, \theta_{\eta+k}, \ldots\} &
   \text{if } \eta < \infty \text{ and } i=\eta,
   \end{cases}
   \end{align*}
where $\{\theta_j\}_{j=1}^\infty$ is a strictly
increasing sequence such that $\supp
{\beta}=\{\theta_1, \theta_2, \ldots\}$. By Lemma
\ref{proN} such a sequence exists and $\lim_{j\to
\infty} \theta_j = \infty$. Then
   \allowdisplaybreaks
   \begin{align*}
\varLambda(\{\varDelta_{k,i}\}_{i=1}^\eta) & =
\frac{(\int_0^{\theta_k} (t-1) \D
\tau(t))^2}{\int_0^{\theta_k} t (t-1) \D \tau(t)} +
\sum_{i=2}^{\eta} \frac{(\int_{\varDelta_{k,i}}(t-1)
\D \tau(t))^2}{\int_{\varDelta_{k,i}}t (t-1) \D
\tau(t)}
   \\
& \hspace{-1.8ex}\overset{\eqref{deltain}}{\Le}
\frac{(\int_0^{\theta_k} (t-1) \D
\tau(t))^2}{\int_0^{\theta_k} t (t-1) \D \tau(t)} +
\sum_{i=2}^{\eta} \int_{\varDelta_{k,i}} \frac{t-1}{t}
\D \tau(t)
   \\
& = \frac{(\int_0^{\theta_k} (t-1) \D
\tau(t))^2}{\int_0^{\theta_k} t (t-1) \D \tau(t)} +
\int_{\theta_{k+1}}^{\infty} \frac{t-1}{t} \D \tau(t),
\quad k\in \nbb.
   \end{align*}
Hence, by applying the Lebesgue monotone and dominated
convergence theorems, we get the inequality ``$\Le$''
in \eqref{oczko1}. This completes the proof of
\eqref{oczko1}.

To prove the remaining part of the conclusion, we
define for every $\eta\in \nbb_2 \cup \{\infty\}$ a
partition $\{\tilde\varDelta_{\eta,i}\}_{i=1}^{\eta}$
of $\supp{\tau}$ by (compare with \eqref{dopor})
   \begin{align*}
\tilde\varDelta_{\eta,i} =
   \begin{cases}
   \{\theta_i\} & \text{if } i \in J_{\eta-1},
\\[1ex]
   \{\theta_{\eta}, \theta_{\eta+1}, \ldots\} &
   \text{if } \eta < \infty \text{ and } i=\eta.
   \end{cases}
   \end{align*}
Then \allowdisplaybreaks
   \begin{align*}
\varLambda(\{\tilde\varDelta_{\eta,i}\}_{i=1}^\eta) =
\sum_{i=1}^{\eta-1} \frac{\theta_i -1}{\theta_i}
\tau(\{\theta_i\}) & +
\frac{(\int_{\theta_{\eta}}^{\infty}(t-1) \D
\tau(t))^2}{\int_{\theta_{\eta}}^{\infty} t (t-1) \D
\tau(t)}
   \\
&\Ge \int_0^{\theta_{\eta-1}} \frac{t-1}{t} \D
\tau(t), \quad \eta \in \nbb_2.
   \end{align*}
Applying the Lebesgue monotone convergence theorem and
using \eqref{oczko2}, we get \eqref{oczko3}. In the
case of $\eta=\infty$, we have
   \begin{align} \label{abc}
\varLambda(\{\tilde\varDelta_{\infty,i}\}_{i=1}^{\infty})
= \int_0^{\infty} \frac{t-1}{t} \D \tau(t).
   \end{align}
This completes the proof of the proposition.
   \end{proof}
   \begin{rem}
It is worth pointing out that under the assumptions of
Proposition \ref{supinf}, the following two
inequalities hold
   \begin{align} \label{strict}
\frac{\big(\int_{\varDelta} (t-1) \D
\tau(t)\big)^2}{\int_{\varDelta} t (t-1) \D \tau(t)}
\Le \sum_{i=1}^n \frac{(\int_{\varDelta_i}(t-1) \D
\tau(t))^2}{\int_{\varDelta_i}t (t-1) \D \tau(t)} \Le
\int_{\varDelta} \frac{t-1}{t} \D \tau(t)
   \end{align}
whenever $\{\varDelta_i\}_{i=1}^n$ is a finite or
infinite partition of a nonempty subset $\varDelta$ of
$\supp{\tau}$ (argue as in the proof of
\eqref{kwaint}). We will show that if, in addition,
$\card{\varDelta_i} \Ge 2$ for at least one $i\in
J_n$, then the second inequality (counting from the
left) in \eqref{strict} is strict. Indeed, otherwise
taking a close look at \eqref{kwaint} shows that the
Cauchy-Schwarz inequality \eqref{deltain} becomes an
equality for every $\varDelta \in \{\varDelta_i\colon
i\in J_n\}$. As a consequence, for every $i\in J_n$,
there exists $\alpha_i \in (0,\infty)$ such that
$\sqrt{(t-1)t} = \alpha_i \sqrt{\frac{t-1}{t}}$ for
every $t \in \varDelta_i$ (recall that $\varDelta_i
\subseteq \supp{\tau}$). This implies that
$\card{\varDelta_i}=1$ for every $i\in J_n$, which
contradicts our assumption.

It follows from the previous paragraph that for every
$\eta \in \nbb \cup \{\infty\}$ and for every
partition $\{\varDelta_i\}_{i=1}^{\eta}$ of
$\supp{\tau}$ such that
$\sup\{\card{\varDelta_i}\colon i \in J_{\eta}\} \Ge
2$ we have
   \begin{align}  \label{abc3}
\varLambda(\{\varDelta_i\}_{i=1}^{\eta}) <
\int_0^\infty \frac{t-1}{t} \D \tau(t).
   \end{align}
However, if $\sup\{\card{\varDelta_i}\colon i \in
J_{\eta}\} =1$ (and consequently $\eta=\infty$), then
the strict inequality in \eqref{abc3} becomes an
equality (cf.\ \eqref{abc}).

Applying \eqref{oczko3} and \eqref{strict}, we deduce
that the sequence
   \begin{align*}
\bigg\{\sup\Big\{\varLambda(\{\varDelta_i\}_{i=1}^{\eta})
\colon \{\varDelta_i\}_{i=1}^{\eta} \text{ satisfies }
\eqref{gs6}\Big\}\bigg\}_{\eta=1}^{\infty}
   \end{align*}
is monotonically increasing to $\int_0^\infty
\frac{t-1}{t} \D \tau(t)$. Hence, there arises the
question whether or not there exists $\eta_0\in \nbb$
at which the above sequence stabilizes.
   \end{rem}
   \subsection*{Acknowledgements} A substantial part
of this paper was written while the second and the
fourth author visited Kyungpook National University
during the spring and the autumn of 2015. They wish to
thank the faculty and the administration of this unit
for their warm hospitality.
   \bibliographystyle{amsalpha}
   
   \end{document}